\newtheorem{theorem}{Theorem}[section]
\newtheorem{thm}[theorem]{Theorem}
\newtheorem{prop}[theorem]{Proposition}
\newtheorem*{proposition*}{Proposition}
\newtheorem{lem}[theorem]{Lemma}
\newtheorem{cor}[theorem]{Corollary}
\newtheorem{exa}[theorem]{Example}
\newtheorem{rem}[theorem]{Remark}
\crefname{lem}{Lemma}{Lemmas}
\DeclareMathOperator{\Class}{Class}
\DeclareMathOperator{\Res}{Res}
\DeclareMathOperator{\Ind}{Ind}
\DeclareMathOperator{\pr}{pr}
\DeclareMathOperator{\Pow}{Pow}
\DeclareMathOperator{\Sim}{Sim}
\DeclareMathOperator{\Def}{def}
\DeclareMathOperator{\Irr}{Irr}
\DeclareMathOperator{\Aut}{Aut}
\DeclareMathOperator{\Hom}{Hom}
\DeclareMathOperator{\Cl}{Cl}
\DeclareMathOperator{\diag}{diag}
\DeclareMathOperator{\rk}{rk}
\newcommand{\SL}{\mathrm{SL}}
\newcommand{\GL}{\mathrm{GL}}
\newcommand{\SU}{\mathrm{SU}}
\newcommand{\GU}{\mathrm{GU}}
\newcommand{\Sp}{\mathrm{Sp}}
\newcommand{\SO}{\mathrm{SO}}
\newcommand{\Spin}{\mathrm{Spin}}
\newcommand{\abs}[1]{\lvert #1 \rvert}
\newcommand{\aff}{\mathrm{aff}}
\newcommand{\eps}{\varepsilon}
\newcommand{\St}{\mathsf{St}}
\newcommand{\supp}{\mathsf{supp}}
\newcommand{\ua}{\underline{a}}
\newcommand{\ub}{\underline{b}}
\renewcommand{\setminus}{\smallsetminus}
\newcommand{\tw}[1]{{}^#1\!}
\newcommand{\osymb}[1]{\left[ #1 \right]}
\newcommand{\symb}[1]{\llbracket #1 \rrbracket}
\newcommand{\negsymb}[1]{\langle\langle #1 \rangle\rangle}
\newcommand{\bset}[1]{\left[ #1 \right]}
\newcommand{\dd}{\mathrm{d}}
\newcommand{\der}{\mathrm{der}}
\newcommand{\odd}{\mathrm{od}}
\newcommand{\ev}{\mathrm{ev}}
\newcommand{\spec}{\mathrm{sp}}
\newcommand{\op}{\mathrm{op}}
\newcommand{\Hk}{\mathscr{H}}
\newcommand{\Sym}{\mathfrak{S}}
\newcommand\sd{
	\mathchoice{\mkern1.5mu}{\mkern1.5mu}{}{}%
	{:}%
	\mathchoice{\mkern1.5mu}{\mkern1.5mu}{}{}
}
\newcommand{\bB}{\mathbf{B}}
\newcommand{\bC}{\mathbf{C}}
\newcommand{\bG}{\mathbf{G}}
\newcommand{\bL}{\mathbf{L}}
\newcommand{\bN}{\mathbf{N}}
\newcommand{\bT}{\mathbf{T}}
\newcommand{\bZ}{\mathbf{Z}}
\newcommand{\FF}{\mathbb{F}}
\newcommand{\GG}{\mathbb{G}}
\newcommand{\NN}{\mathbb{N}}
\newcommand{\QQ}{\mathbb{Q}}
\newcommand{\CC}{\mathbb{C}}
\newcommand{\ZZ}{\mathbb{Z}}
\newcommand{\cB}{\mathcal{B}}
\newcommand{\cD}{\mathcal{D}}
\newcommand{\cE}{\mathcal{E}}
\newcommand{\cF}{\mathcal{F}}
\newcommand{\cH}{\mathcal{H}}
\newcommand{\cI}{\mathcal{I}}
\newcommand{\cO}{\mathcal{O}}
\newcommand{\cP}{\mathcal{P}}
\newcommand{\cQ}{\mathcal{Q}}
\newcommand{\cR}{\mathcal{R}}
\newcommand{\cS}{\mathcal{S}}
\newcommand{\cX}{\mathcal{X}}
\newcommand{\sA}{\mathsf{A}}
\newcommand{\sB}{\mathsf{B}}
\newcommand{\sC}{\mathsf{C}}
\newcommand{\sD}{\mathsf{D}}
\newcommand{\sS}{\mathsf{S}}
\newcommand{\rN}{\mathrm{N}}
\newcommand{\rZ}{\mathbf{Z}}
\DeclareFontFamily{U}{mathx}{\hyphenchar\font45}
\DeclareFontShape{U}{mathx}{m}{n}{
      <5> <6> <7> <8> <9> <10>
      <10.95> <12> <14.4> <17.28> <20.74> <24.88>
      mathx10
      }{}
\DeclareSymbolFont{mathx}{U}{mathx}{m}{n}
\DeclareMathAccent{\wc}{0}{mathx}{"71}
\numberwithin{equation}{section}
\begin{document}

\title[Character bounds and Thompson's conjecture]
{Character bounds for regular semisimple elements and asymptotic results on Thompson's conjecture}

\author{Michael Larsen}
\email{mjlarsen@indiana.edu}
\address{Department of Mathematics\\
    Indiana University\\
    Bloomington, IN 47405\\
    U.S.A.}

\author{Jay Taylor}
\email{jay.taylor@manchester.ac.uk}
\address{
Department of Mathematics\\
The University of Manchester\\
Oxford Road\\
Manchester, M13 9PL\\
U.K.}

\author{Pham Huu Tiep}
\email{pht19@math.rutgers.edu}
\address{Department of Mathematics\\
    Rutgers University\\
    Piscataway, NJ 08854\\
    U.S.A.}
\thanks{The first author was partially supported by the NSF 
grant DMS-2001349. The third author gratefully acknowledges the support of the NSF (grants
DMS-1840702 and DMS-2200850), the Joshua Barlaz Chair in Mathematics, and the Charles Simonyi Endowment at the 
Institute for Advanced Study (Princeton).}

\begin{abstract}
For every integer $k$ there exists a bound $B=B(k)$ such that if the characteristic polynomial of $g\in \SL_n(q)$ is the product of $\le k$ pairwise distinct monic irreducible polynomials over $\FF_q$,
then every element $x$ of $\SL_n(q)$ of support at least $B$ is the product of two conjugates of $g$.  We prove this and analogous results for the other classical groups over finite fields; in the orthogonal and symplectic cases,
the result is slightly weaker.  With finitely many exceptions $(p,q)$, in the special case that $n=p$ is prime, if $g$ has order $\frac{q^p-1}{q-1}$,
then every non-scalar element $x \in \SL_p(q)$ is the product of two conjugates of $g$. The proofs use the Frobenius formula together with upper bounds for values of unipotent and quadratic unipotent characters in finite classical groups.
\end{abstract}

\maketitle

\tableofcontents

\section{Introduction}
A conjecture of Thompson states that each finite simple group $G$ contains a conjugacy class $C \subseteq G$ such that $C^2 = G$. Inspired by this, we would like to study an asymptotic version of Thompson's conjecture when $G$ is one of the finite classical groups $\SL_n(q)$, $\SU_n(q)$, $\Sp_{2n}(q)$, $\SO_{2n+1}(q)$, and $\SO_{2n}^{\pm}(q)$, which are all closely related to simple groups.  
This asymptotic version treats target elements of sufficiently large support. We prove that regular semisimple conjugacy classes $C=g^G$ satisfy our asymptotic version of Thompson's conjecture whenever the characteristic polynomial of $g$ is close to being irreducible.

If $G = \mathrm{Cl}(V)$ is a finite classical group, with natural module $V = \FF_q^n$, we define the \emph{support} $\supp(x)$ of an element $x \in G$ to be the codimension of the largest eigenspace of $x$ on $V \otimes _{\FF_q}\overline{\FF_q}$. The following is one of our main results and generalizes \cite[Theorem 7.8]{LT}.

\begin{thm}\label{main1}
For all integers $k \in \ZZ_{\geqslant 1}$ there exists an explicit constant $B=B(k) > 0$ such that for all $n \in \ZZ_{\geqslant 1}$ and all prime powers $q$ the following statement holds. Suppose $G$ is one of $\SL_n(q)$, $\SU_n(q)$, $\Sp_{2n}(q)$, $\SO_{2n+1}(q)$, and $\SO_{2n}^{\pm}(q)$, and $g \in G$ is a regular semisimple element whose characteristic polynomial on the natural module is a product of $k$ pairwise distinct irreducible polynomials, of pairwise distinct degrees if $G$ is of type $\Sp$ or $\SO$. Then $g^G\cdot g^G$ contains every element $x \in [G,G]$ with $\supp(x) \geqslant B$.
\end{thm}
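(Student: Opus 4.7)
The plan is to apply the Frobenius formula
\[
\#\{(a,b) \in g^G \times g^G \sd ab = x\} = \frac{|g^G|^2}{|G|} \sum_{\chi \in \Irr(G)} \frac{\chi(g)^2 \overline{\chi(x)}}{\chi(1)}
\]
and to derive $x \in g^G \cdot g^G$ from the inequality
\[
\sum_{\chi \neq 1_G} \frac{|\chi(g)|^2 \, |\chi(x)|}{\chi(1)} < 1,
\]
which, together with the fact that $x \in [G,G]$ forces the trivial character to contribute exactly $1$, makes the whole Frobenius sum positive. The problem thereby reduces to establishing the displayed inequality whenever $\supp(x) \geqslant B(k)$.

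The argument then splits into two independent estimates. First, for $|\chi(g)|$: the hypothesis that $g$ is regular semisimple with characteristic polynomial a product of only $k$ pairwise distinct irreducibles forces the maximal torus $\bT = C_{\bG}(g)$ to be a product of at most $k$ cyclic factors, and dually the centralizer $C_{\bG^*}(s)$ of a semisimple element $s \in \bG^*$ corresponding to $g$ is itself a maximal torus of analogous shape. Via Lusztig's Jordan decomposition of characters combined with the Deligne-Lusztig character formula at a regular semisimple element, each $\chi \in \Irr(G)$ lying in the rational Lusztig series $\cE(G,s)$ satisfies $|\chi(g)| = \psi(1)$ for the unipotent character $\psi$ of $C_{\bG^*}(s)^F$ that corresponds to $\chi$, while $\chi(g) = 0$ for $\chi$ outside this series. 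The hypothesis of pairwise distinct degrees of the irreducible factors in the $\Sp$ and $\SO$ cases is precisely what forces $C_{\bG^*}(s)$ to remain of a type to which the sharp unipotent (respectively, quadratic unipotent) character bounds developed in earlier sections apply, yielding usable upper bounds on $|\chi(g)|^2/\chi(1)$.

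Second, for $|\chi(x)|$ I would invoke the support-dependent character bound for classical groups of the form $|\chi(x)| \leqslant \chi(1)^{1 - \alpha \, \supp(x)/n}$ for some absolute $\alpha > 0$, which is the higher-rank generalization of \cite[Theorem 7.8]{LT}. Combining the two estimates, $\sum_{\chi \neq 1}|\chi(g)|^2|\chi(x)|/\chi(1)$ is majorized by a Dirichlet-type sum $\sum_\psi \psi(1)^{-\eta}$ indexed by unipotent (or quadratic unipotent) characters of a bounded number of smaller classical groups, with exponent $\eta > 0$ depending on $\supp(x)$ and approaching a positive constant as $\supp(x) \to \infty$. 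Standard zeta-function estimates for $\Irr$ of finite classical groups make the tail of this sum tend to $0$, and taking $B(k)$ large enough to push the sum below $1$ finishes the proof.

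The main obstacle lies in the sharpness required on both sides for characters of small degree: the generic bound $|\chi(g)| \leqslant \chi(1)^{1/2}$ is far too weak when $\chi$ itself is small, and similarly the support bound must provide non-trivial savings already at the threshold $\supp(x) = B(k)$. Overcoming this requires the case-by-case estimates on small unipotent and quadratic unipotent characters (the Weil, Steinberg, minuscule and analogous families) collected earlier in the paper; once these two families of estimates are in hand, the zeta-function tail is easily absorbed and the Thompson-type product statement follows.
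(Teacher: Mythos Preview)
Your overall architecture---Frobenius formula, support bound $|\chi(x)|\leq\chi(1)^{1-\sigma\supp(x)/n}$ from \cite{LT}, and a zeta-type tail estimate---matches the paper. However, your treatment of $|\chi(g)|$ rests on a genuine misconception.

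You write that ``each $\chi$ in the rational Lusztig series $\cE(G,s)$ satisfies $|\chi(g)|=\psi(1)$ \ldots\ while $\chi(g)=0$ for $\chi$ outside this series.'' Neither assertion is correct. The element $s\in G^*$ indexing a Lusztig series has nothing to do with the particular regular semisimple element $g\in G$ at which we evaluate; there is no single series that captures all characters nonvanishing at $g$. Indeed, for a regular semisimple $g$ of type $wF$, the characteristic function $\pi_g^G=\sum_{\theta\in\Irr(T^{wF})}\theta(t^{-1})R_w^G(\theta)$ hits every Lusztig series $\cE(G,s)$ for which some $(T_w,\theta)$ is in the geometric class of $s$, and there are many such $\theta$. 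Likewise, the identity $|\chi(g)|=\psi(1)$ is false: the value of $\chi$ at a regular semisimple element is controlled by the uniform projection of $\chi$ and hence by values of class functions on the Weyl group (via the MN-rule developed in \S\S\ref{sec:combinatorics}--\ref{sec:hyperoctahedral-groups}), not by the degree of the corresponding unipotent character under Jordan decomposition.

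The paper's route to bounding $|\chi(g)|$ is substantially different and is where the real work lies. For characters of small level $j$ (in the sense of \cite{GLT} for type $\sA$, or via Proposition~\ref{j-red} for types $\sB\sC\sD$), one writes $\chi=\pm R^{\bG}_{\bL}(\varphi_1\boxtimes\varphi_2)$ with $\bL=\bL_1\times\bL_2$ a Levi, $\varphi_1$ a (quadratic) unipotent character of $\bL_1^F$, and $\varphi_2\in\Irr(\bL_2^F)$ arbitrary with $\bL_2\cong\GL_m$ and $m\leq 2j$. Using the identity $\chi(g)=\pm\Ind^G_L(\St_L\cdot(\varphi_1\boxtimes\varphi_2))(g)$, one must then (a) count the conjugates $y^{-1}gy$ landing in $L$---here the hypothesis of $k$ distinct irreducible factors yields the combinatorial bound $\leq 2^k|L|$---and (b) bound $|\varphi_1(g_1)|$ via the quadratic unipotent MN-rule (Corollary~\ref{bcd-bound}) and $|\varphi_2(g_2)|$ via Corollary~\ref{glu-bound2}. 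This produces $|\chi(g)|\leq A(k)\cdot j^{k-1}$, which combines with the support bound and the character-counting Theorem~\ref{Degrees} to control the sum level by level. Your proposal skips all of steps (a) and (b) and replaces them with an incorrect shortcut; without them the argument does not go through.
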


In fact, in the $\Sp$ and $\SO$ cases, we prove a slightly stronger result, see Theorem \ref{almost-thompson-BCD}. 
We also note that the assumption $x \in [G,G]$ is superfluous in the $\SL$, $\SU$, and $\Sp$ cases (since $G=[G,G]$ in these
cases, aside from known exceptions with $n \leq 3$), but necessary in the $\SO$ case (since in this case $[G,G]$ has index $2$ in $G$ and so $g^G \cdot g^G \subseteq [G,G]$). 

In a special family of particularly favorable cases, Theorem \ref{sl-coxeter} shows that all non-central elements of $G$ lie in $C^2$. 

\smallskip
If $\Irr(G)$ denotes the set of the complex irreducible characters of $G$, then the well-known formula of Frobenius states that $x \in G$ is contained in $g^G\cdot g^G$ if and only if
\begin{equation}\label{frob}
\sum_{\chi \in \Irr(G)} \frac{\chi(g)^2\chi(x^{-1})}{\chi(1)} \neq 0
\end{equation}
To show that this is the case we need sufficiently good upper bounds on $\abs{\chi(g)}$. To get these we realise our group as the fixed point subgroup $\bG^F$ of a Frobenius endomorphism $F : \bG \to \bG$ on a connected reductive algebraic group $\bG$ and use the Deligne--Lusztig theory \cite{DL}.

To illustrate our techniques suppose $\bG^F = \Sp_{2n}(q)$ or $\SO_{2n+1}(q)$. To each element $w$ of  the Weyl group 
$W \cong C_2\wr\Sym_n$ of $\bG$, Deligne and Lusztig have associated a virtual character $R_w$ of $\bG^F$ whose irreducible constituents are called {\it unipotent} characters. The subspace $\Class_0(\bG^F) \subseteq \Class(\bG^F)$ of all $\CC$-valued class functions spanned by $\{R_w \mid w \in W\}$ is the space of uniform unipotent class functions.

If $\chi$ is a unipotent character then the (uniform) projection of $\chi$ onto $\Class_0(\bG^F)$ is known to have the form
\begin{equation*}
\cR_{f_{\chi}} = \frac{1}{\abs{W}} \sum_{w \in W} f_{\chi}(w)R_w
\end{equation*}
for some class function $f_{\chi} \in \Class(W)$, which is not irreducible in general. If $g \in \bG^F$ is semisimple then its characteristic function is {\it uniform}, which means $\chi(g) = \cR_{f_{\chi}}(g)$ for all $\chi$.

Our first step towards understanding $\cR_{f_{\chi}}(g)$ is to show that $f_{\chi}$ satisfies a version of the {\it recursive Murnaghan--Nakayama rule} (or MN-rule), see \cref{prop:phi-MN-rule}. This is a consequence of a fundamental combinatorial result of Asai \cite{ASO}, \cite{ASp} that relates the classical MN-rule for the irreducible characters of $W$ and Lusztig's Fourier transform, whose proof we give in \cref{sec:asai-result}. If $g \in \bG^F$ is a regular semisimple element such that $\bC_{\bG}^{\circ}(g)$ is a torus of type $w$ then $\abs{\chi(g)} = \abs{\cR_{f_{\chi}}(g)} = \abs{f_{\chi}(w)}$ and we recover the MN-rule of L\"ubeck--Malle \cite[Thm.~3.3]{LuMa}.

By working with uniform projections we may apply these results to non-unipotent characters, and we do so to obtain bounds on $\abs{\chi(g)}$ whenever $\chi$ is a {\it quadratic unipotent character} and the {\it cycle type} of $g \in \bG^F$ is a product of $k \geqslant 1$ pairwise distinct cycles (see \S\ref{quadratic-unipotent} for precise definitions). In fact, following an argument of Larsen--Shalev \cite{LaSh2} we obtain a bound on $\abs{\chi(g)}$ that depends only on $k$, see Corollary \ref{bcd-bound} in the quadratic unipotent case. Bounds for arbitrary characters, involving $k$ and $n$, are given in \cref{glu-bound2} and \cref{bcd-bound-distinct}. Using the Mackey formula for tori we also give a bound in \cref{prop:mackey-bound} that works for any irreducible character of any finite reductive algebraic group.

Now treating all characters $\chi$ in \eqref{frob} involves a reduction to Levi subgroups using Deligne--Lusztig induction. The characters that contribute to the sum the most have a heavily restricted form. Our character bounds allow us to obtain sufficiently good bounds on the sum. Aside from 
these immediate applications, we believe our character bounds for regular semisimple elements will be useful in other situations as well. 

\section{Combinatorics}\label{sec:combinatorics}
For any set $X$ we will denote by $\Pow(X)$ the set of all subsets of $X$ of \emph{finite cardinality}. This is naturally an $\mathbb{F}_2$-vector space under symmetric difference, which we denote by $A\ominus B = (A\cup B) - (A \cap B)$ for any $A,B \in \Pow(X)$. Moreover, it is equipped with a nondegenerate symmetric bilinear form $\langle -,-\rangle : \Pow(X) \times \Pow(X) \to \ZZ/2\ZZ$ given by $\langle A,B\rangle = \abs{A \cap B} \pmod{2}$. If $e \in \ZZ$ then we let $\Pow_e(X) = \{A \in \Pow(X) \mid \abs{A} \equiv e \pmod{2}\}$.

We set $X^{(2)} = X \times \ZZ/2\ZZ$. If $X \subseteq Y$ then $X^{(2)} \subseteq Y^{(2)}$. Elements of $X^{(2)}$ will be identified with their representatives in $X \times \{0,1\}$. We denote by $\delta : X^{(2)} \to \ZZ/2\ZZ$ the projection onto the second factor.

Set $\NN = \{1,2,3,\dots\}$ and $\NN_0 = \NN \cup \{0\}$. If $A \in \Pow(\NN_0)$ then we define the \emph{rank} of $A$ to be $\rho(A) = \sum_{a \in A} a - \binom{\abs{A}}{2}$. For each $k \in \mathbb{N}_0$ we define a map $(-)^{\rightarrow k} : \Pow(\NN_0) \to \Pow(\NN_0)$ by setting $A^{\rightarrow k} = \{0,\dots,k-1\} \sqcup \{a+k \mid a \in A\}$. This gives an equivalence relation ${\sim}$ by setting $A\sim B$ if $A = B^{\rightarrow k}$ or $B = A^{\rightarrow k}$ for some $k \in \NN_0$.

We denote by $[A]$ the equivalence class containing $A$ and $\cB = \Pow(\NN_0)/{\sim}$ the set of all equivalence classes. These are called $\beta$-sets. The rank $\rho([A]) = \rho(A)$ of $[A] \in \cB$ is well defined. If $n \in \NN_0$ then $\cB_n \subseteq \cB$ denotes all $\beta$-sets of rank $n$.

\subsection{Arrays}\label{sec:arrays}
The elements of $\Pow(\ZZ^{(2)})$ will be called \emph{arrays}. They will be identified with their images under the natural bijection $\Pow(\ZZ^{(2)}) \to \Pow(\ZZ)\times \Pow(\ZZ)$ given by $X \mapsto (X^0,X^1)$, where $X^i = \{x \in \ZZ \mid (x,i) \in X\}$. We say $X^0$ is the \emph{top row} of $X$ and $X^1$ the \emph{bottom row} of $X$.

Following Lusztig \cite{Lus}, and modifying the notation of \cite{W}, we consider elements of $\tilde{\cP} = \Pow(\NN_0^{(2)}) \subseteq \Pow(\ZZ^{(2)})$. Recall the \emph{rank} of $X \in \tilde{\cP}$ is defined to be
\begin{equation}\label{eq:rank-definition}
\rk(X) = \sum_{x^0\in X^0}x^0 + \sum_{x^1\in X^1}x^1 - \left\lfloor \left(\frac{\abs{X}-1}{2}\right)^2 \right\rfloor = \rho(X^0) + \rho(X^1) + \left\lfloor \left(\frac{\Def{X}}{2}\right)^2 \right\rfloor
\end{equation}
where $\Def(\Lambda) = \abs{X^0} - \abs{X^1}$ is the \emph{defect} of $\Lambda$.

For $d \in \ZZ$ we let $\tilde{\cP}^d \subseteq \tilde{\cP}$ be the set of arrays of defect $d$. We set $\tilde{\cP}^{\odd} = \bigsqcup_{d \in \ZZ} \tilde{\cP}^{2d+1}$ and $\tilde{\cP}^{\ev} = \bigsqcup_{d \in \ZZ} \tilde{\cP}^{2d}$ so that $\tilde{\cP} = \tilde{\cP}^{\odd}\sqcup \tilde{\cP}^{\ev}$. For $n \in \NN_0$ we let $\tilde{\cP}_n \subseteq \tilde{\cP}$ be the set of arrays of rank $n$. We then set $\tilde{\cP}_n^d = \tilde{\cP}_n \cap \tilde{\cP}^d$, $\tilde{\cP}_n^{\odd} = \tilde{\cP}_n \cap \tilde{\cP}^{\odd}$, and $\tilde{\cP}_n^{\ev} = \tilde{\cP}_n \cap \tilde{\cP}^{\ev}$.

Each $X \in \tilde{\cP}$ gives rise to the following subsets of $\NN_0$: $X^{\cup} := X^0 \cup X^1$, $X^{\cap} = X^0 \cap X^1$, $X^{\ominus} = X^0 \ominus X^1$. We let $\Sim(X) = \{Y \in \tilde{\cP} \mid Y^{\cup} = X^{\cup}$ and $Y^{\cap} = X^{\cap}\}$ be the \emph{similarity class} of $X$. All elements of $\Sim(X)$ have the same rank but different defects.

\subsection{Fourier transform}
We associate to each $X \in \tilde{\cP}$ an associated \emph{special array}
\begin{equation}\label{eq:spec-id}
X_{\spec} := \{(y,0),(y,1) \mid y \in X^{\cap}\} \cup \{(x,\abs{X^{\ominus}}+\langle X^{\ominus}, \{0,\dots,x\} \rangle) \mid x \in X^{\ominus}\}.
\end{equation}
Note that if $Y \in \Sim(X)$ then $Y_{\spec} = X_{\spec}$. Moreover, if $x \in X^{\ominus}$ then we have
\begin{equation}\label{eq:in-spec-lower}
\langle X_{\spec}, \{x\}\rangle \equiv \abs{X^{\ominus}}+\langle X^{\ominus}, \{0,\dots,x\} \rangle.
\end{equation}
The defect $\Def(X_{\spec}) \in \{0,1\}$ of this array satisfies $\Def(X_{\spec}) \equiv \abs{X^{\ominus}} \pmod{2}$. Thus we have an integer
\begin{equation*}
s(X) = 2^{(\abs{X^{\ominus}}-\Def(X_{\spec}))/2} \in \NN.
\end{equation*}

We have a map ${}^{\sharp} : \tilde{\cP} \to \Pow(\NN_0)$ given by $Y^{\sharp} = Y^1\ominus Y_{\spec}^1 \subseteq Y^{\ominus}$. This restricts to a bijection ${}^{\sharp} : \Sim(X) \to \Pow(X^{\ominus})$ for any $X \in \tilde{\cP}$. With this we define a $\CC$-linear map $\tilde{\cR} : \CC[\tilde{\cP}] \to \CC[\tilde{\cP}]$ by setting
\begin{equation*}
\tilde{\cR}(X) = \frac{1}{s(X)}\sum_{Y \in \Sim(X)} (-1)^{\langle X^{\sharp},Y^{\sharp}\rangle}Y.
\end{equation*}
Here $\CC[\tilde{\cP}]$ denotes the free $\CC$-module with basis $\tilde{\cP}$ and $\langle-,-\rangle$ is the symmetric $\mathbb{F}_2$-bilinear form defined above. Up to scaling this is the Fourier transform of the abelian group $\Pow(X^{\ominus})$.

\begin{rem}
{\em We briefly make a few comments regarding the conventions and definitions in \cite{Lus}. If $M = X^1 \cap X^{\ominus}$ and $M_0 = X_{\spec}^1 \cap X^{\ominus}$ then we have $X^{\sharp} = M \ominus M_0$, which is denoted by $M^{\sharp}$ in \cite[\S4.5, \S4.6]{Lus}. We have another set $M_0' = X_{\spec}^0 \cap X^{\ominus} = X^{\ominus} - M_0$. If $\abs{X^{\ominus}}$ is even then $M_0$ and $M_0'$ are distinguished by the condition that $\sum_{x \in M_0} x < \sum_{x \in M_0'} x$.

This definite choice of $M_0$ over $M_0'$, using $X_{\spec}$, is used in \cite[\S4.18]{Lus}. Moreover, our definition of $X_{\spec}$ agrees with the convention in \cite[17.2]{LusCS}, that the smallest entry of $X^{\ominus}$ occurs in the lower row of $X_{\spec}$. This is different to the definition of a distinguished symbol given in \cite[4.4.3]{GeMa}.}
\end{rem}

\subsection{Hooks}
Each $(d,i) \in \ZZ^{(2)}$ determines an injective function $\mathcal{D}_{d,i} : \ZZ^{(2)} \to \ZZ^{(2)}$ given by $\mathcal{D}_{d,i}((x,j)) = (x-d,i+j)$. This induces a map $\mathcal{D}_{d,i} : \Pow(\ZZ^{(2)}) \to \Pow(\ZZ^{(2)})$ that is $\mathbb{F}_2$-linear. For any $X,H \in \Pow(\ZZ^{(2)})$ we define
\begin{equation*}
X\setminus_{d,i}H = X\ominus H \ominus \mathcal{D}_{d,i}(H) \in \Pow(\ZZ^{(2)}).
\end{equation*}
We write this as $X\setminus H$ when $(d,i)$ is clear from the context or by $X\setminus_{d,i} \lambda = X\setminus \lambda$ when $H = \{\lambda\}$ is a singleton. For brevity we write the map $\mathcal{D}_{0,1}$ simply as $(-)^{\op}$.

If $X \in \tilde{\cP}$ then the elements of the set
\begin{equation*}
\mathscr{H}_{d,i}(X) = \{\lambda \in X \mid \mathcal{D}_{d,i}(\lambda) \in \NN_0^{(2)} - X\}
\end{equation*}
are called the \emph{$(d,i)$-hooks} of $X$. Elsewhere in the literature $(d,0)$-hooks and $(d,1)$-hooks, with $d > 0$, are called $d$-hooks and $d$-cohooks respectively. Following \cite{W} we define the \emph{leg parity} of $\lambda = (x,j) \in \mathscr{H}_{d,i}(X)$ to be
\begin{equation*}
l_{d,i}(\lambda,X) = \langle \{0,\dots,x\},X^j\rangle + \langle \{0,\dots,x-d\},(X\setminus_{d,i}\lambda)^{i+j}\rangle.
\end{equation*}
If $d > 0$ and $i = 0$ then this has the same parity as the usual notion of the leg length of a hook. This is also easily seen to agree with the definitions in \cite[4.4.10]{GeMa}.

\begin{rem}
{\em The similarity relation can be rephrased in terms of $(0,1)$-hooks. Specifically, we have bijections $\mathscr{H}_{0,1}(X) \to X^{\ominus}$ and $\mathscr{H}_{0,1}(X) \to \Sim(X)$ given by $H \mapsto H^{\cup} = H^{\ominus}$ and $H \mapsto X\setminus_{0,1} H$ respectively. Moreover, if $Y = X\setminus_{0,1} H$ then we have $Y^j = X^j \ominus H^{\ominus}$ for any $j \in \{0,1\}$.}
\end{rem}

Recall that $\delta : X^{(2)} \to \ZZ/2\ZZ$ is the second projection map. For a pair $(d,i) \in \ZZ^{(2)}$, with $d \neq 0$, and $j \in \{0,1\}$ we define a $\CC$-linear map $\tilde{\cH}_{d,i}^j : \CC[\tilde{\cP}] \to \CC[\tilde{\cP}]$ by setting
\begin{equation*}
\tilde{\cH}_{d,i}^j(X) = \sum_{\lambda \in \mathscr{H}_{d,i}(X)} (-1)^{j\delta(\lambda) + l_{d,i}(\lambda,X)}X\setminus_{d,i}\lambda
\end{equation*}
for any $X \in \tilde{\cP}$. Note that if $Y = X\setminus_{d,i}\lambda$, with $\lambda \in \mathscr{H}_{d,i}(X)$, then $\Def(Y) = \Def(X)-2i(-1)^{\delta(\lambda)}$ so $\Def(Y) \equiv \Def(X) \equiv \abs{X^{\ominus}} \pmod{2}$.

\subsection{Symbols}
The map $(-)^{\rightarrow k} : \Pow(\NN_0) \to \Pow(\NN_0)$ defined above, for $k \in \NN_0$, extends to map $\tilde{\cP} \to \tilde{\cP}$ given by $(A,B)^{\rightarrow k} = (A^{\rightarrow k}, B^{\rightarrow k})$. As before this yields an equivalence relation on $\tilde{\cP}$. We denote by $\osymb{X}$ the equivalence class containing $X \in \tilde{\cP}$ and $\tilde{\cS}$ the set of equivalence classes. The equivalence class $\osymb{X}$ is called an (ordered) symbol.

Given $\lambda = (x,j) \in \ZZ^{(2)}$ and $k \in \ZZ$ let $\lambda+k = (x+k,j)$. If $X \in \tilde{\cP}$ and $k \in \NN_0$ then it is readily checked that:
\begin{itemize}
	\item $\Hk_{d,i}(X^{\rightarrow k}) = \{\lambda+k \mid \lambda \in \Hk_{d,i}(X)\}$,
	\item $X^{\rightarrow k}\setminus_{d,i} (\lambda+k) = (X\setminus_{d,i}\lambda)^{\rightarrow k}$
	\item $l_{d,i}(\lambda+k,X^{\rightarrow k}) = l_{d,i}(\lambda,X)$.
\end{itemize}
Thus the maps $\tilde{\cR}$ and $\tilde{\cH}_{d,i}^j$ preserve the kernel of the natural quotient map $\CC[\tilde{\cP}] \to \CC[\tilde{\cS}]$ and so factor through endomorphisms of $\CC[\tilde{\cS}]$ which we denote in the same way.

Recall that $X \in \tilde{\cP}$ is degenerate if $X^0 = X^1$. We let
\begin{equation*}
\cS = \{\symb{X} \mid X \in \tilde{\cP}\text{ and }X^0 \neq X^1\} \cup \{\symb{X}_{\pm} \mid X \in \tilde{\cP}\text{ and }X^0 = X^1\}
\end{equation*}
where $\symb{X} = \{\osymb{X},\osymb{X^{\op}}\}$. We take the rank and defect of $\symb{X} \in \cS$ to be $\rk(\symb{X}) = \rk(X)$ and $\Def(\symb{X}) = \abs{\Def(X)}$. We can then partition $\cS$ with respect to the rank and defect as in \cref{sec:arrays}.

\section{More combinatorics}\label{sec:asai-result}
\subsection{A combinatorial result of Asai}
We will now prove the following fundamental combinatorial observation of Asai that relates the maps $\tilde{\cR}$ and $\tilde{\cH}_{d,i}^j$. This was first stated by Asai in \cite[Lem.~2.8.3]{ASO} and \cite[Lem.~1.5.3]{ASp} where it is left as a ``direct calculation''. However, as pointed out by L\"ubeck--Malle \cite[\S3.4]{LuMa} a sign is missing in the statement in \cite{ASO}.

Waldspurger also states a version of this result \cite[\S2]{W}, where it is left as ``un calcul fastidieux mais \'el\'ementaire'', but the conventions of \cite{W} are different leading to a different statement. Specifically the analogue of our map $\tilde{\cR}$, denoted by $\mathcal{F}$ in \cite{W}, is not the same, as can be seen by evaluating it on the similarity class $\{(\emptyset, \{1,2\}),(\{1\}, \{2\}),(\{2\},\{1\}),(\{1,2\},\emptyset)\}$.

Aside from being an important ingredient in our work here, Asai's combinatorics form a core basis for the block theory of finite reductive groups and solutions of Lusztig's conjecture on almost characters for classical groups. In this second application the correctness of signs is crucial. In light of the importance of Asai's statements, we provide some details regarding the proof.

We note that some of the main ideas of the proof have recently appeared in \cite[Prop. 6]{Malle}, where a weaker statement, leading essentially to \cref{prop:phi-MN-rule}, is proved. Unfortunately there are several errors in the proofs of \cite{Malle} that are corrected by our arguments here.

\begin{thm}[Asai]\label{thm:asai-fourier-commutes}
For any $0 \neq d \in \ZZ$ we have the following equalities of linear endomorphisms of $\CC[\tilde{\cP}]$:
\begin{enumerate}[label={\normalfont(\roman*)}]
	\item $\tilde{\cH}_{d,0}^0\circ \tilde{\cR} = \tilde{\cR}\circ \tilde{\cH}_{d,0}^0$,
	\item $\tilde{\cH}_{d,0}^1\circ \tilde{\cR} = -\Theta\circ\tilde{\cR}\circ \tilde{\cH}_{d,1}^0$,
\end{enumerate}
where $\Theta : \CC[\tilde{\cP}] \to \CC[\tilde{\cP}]$ is the $\CC$-linear map defined by $\Theta(X) = (-1)^{\Def(X)}X$.
\end{thm}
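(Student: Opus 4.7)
The plan is to expand both sides of each identity as explicit double sums over pairs and then match coefficients of each basis element $W \in \tilde{\cP}$. Fix an input array $X$. Using the bijection $Y \leftrightarrow Y^{\sharp}$ between $\Sim(X)$ and $\Pow(X^{\ominus})$ recalled above, the map $\tilde{\cR}$ becomes (up to the normalization $s(X)$) the discrete Fourier transform on the $\FF_2$-vector space $\Pow(X^{\ominus})$. So the left-hand side of (i) is a double sum indexed by $Y \in \Sim(X)$ and $\lambda \in \Hk_{d,0}(Y)$, while the right-hand side is a double sum indexed by $\mu \in \Hk_{d,0}(X)$ and $Z \in \Sim(X\setminus_{d,0}\mu)$. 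An analogous decomposition applies for (ii), now with $\lambda \in \Hk_{d,0}(Y)$ on the left and $\mu \in \Hk_{d,1}(X)$ on the right.

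My first step is to classify the hooks $\lambda = (x,j) \in \Hk_{d,0}(Y)$ according to how $x$ and $x-d$ distribute across the three-part partition $\NN_0 = X^{\cap}\sqcup X^{\ominus}\sqcup(\NN_0\setminus X^{\cup})$. The hook condition $(x,j)\in Y$, $(x-d,j)\notin Y$ is determined by this placement together with the indicator values of $Y^{\sharp}$ at those of $x,x-d$ lying in $X^{\ominus}$, so there is only a short list of configurations to handle. In each configuration the similarity class containing $Y\setminus_{d,0}\lambda$ depends only on $\{x,x-d\}\cap X^{\ominus}$ and not on the rest of $Y^{\sharp}$. Consequently, for a fixed target $W$, the sum over those $Y^{\sharp}$ that agree with the constrained bits reduces to a character sum on the $\FF_2$-vector space $\Pow(X^{\ominus}\setminus\{x,x-d\})$, which collapses by Fourier orthogonality.

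What remains is a comparison of signs. The leg parity $l_{d,0}(\lambda, Y)$ differs from $l_{d,0}(\mu, X)$ through terms of the shape $\langle\{0,\dots,x\}, Y^j\ominus X^j\rangle$ that are linear in $Y^{\sharp}\ominus X^{\sharp}$; these combine cleanly with the Fourier character $(-1)^{\langle X^{\sharp}, Y^{\sharp}\rangle}$ to produce the Fourier character for the reduced arrays. The normalization ratio $s(X)/s(X\setminus_{d,0}\mu)$ must be checked to absorb the size of the collapsed $\FF_2$-subspace, and this is where one needs to know how $X_{\spec}$ and $\Def(X_{\spec})$ behave under hook removal. In part (i) the defect of $X$ is unchanged by a $(d,0)$-hook, and the resulting bookkeeping yields the stated identity with no extra global sign.

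Part (ii) proceeds along the same lines, but now a $(d,1)$-hook on $X$ shifts $\Def$ by $\pm 2$ while the $(d,0)$-hook on $Y$ does not; this asymmetry is what forces both the $\delta(\lambda)$ factor on the left and the $\Theta$ on the right. I expect the sign $-(-1)^{\Def(Z)}$ to emerge from the shift in $X_{\spec}$ caused by a parity change of $\abs{X^{\ominus}\cap\{x,x-d\}}$, propagated through formula~\eqref{eq:in-spec-lower}. The main obstacle throughout will be precisely this sign bookkeeping---simultaneously tracking how $X^{\sharp}$ is defined relative to $X_{\spec}$, how $l_{d,i}$ transforms under the passage $X \to Y \to Y\setminus_{d,0}\lambda$, and how all of this interacts with the change of normalization $s$---which is presumably what Asai dismissed as a ``direct calculation'' and where the sign discrepancy noted in the commentary above must be resolved.
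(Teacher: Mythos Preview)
Your proposal is correct and follows essentially the same route as the paper: expand both sides as double sums over (similarity-class element, hook) pairs, split into cases according to how $x$ and $x-d$ distribute among $X^{\cap}$, $X^{\ominus}$, and $\NN_0\setminus X^{\cup}$, and then match terms. The only cosmetic difference is that the paper phrases the matching via explicit bijections $\mathscr{X}(X,x)\to\mathscr{Y}(X,x)$ (together with fixed-point-free involutions to cancel surplus terms in Cases~2 and~3) rather than your Fourier-orthogonality collapse, and it isolates the sign comparison you flag as the main obstacle into a standalone lemma (\cref{lem:fourier-comparison}).
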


From now on $0 \neq d \in \mathbb{Z}$ and $i \in \{0,1\}$ are fixed. Given $x \in \mathbb{N}$ we denote by $\mathscr{X}(X,x)$ the set of pairs $(H,\lambda)$ with $H \subseteq \mathscr{H}_{0,1}(X)$, and $\lambda \in \mathscr{H}_{d,0}(X\setminus H)$ is such that $\lambda = (x,j)$ for some $j \in \{0,1\}$. Correspondingly we denote by $\mathscr{Y}(X,x)$ the set of pairs $(\mu,G)$ where $G \subseteq \mathscr{H}_{0,1}(X)$, and $\mu \in \mathscr{H}_{d,i}(X\setminus G)$ satisfies $\mu = (x,j)$ for some $j \in \{0,1\}$.

Given $X \in \tilde{\cP}$ we then have
\begin{equation}\label{eq:lhs-asai}
\tilde{\cH}_{d,0}^i(\tilde{\cR}(\Lambda)) = \sum_{x \in \mathbb{N}}\sum_{(H,\lambda) \in \mathscr{X}(X,x)} \frac{1}{s(X)}(-1)^{i\delta(\lambda)+\langle X^{\sharp},(X\setminus H)^{\sharp}\rangle + l_{d,0}(\lambda,X\setminus H)}X \setminus H \setminus \lambda
\end{equation}
and
\begin{equation}\label{eq:rhs-asai}
\tilde{\cR}(\tilde{\cH}_{d,i}^0(\Lambda)) = \sum_{x \in \mathbb{N}} \sum_{(\mu,G) \in \mathscr{Y}(\Lambda,x)} \frac{1}{s(X\setminus \mu)}(-1)^{\langle (X\setminus \mu)^{\sharp},(X\setminus \mu \setminus G)^{\sharp}\rangle + l_{d,i}(\mu,X)}X \setminus \mu \setminus G.
\end{equation}
Before proving (i) of \cref{thm:asai-fourier-commutes} we start with a lemma.

\begin{lem}\label{lem:fourier-comparison}
Assume $i=0$ and $(H,\lambda) \in \mathscr{X}(X,x)$ and $(\mu,G) \in \mathscr{Y}(X,x)$ are two terms satisfying one of the following:
\begin{enumerate}[label={\normalfont(\roman*)}]
	\item $(\mu,G^{\ominus}) = (\lambda,H^{\ominus})$ and $\langle \{x,x-d\}, H^{\ominus} \rangle = 0$,
	\item $(\mu,G^{\ominus}) = (\lambda^{\op},H^{\ominus}\ominus\{x,x-d\})$ and $\langle \{x,x-d\}, H^{\ominus} \rangle = \langle \{x,x-d\}, X^{\ominus} \rangle = 1$.
\end{enumerate}
Then we have
\begin{equation*}
(-1)^{\langle X^{\sharp},(X\setminus H)^{\sharp}\rangle + l_{d,0}(\lambda,X\setminus H)} = (-1)^{\langle (X\setminus \mu)^{\sharp},(X\setminus \mu \setminus G)^{\sharp}\rangle + l_{d,0}(\mu,X)}.
\end{equation*}
\end{lem}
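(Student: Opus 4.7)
The plan is to open up each sign exponent and verify the identity by splitting it into contributions from the Fourier pairing and the leg parity, and then using the hypotheses of each case together with a careful analysis of how the special symbol $X_{\spec}$ changes when one removes a $(d,0)$-hook.

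First, I would simplify the two Fourier pairings. Since $Y = X\setminus_{0,1}H$ lies in $\Sim(X)$, we have $Y_{\spec} = X_{\spec}$ and $Y^j = X^j\ominus H^{\ominus}$; together these give $(X\setminus H)^{\sharp} = X^{\sharp}\ominus H^{\ominus}$. Applying the same observation to $X\setminus\mu\setminus G \in \Sim(X\setminus\mu)$ yields $(X\setminus\mu\setminus G)^{\sharp} = (X\setminus\mu)^{\sharp}\ominus G^{\ominus}$. Bilinearity of $\langle-,-\rangle$, together with $\langle A,A\rangle \equiv \abs{A}\pmod 2$, then rewrites the Fourier contribution on the LHS as $\abs{X^{\sharp}} + \langle X^{\sharp},H^{\ominus}\rangle$ and on the RHS as $\abs{(X\setminus\mu)^{\sharp}} + \langle (X\setminus\mu)^{\sharp},G^{\ominus}\rangle$, both modulo $2$.

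Second, I would expand the leg parities directly from their definition. Writing $\mu=(x,j_{\mu})$ and $\lambda=(x,j_{\lambda})$, the identities $(X\setminus H)^j = X^j\ominus H^{\ominus}$ and $((X\setminus H)\setminus_{d,0}\lambda)^{j_{\lambda}} = (X\setminus H)^{j_{\lambda}}\ominus\{x,x-d\}$ let me expand $l_{d,0}(\lambda,X\setminus H)$ and $l_{d,0}(\mu,X)$ as linear combinations of basic pairings. In case (i), where $j_{\mu}=j_{\lambda}$, the difference collapses to $\langle\{0,\dots,x\}\ominus\{0,\dots,x-d\},H^{\ominus}\rangle$. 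In case (ii), where $j_{\mu}=1-j_{\lambda}$, the analogous expansion produces additional terms involving $X^{j_{\lambda}}$ and $X^{1-j_{\lambda}}$ that I will later combine with the Fourier contributions after the third step.

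Third, and this is the main step, I would compute $(X\setminus\mu)^{\sharp}$ in terms of $X^{\sharp}$. Since $(X\setminus\mu)^{\ominus} = X^{\ominus}\ominus\{x,x-d\}$, the special array $(X\setminus\mu)_{\spec}$ may genuinely differ from $X_{\spec}$, and by \eqref{eq:spec-id}--\eqref{eq:in-spec-lower} the change is a controlled function of which of $x$ and $x-d$ lie in $X^{\ominus}$. In case (i), the hypothesis $\langle\{x,x-d\},H^{\ominus}\rangle=0$ forces the pairing $\langle X^{\sharp}\ominus(X\setminus\mu)^{\sharp},H^{\ominus}\rangle$ to be independent of $H^{\ominus}$, reducing the required identity to a scalar check on $\abs{X^{\sharp}}-\abs{(X\setminus\mu)^{\sharp}}$ that can be verified on each of the four intersection patterns of $\{x,x-d\}$ with $X^{\ominus}$. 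In case (ii), the extra summand $\{x,x-d\}$ in $G^{\ominus}\ominus H^{\ominus}$, together with the condition $\langle\{x,x-d\},X^{\ominus}\rangle=1$, supplies the matching correction that absorbs the $\lambda^{\op}$ change of row.

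The main obstacle is the third step: tracking precisely how $X_{\spec}$, and hence $X^{\sharp}$, behaves when $X^{\ominus}$ is modified by $\{x,x-d\}$ is the delicate combinatorial core of the argument. This is exactly where the missing sign in \cite{ASO} pointed out by \cite{LuMa} lives, and where the errors in \cite{Malle} occur; a clean verification will require a meticulous case split on the intersection pattern of $\{x,x-d\}$ with $X^{\ominus}$, using the sign conventions fixed in \eqref{eq:spec-id}--\eqref{eq:in-spec-lower} without ambiguity.
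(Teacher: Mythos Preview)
Your plan is essentially the paper's own approach: expand the Fourier pairings via $Y^{\sharp}=X^{\sharp}\ominus H^{\ominus}$ and $V^{\sharp}=U^{\sharp}\ominus G^{\ominus}$, expand the leg parities, and then control the change $X_{\spec}\rightsquigarrow U_{\spec}$ where $U=X\setminus_{d,0}\mu$. Your first two steps agree verbatim with what the paper does.

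The one substantive difference is in your third step. You propose a four-way case split on the intersection pattern of $\{x,x-d\}$ with $X^{\ominus}$, checking the scalar identity in each case. The paper instead extracts a single uniform formula: since $U^{\ominus}=X^{\ominus}\ominus\{x,x-d\}$, equation \eqref{eq:in-spec-lower} gives, for every $z\in U^{\ominus}\cap X^{\ominus}$,
\[
\langle U_{\spec}^1\ominus X_{\spec}^1,\{z\}\rangle = \langle U^{\ominus}\ominus X^{\ominus},\{0,\dots,z\}\rangle = \langle \{0,\dots,x\}\ominus\{0,\dots,x-d\},\{z\}\rangle,
\]
which immediately identifies the $X_{\spec}$-contribution with the leg-parity term $\langle\{0,\dots,x\}\ominus\{0,\dots,x-d\},H^{\ominus}\rangle$ whenever $H^{\ominus}\subseteq U^{\ominus}\cap X^{\ominus}$. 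This handles case (i) in one line (since there $H^{\ominus}=G^{\ominus}$ and $X^1\ominus U^1\in\{\emptyset,\{x,x-d\}\}$ pairs trivially with $H^{\ominus}$), and reduces case (ii) to checking the single element of $H^{\ominus}\cap\{x,x-d\}$ that falls outside $U^{\ominus}\cap X^{\ominus}$. Your case-by-case verification would certainly succeed, but the uniform formula is what makes the argument short and is exactly the device that repairs the sign issues you cite from \cite{ASO} and \cite{Malle}; I would recommend proving and using it rather than enumerating intersection patterns.
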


\begin{proof}
Let $Y = X\setminus_{0,1} H$, $U = X\setminus_{d,0} \mu$, and $V = U \setminus_{0,1} G$. We have 
$$\langle X^{\sharp},Y^{\sharp}\rangle + \langle U^{\sharp},V^{\sharp}\rangle = \abs{X^1}+\abs{U^1}+\abs{X_{\spec}^1}+\abs{U_{\spec}^1}+\langle X^{\sharp},H^{\ominus}\rangle + \langle U^{\sharp},G^{\ominus} \rangle$$ 
because $V^{\sharp} = U^{\sharp} \ominus G^{\ominus}$ and $Y^{\sharp} = X^{\sharp} \ominus H^{\ominus}$. The sum of the first four terms is $0$ because $\abs{X^1} = \abs{U^1}$, as $\mu$ is a $(d,0)$-hook, and a straightforward check shows that $\abs{U_{\spec}^1} = \abs{X_{\spec}^1}$. We thus have
\begin{equation*}
\langle X^{\sharp},Y^{\sharp}\rangle + \langle U^{\sharp},V^{\sharp}\rangle = \langle U^1, H^{\ominus}\ominus G^{\ominus}\rangle + \langle U_{\spec}^1, H^{\ominus}\ominus G^{\ominus}\rangle + \langle X^1\ominus U^1, H^{\ominus}\rangle + \langle X_{\spec}^1\ominus U_{\spec}^1, H^{\ominus}\rangle.
\end{equation*}
As $U^{\delta(\mu)} = X^{\delta(\mu)} \ominus \{x,x-d\}$ and $Y^{\delta(\lambda)} = X^{\delta(\lambda)} \ominus H^{\ominus}$, it is straightforward to see that
\begin{equation*}
l_{d,0}(\lambda,Y) + l_{d,0}(\mu,X) = \langle \{0,\dots,x\}\ominus\{0,\dots,x-d\},X^{\delta(\lambda)}\ominus X^{\delta(\mu)}\ominus H^{\ominus} \rangle.
\end{equation*}
We have to show the sum of these two expressions is $0$.

For any $z \in U^{\ominus} \cap X^{\ominus}$ it follows from \cref{eq:in-spec-lower} that
\begin{equation*}
\langle U_{\spec}^1\ominus X_{\spec}^1,\{z\}\rangle = \langle U^{\ominus}\ominus X^{\ominus},\{0,\dots,z\}\rangle = \langle \{0,\dots,x\}\ominus \{0,\dots,x-d\},\{z\}\rangle
\end{equation*}
because $U^{\ominus} \ominus X^{\ominus} = \{x-d,x\}$ so $\abs{X^{\ominus}} \equiv \abs{U^{\ominus}} \pmod{2}$. Thus for any subset $Z \subseteq U^{\ominus}\cap X^{\ominus}$ we get
\begin{equation*}
\langle U_{\spec}^1\ominus X_{\spec}^1,Z\rangle = \langle \{0,\dots,x\}\ominus \{0,\dots,x-d\},Z\rangle.
\end{equation*}

Note that $X^1\ominus U^1$ is either $\emptyset$ or $\{x,x-d\}$ depending on whether $\delta(\mu)=0$ or $1$. Hence the statement clearly follows if (i) holds. So assume (ii) holds. As $H^{\ominus} \subseteq X^{\ominus}$ we must have $H^{\ominus} \cap \{x,x-d\} = X^{\ominus} \cap \{x,x-d\}$. We will assume this is $\{x\}$ as the case where it is $\{x-d\}$ is identical. This means $x-d \not\in X^{\cup}$ because $x-d \not\in X^{\cap}$.

Clearly $x \not\in U^{\cup}$ and $x-d \in U^{\ominus}$ so
\begin{equation*}
\langle U^1,H^{\ominus}\ominus G^{\ominus}\rangle + \langle X^1\ominus U^1,H^{\ominus}\rangle = \langle U^1,\{x-d\}\rangle + \langle X^1,\{x\}\rangle = 0.
\end{equation*}
Now $\langle U_{\spec}^1,H^{\ominus}\ominus G^{\ominus}\rangle + \langle X_{\spec}^1\ominus U_{\spec}^1,\{x\}\rangle = \langle U_{\spec}^1,\{x-d\}\rangle + \langle X_{\spec}^1,\{x\}\rangle$ is the same as
\begin{equation*}
\langle \{x,x-d\},\{0,\dots,x-d\}\rangle + \langle X^{\ominus},\{0,\dots,x\}\ominus \{0,\dots,x-d\} \rangle,
\end{equation*}
and the first term is equivalent to $\langle \{x\},\{0,\dots,x\}\ominus\{0,\dots,x-d\}\rangle $. Adding this to
\begin{equation*}
\langle X_{\spec}^1\ominus U_{\spec}^1,H^{\ominus}\ominus\{x\}\rangle
\end{equation*}
gives the statement because $H^{\ominus} \ominus \{x\} \subseteq U^{\ominus}\cap X^{\ominus}$.
%
\end{proof}

\begin{proof}[Proof of \cref{thm:asai-fourier-commutes}(i)]
We can assume $x \in X^{\cup}$ since otherwise $\mathscr{X}(X,x)$ and $\mathscr{Y}(X,x)$ are empty, and there is nothing to show. Similarly, this is the case if $x-d \in X^{\cap}$. The proof divides into several cases distinguished by the distribution of $x-d$ and $x$ amongst the rows of $X$.

\underline{Case 1}: $x \in X^{\ominus}$ and $x-d \not\in X^{\cup}$. We have a bijection $\mathscr{X}(X,x) \to \mathscr{Y}(X,x)$ given by
\begin{equation*}
(H,\lambda) \mapsto (\mu,G) := \begin{cases}
(\lambda,H) &\text{if }H^{\ominus} \cap \{x,x-d\}=\emptyset\\
(\lambda^{\op},H\ominus \{\lambda^{\op},\mathcal{D}_{d,0}(\lambda^{\op})\}) &\text{if }H^{\ominus} \cap \{x,x-d\}=\{x\}
\end{cases}
\end{equation*}
such that $X\setminus H \setminus \lambda = X \setminus \mu \setminus G$. Now $(X\setminus \mu)^{\ominus} = (X^{\ominus}-\{x\}) \sqcup\{x-d\}$, so we have $s(X\setminus \mu) = s(X)$. Hence, by \cref{lem:fourier-comparison} the coefficients of $X\setminus H \setminus \lambda$ and $X \setminus \mu \setminus G$ in \cref{eq:lhs-asai,eq:rhs-asai} are the same.

\underline{Case 2}: $x \in X^{\cap}$ and $x-d \not\in X^{\cup}$. We have a fixed-point free involution ${}' : \mathscr{Y}(\Lambda,x) \to \mathscr{Y}(\Lambda,x)$ given by $(\mu,G)' = (\mu^{\op},G')$, where $G'$ is defined by
\begin{equation*}
G\ominus G' = \begin{cases}
\{\mu,\mathcal{D}_{d,0}(\mu^{\op})\} &\text{if }G^{\ominus} \cap \{x,x-d\} = \emptyset\\
\{\mu^{\op},\mathcal{D}_{d,0}(\mu^{\op})\} &\text{if }G^{\ominus} \cap \{x,x-d\} = \{x\}\\
\{\mu,\mathcal{D}_{d,0}(\mu)\} &\text{if }G^{\ominus} \cap \{x,x-d\} = \{x-d\}\\
\{\mu^{\op},\mathcal{D}_{d,0}(\mu)\} &\text{if }G^{\ominus} \cap \{x,x-d\} = \{x,x-d\}.
\end{cases}
\end{equation*}
This bijection satisfies
$$X\setminus \mu \setminus G = X\setminus \mu^{\op} \setminus G' \mbox{ and }\langle \{x,x-d\}, G^{\ominus}\rangle = \langle \{x,x-d\}, G'^{\ominus}\rangle$$ 
because $G^{\ominus}\ominus G'^{\ominus} = \{x,x-d\}$. Given $e \in \{0,1\}$ we let $\mathscr{Y}^e(\Lambda,x)$ be the set of pairs $\{(\mu,G),(\mu,G)'\}$ with $\langle \{x,x-d\}, G^{\ominus}\rangle = \bar e$, where bar indicates reduction (mod $2$).

We claim the coefficients of $V = X\setminus \mu \setminus G$ and $V' = X\setminus \mu^{\op} \setminus G'$ in \cref{eq:rhs-asai} differ by $(-1)^{\langle \{x,x-d\}, G^{\ominus}\rangle}$. If $U = X\setminus \mu$ and $U' = X \setminus \mu^{\op}$ then as in \cref{lem:fourier-comparison} we get that 
$$\langle U^{\sharp},V^{\sharp}\rangle + \langle U'^{\sharp},V'^{\sharp}\rangle = \langle U^{\sharp},G^{\ominus}\rangle + \langle U'^{\sharp},G'^{\ominus}\rangle.$$ 
As $G^{\ominus}\ominus G'^{\ominus} = \{x,x-d\}$, this term can be written as
\begin{align*}
&\quad\langle U^1,\{x,x-d\}\rangle+\langle U_{\spec}^1,\{x,x-d\}\rangle + \langle U^1\ominus U'^1, G^{\ominus}\rangle + \langle U_{\spec}^1\ominus U_{\spec}'^1, G^{\ominus}\rangle\\
&= 1 + \langle U^{\ominus}, \{0,\dots,x\}\ominus \{0,\dots,x-d\}\rangle + \langle \{x,x-d\}, G^{\ominus}\rangle+0\\
&= \langle X^{\ominus}, \{0,\dots,x\}\ominus \{0,\dots,x-d\}\rangle + \langle \{x,x-d\}, G^{\ominus}\rangle.
\end{align*}
Finally, as in \cref{lem:fourier-comparison}, 
$$l_{d,0}(\mu,X)+l_{d,0}(\mu^{\op},X) = \langle \{0,\dots,x\}\ominus \{0,\dots,x-d\},X^{\ominus}\rangle.$$

Therefore, given a pair $\{(\mu,G),(\mu^{\op},G')\} \in \mathscr{Y}^1(X,x)$, the corresponding terms $X\setminus \mu \setminus G$ and $X\setminus \mu^{\op} \setminus G'$ in \cref{eq:rhs-asai} cancel. We have a bijection $\mathscr{X}(X,x) \to \mathscr{Y}^0(\Lambda,x)$ given by $(H,\lambda)\mapsto \{(\lambda,H),(\lambda,H)'\}$ such that $X\setminus H \setminus \lambda = X\setminus \lambda \setminus H$. Now $s(X\setminus \lambda) = 2s(X)$ because $(X\setminus \lambda)^{\ominus} = X^{\ominus}\sqcup\{x,x-d\}$, but the coefficients of $X \setminus \lambda \setminus H$ and $X \setminus \lambda^{\op} \setminus H'$ combine to yield the coefficient of $X \setminus H \setminus \lambda$ by \cref{lem:fourier-comparison}.

\underline{Case 3}: $x \in X^{\ominus}$ and $x-d \in X^{\ominus}$. Identically to Case 2 we have a fixed-point free involution ${}' : \mathscr{X}(X,x) \to \mathscr{X}(X,x)$ denoted by $(H,\lambda)' = (H',\lambda^{\op})$. This bijection satisfies
$$X\setminus H' \setminus\lambda^{\op} = X\setminus H \setminus\lambda \mbox{ and }\langle \{x,x-d\}, H^{\ominus}\rangle = \langle \{x,x-d\}, H'^{\ominus}\rangle.$$ 
One can check that when the terms exist, the coefficients of $X\setminus H \setminus \lambda$ and $X\setminus H'\setminus \lambda^{\op}$ in \cref{eq:lhs-asai} differ by $(-1)^{1+\langle \{x,x-d\}, X^1\rangle}$. Given $e \in \{0,1\}$ we let $\mathscr{X}^e(X,x)$ be the set of pairs $\{(H,\lambda),(H,\lambda)'\}$ with $\langle \{x,x-d\}, H^{\ominus}\rangle = \bar e$.
%

If $x-d$ and $x$ occur in the same row of $X$ then we must have $\mathscr{X}^0(X,x) = \{\emptyset\}$ and $\mathscr{Y}(X,x) = \{\emptyset\}$. Moreover, each pair in $\mathscr{X}^1(X,x)$ gives rise to terms in \cref{eq:lhs-asai} that cancel. If $x-d$ and $x$ occur in opposite rows of $X$ then we must have $\mathscr{X}^1(X,x) = \{\emptyset\}$. Moreover, we have a bijection $\mathscr{Y}(X,x) \to \mathscr{X}^0(\Lambda,x)$ given by $(\mu,G) \mapsto \{(G,\mu),(G,\mu)'\}$ such that $X\setminus \mu \setminus G = X \setminus G \setminus \mu = X \setminus G' \setminus \mu^{\op}$. In this case $(X\setminus \mu)^{\ominus} = X^{\ominus} - \{x,x-d\}$ so $2s(X\setminus \mu) = s(X)$ but $X \setminus H \setminus\lambda$ and $X \setminus H' \setminus\lambda^{\op}$ have the same coefficient in \cref{eq:lhs-asai}. Thus, the coefficients of $X\setminus \mu \setminus G = X \setminus G \setminus \mu$ agree by \cref{lem:fourier-comparison}

\underline{Case 4}: $x \in X^{\cap}$ and $x-d \in X^{\ominus}$. As in Case 1, we have a bijection $\mathscr{X}(X,x) \to \mathscr{Y}(X,x)$ given by
\begin{equation*}
(H,\lambda) \mapsto (\mu,G) := \begin{cases}
(\lambda,H) &\text{if }H^{\ominus} \cap \{x,x-d\}=\emptyset\\
(\lambda^{\op},H\ominus \{\lambda,\mathcal{D}_{d,0}(\lambda)\}) &\text{if }H^{\ominus} \cap \{x,x-d\}=\{x-d\}
\end{cases}
\end{equation*}
such that $X\setminus H\setminus\lambda = X\setminus \mu\setminus G$. Clearly $(X\setminus \mu)^{\ominus} = (X^{\ominus}-\{x-d\})\sqcup\{x\}$ so $s(X\setminus \mu) = s(X)$. Again, by \cref{lem:fourier-comparison} the coefficients of $X\setminus H \setminus \lambda$ and $X \setminus \mu \setminus G$ in \cref{eq:lhs-asai,eq:rhs-asai} are the same.
%
%
\end{proof}

We now consider the proof of (ii) of \cref{thm:asai-fourier-commutes}. The argument is exactly the same as (i), proceeding through the same cases. The bijection in Case (ii) is defined identically simply replacing $\cD_{d,0}$ with $\cD_{d,1}$. Instead of providing a direct analogue of \cref{lem:fourier-comparison} we instead check directly in each case that the signs of the corresponding coefficients agree. As an example, we treat the analogue of Case 1, leaving the remaining cases to the reader.

\begin{proof}[Proof of \cref{thm:asai-fourier-commutes}(ii)]
Assume $x \in X^{\ominus}$ and $x-d \not\in X^{\cup}$. We have a bijection $\mathscr{X}(X,x) \to \mathscr{Y}(X,x)$ given by
\begin{equation*}
(H,\lambda) \mapsto (\mu,G) := \begin{cases}
(\lambda,H\ominus\{\mathcal{D}_{d,1}(\lambda)\}) &\text{if }H^{\ominus} \cap \{x,x-d\}=\emptyset\\
(\lambda^{\op},H\ominus \{\lambda^{\op}\}) &\text{if }H^{\ominus} \cap \{x,x-d\}=\{x\}
\end{cases}
\end{equation*}
such that $X\setminus H\setminus \lambda = X\setminus \mu\setminus G$. As in the proof of (i) of \cref{thm:asai-fourier-commutes}, we need only check that the sign of the coefficient of $Y \setminus\lambda$ in \cref{eq:lhs-asai} and $(-1)^{1+\Def(V)}$ times the sign of the coefficient of $V = U\setminus G$ in \cref{eq:rhs-asai} agree, where $Y = X\setminus H$ and $U = X\setminus \mu$. We check this directly.

As $\mu$ is a $(d,1)$-hook, we have $\abs{U^1} = \abs{X^1}\pm 1$, so arguing as in the proof of \cref{lem:fourier-comparison}, we see that $1+\Def(V)+\langle X^{\sharp},Y^{\sharp}\rangle + \langle U^{\sharp},V^{\sharp}\rangle$ is
\begin{equation*}
\Def(V)+\langle U^1, H^{\ominus}\ominus G^{\ominus}\rangle + \langle U_{\spec}^1, H^{\ominus}\ominus G^{\ominus}\rangle + \langle X^1\ominus U^1, H^{\ominus}\rangle + \langle X_{\spec}^1\ominus U_{\spec}^1, H^{\ominus}\rangle.
\end{equation*}
Moreover, this time $l_{d,0}(\lambda,Y) + l_{d,1}(\mu,X)$ is
\begin{equation*}
\langle \{0,\dots,x\}\ominus\{0,\dots,x-d\},X^{\delta(\lambda)}\ominus X^{\delta(\mu)}\ominus H^{\ominus}\rangle + \langle \{0,\dots,x-d\},X^{\ominus}\ominus\{x\}\rangle.
\end{equation*}
We consider the two cases of the bijection above separately.

Suppose first that $H^{\ominus} \cap \{x,x-d\} = \emptyset$. Clearly $\langle X^1\ominus U^1, H^{\ominus}\rangle = 0$, and as $H^{\ominus} \subseteq U^{\ominus}$, we have $\langle U_{\spec}^1 \ominus X_{\spec}^1, H^{\ominus}\rangle = \langle \{0,\dots,x-d\}\ominus\{0,\dots,x\}, H^{\ominus}\rangle$ as in \cref{lem:fourier-comparison}. Now 
$$\langle U_{\spec}^1,H^{\ominus}\ominus G^{\ominus}\rangle = \langle U_{\spec}^1,\{x-d\}\rangle = \abs{U^{\ominus}} + \langle U^{\ominus},\{0,\dots,x-d\}\rangle = \Def(V) + \langle X^{\ominus}\ominus\{x,x-d\},\{0,\dots,x-d\}\rangle$$ 
because $x \not\in U^{\cup}$. Summing the above terms it suffices to show that 
$$(-1)^{\delta(\lambda)} = (-1)^{1+\langle U^1, H^{\ominus} \ominus G^{\ominus}\rangle} = (-1)^{1+\langle U^1, \{x-d\}\rangle},$$ 
and this is straightforward.

Finally we assume that $H^{\ominus} \cap \{x,x-d\} = \{x\}$. This time $H^{\ominus} \ominus\{x\} \subseteq X^{\ominus} \cap U^{\ominus}$ so 
$$\langle U_{\spec}^1 \ominus X_{\spec}^1, H^{\ominus}\ominus\{x\}\rangle = \langle \{0,\dots,x-d\}\ominus\{0,\dots,x\}, H^{\ominus}\ominus\{x\}\rangle.$$ 
Moreover, $\langle X_{\spec}^1\ominus U_{\spec}^1,\{x\}\rangle = \langle X_{\spec}^1,\{x\}\rangle = \overline{\Def(V)} + \langle X^{\ominus},\{0,\dots,x\}\rangle$ and $\langle U^1,H^{\ominus}\ominus G^{\ominus}\rangle = \langle U_{\spec}^1,H^{\ominus}\ominus G^{\ominus}\rangle = 0$ because $H^{\ominus}\ominus G^{\ominus} = \{x\}$ and $x \not\in U^{\cup}$. As before it suffices to show that 
$$(-1)^{\delta(\lambda)} = (-1)^{1+\langle X^1\ominus U^1, H^{\ominus}\rangle} = (-1)^{1+\langle X^1\ominus U^1, \{x\}\rangle},$$ 
and again this is straightforward.
\end{proof}

\subsection{More symbols}
We have a linear map $\symb{-} : \CC[\tilde{\cP}] \to \CC[\cS]$ defined such that if $X \in \tilde{\cP}$ is nondegenerate then $\symb{X} \in \cS$ and if $X$ is degenerate then $\symb{X} = \symb{X}_+ + \symb{X}_-$. This clearly factors through a map $\CC[\tilde{\cS}] \to \CC[\cS]$. The image of $\symb{-}$ has a natural complement in $\CC[\cS]$, namely $\langle \symb{X}_+ - \symb{X}_- \mid X \in \tilde{\cP}$ is degenerate$\rangle_{\CC}$.

We wish to understand to what extent the endomorphisms $\tilde{\cR}$ and $\tilde{\cH}_{d,i}^j$ of $\CC[\tilde{\cS}]$ factor through $\symb{-}$. As the term arises frequently we let
\begin{equation*}
\dd(X) = (\Def(X) - \Def(X_{\spec}))/2 \in \NN_0
\end{equation*}
for any $X \in \tilde{\cP}$. We then define a $\CC$-linear map $\varepsilon : \CC[\tilde{\cS}] \to \CC[\tilde{\cS}]$ by setting $\varepsilon(\osymb{X}) = (-1)^{\dd(X)}\osymb{X}$. The following easy observations are stated in \cite[\S2]{W}.

\begin{lem}\label{lem:fourier-op}
For any $(d,i) \in \ZZ^{(2)}$ and $j \in \{0,1\}$ we have the following equalities of linear endomorphisms of $\CC[\tilde{\cS}]$:
\begin{enumerate}
	\item[\rm(i)] $\tilde{\cR}\circ (-)^{\op} = \varepsilon\circ\tilde{\cR}$,
	\item[\rm(ii)] $(-)^{\op}\circ\tilde{\cR} = \tilde{\cR}\circ \varepsilon$,
	\item[\rm(iii)] $(-)^{\op}\circ\tilde{\cH}_{d,i}^j = (-1)^j\tilde{\cH}_{d,i}^j \circ (-)^{\op}$,
	\item[\rm(iv)] $\varepsilon \circ (-)^{\op} = \Theta\circ (-)^{\op} \circ \varepsilon$.
\end{enumerate}
\end{lem}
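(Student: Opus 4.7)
The plan is to verify each of the four identities by direct unraveling of the definitions, using the central observation that the special array $X_{\spec}$ depends only on $X^{\cap}$ and $X^{\ominus}$, not on how $X^{\ominus}$ is distributed between the two rows. Consequently:
\begin{equation*}
(X^{\op})_{\spec} = X_{\spec},\quad \Sim(X^{\op}) = \Sim(X),\quad s(X^{\op}) = s(X).
\end{equation*}
This makes the Fourier formula for $\tilde{\cR}(X^{\op})$ a sum over the \emph{same} indexing set $\Sim(X)$ as for $\tilde{\cR}(X)$.

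For (i), the main step is to compute $(X^{\op})^{\sharp} = (X^{\op})^1 \ominus (X^{\op})_{\spec}^1 = X^0 \ominus X_{\spec}^1$, and then use $X^0 \ominus X^1 = X^{\ominus}$ to obtain $(X^{\op})^{\sharp} = X^{\ominus} \ominus X^{\sharp}$. Since $Y^{\sharp} \subseteq X^{\ominus}$ for every $Y \in \Sim(X)$, bilinearity gives
\begin{equation*}
\langle (X^{\op})^{\sharp}, Y^{\sharp}\rangle \equiv \langle X^{\sharp}, Y^{\sharp}\rangle + \abs{Y^{\sharp}} \pmod{2}.
\end{equation*}
The key identification is then $\abs{Y^{\sharp}} \equiv \dd(Y) \pmod 2$: writing $M_Y = Y^1 \cap X^{\ominus}$ and $M_0 = X_{\spec}^1 \cap X^{\ominus}$, one has $Y^{\sharp} = M_Y \ominus M_0$ and also $\dd(Y) = \abs{M_0} - \abs{M_Y}$, so both parities agree. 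Summing over $\Sim(X)$ then gives $\tilde{\cR}(X^{\op}) = \varepsilon(\tilde{\cR}(X))$. For (ii), the change of variables $Z := Y^{\op}$ on $\Sim(X)$ and the analogous identity $Y^{\sharp} = X^{\ominus} \ominus Z^{\sharp}$ yield the factor $(-1)^{\langle X^{\sharp}, X^{\ominus}\rangle} = (-1)^{\abs{X^{\sharp}}} = (-1)^{\dd(X)}$, which is exactly the action of $\varepsilon$ on the input.

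For (iii), I would simply check that $\lambda \mapsto \lambda^{\op}$ is a bijection $\mathscr{H}_{d,i}(X) \to \mathscr{H}_{d,i}(X^{\op})$ (obvious from the definitions of the sets involved), that $(X\setminus_{d,i}\lambda)^{\op} = X^{\op}\setminus_{d,i}\lambda^{\op}$ (both sides are $X^{\op} \ominus \{\lambda^{\op}\} \ominus \mathcal{D}_{d,i}(\{\lambda\})^{\op}$), and that $l_{d,i}(\lambda^{\op}, X^{\op}) = l_{d,i}(\lambda, X)$ (each $(X^{\op})^{\delta(\lambda^{\op})}$ appearing in the leg-parity formula collapses back to the corresponding row of $X$ because the two flips cancel). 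The only sign change comes from $\delta(\lambda^{\op}) = 1 + \delta(\lambda)$, producing the global factor $(-1)^j$.

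For (iv), this is a pure parity computation: using $\Def(X^{\op}) = -\Def(X)$ and $(X^{\op})_{\spec} = X_{\spec}$, one has $\dd(X) + \dd(X^{\op}) = -\Def(X_{\spec})$, and since $\Def(X_{\spec}) \equiv \abs{X^{\ominus}} \equiv \Def(X) \pmod 2$, we get $\dd(X^{\op}) \equiv \dd(X) + \Def(X^{\op}) \pmod 2$, which is exactly the claim. The only conceptual obstacle across the whole lemma is identifying $\abs{Y^{\sharp}}$ with $\dd(Y) \pmod 2$ in part (i); everything else reduces to rewriting via $(-)^{\op}$ and tracking parities.
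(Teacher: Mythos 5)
Your proof is correct and follows essentially the same route as the paper's: for (i) and (ii) you unwind the Fourier sum using the invariance of $\Sim$, $X_{\spec}$, and $s$ under $(-)^{\op}$, the identity $(X^{\op})^{\sharp} = X^{\ominus}\ominus X^{\sharp}$, and the parity fact $\abs{Y^{\sharp}}\equiv \dd(Y)\pmod 2$; for (iii) you track the hook bijection $\lambda\mapsto\lambda^{\op}$ and the sign from $\delta(\lambda^{\op}) = 1+\delta(\lambda)$; and for (iv) you do the defect parity bookkeeping. The paper's proof is stated more tersely (it states $(X^{\op})^{\sharp}=X^{\ominus}\ominus X^{\sharp}$ and $\mathscr{H}_{d,i}(X^{\op})=\mathscr{H}_{d,i}(X)^{\op}$ etc.\ without derivation), but the underlying computations are the ones you give.
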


\begin{proof}
(i). If $X \in \tilde{\cP}$ then $\Sim(X) = \Sim(X^{\op}) = \Sim(X)^{\op}$. As $(X^{\op})^{\sharp} = X^{\ominus}\ominus X^{\sharp}$, we see that the coefficient of $Y \in \Sim(\Lambda)$ in $\tilde{\cR}(X^{\op})$ is $(-1)^{\langle X^{\ominus},Y^{\sharp}\rangle} = (-1)^{\langle X^{\ominus},Y^1\rangle+\langle X^{\ominus},X_{\spec}^1\rangle}$ times the corresponding coefficient in $\tilde{\cR}(X)$. Now,
\begin{align*}
2\dd(X) &= (\abs{X^{\ominus}} - 2\abs{Y^1 \cap X^{\ominus}}) - (\abs{X^{\ominus}} - 2\abs{X_{\spec}^1 \cap X^{\ominus}})\\
&= 2(\abs{X_{\spec}^1 \cap X^{\ominus}} - \abs{Y^1 \cap X^{\ominus}}).
\end{align*}

(ii). This is similar to (i) using that $\langle X^{\sharp},Y^{\sharp}\ominus(Y^{\op})^{\sharp}\rangle = \langle X^{\sharp},X^{\ominus}\rangle$.

(iii). It is straightforward to check that for any $X \in \tilde{\cP}$, we have $\mathscr{H}_{d,i}(X^{\op}) = \mathscr{H}_{d,i}(X)^{\op}$ and $l_{d,i}(\lambda,X) = l_{d,i}(\lambda^{\op},X^{\op})$, which gives the statement.

(iv). As $\Def(X^{\op}) = -\Def(X)$, we have $\dd(X^{\op}) = \dd(X)-\Def(X)$.
\end{proof}

We define for each integer $e \in \ZZ$ the set
\begin{equation*}
\tilde{\cP}^{\equiv e} = \{X \in \tilde{\cP} \mid \dd(X) \equiv e\pmod{2}\}.
\end{equation*}
This gives a partition $\tilde{\cP} = \tilde{\cP}^{\equiv 0} \sqcup \tilde{\cP}^{\equiv 1}$ and partitions $\tilde{\cP}^{\odd} = \tilde{\cP}^{\odd,0}\sqcup \tilde{\cP}^{\odd,1}$ and $\tilde{\cP}^{\ev} = \tilde{\cP}^{\ev,0}\sqcup \tilde{\cP}^{\ev,1}$. Note that $(-)^{\op}$ swaps $\tilde{\cP}^{\odd,0}$ and $\tilde{\cP}^{\odd,1}$ but stabilises $\tilde{\cP}^{\ev,0}$ and $\tilde{\cP}^{\ev,1}$. Thus we get a partition $\cS^{\ev} = \cS^{\ev,0}\sqcup \cS^{\ev,1}$, where $\cS^{\ev,0}$ contains all $\symb{X}_{\pm}$ with $X \in \tilde{\cP}^{\ev,0}$ degenerate.

If $X \in \tilde{\cP}$ then we let $\Sim_e(X) = \Sim(X) \cap \tilde{\cP}^{\equiv e}$. Under the map ${}^{\sharp} : \tilde{\cP} \to \Pow(\NN_0)$, we have $X \in \tilde{\cP}^{\equiv e}$ if and only if $\abs{X^{\sharp}} \equiv e \pmod{2}$. Now we define $\tilde{\cR}_e : \CC[\tilde{\cP}] \to \CC[\tilde{\cP}]$ by setting
\begin{equation*}
\tilde{\cR}_e(X) = \frac{1}{s(X)}\sum_{Y \in \Sim_e(X)} (-1)^{\langle X^{\sharp},Y^{\sharp}\rangle}Y
\end{equation*}
so that $\tilde{\cR} = \tilde{\cR}_0 + \tilde{\cR}_1$.

\begin{lem}\label{lem:Re-op}
For any $X \in \tilde{\cP}$ and $e \in \ZZ$ we have:
\begin{enumerate}
	\item[\rm(i)] $\tilde{\cR}_e(X^{\op}) = (-1)^e\tilde{\cR}_e(X)$,
	\item[\rm(ii)] $\tilde{\cR}_e(X)^{\op} = (-1)^{\dd(X)}\tilde{\cR}_{e+\Def(X)}(X)$
\end{enumerate}
In particular, for any $X \in \tilde{\cP}^{\equiv 1}$ we have $\symb{\tilde{\cR}_e(X)} = 0$.
\end{lem}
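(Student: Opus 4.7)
The plan is to prove (i) and (ii) by direct manipulation of the defining sum of $\tilde{\cR}_e$, relying on two preparatory identities. First, since $X_{\spec}$ depends only on $X^{\cap}$ and $X^{\ominus}$ (both $(-)^{\op}$-invariant), we have $(X^{\op})_{\spec} = X_{\spec}$, and consequently $(X^{\op})^{\sharp} = X^{\sharp}\ominus X^{\ominus}$. Second, the bijection ${}^{\sharp}:\Sim(X)\to\Pow(X^{\ominus})$ places each $Y^{\sharp}$ inside $X^{\ominus}$, and the characterisation of $\tilde{\cP}^{\equiv e}$ recorded just before the lemma identifies $|Y^{\sharp}|$ with $e$ modulo $2$ for $Y\in\Sim_e(X)$. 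These two facts will handle all of the sign bookkeeping below.

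For (i), note that $\Sim(X)$, $\Sim_e(X)$, and $s(X)$ depend only on $X^{\cup}$ and $X^{\cap}$, so each coincides for $X$ and $X^{\op}$. The only change in the sum defining $\tilde{\cR}_e(X^{\op})$ is that each term picks up the factor $(-1)^{\langle X^{\ominus},Y^{\sharp}\rangle} = (-1)^{|Y^{\sharp}|}$, which is uniformly $(-1)^e$ over $\Sim_e(X)$.

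For (ii), I would substitute $Z = Y^{\op}$ in the sum. Two parity observations drive the calculation: every $Z\in\Sim(X)$ has $\Def(Z)\equiv\Def(X)\pmod{2}$ (both reduce to $|X^{\ominus}|\pmod{2}$), and combined with $\dd(Z^{\op}) = \dd(Z)-\Def(Z)$ this converts the condition $Z^{\op}\in\Sim_e(X)$ into $Z\in\Sim_{e+\Def(X)}(X)$; moreover the Fourier phase picks up the uniform factor $(-1)^{\langle X^{\sharp},X^{\ominus}\rangle} = (-1)^{|X^{\sharp}|} = (-1)^{\dd(X)}$. Collecting these gives the claimed identity $\tilde{\cR}_e(X)^{\op} = (-1)^{\dd(X)}\tilde{\cR}_{e+\Def(X)}(X)$.

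For the ``in particular'' statement, I apply $\symb{-}$ to (ii) and invoke $\symb{Y^{\op}} = \symb{Y}$, which is immediate from the very definition of $\symb{-}$. The resulting identity reads $\symb{\tilde{\cR}_e(X)} = (-1)^{\dd(X)}\symb{\tilde{\cR}_{e+\Def(X)}(X)}$. When $X\in\tilde{\cP}^{\equiv 1}$ the prefactor is $-1$; in the case $\Def(X)$ is even, the two indices agree modulo $2$, so the right-hand side equals $-\symb{\tilde{\cR}_e(X)}$ and the symbol vanishes. I expect the main obstacle to be the clean identification $(-1)^{|X^{\sharp}|} = (-1)^{\dd(X)}$ in (ii), and correctly tracking the index shift $e\mapsto e+\Def(X)$ that arises because $(-)^{\op}$ moves an element of $\Sim(X)$ between $\Sim_0(X)$ and $\Sim_{\Def(X)}(X)$.
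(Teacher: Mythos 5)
Your proofs of parts (i) and (ii) are correct, and they carry out at the level of $\tilde{\cR}_e$ the same bookkeeping that the paper packages once and for all into \cref{lem:fourier-op}: the key ingredients in both are $(X^{\op})^{\sharp}=X^{\sharp}\ominus X^{\ominus}$, the inclusion $Y^{\sharp}\subseteq X^{\ominus}$ for $Y\in\Sim(X)$ giving $\langle X^{\ominus},Y^{\sharp}\rangle\equiv\abs{Y^{\sharp}}\equiv\dd(Y)\pmod 2$, and, for (ii), the substitution $Z=Y^{\op}$ together with $\Sim_e(X)^{\op}=\Sim_{e+\Def(X)}(X)$ and $\langle X^{\sharp},X^{\ominus}\rangle\equiv\abs{X^{\sharp}}\equiv\dd(X)\pmod 2$. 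The paper instead first proves the unrefined relations $\tilde{\cR}\circ(-)^{\op}=\varepsilon\circ\tilde{\cR}$ and $(-)^{\op}\circ\tilde{\cR}=\tilde{\cR}\circ\varepsilon$ and then obtains the lemma by projecting onto the summands $\CC[\tilde{\cP}^{\equiv e}]$; your route avoids that intermediate step and is more self-contained, but the underlying computation is identical.

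Your handling of the ``in particular'' clause is in fact more careful than the paper's own. You derive $\symb{\tilde{\cR}_e(X)}=(-1)^{\dd(X)}\symb{\tilde{\cR}_{e+\Def(X)}(X)}$ and correctly note that the vanishing is forced only when $\Def(X)$ is even, so that $\tilde{\cR}_{e+\Def(X)}=\tilde{\cR}_e$. The paper's one-line proof asserts the middle equality in $\symb{\tilde{\cR}_e(X)}=-\symb{\tilde{\cR}_{e+\Def(X)}(X)}=-\symb{\tilde{\cR}_e(X)}$ without that caveat, and for $\Def(X)$ odd this equality and the conclusion $\symb{\tilde{\cR}_e(X)}=0$ genuinely fail: for instance, $X=(\{0,1,2\},\emptyset)$ has $\Def(X)=3$, $\dd(X)=1$, and $\symb{\tilde{\cR}_e(X)}$ is a nonzero combination of four distinct symbols with coefficients $\pm\tfrac12$. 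The statement is therefore really limited to even-defect arrays, which is also the only case the paper subsequently uses. You should state the hypothesis $2\mid\Def(X)$ explicitly rather than leave the odd-defect case hanging, but your instinct that the claim requires it is right, and the gap you noticed is one the paper itself glosses over.
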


\begin{proof}
This follows by projecting (i) and (ii) of \cref{lem:fourier-op} onto each summand of the decomposition $\CC[\tilde{\cP}] = \CC[\tilde{\cP}^{\equiv 0}] \oplus \CC[\tilde{\cP}^{\equiv 1}]$. Here we use that $\Sim_e(X)^{\op} = \Sim_{e+\Def(X)}(X)$, which follows from the above remarks. For the final statement note that (ii) shows that if $X \in \tilde{\cP}^{\equiv 1}$ then $\symb{\tilde{\cR}_e(X)} = -\symb{\tilde{\cR}_{e+\Def(X)}(X)} = -\symb{\tilde{\cR}_e(X)}$.
\end{proof}

By (i) of \cref{lem:Re-op} we have $\cR_0$ factors through $\symb{-}$ to give an endomorphism of its image. We extend this to an endomorphism of $\CC[\cS]$ by letting it fix pointwise the complement defined above (in other words, $\cR_0(\symb{X}_{\pm}) = \symb{X}_{\pm}$ for all degenerate $X \in \tilde{\cP}$). We also denote this by $\cR_0$. We consider the subspaces $\CC[\cS^{\odd}]$ and $\CC[\cS^{\ev}]$ separately. First let $\mathscr{A}^{\odd} = \mathscr{U}^{\odd} = \CC[\cS^{\odd}]$.

Note that $\cR_0(\symb{X})$ is simply the Fourier transform on the abelian group $\Pow_0(X^{\ominus})$. As such $\cR_0^2$ is the identity on $\CC[\cS^{\odd}]$. To have a compatible notation we consider $\cR_0$ as a map $\mathscr{U}^{\odd} \to \mathscr{A}^{\odd}$ and denote by $\cQ_0 : \mathscr{A}^{\odd} \to \mathscr{U}^{\odd}$ its inverse.

Assume $(d,i) \in \ZZ^{(2)}$ with $d \neq 0$. It follows from \cref{lem:fourier-op} that the restriction of the map $\tilde{\cH}_{d,i}^0$ to $\CC[\tilde{\cS}^{\odd}]$ factors through a well-defined endomorphism of $\mathscr{U}^{\odd}$ and $\mathscr{A}^{\odd}$ which we denote by $\cH_{d,i}^0$. Similarly $\tilde{\cH}_{d,i}^1 \circ \varepsilon$ factors through an endomorphism which we denote by $\cH_{d,i}^1$. The following is now simply a consequence of \cref{thm:asai-fourier-commutes}.

\begin{prop}
For any $(d,i) \in \mathbb{Z}^{(2)}$, with $d \neq 0$, we have commutative diagrams
\begin{center}
\begin{tikzcd}
\mathscr{A}^{\odd}\arrow[d,"\cH_{d,i}^0"']\arrow[r,"\cQ_0"] & \arrow[d,"\cH_{d,0}^i"] \mathscr{U}^{\odd}\\
\mathscr{A}^{\odd}\arrow[r,"\cQ_0"'] & \mathscr{U}^{\odd}
\end{tikzcd}
\qquad
\begin{tikzcd}
\mathscr{U}^{\odd}\arrow[d,"\cH_{d,i}^0"']\arrow[r,"\cR_0"] & \arrow[d,"\cH_{d,0}^i"] \mathscr{A}^{\odd}\\
\mathscr{U}^{\odd}\arrow[r,"\cR_0"'] & \mathscr{A}^{\odd}
\end{tikzcd}
\end{center}
\end{prop}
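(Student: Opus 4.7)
The plan is to derive the proposition from \cref{thm:asai-fourier-commutes} by restricting to odd-defect arrays and projecting onto the component where $\dd$ is even. As a first reduction, the two diagrams are formally equivalent: composing the second diagram with $\cQ_0 = \cR_0^{-1}$ on both sides yields the first. So it suffices to prove the identity $\cR_0 \circ \cH_{d,i}^0 = \cH_{d,0}^i \circ \cR_0$ on $\mathscr{U}^{\odd}$.

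Next I would lift both sides to $\CC[\tilde{\cP}^{\odd}]$. On odd defect the operator $\Theta$ acts as $-1$, so Asai's two identities both take the uniform shape
\[
\tilde{\cH}_{d,0}^i \circ \tilde{\cR} = \tilde{\cR} \circ \tilde{\cH}_{d,i}^0 \qquad (i = 0, 1).
\]
The key combinatorial observation is that $\tilde{\cH}_{d,0}^j$ preserves the defect (and hence $\dd$), while $\tilde{\cH}_{d,1}^0$ swaps $\tilde{\cP}^{\equiv 0}$ and $\tilde{\cP}^{\equiv 1}$ (the defect changes by $\pm 2$, so $\dd$ changes by $\pm 1$). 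Since $\tilde{\cR}_0$ has image in $\tilde{\cP}^{\equiv 0}$ by definition, projecting the displayed identity onto the $\tilde{\cP}^{\equiv 0}$-component gives
\[
\tilde{\cH}_{d,0}^i \circ \tilde{\cR}_0 = \tilde{\cR}_0 \circ \tilde{\cH}_{d,i}^0
\]
on $\CC[\tilde{\cP}^{\odd}]$.

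Descending to $\mathscr{U}^{\odd} = \CC[\cS^{\odd}]$ is then straightforward for $i = 0$, since $\cR_0$ and $\cH_{d,0}^0$ are the factorizations of $\tilde{\cR}_0$ and $\tilde{\cH}_{d,0}^0$, respectively. For $i = 1$, $\cH_{d,0}^1$ is by definition the factorization of $\tilde{\cH}_{d,0}^1 \circ \varepsilon$; but $\varepsilon$ acts trivially on $\tilde{\cP}^{\equiv 0}$, which is exactly where $\tilde{\cR}_0$ takes its values, so the $\varepsilon$ disappears and the identity again descends.

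The main obstacle I anticipate is the bookkeeping in this last step: one must verify compatibility with the successive quotients $\CC[\tilde{\cP}^{\odd}] \to \CC[\tilde{\cS}^{\odd}] \to \mathscr{U}^{\odd}$, and in particular apply parts (iii) and (iv) of \cref{lem:fourier-op} together with \cref{lem:Re-op} to confirm that $\tilde{\cH}_{d,0}^1 \circ \varepsilon$ genuinely factors through $\mathscr{U}^{\odd}$. Here the sign from $\Theta = -1$ on odd defect must exactly cancel the sign produced when $\tilde{\cH}_{d,0}^1 \circ \varepsilon$ is conjugated by $(-)^{\op}$.
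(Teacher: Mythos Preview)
Your proposal is correct and is essentially the approach the paper intends: the paper simply states that the proposition ``is now simply a consequence of \cref{thm:asai-fourier-commutes}'' without further detail, and what you have written is precisely the routine unpacking of that claim---using $\Theta=-1$ on odd defect to unify the two cases of Asai's identity, projecting onto the $\tilde{\cP}^{\equiv 0}$ component to pass from $\tilde{\cR}$ to $\tilde{\cR}_0$, and then invoking \cref{lem:fourier-op} and \cref{lem:Re-op} to descend to $\cS^{\odd}$. The anticipated sign cancellation in the last step works exactly as you describe.
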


If $X \in \tilde{\cP}$ has even defect then $\Sim_0(X) = \Sim_0(X)^{\op}$. As remarked in the proof of \cref{lem:fourier-op} we have $(Y^{\op})^{\sharp} = Y^{\sharp}\ominus X^{\ominus}$ so $\langle X^{\sharp},(Y^{\op})^{\sharp}\rangle = \langle X^{\sharp},Y^{\sharp}\rangle$ for any $Y \in \Sim_0(X)$. Thus if $X \in \tilde{\cP}^{\ev}$ is nondegenerate then
\begin{equation*}
\cR_0(\symb{X}) = \frac{2}{s(X)}\sum_{Y \in \overline{\Sim}_0(X)} (-1)^{\langle X^{\sharp},Y^{\sharp}\rangle}\symb{Y}
\end{equation*}
where $\overline{\Sim}_e(X) = \{\{Y,Y^{\op}\} \mid Y \in \Sim_e(X)\}$. This is the Fourier transform on the abelian group $\overline{\Pow}_0(X^{\ominus}) = \Pow_0(X^{\ominus})/\{\emptyset,X^{\ominus}\}$.

If $e \in 2\ZZ$ is an even integer then we let $\mathscr{A}^{\ev,e} = \mathscr{U}^{\ev,e} = \CC[\cS^{\ev,0}] \subseteq \CC[\cS^{\ev}]$. The map $\cR_0$ restricts to an involution on the subspace $\CC[\cS^{\ev,0}]$. As above we consider $\cR_0$ as a map $\mathscr{A}^{\ev,e} \to \mathscr{U}^{\ev,e}$ with inverse $\cQ_0 : \mathscr{U}^{\ev,e} \to \mathscr{A}^{\ev,e}$.

Let $\tilde{\cP}^{\ev,\mathrm{nd}} \subseteq \tilde{\cP}^{\ev,0}$ be the subset of nondegenerate arrays. For any odd integer $e \in 2\ZZ+1$ we let $\mathscr{U}^{\ev,e} = \CC[\cS^{\ev,1}]$ and define the quotient space
\begin{equation*}
\mathscr{A}^{\ev,e} = \CC[\tilde{\cP}^{\ev,\mathrm{nd}}]/\langle \osymb{X} + \osymb{X^{\op}} \mid X \in \tilde{\cP}^{\ev,\mathrm{nd}}\rangle_{\CC}.
\end{equation*}
By \cref{lem:Re-op} the map $\tilde{\cR}_e$ factors through a map $\mathscr{A}^{\ev,e} \to \mathscr{U}^{\ev,e}$ which we denote by $\cR_e$. We define a right inverse $\cQ_e : \mathscr{U}^{\ev,e} \to \mathscr{A}^{\ev,e}$ of this map by setting
\begin{equation*}
\cQ_e(\symb{X}) = \frac{1}{s(X)}\sum_{Y \in \Sim_0(X)} (-1)^{\langle X^{\sharp},Y^{\sharp}\rangle}\negsymb{Y} = \frac{2}{s(X)}\sum_{\overline{Y} \in \overline{\Sim}_0(X)} (-1)^{\langle X^{\sharp},Y^{\sharp}\rangle}\negsymb{Y}
\end{equation*}
where $\negsymb{-} : \CC[\tilde{\cP}^{\ev,\mathrm{nd}}] \to \mathscr{A}^{\ev,e}$ is the natural quotient map.

It follows easily from \cref{lem:fourier-op} that the endomorphism $\tilde{\cH}_{d,0}^i$ of $\CC[\tilde{\cP}^{\ev,\mathrm{nd}}]$ factors through a well defined map $\cH_{d,0}^i : \mathscr{A}^{\ev,e} \to \mathscr{A}^{\ev,e+i}$ for each $e \in \ZZ$. Similarly we have $\cH_{d,i}^0$ factors through a map $\mathscr{U}^{\ev,e} \to \mathscr{U}^{\ev,e+i}$.

\begin{prop}
For any $(d,i) \in \ZZ^{(2)}$, with $d \neq 0$, and any $e \in \ZZ$ we have commutative diagrams
\begin{center}
\begin{tikzcd}
\mathscr{U}^{\ev,e}\arrow[d,"(-1)^i\cH_{d,i}^0"']\arrow[r,"\cQ_e"] & \arrow[d,"\cH_{d,0}^i"] \mathscr{A}^{\ev,e}\\
\mathscr{U}^{\ev,e+i}\arrow[r,"\cQ_{e+i}"'] & \mathscr{A}^{\ev,e+i}
\end{tikzcd}
\qquad
\begin{tikzcd}
\mathscr{A}^{\ev,e}\arrow[d,"\cH_{d,0}^i"']\arrow[r,"\cR_e"] & \arrow[d,"(-1)^i\cH_{d,i}^0"] \mathscr{U}^{\ev,e}\\
\mathscr{A}^{\ev,e+i}\arrow[r,"\cR_{e+i}"'] & \mathscr{U}^{\ev,e+i}
\end{tikzcd}
\end{center}
\end{prop}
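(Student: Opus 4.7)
The plan is to derive both diagrams from \cref{thm:asai-fourier-commutes} by a projection-and-descent argument analogous to the one implicit in the previous proposition on $\mathscr{U}^{\odd}$, $\mathscr{A}^{\odd}$, exploiting two simplifications specific to the even-defect side. First, on $\CC[\tilde{\cP}^{\ev}]$ we have $\abs{X^{\ominus}} \equiv \Def(X) \equiv 0 \pmod 2$, so $\Def(X_{\spec}) = 0$; consequently $\Theta$ acts as the identity and the Fourier map $\tilde{\cR}$ is an involution, since the standard orthogonality computation gives $\tilde{\cR}^2(X) = 2^{\Def(X_{\spec})}X = X$. Second, writing $Y = X\setminus_{d,i}\lambda$, a short case analysis on whether $x$ and $x-d$ belong to $X^0$, $X^1$, both, or neither, shows that $\abs{Y^{\ominus}}$ always has the same parity as $\abs{X^{\ominus}}$; combined with the defect shift $\Def(Y) = \Def(X) - 2i(-1)^{\delta(\lambda)}$ this implies that $\tilde{\cH}_{d,0}^i$ preserves $\dd$-parity while $\tilde{\cH}_{d,1}^0$ flips it.

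With these observations in hand, I would restrict \cref{thm:asai-fourier-commutes} to $\CC[\tilde{\cP}^{\ev}]$. Part (i) is already in the right form, and part (ii), since $\Theta = \mathrm{id}$, becomes $\tilde{\cH}_{d,0}^1 \circ \tilde{\cR} = -\tilde{\cR}\circ \tilde{\cH}_{d,1}^0$; conjugating by the involution $\tilde{\cR}$ (that is, multiplying on the left by $\tilde{\cR}$ and on the right by $\tilde{\cR}$ and using $\tilde{\cR}^2 = \mathrm{id}$) yields the companion identity $\tilde{\cR}\circ \tilde{\cH}_{d,0}^1 = -\tilde{\cH}_{d,1}^0\circ \tilde{\cR}$. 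Projecting both identities onto the parity-$(e+i)$ summand of the target, using the parity behaviour of the hook operators noted above to track where each piece lands, packages them uniformly as
\begin{equation*}
\tilde{\cR}_{e+i}\circ \tilde{\cH}_{d,0}^i = (-1)^i\,\tilde{\cH}_{d,i}^0\circ \tilde{\cR}_e
\end{equation*}
on $\CC[\tilde{\cP}^{\ev}]$. Since $\tilde{\cR}_e$, $\tilde{\cH}_{d,0}^i$ and $\tilde{\cH}_{d,i}^0$ each factor through the relevant quotient by construction (by \cref{lem:Re-op} and \cref{lem:fourier-op}), this identity descends to give the commutativity of the right diagram on $\mathscr{A}^{\ev,e}\to\mathscr{U}^{\ev,e+i}$.

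For the left diagram, apply the right diagram to $\cQ_e(\sigma)$ and use $\cR_{e+i}\circ \cQ_{e+i} = \mathrm{id}$, together with the fact that $\cQ_{e+i}$ is in fact a two-sided inverse of $\cR_{e+i}$; this last point is a direct double-sum computation where $\cQ_e\circ \cR_e$ collapses to the identity via orthogonality of characters on $\Pow(X^{\ominus})$ restricted to subsets of the appropriate parity, normalised by $s(X)^2$. The main technical hurdle is the parity analysis of $\abs{Y^{\ominus}}$ under hook removal, which must be checked carefully across all configurations of $x$ and $x-d$; once that is established the remainder of the proof is a routine projection and quotient argument on top of Asai's theorem.
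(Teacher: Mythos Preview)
Your proposal is correct and is precisely the natural way to fill in what the paper leaves implicit: the paper gives no explicit proof of this proposition, treating it as an immediate consequence of \cref{thm:asai-fourier-commutes} together with the parity and descent properties established in \cref{lem:fourier-op} and \cref{lem:Re-op}. Your two key simplifications on the even-defect side---that $\Theta = \mathrm{id}$ and $\tilde{\cR}^2 = \mathrm{id}$---are exactly what makes the conjugation step work, and your verification that $\cQ_e$ is in fact a two-sided inverse (which the paper only asserts to be a right inverse) via the orthogonality computation on $\Pow(X^{\ominus})$ is both correct and necessary for deducing the left diagram from the right.
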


\section{Hyperoctahedral groups}\label{sec:hyperoctahedral-groups}
Assume $(\cI,\prec)$ is a finite totally ordered set of cardinality $\abs{\cI} = 2n$. Denote by ${}^{\dag} : \cI \to \cI$ the unique order reversing bijection on $\cI$. We say $a \in \cI$ is positive or negative if $a \succ a^{\dag}$ or $a \prec a^{\dag}$ respectively. This gives a decomposition $\cI = \cI^+ \sqcup \cI^-$ into subsets of cardinality $n$. If $\cO \subseteq \cI$ is a subset then $(\cO,\prec)$ is also a totally ordered set.

\begin{exa}\label{exmp:totally-ordered-set}
We could take $\cI = \{-n\prec\cdots \prec -1\prec 1\prec \cdots \prec n\}$ then for any $a \in \cI$ we have $a^{\dag} = -a$ so $\cI^+ = \{1,\dots,n\}$ and $\cI^- = \{-1,\dots,-n\}$.
\end{exa}

If $\Sym_{\cI}$ is the symmetric group on $\cI$ then we define $W_{\cI} = \bC_{\Sym_{\cI}}(\sigma)$ to be the centraliser of the involution $\sigma = \prod_{a \in \cI^+} (a,a^{\dag})$. Let $\delta_{\cI} : \Sym_{\cI} \to \ZZ/2\ZZ$ be the unique non-trivial homomorphism. Given $e \in \{0,1\}$ we let $W_{\cI}^e = \{w \in W_{\cI} \mid \delta_{\cI}(w) = e\}$ so that we have a decomposition $W_{\cI} = W_{\cI}^0 \sqcup W_{\cI}^1$ into the cosets of $W_{\cI}^0 \lhd W_{\cI}$. Note we have a semidirect product decomposition $W_{\cI} = N_{\cI} \rtimes H_{\cI}$ where $N_{\cI} = \langle (a,a^{\dag}) \mid a \in \cI^+\rangle$ and $H_{\cI} = \{w \in W_{\cI} \mid {}^w\cI^+ = \cI^+\} \cong \Sym_{\cI^+}$.

For any $\sigma$-stable subset $\cO \subseteq \cI$, equivalently $\cO = \cO^{\dag}$, we have a natural injective homomorphism $W_{\cO} \to W_{\cI}$ whose image is the pointwise stabiliser of $\cI\setminus\cO$. We identify $W_{\cO}$ with its image in $W_{\cI}$.

We say $w \in W_{\cI}$ is an \emph{$\cI$-cycle} if the subgroup $\langle w,\sigma\rangle \leqslant W_{\cI}$ acts transitively on $\cI$. Thus $w = nh$, with $n \in N_{\cI}$ and $h \in H_{\cI}$ acting on $\cI^+$ as cycle of length $n$. The following is an elementary calculation.

\begin{lem}\label{lem:cent-I-cycle}
If $w \in W_{\cI}$ is an $\cI$-cycle then $\bC_{W_{\cI}}(w) = \langle w\rangle$ if $\delta_{\cI}(w)=1$ and $\bC_{W_{\cI}}(w) = \langle w\rangle\rtimes\langle \sigma\rangle$ if $\delta_{\cI}(w)=0$. In either case $\abs{\bC_{W_{\cI}}(w)} = \abs{\cI} = 2n$.
\end{lem}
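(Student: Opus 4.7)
The plan is to analyse the cycle structure of $w$ acting on $\cI$ and then read off its centraliser from that. First I would write $w = nh$ with $n = \prod_{a \in S}(a,a^{\dag}) \in N_{\cI}$ for some $S \subseteq \cI^+$, and $h \in H_{\cI}$ acting on $\cI^+$ as an $n$-cycle. Since $h$ commutes with $\sigma$ it acts on $\cI^-$ via ${}^{\dag}$, and so on $\cI$ as a product of two disjoint $n$-cycles; in particular $\delta_{\cI}(h)=0$ and $\delta_{\cI}(w) \equiv |S| \pmod 2$.

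Next I would trace the $\langle w\rangle$-orbit of any $a \in \cI^+$: the pairs $\{a_i,a_i^{\dag}\}$ are permuted in an $n$-cycle by $h$, and each pass through an index $i$ with $a_i \in S$ contributes one sign flip. After $n$ applications of $w$ we return to $a$ when $|S|$ is even and to $a^{\dag} = \sigma(a)$ when $|S|$ is odd. Hence when $\delta_{\cI}(w)=0$ the element $w$ has order $n$ and has two $\langle w\rangle$-orbits on $\cI$ of size $n$, swapped by $\sigma$; and when $\delta_{\cI}(w)=1$ the element $w$ has order $2n$, satisfies $w^n = \sigma$, and acts on $\cI$ as a single $2n$-cycle. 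In particular $|\langle w\rangle| = 2n$ or $|\langle w\rangle \rtimes \langle \sigma\rangle| = 2n$ respectively, matching the asserted orders.

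For the centraliser, in the case $\delta_{\cI}(w)=1$ the cyclic group $\langle w\rangle$ acts regularly on $\cI$, so its centraliser in $\Sym_{\cI}$ is itself and hence $\bC_{W_{\cI}}(w) = \langle w\rangle$. In the case $\delta_{\cI}(w)=0$ the central involution $\sigma$ lies outside $\langle w\rangle$ (since it swaps the two orbits while $\langle w\rangle$ preserves them), so $\langle w\rangle \rtimes \langle \sigma\rangle \subseteq \bC_{W_{\cI}}(w)$ already has order $2n$. For the opposite inclusion, any $x \in \bC_{W_{\cI}}(w)$ either preserves or swaps the two orbits; after multiplying by $\sigma$ I may assume it preserves them, in which case on each orbit $x$ commutes with the regular action of $\langle w\rangle$ and is therefore a power of $w$ on that orbit.

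The one step that requires genuine care is showing that, in the even case, the two exponents on the two orbits must coincide, forcing $x$ to be a single power of $w$ globally. This follows because $x$ commutes with $\sigma$, which intertwines the two regular $\langle w\rangle$-actions via the identification ${}^{\dag}$; once this is established the two cases of the lemma drop out, and the counting check $|\langle w\rangle| = 2n = |\langle w\rangle \rtimes \langle \sigma\rangle|$ completes the proof.
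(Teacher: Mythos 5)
The paper itself gives no proof of this lemma, simply calling it ``an elementary calculation,'' so there is no authorial argument to compare against. Your proof is correct and is the natural direct argument: writing $w = nh$ with $n$ a product of $|S|$ sign flips and $h$ an $n$-cycle on $\cI^+$, you correctly identify $\delta_\cI(w) \equiv |S| \pmod 2$, deduce that $w^n = 1$ or $w^n = \sigma$ accordingly, hence that $\langle w\rangle$ acts with two regular orbits swapped by $\sigma$ in the even case or a single regular orbit ($= \cI$) in the odd case, and then read off the centraliser from the standard fact that a regular cyclic permutation group is self-centralising in the full symmetric group of the orbit. You also correctly flagged and resolved the only delicate point --- that in the even case the two exponents on the two $\langle w\rangle$-orbits must agree --- using commutativity with $\sigma$, which conjugates the $\langle w\rangle$-action on one orbit to the action on the other. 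The argument is complete.
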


Now suppose $w \in W_{\cI}$ and $\cI/\langle w,\sigma\rangle = \{\cO_1,\dots,\cO_k\}$. Then we can write $w = w_1\cdots w_k$ as a pairwise commuting product with $w_i \in W_{\cO_i}$ an $\cO_i$-cycle. Such a decomposition, which we call a \emph{cycle decomposition}, is unique up to reordering. We call $k\geqslant 1$ the \emph{cycle length}.

There is a unique ordering of the orbits such that $(-1)^{\delta_{\cO_1}(w_1)}\abs{\cO_1^+} \geqslant \cdots \geqslant (-1)^{\delta_{\cO_k}(w_k)}\abs{\cO_k^+}$. With this ordering we call $((-1)^{\delta_{\cO_1}(w_1)}\abs{\cO_1^+}, \dots ,(-1)^{\delta_{\cO_k}(w_k)}\abs{\cO_k^+})$ the signed cycle type of the element. It determines the conjugacy class $\Cl_{W_{\cI}}(w)$ uniquely. If these inequalities are all strict then we say $w$ has \emph{pairwise distinct cycles}.

\begin{lem}\label{lem:cents-pairwise-distinct}
Let $w \in W_{\cI}$ be as above and let $\cI = \cI_1\sqcup\cdots\sqcup\cI_m$ be a decomposition into $\sigma$-stable sets such that $P = W_{\cI_1}\cdots W_{\cI_m} \leqslant W_{\cI}$ contains $w$. If $w$ has pairwise distinct cycles then the following hold:
\begin{enumerate}[label={\normalfont(\roman*)}]
	\item $\bC_{W_{\cI}}(w) = \bC_{W_{\cO_1}}(w_1)\cdots \bC_{W_{\cO_k}}(w_k)$,
	\item $\abs{\bC_{W_{\cI}}(w)} \leqslant 2^k\cdot n^k$,
	\item $\bC_P(w) = \bC_W(w)$.
\end{enumerate}
\end{lem}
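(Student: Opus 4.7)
The plan is to deduce all three statements from a single structural observation: under the pairwise distinct cycles hypothesis, every element centralising $w$ must in fact preserve each orbit $\cO_i$ of $\langle w,\sigma\rangle$ individually. Once this is established, (i) is immediate, (ii) is a size estimate using \cref{lem:cent-I-cycle}, and (iii) follows because the orbits refine the partition into $\cI_j$'s.

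For (i), suppose $g \in \bC_{W_{\cI}}(w)$. Since $g \in W_{\cI}$ it commutes with $\sigma$, and by assumption it commutes with $w$, hence $g$ normalises $\langle w,\sigma\rangle$ and therefore permutes the set of orbits $\{\cO_1,\dots,\cO_k\}$; write $g\cO_i = \cO_{\pi(i)}$ for some $\pi \in \Sym_k$. Because $w = w_1\cdots w_k$ is a pairwise commuting product with $w_i$ supported on $\cO_i$, the equality $gwg^{-1}=w$ forces $gw_ig^{-1} = w_{\pi(i)}$. Thus $g$ induces a bijection $\cO_i \to \cO_{\pi(i)}$ intertwining $w_i$ with $w_{\pi(i)}$ and $\sigma|_{\cO_i}$ with $\sigma|_{\cO_{\pi(i)}}$, so $w_i$ and $w_{\pi(i)}$ share the same signed cycle length. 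Since the signed cycle lengths of $w_1,\dots,w_k$ are pairwise distinct, $\pi$ must be trivial, $g$ stabilises each $\cO_i$, and the restrictions $g_i := g|_{\cO_i}$ lie in $\bC_{W_{\cO_i}}(w_i)$. This produces the factorisation $g = g_1\cdots g_k$, and the reverse containment is obvious since the factors commute.

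Given (i), statement (ii) is a direct computation: by \cref{lem:cent-I-cycle} each factor satisfies $|\bC_{W_{\cO_i}}(w_i)| = |\cO_i| = 2|\cO_i^+|$, and using the trivial bound $|\cO_i^+| \leqslant n$ we obtain
\begin{equation*}
|\bC_{W_{\cI}}(w)| \;=\; \prod_{i=1}^k 2|\cO_i^+| \;\leqslant\; 2^k n^k.
\end{equation*}
For (iii), I plan to argue that each orbit $\cO_i$ of $\langle w,\sigma\rangle$ is contained in a single block $\cI_j$. Indeed, $w$ preserves the partition $\cI = \cI_1\sqcup\cdots\sqcup\cI_m$ (since $w \in P = W_{\cI_1}\cdots W_{\cI_m}$) and so does $\sigma$ (since each $\cI_j$ is $\sigma$-stable), so every $\langle w,\sigma\rangle$-orbit lies in some $\cI_j$. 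Hence $W_{\cO_i} \leqslant W_{\cI_{j(i)}} \leqslant P$, and by (i) the entire centraliser $\bC_{W_{\cI}}(w)$ sits inside $P$, which gives $\bC_P(w) = \bC_{W_{\cI}}(w)$.

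The only real content is the cycle-preservation argument in (i); once one accepts that conjugation by a centralising element permutes the pieces $w_i$ among themselves while preserving signed cycle lengths, pairwise distinctness does the rest. The slight care required is to check that the induced bijection $\cO_i \to \cO_{\pi(i)}$ is compatible with $\sigma$ as well as $w$, which is where $g \in W_{\cI} = \bC_{\Sym_{\cI}}(\sigma)$ enters; this will be the main point to verify explicitly.
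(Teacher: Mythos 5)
Your proposal is correct and follows the same route as the paper, which disposes of (i) in one line by appealing to the uniqueness of the cycle decomposition, derives (ii) from \cref{lem:cent-I-cycle}, and proves (iii) by observing that each $\cI_j$ is a union of $\langle w,\sigma\rangle$-orbits; your write-up simply unpacks these steps in more detail, in particular making explicit why a centralising element must fix each orbit $\cO_i$ setwise once the cycles are pairwise distinct.
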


\begin{proof}
Part (i) is given by the uniqueness of the cycle decomposition. Part (ii) follows from (i) and \cref{lem:cent-I-cycle}. Part (iii) follows from (i) because $\langle w,\sigma\rangle$ stabilises each $\cI_j$, so they must be a union of the $\cO_i$.
\end{proof}

Recall from \cref{sec:combinatorics} that we have define the $\beta$-sets $\cB_n$. After \cite[\S6.4.1]{GP} we have a bijection $\cB_n \to \Irr(\Sym_{\cI^+})$ which we denote by $\bset{A} \mapsto \chi_{\bset{A}}$. Under the natural isomorphism $\Sym_{\cI^+} \cong H_{\cI^+}$ we get a bijection $\cB_n \to \Irr(H_{\cI})$.

If $\delta \in \{0,1\}$ then this yields a bijection $\tilde{\cS}_n^{\delta} \to \Irr(W_{\cI})$, denoted by $\osymb{X} \mapsto \rho_{\osymb{X}}$, defined as follows. First note that for any $\osymb{X} \in \tilde{\cS}_n^{\delta}$ we have $\rk(\bset{X^0}) + \rk(\bset{X^1}) = n$ by \cref{eq:rank-definition}. Now choose a $\sigma$-stable partition $\cI = \cI_0\sqcup\cI_1$ such that $\abs{\cI_j} = 2\cdot\rk(X^j)$ with $j \in \{0,1\}$ (note these subsets may be empty). We then have
\begin{equation*}
\rho_{\osymb{X}} = \Ind_{W_{\cI_0}W_{\cI_1}}^{W_{\cI}}(\tilde\chi_{\bset{X^0}} \boxtimes (\varepsilon_{\cI_1}\tilde{\chi}_{\bset{X^1}}))
\end{equation*}
where $\tilde{\chi}_{\bset{X^j}}$ is the inflation of $\chi_{\bset{X^j}} \in \Irr(H_{\cI_j})$ under the map $W_{\cI_j} \to H_{\cI_j}$. These characters satisfy the following MN-rule (or Murnaghan--Nakayama rule).

\begin{prop}\label{prop:MN-rule-irr-char}
Let $\cO \in \cI/\langle w,N_{\cI}\rangle$ be an orbit for some element $w \in W_{\cI}$. Then we have a unique decomposition $w = w_1w_2 = w_2w_1$ with $w_1 \in W_{\cO}$ and $w_2 \in W_{\mathcal{I\backslash O}}$. If $(d,j) = (\abs{\cO^+},\delta_{\cO}(w_1))$ then for any $\osymb{X} \in \tilde{\cS}_n^{\delta}$, with $\delta \in \{0,1\}$, we have
\begin{equation*}
\rho_{\osymb{X}}(w) = \sum_{\lambda \in \mathscr{H}_{d,0}(X)} (-1)^{j\delta(\lambda) + l_{d,0}(\lambda,X)}\rho_{\osymb{X\setminus \lambda}}(w_2),
\end{equation*}
\end{prop}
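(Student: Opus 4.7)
The plan is to unpack the induced character formula defining $\rho_{\osymb{X}}$ and reduce to the classical Murnaghan--Nakayama rule for symmetric groups on each row of $X$.

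Concretely, write $L = W_{\cI_0}W_{\cI_1}$ and $\psi = \tilde\chi_{\bset{X^0}} \boxtimes (\varepsilon_{\cI_1}\tilde\chi_{\bset{X^1}})$. Since $\sigma$ is centralised by $W_{\cI}$, a coset $xL$ satisfies $x^{-1}wx \in L$ if and only if the $\sigma$-stable partition $\cI = x^{-1}\cI_0 \sqcup x^{-1}\cI_1$ is $w$-stable; equivalently, each $\langle w,\sigma\rangle$-orbit of $\cI$ (equivalently, each $\langle w,N_{\cI}\rangle$-orbit) is contained in a single part. Summing over such cosets gives
\begin{equation*}
\rho_{\osymb{X}}(w) = \sum_{\cI = \cI'_0\sqcup \cI'_1} \psi(x_{\cI'}^{-1}wx_{\cI'}),
\end{equation*}
where the sum is over such partitions with $\abs{\cI'_j} = \abs{\cI_j}$. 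I would then split this sum according to which part contains the distinguished orbit $\cO$: call this \emph{Type A} when $\cO \subseteq \cI'_0$ and \emph{Type B} when $\cO \subseteq \cI'_1$. In either case, the cycle decomposition $w = w_1w_2$ forces $w_1$ to lie in the $W_{\cI'_0}$-factor (Type A) or the $W_{\cI'_1}$-factor (Type B).

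For Type A, the character $\tilde\chi_{\bset{X^0}}$ evaluated at $w_1$ and the remaining part of $w$ in $\cI'_0$ can be expanded by the classical MN-rule for $\Sym_{\cI'_0{}^+}$: since $\pi_0(w_1)$ is a $d$-cycle, we get a sum over ordinary $d$-hooks of the $\beta$-set $[X^0]$, each with sign $(-1)^{l_d(\lambda,X^0)}$. These hooks are precisely the elements $\lambda \in \mathscr{H}_{d,0}(X)$ with $\delta(\lambda)=0$, and because the bottom row of $X$ is untouched, $l_d(\lambda,X^0) = l_{d,0}(\lambda,X)$. Since $w_1$ lies in $W_{\cI'_0}$ the twist $\varepsilon_{\cI'_1}$ plays no role here, so the contribution is $\sum_{\delta(\lambda)=0}(-1)^{l_{d,0}(\lambda,X)}\cdot(\text{residual induced-character value})$. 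For Type B the same argument is applied to $\tilde\chi_{\bset{X^1}}$, except that now the sign character $\varepsilon_{\cI'_1}$ evaluates on $w_1$; the $\cO$-cycle $w_1$ is, as a permutation of $\cO$, either a product of two $d$-cycles (when $\delta_{\cO}(w_1)=0$) or a single $2d$-cycle (when $\delta_{\cO}(w_1)=1$), so $\varepsilon_{\cI'_1}(w_1) = (-1)^{\delta_{\cO}(w_1)} = (-1)^j$. After extracting this global $(-1)^j$ and applying the MN-rule to $\chi_{\bset{X^1}}$, Type B contributes $\sum_{\delta(\lambda)=1}(-1)^{j+l_{d,0}(\lambda,X)}\cdot(\text{residual})$, which is exactly the $(-1)^{j\delta(\lambda)+l_{d,0}(\lambda,X)}$ prescribed by the proposition.

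The final step is to recognise that, for each fixed $\lambda \in \mathscr{H}_{d,0}(X)$, the residual sum over partitions of $\cI\setminus\cO$ into $\sigma$-stable, $w_2$-stable pieces is exactly the induced-character expansion of $\rho_{\osymb{X\setminus\lambda}}(w_2)$ on $W_{\cI\setminus\cO}$: the new symbol $\osymb{X\setminus_{d,0}\lambda}$ has $\rk(\lambda^{0}) = \rk(X^0)-d$ or $\rk(X^0)$ (and similarly for row $1$) according to $\delta(\lambda)$, matching the sizes of the pieces of $\cI\setminus\cO$ that appear. The main obstacle will be the sign bookkeeping: verifying that the classical Young-tableau notion of leg length for a $d$-hook of $[X^j]$ agrees with the symbol leg parity $l_{d,0}(\lambda,X)$ after accounting for the row-$(1-j)$ entries (which is a direct consequence of the definition in \S\ref{sec:combinatorics}, since $l_{d,0}(\lambda,X) = \langle\{0,\ldots,x\},X^j\rangle + \langle\{0,\ldots,x-d\},(X\setminus_{d,0}\lambda)^j\rangle$ has no contribution from the opposite row), and checking that the sign character $\varepsilon_{\cI'_1}$ produces precisely $(-1)^j$ on an $\cO$-cycle. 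Once these sign identities are verified, the two contributions combine into the single sum in the statement.
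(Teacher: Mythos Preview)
The paper does not actually prove this statement: it simply refers to \cite[Thm.~10.3.1]{GP} for the rule and to \cite[\S I.1]{O} for the translation between hooks of partitions and hooks of $\beta$-sets. Your proposal is precisely the standard argument one finds behind that citation: expand the induced character as a sum over $w$-stable $\sigma$-stable ordered partitions of $\cI$, split according to which part contains $\cO$, apply the type-$\sA$ Murnaghan--Nakayama rule on the relevant row (legitimate because $\tilde\chi_{\bset{X^i}}$ is inflated from $H_{\cI_i}\cong\Sym_{\cI_i^+}$ and the projection of $w_1$ there is a $d$-cycle), and then recognise the residual sum over partitions of $\cI\setminus\cO$ as the induced-character expansion of $\rho_{\osymb{X\setminus\lambda}}(w_2)$. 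Your sign bookkeeping is correct: since removing a $(d,0)$-hook leaves the opposite row untouched, the definition of $l_{d,0}(\lambda,X)$ collapses to the classical leg parity on the single row $X^{\delta(\lambda)}$; and your identification $\varepsilon_{\cI'_1}(w_1)=(-1)^{j}$ is right, as an $\cO$-cycle with $\delta_{\cO}(w_1)=j$ acts on $\cO$ either as two $d$-cycles ($j=0$) or as one $2d$-cycle ($j=1$). So your outline is sound and amounts to writing out what the paper leaves to the reference.

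One harmless slip: left cosets $xL$ correspond to the partitions $(x\cI_0,x\cI_1)$ rather than $(x^{-1}\cI_0,x^{-1}\cI_1)$; the rest of the argument is unaffected.
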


\begin{proof}
We refer the reader to \cite[Thm.~10.3.1]{GP}. For the correspondence between hooks of partitions and hooks of $\beta$-sets see \cite[\S I.1]{O}.
\end{proof}

In \cite[Theorem~7.2]{LaSh2}, a bound is given for the character values of the symmetric group at a given element in terms of its cycle length. The argument in \cite{LaSh2} is a consequence of the MN-rule together with analogues of the following easy observations.

\begin{lem}\label{lem:hook-inclusion}
Let $X \in \tilde{\cP}$ be an array with an $(e,j)$-hook $\lambda \in \Hk_{e,j}(X)$ for some $(e,j) \in \NN_0^{(2)}$. Then for any $(d,i) \in \NN_0^{(2)}$ we have
\begin{equation*}
\Hk_{d,i}(X) \subseteq \Hk_{d,i}(X\setminus_{e,j} \lambda) \cup \{\lambda,\cD_{e-d,i+j}(\lambda)\}.
\end{equation*}
\end{lem}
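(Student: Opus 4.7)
The plan is to verify the lemma by a direct set-theoretic comparison. The strategy is to unwind the defining formula
\[
Y := X \setminus_{e,j}\lambda = X \ominus \{\lambda\} \ominus \{\mathcal{D}_{e,j}(\lambda)\}
\]
and then check, element by element, that any $(d,i)$-hook of $X$ which is neither $\lambda$ nor $\mathcal{D}_{e-d,i+j}(\lambda)$ remains a $(d,i)$-hook of $Y$.

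First I would observe that because $\lambda \in \Hk_{e,j}(X)$ gives $\lambda \in X$ and $\mathcal{D}_{e,j}(\lambda) \in \NN_0^{(2)} \setminus X$, the two symmetric differences collapse into $Y = (X\setminus\{\lambda\}) \sqcup \{\mathcal{D}_{e,j}(\lambda)\}$; that is, $Y$ differs from $X$ in exactly two positions, one removed and one added. Next I would fix $\mu \in \Hk_{d,i}(X)$ with $\mu \neq \lambda$ and $\mu \neq \mathcal{D}_{e-d,i+j}(\lambda)$ and verify the two conditions $\mu \in Y$ and $\mathcal{D}_{d,i}(\mu) \in \NN_0^{(2)} \setminus Y$ separately. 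The first is routine: $\mu \in X$, $\mu \neq \lambda$ by assumption, and $\mu \neq \mathcal{D}_{e,j}(\lambda)$ because $\mu \in X$ whereas $\mathcal{D}_{e,j}(\lambda) \notin X$. For the second, $\mathcal{D}_{d,i}(\mu) \in \NN_0^{(2)} \setminus X$ by the hook condition, so the only danger is $\mathcal{D}_{d,i}(\mu) = \mathcal{D}_{e,j}(\lambda)$. Applying the inverse $\mathcal{D}_{d,i}^{-1} = \mathcal{D}_{-d,i}$ (noting that $\pm i$ coincide modulo $2$) together with the composition law $\mathcal{D}_{a,b}\circ\mathcal{D}_{c,k} = \mathcal{D}_{a+c,b+k}$ rewrites this equation as $\mu = \mathcal{D}_{e-d,i+j}(\lambda)$, which is excluded.

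No real obstacle is anticipated; the argument is purely formal. The only place where a mistake is easy to make is the index arithmetic identifying the exceptional element in the statement. Once that computation is done correctly, the conclusion is immediate: the only elements of $\Hk_{d,i}(X)$ that can fail to sit inside $\Hk_{d,i}(Y)$ are $\lambda$ itself (which is no longer in $Y$) and $\mathcal{D}_{e-d,i+j}(\lambda)$ (whose image under $\mathcal{D}_{d,i}$ lands on the newly added element $\mathcal{D}_{e,j}(\lambda) \in Y$).
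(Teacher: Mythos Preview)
Your proposal is correct and follows essentially the same route as the paper's proof: reduce to $\mu\neq\lambda$, observe $\mu\in Y$ because $\mu\in X$ and $\mu\neq\cD_{e,j}(\lambda)$, and then note that the only way $\cD_{d,i}(\mu)\in Y$ is if it equals the single newly added element $\cD_{e,j}(\lambda)$, forcing $\mu=\cD_{e-d,i+j}(\lambda)$. The paper just states this more tersely.
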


\begin{proof}
Let $\mu \in \Hk_{d,i}(X)$ so $\cD_{d,i}(\mu) \in \NN_0^{(2)}$. If $\mu \neq\lambda$ then $\mu \in Y := X\setminus_{e,j} \lambda$, and if $\mu$ is not a $(d,i)$-hook of $Y$ then clearly $\cD_{d,i}(\mu) \in Y-X = \{\cD_{e,j}(\lambda)\}$.
\end{proof}

\begin{lem}\label{lem:n-hooks}
If $(n,i) \in \NN_0^{(2)}$ then $X \in \tilde{\cP}_n$ has at most one $(n,i)$-hook.
\end{lem}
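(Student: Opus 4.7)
The plan is to exploit the fact that removing an $(n, i)$-hook from an array of rank $n$ yields an array of rank $0$, combined with the extreme rigidity of $\tilde{\cP}_0$.

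First, a direct computation from the rank formula \eqref{eq:rank-definition} shows that removing any $(n, i)$-hook $\lambda = (x,j)$ from an array $X$ decreases the rank by exactly $n$. For $i = 0$ the defect is preserved and only $\rho(X^{\delta(\lambda)})$ in the sum $\rho(X^0) + \rho(X^1)$ changes (by $-n$). For $i = 1$, one element transfers between rows, so both $\rho(X^0) + \rho(X^1)$ and the floor term $\lfloor (\Def(X)/2)^2 \rfloor$ change; a short parity check on $\Def(X) \bmod 2$ in each of the cases $j \in \{0, 1\}$ shows the two contributions sum to exactly $-n$. Hence if $X \in \tilde{\cP}_n$ admits an $(n, i)$-hook $\lambda$, then $X \setminus_{n, i} \lambda \in \tilde{\cP}_0$.

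Next, since $\rho(A) \geq 0$ with equality iff $A = \{0, 1, \dots, m - 1\}$, and $\lfloor (\Def(X)/2)^2 \rfloor = 0$ iff $\abs{\Def(X)} \leq 1$, the elements of $\tilde{\cP}_0$ are precisely the ``staircase arrays'' $Y$ with $Y^\ell = \{0, 1, \dots, m_\ell - 1\}$ for $m_0, m_1 \in \NN_0$ satisfying $\abs{m_0 - m_1} \leq 1$. Such a $Y$ is determined uniquely by its pair of row sizes $(|Y^0|, |Y^1|)$.

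Now assume, for contradiction, that $X \in \tilde{\cP}_n$ has two distinct $(n, i)$-hooks $\lambda_k = (x_k, j_k)$ for $k = 1, 2$, and set $Y_k := X \setminus_{n, i} \lambda_k \in \tilde{\cP}_0$. If $i = 0$, hook removal preserves both row sizes, so $Y_1$ and $Y_2$ have identical row sizes and hence coincide by the previous paragraph; the common symmetric difference $X \ominus Y_k = \{(x_k, j_k), (x_k - n, j_k)\}$ then forces $j_1 = j_2$ (otherwise the two two-element sets lie in different rows of $\NN_0^{(2)}$) and subsequently $x_1 = x_2$ by comparing the larger element in the common row, giving $\lambda_1 = \lambda_2$. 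If $i = 1$, an $(n, 1)$-hook in row $j_k$ decrements $|X^{j_k}|$ and increments $|X^{j_k + 1}|$; the subcase $j_1 = j_2$ reduces to the same row-size argument, while $j_1 \neq j_2$ forces $|\Def(Y_1) - \Def(Y_2)| = 4$, which is incompatible with both defects lying in $\{-1, 0, 1\}$.

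The main bookkeeping obstacle is the rank-change calculation for $i = 1$: one must simultaneously track the changes in $\rho(X^0) + \rho(X^1)$ and $\lfloor (\Def(X)/2)^2 \rfloor$ across all four combinations of $j \in \{0, 1\}$ and $\Def(X) \bmod 2$, and verify the total is always exactly $-n$.
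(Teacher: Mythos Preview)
Your argument is correct. Both your proof and the paper's begin with the same observation: removing an $(n,i)$-hook from $X \in \tilde{\cP}_n$ lands in $\tilde{\cP}_0$. After that the routes diverge. The paper invokes the preceding hook-inclusion lemma (\cref{lem:hook-inclusion}) with $(e,j)=(d,i)=(n,i)$, which immediately gives $\Hk_{n,i}(X) \subseteq \Hk_{n,i}(X\setminus_{n,i}\lambda) \cup \{\lambda,\cD_{0,0}(\lambda)\}$; since a rank-$0$ staircase has no $(n,i)$-hooks for $n>0$, the result follows (with a brief extra sentence ruling out $\lambda^{\op}$). You instead classify $\tilde{\cP}_0$ explicitly as the staircase arrays determined by their row sizes, and then compare the row sizes of the two candidate targets $Y_1,Y_2$: for $i=0$ (and for $i=1$ with $j_1=j_2$) the row sizes agree, so $Y_1=Y_2$ and the two-element symmetric differences $X\ominus Y_k$ coincide, forcing $\lambda_1=\lambda_2$; for $i=1$ with $j_1\neq j_2$ the defects differ by $4$, impossible in $\tilde{\cP}_0$. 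The paper's route is shorter because it reuses \cref{lem:hook-inclusion}, while yours is self-contained and makes the rigidity of $\tilde{\cP}_0$ explicit. One minor remark: the ``parity check on $\Def(X)\bmod 2$'' you flag as the main obstacle is in fact unnecessary, since the change in $\lfloor(\Def(X)/2)^2\rfloor$ under $\Def(X)\mapsto\Def(X)\pm 2$ is $\pm\Def(X)+1$ uniformly in the parity.
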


\begin{proof}
Suppose $\lambda \in \mathscr{H}_{n,i}(X)$ is such a hook. Then $X\setminus_{n,i}\lambda$ is of rank $0$ and so has no hooks. Hence, by the previous lemma the only possible $(n,i)$-hooks are $\lambda$ and $\lambda^{\op}$. But it is easily seen that if $\lambda^{\op}$ were a hook of $X$ then it would also be one of $X\setminus_{n,i}\lambda$, which is impossible.
\end{proof}

\begin{thm}\label{thm:char-bound-hyperoctahedral}
Fix an integer $1 \leqslant k \leqslant n$. Then for each element $w \in W_{\cI}$ of cycle length $k$ and each irreducible character $\chi \in \Irr(W_{\cI})$ we have
\begin{equation*}
\abs{\chi(w)} \leqslant 2^{k-1}\cdot k!.
\end{equation*}
Moreover, if $w \in W_{\cI}^0$ then for all $\chi \in \Irr(W_{\cI}^0)$ we have $\abs{\chi(w)} \leqslant (2^k+1)\cdot 2^{k-1}\cdot k! \leqslant 2^{2k} \cdot k!$.
\end{thm}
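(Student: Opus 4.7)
The plan is to induct on the cycle length $k$, applying the Murnaghan--Nakayama rule (\cref{prop:MN-rule-irr-char}) one cycle at a time. Order the cycles of $w = w_1 \cdots w_k$ so that $d_1 \geq d_2 \geq \cdots \geq d_k$ with $d_i = |\cO_i^+|$. Iterating \cref{prop:MN-rule-irr-char} $k$ times expresses $\rho_{\osymb{X}}(w)$ as a signed sum over sequences of hook removals $(\lambda_1,\ldots,\lambda_k)$ with $\lambda_i \in \Hk_{d_i,0}(Y_{i-1})$, where $Y_0 = X$ and $Y_j = Y_{j-1}\setminus_{d_j,0}\lambda_j$; the terminal array $Y_k$ has rank $0$ and contributes $\pm 1$. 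Thus $|\rho_{\osymb{X}}(w)|$ is at most the total number of such sequences.

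To count these, I would establish a uniform upper bound on $|\Hk_{d_i,0}(Y_{i-1})|$ at each step $i$ that is independent of the history. Since $Y_k$ has rank $0$, it has no $(d,0)$-hook of positive size. Iterating \cref{lem:hook-inclusion} along the removal sequence from $Y_{i-1}$ to $Y_k$ shows $|\Hk_{d_i,0}(Y_{i-1})| \leq |\Hk_{d_i,0}(Y_k)| + 2(k-i+1) = 2(k-i+1)$. At the last step, \cref{lem:n-hooks} sharpens this: since $Y_{k-1}$ has rank exactly $d_k$, one has $|\Hk_{d_k,0}(Y_{k-1})| \leq 1$. Multiplying these uniform bounds yields
\begin{equation*}
|\rho_{\osymb{X}}(w)| \leq \prod_{i=1}^{k-1} 2(k-i+1)\cdot 1 = 2^{k-1}\,k!,
\end{equation*}
which is the first inequality.

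For the second inequality, I would use Clifford theory applied to the index-two normal subgroup $W_\cI^0$ of $W_\cI$. Any $\chi \in \Irr(W_\cI^0)$ is either (a) $W_\cI$-stable, in which case it extends to some $\tilde\chi \in \Irr(W_\cI)$ and $|\chi(w)| = |\tilde\chi(w)| \leq 2^{k-1}\,k!$ by the first part; or (b) not $W_\cI$-stable, so $\chi$ is a summand of a restriction $\tilde\chi|_{W_\cI^0} = \chi + \chi^g$, where $\tilde\chi = \rho_{\osymb{X}}$ corresponds to a degenerate array ($X^0 = X^1$) and $g \in W_\cI \setminus W_\cI^0$. In case (b), write $\chi(w) = \tfrac{1}{2}(\tilde\chi(w) + \zeta(w))$ with $\zeta := \chi - \chi^g$, which vanishes outside the $W_\cI^0$-classes arising from splitting a $W_\cI$-class. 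On such split classes, elements necessarily have all cycles positive and of even length (by the centralizer analysis already set up in \cref{sec:hyperoctahedral-groups}), and one identifies $\zeta(w)$, up to sign, with $2^k$ times a symmetric-group character value $\chi^{\Sym_{n/2}}_{[X^0]}(\bar w)$ on the halved cycle type $\bar w$. Applying a Larsen--Shalev-type estimate to this symmetric-group factor --- the one-row analogue of the first bound proved above --- yields $|\zeta(w)| \leq 2^{2k-1}\,k!$ and hence $|\chi(w)| \leq (1 + 2^k)\cdot 2^{k-1}\,k!$.

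The principal obstacle is case (b) of the second bound: one must make explicit the restriction from $W_\cI$ to $W_\cI^0$ of characters indexed by degenerate symbols, identify the spin difference $\zeta$ on split classes with a halved symmetric-group character, and invoke a symmetric-group version of the hook-counting argument just used. The first two steps rely on the classical theory of split characters of the Weyl group of type $D$; the last is a routine variant of the Lemmas~\ref{lem:hook-inclusion} and~\ref{lem:n-hooks} argument carried out for one-row arrays.
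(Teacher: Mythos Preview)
Your proposal is correct and follows essentially the same approach as the paper. For the first bound, both argue via the MN-rule together with \cref{lem:hook-inclusion,lem:n-hooks}; the paper phrases this as an explicit induction on $k$, assuming $\chi(w)\neq 0$ and bounding only $|\Hk_{d_1,0}(X)|\le 2k$ at the outer step, which cleanly sidesteps the one loose point in your product formulation (that a reachable $Y_{i-1}$ need not admit a continuation to rank~$0$---but those contribute nothing anyway). For the second bound, the paper likewise splits off the difference character $\Delta=\chi_+-\chi_-$, cites Stembridge \cite[Theorem~7.5]{S} for the identification $|\Delta(w)|=2^k|\chi_{[A]}(x)|$ with $x$ a product of $k$ disjoint cycles in a symmetric group, and then invokes \cite[Theorem~7.2]{LaSh2} for the bound $2^{k-1}k!$ on that factor.
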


\begin{proof}
Let $\chi = \rho_{\osymb{X}}$ with $\osymb{X} \in \tilde{\cS}_n^1$. We argue by induction on $k$. Suppose $k = 1$. If $\chi(w) \neq 0$ then by \cref{prop:MN-rule-irr-char}, $[X]$ has an $(n,0)$-hook, but by \cref{lem:n-hooks}, there is at most one such hook, so $\abs{\chi(w)} \leqslant 1$. So assume $k > 1$. We have $\cI/\langle w,\sigma\rangle = \{\cO_1,\dots,\cO_k\}$ and we set $d_i = \abs{\cO_i}$ for any $1 \leqslant i \leqslant k$. We also let $w = w_1w_2$ with $w_1 \in W_{\cO_1}$ and $w_2 \in W_{\cI\setminus \cO_1}$.

Clearly we may assume that $\chi(w) \neq 0$. By \cref{prop:MN-rule-irr-char} and the induction hypothesis we see that
\begin{equation*}
\abs{\chi(w)} \leqslant \sum_{\lambda \in \Hk_{d_1,0}(X)} \abs{\rho_{\osymb{X\setminus \lambda}}(w_2)} \leqslant \abs{\Hk_{d_1,0}(X)} \cdot 2^{k-2}\cdot (k-1)!
\end{equation*}
So it suffices to show that $\abs{\Hk_{d_1,0}(X)} \leqslant 2k$.

Repeatedly applying \cref{prop:MN-rule-irr-char}, we see that there exist arrays $X = X_0,X_1,\dots,X_k \in \tilde{\cP}$ such that for $1 \leqslant i \leqslant k$ we have $X_i = X_{i-1} \setminus_{d_i,0} \lambda_i$ for some hook $\lambda_i \in \Hk_{d_i,0}(X_{i-1})$. As 
$$\rk(X_i) = \rk(X_0) - (d_1+\cdots+d_i),$$ 
we have $\rk(X_k) = 0$, so $\mathscr{H}_{d_1,0}(X_k) = \emptyset$. By \cref{lem:hook-inclusion} we have $\abs{\mathscr{H}_{d_1,0}(X_i)} \leqslant \abs{\mathscr{H}_{d_1,0}(X_{i+1})}+2$, which yields the desired bound.

Now assume $\chi \in \Irr(W_{\cI}^0)$. If $\chi$ extends to $W_{\cI}$ then we are done so assume this is not the case. Then $\chi = \Res_{W_{\cI}^0}^{W_{\cI}}(\rho_{\osymb{X}})$ for some degenerate symbol $\osymb{X} \in \tilde{\cS}_n^0$ and is the sum $\chi_+ + \chi_-$ of two distinct irreducible characters. Clearly $\chi_{\pm}(w) = \frac{1}{2}(\chi(w)+\Delta(w))$ where $\Delta = \chi_+ - \chi_-$ is the difference character.

If $\Delta(w) = 0$ then we are done, so we may assume that $\Delta(w) \neq 0$. A result of Stembridge \cite[Theorem~7.5]{S} shows that, in this case, $\abs{\Delta(w)} = 2^k\abs{\chi_{\bset{A}}(x)}$ for some character $\chi_{\bset{A}} \in \Irr(\Sym_{\cI^+})$ and some element $x \in \Sym_{\cI^+}$ which is a product of $k$ disjoint cycles. Hence, by \cite[Theorem~7.2]{LaSh2}, we have $\abs{\Delta(w)} \leqslant 2^{2k-1}\cdot k!$, which easily gives the bound.
\end{proof}

For an ordered symbol $\osymb{X} \in \tilde{\cS}_n$ of rank $n$ we define a corresponding class function
\begin{equation*}
\phi_{\osymb{X}} = \frac{1}{s(X)}\sum_{\substack{Y \in \Sim(X)\\ \Def(Y) = \Def(Y_{\spec})}} (-1)^{\langle X^{\sharp},Y^{\sharp}\rangle}\rho_{\osymb{Y}} \in \Class(W_{\cI}).
\end{equation*}
Note that $\{Y \in \Sim(X) \mid \Def(Y) = \Def(Y_{\spec})\} \subseteq \Sim_0(X)$ so this is essentially the projection of the Fourier transform $\tilde{\cR}_0(X)$ onto the subspace $\QQ[\tilde{\cS}_n^{\delta}]$, where $\delta = \Def(X_{\spec})$. Somewhat remarkably these functions also satisfy a version of the MN-rule, which is the main point of \cref{thm:asai-fourier-commutes}.

\begin{thm}\label{prop:phi-MN-rule}
Let $\cO \in \cI/\langle w,N_{\cI}\rangle$ be an orbit for some element $w \in W_{\cI}$. Then we have a unique decomposition $w = w_1w_2 = w_2w_1$ with $w_1 \in W_{\cO}$ and $w_2 \in W_{\mathcal{I\backslash O}}$. If $(d,j) = (\abs{\cO^+},\delta_{\cO}(w_1))$ then for any $\osymb{X} \in \tilde{\cS}_n$, we have
\begin{equation*}
\phi_{\osymb{X}}(w) = (-1)^{j(1+\Def(X))}\sum_{\lambda \in \mathscr{H}_{d,j}(X)} (-1)^{l_{d,j}(\lambda,X)}\phi_{\osymb{X\setminus \lambda}}(w_2).
\end{equation*}
\end{thm}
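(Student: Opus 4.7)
The plan is to realise $\phi_{\osymb{X}}$ at the operator level as $\rho(P_\delta(\tilde{\cR}(X)))$, where $\delta := \Def(X_{\spec}) \in \{0,1\}$, where $P_\delta$ denotes the projection of $\CC[\tilde{\cP}]$ onto arrays of defect $\delta$, and where $\rho : \CC[\tilde{\cP}] \to \Class(W_{\cI})$ is the linear extension of $Y \mapsto \rho_{\osymb{Y}}$, set to zero when $\Def(Y) \notin \{0,1\}$. This identification is valid because every $Y \in \Sim(X)$ satisfies $Y^{\cap} = X^{\cap}$ and $Y^{\cup} = X^{\cup}$, and hence $Y_{\spec} = X_{\spec}$, so the defining condition $\Def(Y) = \Def(Y_{\spec})$ in $\phi_{\osymb{X}}$ is equivalent to $\Def(Y) = \delta$. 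With this setup the theorem reduces to pushing the hook operator past the Fourier transform.

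Setting $j := \delta_{\cO}(w_1)$, the classical MN rule \cref{prop:MN-rule-irr-char} extends linearly to the identity $\rho(Y)(w) = \rho(\tilde{\cH}_{d,0}^j(Y))(w_2)$ on $\CC[\tilde{\cP}]$. As the $(d,0)$-hook operator preserves defect, it commutes with $P_\delta$, so
\begin{equation*}
\phi_{\osymb{X}}(w) = \rho(P_\delta(\tilde{\cH}_{d,0}^j(\tilde{\cR}(X))))(w_2).
\end{equation*}
I then invoke \cref{thm:asai-fourier-commutes} to rewrite $\tilde{\cH}_{d,0}^j \circ \tilde{\cR}$ in terms of $\tilde{\cR} \circ \tilde{\cH}_{d,j}^0$. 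For $j=0$ this is immediate; for $j=1$ the relation $\tilde{\cH}_{d,0}^1 \circ \tilde{\cR} = -\Theta \circ \tilde{\cR} \circ \tilde{\cH}_{d,1}^0$ requires us to evaluate $\Theta$ on $\tilde{\cR}(Y)$. The key point is that the summands of $\tilde{\cR}(Y)$ all lie in $\Sim(Y)$, whose elements have defects of a single common parity $\equiv \abs{Y^{\ominus}} \equiv \Def(Y) \pmod{2}$; hence $\Theta$ acts on $\tilde{\cR}(Y)$ as the global scalar $(-1)^{\Def(Y)}$. Applied to $Y = X\setminus_{d,1}\lambda$, whose defect has the same parity as $\Def(X)$ by the remark following the definition of $\tilde{\cH}_{d,i}^j$, this scalar equals $(-1)^{\Def(X)}$ uniformly in $\lambda$, so the two cases combine into
\begin{equation*}
\tilde{\cH}_{d,0}^j(\tilde{\cR}(X)) = (-1)^{j(1+\Def(X))}\tilde{\cR}(\tilde{\cH}_{d,j}^0(X)).
\end{equation*}

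Expanding $\tilde{\cH}_{d,j}^0(X) = \sum_{\lambda \in \mathscr{H}_{d,j}(X)} (-1)^{l_{d,j}(\lambda,X)}X\setminus_{d,j}\lambda$ and distributing by linearity gives
\begin{equation*}
\phi_{\osymb{X}}(w) = (-1)^{j(1+\Def(X))}\sum_{\lambda \in \mathscr{H}_{d,j}(X)} (-1)^{l_{d,j}(\lambda,X)}\rho(P_\delta(\tilde{\cR}(X\setminus_{d,j}\lambda)))(w_2).
\end{equation*}
To identify each summand with $\phi_{\osymb{X\setminus_{d,j}\lambda}}(w_2)$ I need $\delta = \Def((X\setminus_{d,j}\lambda)_{\spec})$, which follows once more from the remark after the definition of $\tilde{\cH}_{d,i}^j$: hook removal preserves the parity of the defect, hence of $\abs{X^{\ominus}}$, so the value of $\Def(X_{\spec}) \in \{0,1\}$ is unchanged. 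The main obstacle is the sign bookkeeping in the $j=1$ case above, where one has to verify that the term-by-term sign $(-1)^{\Def(Z)}$ coming from $\Theta$ simplifies to the single global sign $(-1)^{\Def(X)}$ uniformly across the summands of $\tilde{\cR}(\tilde{\cH}_{d,1}^0(X))$, so that the recursion can be stated purely in terms of the input $X$.
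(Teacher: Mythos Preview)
Your proof is correct and follows essentially the same approach as the paper: both arguments realise $\phi_{\osymb{X}}$ as the defect-$\delta$ projection of $\tilde{\cR}(X)$ composed with the character map, apply the classical MN rule \cref{prop:MN-rule-irr-char} to introduce $\tilde{\cH}_{d,0}^j$, then invoke Asai's \cref{thm:asai-fourier-commutes} to swap it with the Fourier transform. Your treatment of the sign coming from $\Theta$ in the $j=1$ case is slightly more explicit than the paper's, which simply records the odd and even defect cases separately, but the content is identical.
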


\begin{proof}
By \cref{prop:MN-rule-irr-char} we have
\begin{equation*}
\phi_{\osymb{X}}(w) = \frac{1}{s(X)}\sum_{\substack{Y \in \Sim(X)\\\Def(Y) = \Def(X_{\spec})}}\sum_{\lambda \in \mathscr{H}_{d,0}(Y)} (-1)^{j\delta(\lambda)+\langle X^{\sharp},Y^{\sharp}\rangle+l_{d,0}(\lambda,X)}\rho_{\osymb{Y\setminus \lambda}}(w_2).
\end{equation*}
Let $\delta = \Def(X_{\spec}) \in \{0,1\}$. Under the isomorphism $\QQ[\Irr(W_{\cI\setminus\cO})] \overset{\sim}{\longrightarrow} \QQ[\tilde{\cS}_{n-d}^{\delta}]$ defined above the right hand side is identified with an expression in $\QQ[\tilde{\cS}]$.

Under the decomposition $\QQ[\tilde{\cS}] = \bigoplus_{e \in \ZZ} \QQ[\tilde{\cS}^e]$ this is the projection of $\tilde{\cH}_{d,0}^j(\tilde{\cR}(\osymb{X}))$ onto the subspace $\QQ[\tilde{\cS}^{\delta}]$. If $\Def(X)$ is odd then by \cref{thm:asai-fourier-commutes} we have $\tilde{\cH}_{d,0}^j(\tilde{\cR}(\osymb{X})) = \tilde{\cR}(\tilde{\cH}_{d,j}^0(\osymb{X}))$ and projecting the right hand side of this onto $\QQ[\tilde{\cS}^{\delta}]$ gives us that
\begin{align*}
\phi_{\osymb{X}}(w) &= \sum_{\lambda \in \mathscr{H}_{d,\delta}(X)}  
\frac{1}{s(X\setminus \lambda)}
\sum_{\substack{Y \in \Sim(X\setminus \lambda)\\\Def(Y) = \Def(Y_{\spec})}}(-1)^{\langle (X\setminus \lambda)^{\sharp},Y^{\sharp}\rangle+l_{d,j}(\lambda,X)}\rho_{\osymb{Y}}(w_2)\\
&= \sum_{\lambda \in \mathscr{H}_{d,\delta}(X)}  
(-1)^{l_{d,j}(\lambda,X)}\phi_{\osymb{X\setminus \lambda}}(w_2).
\end{align*}
If $\Def(X)$ is even then the same holds. but we must multiply through by $(-1)^j$.
\end{proof}

\begin{thm}\label{length-k}
Fix an integer $1 \leqslant k \leqslant n$. Then for each element $w \in W_{\cI}$ of cycle length $k$ and each symbol $\osymb{X} \in \tilde{\cS}_n$ of rank $n$, we have
\begin{equation*}
\abs{\phi_{\osymb{X}}(w)} \leqslant 2^{k-1}\cdot k!.
\end{equation*}
\end{thm}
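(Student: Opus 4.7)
The plan is to mirror the argument given for \cref{thm:char-bound-hyperoctahedral}, with \cref{prop:phi-MN-rule} playing the role of \cref{prop:MN-rule-irr-char}. The key point is that $\phi_{\osymb{X}}$ satisfies its own Murnaghan--Nakayama rule, so the inductive scheme used for irreducible characters of $W_{\cI}$ goes through with only cosmetic changes. Unlike the second half of the proof of \cref{thm:char-bound-hyperoctahedral}, no restriction/extension argument for $W_{\cI}^0$ is needed here, so the result is cleaner.

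We induct on $k$. For the base case $k = 1$, the element $w$ is an $\cI$-cycle, so \cref{prop:phi-MN-rule} expresses $\phi_{\osymb{X}}(w)$, up to sign, as a sum over $\Hk_{n, j}(X)$ of values $\phi_{\osymb{X\setminus\lambda}}(1)$, where $j = \delta_{\cI}(w)$. By \cref{lem:n-hooks} there is at most one such hook, and for any rank-zero symbol $Z$ one checks directly from the definition that $\abs{\phi_{\osymb{Z}}(1)} \leq 1$: the class function lives on the trivial group $W_{\emptyset}$ and evaluates to a signed dimension of a one-dimensional character. This yields $\abs{\phi_{\osymb{X}}(w)} \leq 1 = 2^{0}\cdot 1!$.

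For the inductive step, decompose $w = w_1 w_2$ where $w_1$ is an $\cO_1$-cycle of length $d_1 = \abs{\cO_1^+}$ and $w_2 \in W_{\cI\setminus\cO_1}$ has cycle length $k-1$, and set $j_1 = \delta_{\cO_1}(w_1)$. Applying \cref{prop:phi-MN-rule} and the inductive hypothesis gives
\begin{equation*}
\abs{\phi_{\osymb{X}}(w)} \leq \sum_{\lambda \in \Hk_{d_1, j_1}(X)} \abs{\phi_{\osymb{X \setminus \lambda}}(w_2)} \leq \abs{\Hk_{d_1, j_1}(X)} \cdot 2^{k-2}(k-1)!,
\end{equation*}
so it remains to show $\abs{\Hk_{d_1, j_1}(X)} \leq 2k$. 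Assuming $\phi_{\osymb{X}}(w) \neq 0$ (otherwise the bound is trivial), iterated application of \cref{prop:phi-MN-rule} to $w = w_1\cdots w_k$ produces a chain of arrays $X = X_0, X_1, \ldots, X_k$ with each $X_i = X_{i-1} \setminus_{d_i, j_i} \lambda_i$ for an appropriate hook, terminating at $\rk(X_k) = \rk(X) - (d_1 + \cdots + d_k) = 0$. Then $\Hk_{d_1, j_1}(X_k) = \emptyset$, and \cref{lem:hook-inclusion} gives $\abs{\Hk_{d_1, j_1}(X_i)} \leq \abs{\Hk_{d_1, j_1}(X_{i+1})} + 2$ at each stage; iterating $k$ times yields the required bound.

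The main point requiring attention is that the hook types $(d_i, j_i)$ now alternate between $j_i = 0$ and $j_i = 1$ according to the signs $\delta_{\cO_i}(w_i)$, unlike the uniformly $(d_i, 0)$ situation in \cref{thm:char-bound-hyperoctahedral}. However, both \cref{lem:hook-inclusion} and \cref{lem:n-hooks} are stated uniformly in the second coordinate, so the mixed hook types present no genuine obstacle to the argument.
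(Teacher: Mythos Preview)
Your proof is correct and matches the paper's approach exactly: the paper's proof consists of the single sentence ``The proof is identical to that of \cref{thm:char-bound-hyperoctahedral},'' and you have faithfully unpacked that, correctly noting the one cosmetic difference (mixed hook types $(d_i,j_i)$ in place of uniform $(d_i,0)$) and verifying that \cref{lem:hook-inclusion,lem:n-hooks} handle this without change.
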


\begin{proof}
The proof is identical to that of \cref{thm:char-bound-hyperoctahedral}.
\end{proof}

We now associate functions to unordered symbols as follows. If $\symb{X} \in \cS_n^{\odd}$ has odd defect then we simply let $\phi_{\symb{X}} = \phi_{\osymb{X}} \in \Class(W_{\cI})$. Now fix $e \in \{0,1\}$. Then for any $\symb{X} \in \cS_n^{\ev,e}$ we let
\begin{equation*}
\phi_{\symb{X}}
= \Res_{W_{\cI}^e}^{W_{\cI}}(\phi_{\osymb{X}})
= \frac{1}{s(X)}\sum_{\substack{Y \in \Sim(X)\\ \Def(Y) = 0}} (-1)^{\langle X^{\sharp},Y^{\sharp}\rangle}\Res_{W_{\cI}^e}^{W_{\cI}}(\rho_{\osymb{Y}}) \in \Class(W_{\cI}^e).
\end{equation*}
Note that $\Res_{W_{\cI}^e}^{W_{\cI}}(\rho_{\osymb{Y^{\op}}}) = (-1)^e\Res_{W_{\cI}^e}^{W_{\cI}}(\rho_{\osymb{Y}})$ so these functions are nonzero.

\begin{rem}
{\em With some additional justifications, the statement in \cref{prop:phi-MN-rule} may now equally be seen to hold for the class functions $\phi_{\symb{X}}$. If $\symb{X}$ has odd defect then the same statement holds verbatim.

Assume now in the statement of \cref{prop:phi-MN-rule} that $\delta(w) = e$, so that $w \in W_{\cI}^e$. Then it makes sense to consider $\phi_{\symb{X}}(w)$ for any $\symb{X} \in \cS_n^{\ev,e}$. Clearly $w_2 \in W_{\cI}^{e+j}$, and if $\lambda \in \mathscr{H}_{d,j}(X)$ then $\symb{X\setminus\lambda} \in \cS_n^{\ev,e+j}$, so the term $\phi_{\symb{X\setminus \lambda}}(w_2)$ makes sense. Hence, restricting the symbols to $\cS_n^{\ev,2e}$, the statement in \cref{prop:phi-MN-rule} continues to hold.}
\end{rem}

\section{Lusztig series}
In the next few sections we consider a general connected reductive group $\bG$ defined over $\mathbb{F} = \overline{\mathbb{F}_p}$ with Frobenius endomorphism $F : \bG \to \bG$. We will follow the setup in \cite{Lus}, see also the exposition in \cite[Chap.~2]{GeMa}. This setting, whilst a little less frequently used, is more convenient as we wish to discuss character values of Deligne--Lusztig characters. In this section we just outline some notation.

Let $\bT \leqslant \bB \leqslant \bG$ be a fixed $F$-stable maximal torus and Borel subgroup of $\bG$. Let $X = X(\bT) = \Hom(\bT,\GG_m)$, which we view as a $\mathbb{Z}$-module. For any $\phi : \bT \to \bT$ a morphism of algebraic groups we denote by $\phi^* : X \to X$ the map given by $\phi^*(\chi) = \chi\circ\phi$ for all $\chi \in X$.

For each $w \in W := \rN_{\bG}(\bT)/\bT$ we fix an element $n_w \in \rN_{\bG}(\bT)$ such that $w = n_w\bT$. Let $\iota_g : \bG \to \bG$, with $g \in \bG$, be the inner automorphism given by $\iota_g(x) = gxg^{-1}$. Then $Fw := F\iota_{n_w}$ and $wF := \iota_{n_w}F$ are also Frobenius endomorphisms of $\bG$ stabilising $\bT$. We write $w^*$ instead of $(\iota_{n_w}|_{\bT})^*$ and for brevity we let ${}^w\lambda = w^{*-1}(\lambda)$ for all $w \in W$ and $\lambda \in X$.

If $\mathbb{Z}_{(p)}$ is the localisation of $\mathbb{Z}$ at the prime ideal $(p) \subseteq \mathbb{Z}$ containing $p$ then $V = \mathbb{Z}_{(p)} \otimes_{\mathbb{Z}} X$ is a free $\mathbb{Z}_{(p)}$-module of finite rank. We will identify $X$ with its image in $V$ under the canonical map $x \mapsto 1\otimes x$ and similarly any homomorphism $\gamma : X \to X$ is identified with $1\otimes\gamma : V \to V$. Note the quotient $V/X$ is a torsion $\mathbb{Z}$-module.

Abusing terminology we call $W_{\aff} = X \rtimes W$ the affine Weyl group. We have a natural action of $W_{\aff}$ on $V$ given by $(\chi,z)\cdot \lambda = \chi + {}^z\lambda$ for any $(\chi,z) \in W_{\aff}$ and $\lambda \in V$. Let $W_{\aff}(\lambda)$ be the stabiliser of $\lambda$ and let $W(\lambda) = \{w \in W \mid \lambda - {}^w\lambda \in X\}$ be the projection of $W_{\aff}(\lambda)$ onto $W$.

If $\mathbb{Z}\Phi \subseteq X$ is the $\mathbb{Z}$-submodule generated by the roots then $W_{\aff}^{\circ} = \mathbb{Z}\Phi \rtimes W \lhd W_{\aff}$ is the usual affine Weyl group. Again we let $W_{\aff}^{\circ}(\lambda)$ be the stabiliser of $\lambda$ and let $W^{\circ}(\lambda)$ be the projection of $W_{\aff}^{\circ}(\lambda) \lhd W_{\aff}(\lambda)$ on to $W$. We have $W^{\circ}(\lambda)$ is the kernel of the homomorphism $W(\lambda) \to X/\mathbb{Z}\Phi$ given by $w \mapsto (\lambda-{}^w\lambda) + \mathbb{Z}\Phi$.

Given $\lambda \in V$ and $w \in W$ we let $\lambda_w = F^*\lambda - {}^w\lambda \in V$. Note that $(\lambda+\lambda')_w = \lambda_w+\lambda_w'$ for any $\lambda,\lambda' \in V$. Moreover, for any $z \in W$ a straightforward calculation shows that
\begin{equation}\label{eq:conj-identity}
{}^{F^{-1}(z)}\lambda_w = ({}^z\lambda)_{F^{-1}(z)wz^{-1}}.
\end{equation}
The set $\rZ_W(\lambda,F) = \{w \in W \mid \lambda_w \in X\}$ is either empty or a coset of the group $W(\lambda)$.

Denote by $\mathcal{C}_W(X,F)$  (resp. $\mathcal{D}_W(X,F)$) the set of all pairs $(\lambda,w)$ (resp. $(\lambda,a)$) with $\lambda \in V$ and $w \in \rZ_W(\lambda,F)$ (resp. $a = wW^{\circ}(\lambda) \subseteq \rZ_W(\lambda,F)$). By \eqref{eq:conj-identity}, we have a natural action of $W_{\aff}$ on $\mathcal{C}_W(X,F)$ via
\begin{equation*}
(\chi,z)\cdot (\lambda,w) = (\chi+{}^z\lambda,F^{-1}(z)wz^{-1})
\end{equation*}
and a similar action on $\mathcal{D}_W(X,F)$ as $W({}^z\lambda) = {}^zW(\lambda)$ for any $z \in W$ and $\lambda \in V$.

We now fix an injective homomorphism $\kappa : \mathbb{F}^{\times} \to \mathbb{C}^{\times}$. As $\bT^{wF}$ is a $p'$-group, we have a bijection $X(\bT^{wF}) \to \Irr(\bT^{wF})$ given by $\chi \mapsto \kappa\circ\chi$. Given a pair $(\lambda,w) \in \mathcal{C}_W(X,F)$ we set $$\lambda_{wF} = \kappa\circ(\lambda_w|_{\bT^{wF}}) \in \Irr(\bT^{wF}).$$ 
The following is straightforward; see \cite[Lem.~6.2]{Lus} and \cite[Lem.~2.4.8]{GeMa}.

\begin{lem}\label{lem:char-grp-iso}
Fix $w \in W$ and let $V(w) = \{\lambda \in V \mid \lambda_w \in X\}$. Then the map $V(w) \to \Irr(\bT^{wF})$ defined by $\lambda \mapsto \lambda_{wF}$ is a surjective $\mathbb{Z}$-module homomorphism with kernel $X \subseteq V(w)$.
\end{lem}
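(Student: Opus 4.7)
The plan is to factor $\lambda \mapsto \lambda_{wF}$ as a composition $V(w) \to X \to \Irr(\bT^{wF})$: the first arrow is $\lambda \mapsto \lambda_w$ (which lands in $X$ by definition of $V(w)$), and the second is $\mu \mapsto \kappa \circ \mu|_{\bT^{wF}}$. The $\mathbb{Z}$-linearity of the composite is clear, and the inclusion $X \subseteq V(w)$ is immediate since both $F^*$ and $w^*$ preserve $X$. That $X$ lies in the kernel is a direct check: for $\chi \in X$ and $t \in \bT^{wF}$, the defining relation $F(t) = n_w^{-1} t n_w$ gives $({}^w\chi)(t) = \chi(n_w^{-1} t n_w) = \chi(F(t)) = (F^*\chi)(t)$, whence $\chi_w(t) = 1$.

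The central input is that $F^* - w^{*-1}$ is an automorphism of $V = \mathbb{Z}_{(p)} \otimes X$. Indeed, the identity $F^*w^* - 1 = (F^* - w^{*-1})\circ w^*$ together with $w^* \in \Aut(X)$ gives $\abs{\det(F^* - w^{*-1})} = \abs{\det((wF)^* - 1)} = \abs{\bT^{wF}}$, which is a $p'$-integer. Hence $\det(F^* - w^{*-1})$ is a unit in $\mathbb{Z}_{(p)}$, so $\lambda \mapsto \lambda_w$ is injective on $V$ and sends $V(w)$ surjectively onto $X$.

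For the second arrow, the classical Lang--Steinberg argument applied to the Frobenius $\sigma = wF$ gives an exact sequence $0 \to ((wF)^* - 1)X \to X \to \Irr(\bT^{wF}) \to 0$, with the right-hand map being $\mu \mapsto \kappa \circ \mu|_{\bT^{wF}}$. Combined with surjectivity of $V(w) \to X$ this yields surjectivity of the composite. Using $(F^*w^* - 1)X = (F^* - w^{*-1})X$, an element $\lambda \in V(w)$ lies in the kernel of the composite iff $\lambda_w \in (F^* - w^{*-1})X$, which by injectivity of $F^* - w^{*-1}$ on $V$ is equivalent to $\lambda \in X$.

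The step requiring the most care is reconciling the formula $\lambda_w = F^*\lambda - w^{*-1}\lambda$ (which naturally involves $w^{*-1}$) with the standard Lang--Steinberg quotient $X/(F^*w^* - 1)X$ (which naturally involves $w^*$). The needed bridge is the one-line identity $(F^*w^* - 1) = (F^* - w^{*-1}) \circ w^*$, but one must be careful not to confuse $w^*$ with $w^{*-1}$ at any step of the bookkeeping.
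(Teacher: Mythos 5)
Your proof is correct. The paper itself gives no argument for this lemma (it simply cites [Lus, Lem.~6.2] and [GeMa, Lem.~2.4.8]), but the route you take -- factoring through $X$, using $\abs{\det((wF)^*-1)} = \abs{\bT^{wF}}$ to conclude that $F^*-w^{*-1}$ is a unit on $V=\ZZ_{(p)}\otimes X$, and invoking the Lang--Steinberg exact sequence $0 \to ((wF)^*-1)X \to X \to \Irr(\bT^{wF}) \to 0$ -- is precisely the standard argument underlying those references, and your careful reconciliation of $w^{*-1}$ versus $w^*$ via $(F^*w^*-1)=(F^*-w^{*-1})\circ w^*$ handles the only point that could cause trouble.
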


Given $w \in W$ and $\theta \in \Irr(\bT^{wF})$ we denote by $R_w^{\bG}(\theta)$ the virtual character of $\bG^F$ defined in \cite[Def.~1.9]{DL}. As usual we extend this by linearity to a map on all class functions. Moreover, for any $(\lambda,w) \in \mathcal{C}_W(X,F)$ we set $R_w^{\bG}(\lambda) := R_w^{\bG}(\lambda_{wF})$. We then define for any pair $(\lambda,a) \in \mathcal{D}_W(X,F)$ the set
\begin{equation*}
\mathcal{E}(\bG^F,\lambda,a) = \{\rho \in \Irr(\bG^F) \mid \langle R_w^{\bG}(\lambda),\rho \rangle \neq 0\text{ for some }w \in a\}.
\end{equation*}
This is a rational Lusztig series of $\bG^F$ contained in the geometric series 
$$\mathcal{E}(\bG^F,\lambda) = \{\rho \in \Irr(\bG^F) \mid \langle R_w^{\bG}(\lambda),\rho \rangle \neq 0\text{ for some }w \in \rZ_W(\lambda,F)\}$$ 
indexed by $\lambda \in V$. We have $\mathcal{E}(\bG^F,\lambda,a) = \mathcal{E}(\bG^F,\mu,b)$ if and only if $(\lambda,a)$ and $(\mu,b)$ are in the same $W_{\aff}$-orbit.

Suppose now that $\iota : \bG \to \tilde{\bG}$ is a regular embedding. Then $\tilde{\bT} = \bT\cdot \rZ(\bG)$ is an $F$-stable maximal torus of $\tilde{\bG}$ and if we let $\tilde{X} = X(\tilde{\bT})$ then we have a surjective $\ZZ$-module homomorphism $\iota^* : \tilde{X} \to X$ given by $\iota^*(\chi) = \chi\circ\iota$. Note this maps the roots of $\tilde{\bG}$ in $\tilde{X}$ bijectively onto the roots of $\bG$ in $X$. Through $\iota$ we identify $W$ with the Weyl group $\rN_{\tilde{\bG}}(\tilde{\bT})/\tilde{\bT}$ of $\tilde{\bG}$.

\begin{lem}\label{lem:res-series}
For any $(\tilde{\lambda},a) \in \cD_W(\tilde{X},F)$ we have $W(\tilde{\lambda}) = W^{\circ}(\tilde{\lambda}) = W^{\circ}(\lambda)$ and
\begin{equation*}
\mathcal{E}(\bG^F,\lambda,a) = \{\rho \in \Irr(\bG^F) \mid \langle \rho,\Res_{\bG^F}^{\tilde{\bG}^F}(\tilde{\rho})\rangle \neq 0\text{ for some }\tilde{\rho} \in \mathcal{E}(\tilde{\bG}^F,\tilde{\lambda},a)\}
\end{equation*}
where $\lambda = \iota^*(\tilde{\lambda})$.
\end{lem}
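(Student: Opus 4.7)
The plan is to establish the three Weyl subgroup equalities separately and then deduce the Lusztig series description from the standard compatibility of Deligne--Lusztig induction with regular embeddings. For $W(\tilde{\lambda}) = W^{\circ}(\tilde{\lambda})$: since $\tilde{\bG}$ has connected center the quotient $\tilde{X}/\ZZ\tilde{\Phi}$ is identified with $X(\rZ(\tilde{\bG}))$ and in particular is torsion-free; moreover $W$ acts trivially on it because Weyl representatives can be chosen in $\bG$ and hence centralize $\rZ(\tilde{\bG})$. For $w \in W(\tilde{\lambda})$ of order $n$, the telescoping identity $\sum_{k=0}^{n-1}{}^{w^k}(\tilde{\lambda} - {}^w\tilde{\lambda}) = \tilde{\lambda} - {}^{w^n}\tilde{\lambda} = 0$ reduces modulo $\ZZ\tilde{\Phi}$ to $n(\tilde{\lambda} - {}^w\tilde{\lambda}) \equiv 0$, forcing $\tilde{\lambda} - {}^w\tilde{\lambda} \in \ZZ\tilde{\Phi}$.

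For $W^{\circ}(\tilde{\lambda}) = W^{\circ}(\lambda)$, the inclusion $\subseteq$ is immediate on applying $\iota^*$. For the reverse, given $w \in W^{\circ}(\lambda)$ I would let $\alpha \in \ZZ\tilde{\Phi}$ be the unique element with $\iota^*\alpha = \lambda - {}^w\lambda$ (using that $\iota^*$ is bijective on root lattices) and set $\mu = \tilde{\lambda} - {}^w\tilde{\lambda} - \alpha \in \ker(\iota^*\colon\tilde{V} \to V) = \ZZ_{(p)} \otimes X(\tilde{\bT}/\bT)$. Since $\bG$ is normal in $\tilde{\bG}$ with torus quotient, $\bG$ acts trivially by conjugation on $\tilde{\bG}/\bG$ and hence on $\tilde{\bT}/\bT$, so $W$ acts trivially on $\ker\iota^*$. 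Telescoping in $V$ gives $\iota^*(\sum_k {}^{w^k}\alpha) = 0$, so $\sum_k {}^{w^k}\alpha = 0$ by injectivity of $\iota^*$ on $\ZZ\tilde{\Phi}$, and consequently $n\mu = \sum_k {}^{w^k}\mu = \sum_k{}^{w^k}(\tilde{\lambda} - {}^w\tilde{\lambda}) - \sum_k{}^{w^k}\alpha = 0$. Torsion-freeness of $\tilde{V}$ then forces $\mu = 0$, so $\tilde{\lambda} - {}^w\tilde{\lambda} = \alpha \in \ZZ\tilde{\Phi}$ and $w \in W^{\circ}(\tilde{\lambda})$.

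For the description of the rational Lusztig series, the key input is the standard compatibility $\Res_{\bG^F}^{\tilde{\bG}^F} R_w^{\tilde{\bG}}(\tilde{\lambda}) = R_w^{\bG}(\lambda)$, which follows from $\Res_{\bT^{wF}}^{\tilde{\bT}^{wF}} \tilde{\lambda}_{wF} = \lambda_{wF}$ and the usual restriction formula for Deligne--Lusztig characters under a regular embedding. Frobenius reciprocity then yields $\langle R_w^{\bG}(\lambda), \rho\rangle_{\bG^F} = \langle R_w^{\tilde{\bG}}(\tilde{\lambda}), \Ind_{\bG^F}^{\tilde{\bG}^F}\rho\rangle_{\tilde{\bG}^F}$, and this is nonzero for some $w \in a$ if and only if some irreducible constituent $\tilde{\rho}$ of $\Ind\rho$ lies in $\mathcal{E}(\tilde{\bG}^F, \tilde{\lambda}, a)$; another application of reciprocity gives that $\tilde{\rho}$ is such a constituent if and only if $\rho$ is a constituent of $\Res\tilde{\rho}$. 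The main obstacle is expected to be the equality $W^{\circ}(\tilde{\lambda}) = W^{\circ}(\lambda)$, which requires the careful interplay between the triviality of the $W$-action on the central quotient $\tilde{\bT}/\bT$, the bijectivity of $\iota^*$ on root lattices, and the torsion-freeness of $\tilde{V}$; by contrast, the Lusztig series step is essentially bookkeeping once the compatibility of $R_w^{\tilde{\bG}}$ with restriction is invoked.
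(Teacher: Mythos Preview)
Your argument is correct, but it takes a more laborious route than the paper for the Weyl-group equalities. For $W^{\circ}(\tilde{\lambda}) = W^{\circ}(\lambda)$ the paper simply observes that $\iota^*$ carries $\ZZ\tilde{\Phi}$ bijectively onto $\ZZ\Phi$, and since $W$ acts trivially on $\tilde{V}/\ZZ_{(p)}\tilde{\Phi}$ (as each simple reflection satisfies $s_\alpha v - v \in \ZZ_{(p)}\alpha$) one always has $\tilde{\lambda} - {}^w\tilde{\lambda} \in \ZZ_{(p)}\tilde{\Phi}$; thus $\tilde{\lambda} - {}^w\tilde{\lambda} \in \ZZ\tilde{\Phi}$ if and only if $\iota^*(\tilde{\lambda} - {}^w\tilde{\lambda}) = \lambda - {}^w\lambda \in \ZZ\Phi$. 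This one-line observation replaces your entire telescoping argument with the auxiliary element $\mu$, and shows that the step you flagged as ``the main obstacle'' is in fact the easy one. For $W(\tilde{\lambda}) = W^{\circ}(\tilde{\lambda})$ the paper instead cites \cite[Lem.~11.2.1]{DM} to see that the image of $W(\tilde{\lambda}) \to \tilde{X}/\ZZ\tilde{\Phi}$ has $p'$-order, and then uses that connectedness of $\rZ(\tilde{\bG})$ makes $\tilde{X}/\ZZ\tilde{\Phi}$ free of $p'$-torsion; your direct telescoping-plus-torsion-freeness argument is a valid and more self-contained alternative. For the Lusztig series description both you and the paper invoke the compatibility $\Res_{\bG^F}^{\tilde{\bG}^F} R_w^{\tilde{\bG}}(\tilde{\lambda}) = R_w^{\bG}(\lambda)$; the paper simply cites \cite[Prop.~11.7]{B}, whereas your Frobenius-reciprocity sketch is essentially the outline of that proof. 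One caution: in your ``if and only if'' the reverse implication is not automatic, since distinct constituents of $\Ind\rho$ could in principle cancel in $\langle R_w^{\tilde{\bG}}(\tilde{\lambda}), \Ind\rho\rangle$; one needs the additional observation that any two constituents of $\Ind\rho$ differ by a linear twist and hence at most one of them lies in a given rational series $\cE(\tilde{\bG}^F,\tilde{\lambda},a)$.
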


\begin{proof}
By the above remark we have $\tilde{\lambda} - {}^w\tilde{\lambda} \in \ZZ\Phi$ if and only if $\lambda - {}^w\lambda = \iota^*(\tilde{\lambda} - {}^w\tilde{\lambda}) \in \ZZ\Phi$ which shows that $W^{\circ}(\tilde{\lambda}) = W^{\circ}(\lambda)$. From the proof of \cite[Lem.~11.2.1]{DM} we see that the image of the map $W(\tilde{\lambda}) \to \tilde{X}/\ZZ\Phi$ has $p'$-order but as $\rZ(\tilde{\bG})$ is connected the quotient $\tilde{X}/\ZZ\Phi$ has trivial $p'$-torsion so $W(\tilde{\lambda}) = W^{\circ}(\tilde{\lambda})$. The second statement is \cite[Prop.~11.7]{B}.
\end{proof}

\section{A character bound from the Mackey formula}
We denote by $W\sd F$ the semidirect product of $W$ with the group $\langle F\rangle \leqslant \Aut(W)$ such that $FwF^{-1} = F(w)$ for all $w \in W$. The unique coset $WF \subseteq W\sd F$ of $W$ containing $F$ is a $W$-set under conjugation and for $w \in W$ we write $\bC_W(wF)$ for the stabiliser of $wF$ under this action.

For each $w \in W$ we choose an element $g_w \in \bG$ such that $g_w^{-1}F(g_w) = n_w \in \rN_{\bG}(\bT)$. As usual the map $wF \mapsto \bT_w := {}^{g_w}\bT$ yields a bijection between the orbits of $W$ acting on $WF$ and the $\bG^F$-classes of $F$-stable maximal tori. Note that $t\mapsto {}^{g_w}t$ gives an isomorphism $\bT^{wF}\to \bT_w^F$. In particular, we have a bijection $\Irr(\bT_w^F) \to \Irr(\bT^{wF})$ given by $\theta \mapsto {}^{g_w^{-1}}\theta = \theta\circ\iota_{g_w}$.

\begin{rem}\label{rem:conj-DL-chars}
{\em For $\theta \in \Irr(\bT_w^F)$ we have a Deligne--Lusztig character $R_{\bT_w}^{\bG}(\theta)$ defined as in \cite[1.20]{DL} which satisfies $R_{\bT_w}^{\bG}(\theta) = R_w^{\bG}({}^{g_w^{-1}}\theta)$. We will implicitly use this equality in what follows.}
\end{rem}

We have an action of $\bC_W(wF)$ on $\Irr(\bT^{wF})$ by setting ${}^z\theta = \theta\circ\iota_{n_z}^{-1}$ for any $z \in \bC_W(wF)$ and $\theta \in \Irr(\bT^{wF})$. We denote by $\bC_W(wF,\theta) \leqslant \bC_W(wF)$ the stabiliser of $\theta \in \Irr(\bT^{wF})$. Now for any $(\lambda,w) \in \mathcal{C}_W(X,F)$ and $z \in W$ it follows from \eqref{eq:conj-identity} that ${}^z(\lambda_{wF}) = (F(z)^{*-1}\lambda)_{zwFz^{-1}}$. Therefore,
\begin{equation}\label{eq:cent-comp}
F(\bC_W(wF,\lambda_{wF})) = \bC_{W(\lambda)}(Fw),
\end{equation}
where the latter stabiliser is calculated with respect to the action of $W(\lambda)$ on $F\rZ_W(\lambda,F)$.

A regular semisimple element $g \in \bG^F$ is said to be of type $wF \in WF$ if $\bC_{\bG}^{\circ}(g)$ is $\bG^F$-conjugate to $\bT_w$. Of course, the type is only determined up to $W$-conjugacy. Moreover, an element $g$ of type $wF$ is then $\bG^F$-conjugate to an element of the form ${}^{g_w}t$ with $t \in \bT^{wF}$.

As in \cite{DM}, if $H$ is a finite group then we denote by $\pi_h^H \in \Class(H)$ the function taking the value $\abs{\bC_H(h)}$ on $\Cl_H(h)$ and the value $0$ on $H - \Cl_H(h)$. For any $f \in \Class(H)$ we then have $\langle f,\pi_h^H\rangle = f(h)$. We also write $[g,h] = g^{-1}h^{-1}gh$ for the commutator of $g,h \in H$.

\begin{thm}\label{prop:mackey-bound}
Assume $g \in \bG^F$ is a regular semisimple element of type $wF$. Fix a pair $(\lambda,a) \in \mathcal{D}_W(X,F)$ and let 
$$\mathcal{X}_w(\lambda,a) := \{z \in W \mid [z,Fw]w^{-1}a = W^{\circ}(\lambda)\}.$$ 
If $\chi \in \mathcal{E}(\bG^F,\lambda,a)$ and $x \in a$ then
\begin{equation*}
\abs{\chi(g)} \leqslant \sum_{z \in \bC_W(Fw)\backslash\mathcal{X}_w(\lambda,a)/\bC_W(Fx,\lambda)} \big( \abs{\bC_W(Fw)}/\abs{\bC_{W({}^z\lambda)}(Fw)}^{\frac{1}{2}} \big) \leqslant \abs{W}\cdot \abs{\bC_W(Fw)}
\end{equation*}
\end{thm}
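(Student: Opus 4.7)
Since $g$ is a regular semisimple element of type $wF$, I may assume $g = {}^{g_w}t$ for some regular $t \in \bT^{wF}$. The characteristic function $\pi_g^{\bG^F}$ is then a uniform class function, and combining Fourier inversion on the abelian group $\bT^{wF}$ with the Deligne--Lusztig character formula on semisimple elements allows me to write $\chi(g) = \langle \chi,\pi_g^{\bG^F}\rangle$ in the form
\begin{equation*}
\chi(g) = c \cdot \sum_{\theta \in \Irr(\bT^{wF})} \theta(t^{-1})\,\bigl\langle\chi, R_w^{\bG}(\theta)\bigr\rangle,
\end{equation*}
for an appropriate normalising constant $c$ involving $|\bC_W(wF,t)|$ and $|\bT^{wF}|$.

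Next, I would restrict the sum using $\chi \in \mathcal{E}(\bG^F,\lambda,a)$. By \cref{lem:char-grp-iso} together with the $W_{\aff}$-orbit parametrisation of rational Lusztig series, $\langle\chi, R_w^{\bG}(\theta)\rangle = 0$ unless $\theta = ({}^z\lambda)_{wF}$ for some $z \in W$ such that $({}^z\lambda,w)$ lies in the $W_{\aff}$-orbit of $(\lambda,y)$ for some $y \in a$. Unwinding this via \eqref{eq:conj-identity} yields exactly the condition $z \in \mathcal{X}_w(\lambda,a)$. Since $R_w^{\bG}(\theta)$ is invariant under the $\bC_W(wF)$-action on $\Irr(\bT^{wF})$, and two elements $z_1,z_2 \in \mathcal{X}_w(\lambda,a)$ produce the same character when they differ on the right by an element of $\bC_W(Fx,\lambda)$, the distinct surviving Deligne--Lusztig characters $R_w^{\bG}(\theta)$ are parametrised by $\bC_W(Fw)\backslash\mathcal{X}_w(\lambda,a)/\bC_W(Fx,\lambda)$.

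Fix a double-coset representative $z$ and set $\theta = ({}^z\lambda)_{wF}$. By Cauchy--Schwarz and the Mackey formula for tori,
\begin{equation*}
|\langle\chi, R_w^{\bG}(\theta)\rangle| \le \|R_w^{\bG}(\theta)\| = |\bC_W(wF,\theta)|^{1/2} = |\bC_{W({}^z\lambda)}(Fw)|^{1/2},
\end{equation*}
the last equality being \eqref{eq:cent-comp}. Gathering the $\bC_W(wF)$-orbit of characters $\theta'$ all contributing the same $R_w^{\bG}(\theta)$, the sum $\sum_{\theta'} \theta'(t^{-1})$ has modulus at most the orbit size $|\bC_W(Fw)|/|\bC_{W({}^z\lambda)}(Fw)|$. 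Combining with the prefactor $c$ and performing the orbit-stabiliser bookkeeping yields a contribution of at most $|\bC_W(Fw)|/|\bC_{W({}^z\lambda)}(Fw)|^{1/2}$ per double coset, proving the first inequality. The weaker bound $|W|\cdot|\bC_W(Fw)|$ then follows because there are at most $|\mathcal{X}_w(\lambda,a)| \le |W|$ double cosets and each term is bounded above by $|\bC_W(Fw)|$.

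The main obstacle is the precise bookkeeping in Step~1: pinning down the constant $c$ so that, after gathering $\bC_W(wF)$-orbits and applying Cauchy--Schwarz in Step~3, all $|\bC_W(wF,t)|$ and $|\bC_W(wF,\theta)|$ factors combine to give exactly $|\bC_W(Fw)|/|\bC_{W({}^z\lambda)}(Fw)|^{1/2}$. A secondary technical point is verifying that the commutator-coset condition $[z,Fw]w^{-1}a = W^{\circ}(\lambda)$ in the definition of $\mathcal{X}_w(\lambda,a)$ is precisely equivalent to the $W_{\aff}$-equivalence of $({}^z\lambda,w)$ with some $(\lambda,y)$, $y\in a$.
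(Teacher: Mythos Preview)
Your approach is essentially the same as the paper's. The ``main obstacle'' you flag is a non-issue: the constant $c$ is simply $1$, with no $|\bC_W(wF,t)|$ factor, since for a regular semisimple $g$ one has $\pi_g^{\bG^F} = R_w^{\bG}(\pi_t^{\bT^{wF}}) = \sum_{\theta} \theta(t^{-1})R_w^{\bG}(\theta)$ on the nose (this is \cite[Prop.~9.18]{DL}). The paper then bounds $|\theta(t^{-1})| \leq 1$ immediately, obtaining $|\chi(g)| \leq \sum_\theta |\bC_W(wF,\theta)|^{1/2}$, and only afterwards groups the surviving terms into left $\bC_W(Fw)$-orbits (noting the summand is constant on orbits); this is marginally simpler than your plan of grouping first and bounding $|\sum_{\theta'} \theta'(t^{-1})|$ by the orbit size, but leads to the identical estimate.
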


\begin{proof}
Assume $t \in \bT^{wF}$ is such that ${}^{g_w}t \in \Cl_{\bG^F}(g)$. By \cite[Prop.~9.18]{DL}, see also \cite[Prop.~10.3.6]{DM}, we have $\pi_g^{\bG^F} = R_w^{\bG}(\pi_t^{\bT^{wF}}) = \sum_{\theta \in \Irr(\bT^{wF})} \theta(t^{-1})R_w^{\bG}(\theta)$. For any $\theta \in \Irr(\bT^{wF})$ we have by the Mackey formula for tori, see \cite[Thm.~6.8]{DL} or \cite[Cor.~9.3.1]{DM}, that
\begin{equation*}
\sum_{\chi \in \Irr(\bG^F)} \langle \chi,R_w^{\bG}(\theta)\rangle^2 = \langle R_w^{\bG}(\theta),R_w^{\bG}(\theta)\rangle = \abs{\bC_W(wF,\theta)}.
\end{equation*}
Applying this to $\langle \chi, \pi_g^{\bG^F}\rangle$ we get
\begin{equation}\label{eq:intermediate-bound}
\abs{\chi(g)} \leqslant \sum_{\theta \in \Irr(\bT^{wF})} \abs{\langle \chi,R_w^{\bG}(\theta)\rangle} \leqslant \sum_{\theta \in \Irr(\bT^{wF})} \abs{\bC_W(wF,\theta)}^{\frac{1}{2}}.
\end{equation}

Let $V(w)$ be as in \cref{lem:char-grp-iso} so that $V(w)/X \cong \Irr(\bT^{wF})$. If $\mu \in V(w)$ is such that $\langle \chi, R_w^{\bG}(\mu)\rangle \neq 0$ then we must have $(\mu,wW^{\circ}(\mu))$ and $(\lambda,a)$ are in the same $W_{\aff}$-orbit by the disjointness of Lusztig series. This happens if and only if there exists a $z \in W$ such that $\mu - {}^z\lambda \in X$ and
\begin{equation*} 
F^{-1}(z)az^{-1} = wW^{\circ}(\mu) = wzW^{\circ}(\lambda)z^{-1}.
\end{equation*}
This last condition is equivalent to $z \in \mathcal{X}_w(\lambda,a)$.

If $z \in \mathcal{X}_w(\lambda,a)$ and $v \in \bC_W(Fx,\lambda)$ then one checks easily that $zv \in \mathcal{X}_w(\lambda,a)$, and clearly ${}^{zv}\lambda = {}^z\lambda$. Hence, the sum in \eqref{eq:intermediate-bound} can be taken over $\mathcal{X}_w(\lambda,a)/\bC_W(Fx,\lambda)$ with $\theta$ replaced by $({}^z\lambda)_{wF}$. The group $\bC_W(Fw) = F(\bC_W(wF))$ acts on $\mathcal{X}_w(\lambda,a)$ by left multiplication, and the term in the sum is constant on orbits. Hence, we can sum as in the statement once we multiply through by the size of the orbit $\abs{\bC_W(Fw)/\bC_{W({}^z\lambda)}(Fw)}$. We now cancel terms using \eqref{eq:cent-comp}.
\end{proof}

\begin{rem}
{\em The above follows the same argument as \cite[Thm.~5.4]{GM} where one finds the bound $\abs{\bC_W(Fw)}$. The above yields this bound when the sum contains only one term. However, this does not hold in general so this should be corrected as above. In the extreme cases where $W(\lambda) = W$ or $W^{\circ}(\lambda) = \{1\}$ then this does give the bound $\abs{\bC_W(Fw)}$.}
\end{rem}

\section{Further bounds from Deligne--Lusztig characters}
The following result giving the value of a Deligne--Lusztig character at a regular semisimple element is well known. We simply translate this into the setup we utilise here, see \cite[Prop.~4.5.8]{Ge} for an equivalent formulation.

\begin{prop}\label{prop:val-DL-char}
Assume $g \in \bG^F$ is a regular semisimple element of type $wF$ and let $t \in \bT^{wF}$ be such that ${}^{g_w}t \in \Cl_{\bG^F}(g)$. Then for any $(\lambda,x) \in \mathcal{C}_W(X,F)$ we have
\begin{equation*}
R_x^{\bG}(\lambda)(g) = \sum_{\substack{\mu \in V(w)/X \\ (\mu,x) \in \Cl_W(\lambda,w)}} \abs{\bC_{W(\mu)}(Fw)} \mu_{wF}(t) = \abs{\bC_{W(\lambda)}(Fx)}\sum_{\substack{z \in W/\bC_{W(\lambda)}(Fx) \\ Fw = zFxz^{-1}}} ({}^z\lambda)_{wF}(t).
\end{equation*}
In particular, we have $\abs{R_x^{\bG}(\lambda)(g)} \leqslant \abs{\bC_W(Fw)}$.
\end{prop}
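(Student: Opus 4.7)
The proof begins from the class-function identity
\[
\pi_g^{\bG^F} = R_w^{\bG}(\pi_t^{\bT^{wF}})
\]
already invoked in the proof of \cref{prop:mackey-bound}, which holds because $g$ is regular semisimple of type $wF$ with ${}^{g_w}t$ in its class. Using it together with the adjointness of Deligne--Lusztig induction and restriction, and the identity $\langle \theta, \pi_t^{\bT^{wF}}\rangle_{\bT^{wF}} = \theta(t)$ for $\theta \in \Irr(\bT^{wF})$, one obtains
\[
R_x^{\bG}(\lambda)(g) = \langle R_x^{\bG}(\lambda_{xF}), \pi_g^{\bG^F}\rangle_{\bG^F} = \langle {}^*\!R_w^{\bG}(R_x^{\bG}(\lambda_{xF})), \pi_t^{\bT^{wF}}\rangle_{\bT^{wF}}.
\]
The Mackey formula for tori now expresses
\[
{}^*\!R_w^{\bG}(R_x^{\bG}(\lambda_{xF})) = \sum_{z \in W,\, z(xF)z^{-1} = wF} {}^z(\lambda_{xF}),
\]
as an element of $\Class(\bT^{wF})$, so $R_x^{\bG}(\lambda)(g) = \sum_z ({}^z\lambda_{xF})(t)$.

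To convert to the form of the statement, I apply \eqref{eq:conj-identity} with the change of variable $z' = F(z)$: each ${}^z(\lambda_{xF})$ is identified as a character of $\bT^{wF}$ with $({}^{z'}\lambda)_{wF}$, and the condition $z(xF)z^{-1} = wF$ in $W\sd F$ becomes $z'Fx(z')^{-1} = Fw$. This gives
\[
R_x^{\bG}(\lambda)(g) = \sum_{z' \in W,\ z'Fx(z')^{-1} = Fw} ({}^{z'}\lambda)_{wF}(t).
\]
The two displayed forms of the claim now arise by grouping this sum. For the right-hand expression, right multiplication by $v \in \bC_{W(\lambda)}(Fx)$ preserves the constraint (since $v$ centralises $Fx$) and leaves the summand fixed: $v \in W(\lambda)$ implies ${}^v\lambda - \lambda \in X$, and any $\xi \in X$ restricts trivially to $\bT^{wF}$ because $wF(t) = t$ on $\bT^{wF}$ forces $F^*\xi(t) = \xi(n_w^{-1}tn_w) = {}^w\xi(t)$, so $\xi_{wF} = 1$. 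Collapsing orbits of size $\abs{\bC_{W(\lambda)}(Fx)}$ yields the rightmost formula. For the left-hand form, I instead group by $\mu = {}^{z'}\lambda \bmod X \in V(w)/X$; from the definitions $W(\mu) = z'W(\lambda)(z')^{-1}$, and conjugation by $z'$ together with $z'Fx(z')^{-1} = Fw$ gives an isomorphism $\bC_{W(\lambda)}(Fx) \cong \bC_{W(\mu)}(Fw)$, producing the stated multiplicity $\abs{\bC_{W(\mu)}(Fw)}$ for each $\mu$.

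Finally, the bound is immediate from the intermediate sum: each value $({}^{z'}\lambda)_{wF}(t)$ is the value of a one-dimensional character and so has modulus at most $1$, while the set $\{z' \in W : z'Fx(z')^{-1} = Fw\}$, when nonempty, is simultaneously a right coset of $\bC_W(Fx)$ and a left coset of $\bC_W(Fw)$, and so has cardinality $\abs{\bC_W(Fw)}$. The main bookkeeping obstacle is the correspondence between the two Weyl-group conjugation conventions $z(xF)z^{-1} = wF$ and $z'Fx(z')^{-1} = Fw$ in $W\sd F$, together with the identification ${}^z(\lambda_{xF}) = ({}^{F(z)}\lambda)_{wF}$ supplied by \eqref{eq:conj-identity}; once these are untangled, the remainder is a standard Mackey-formula manipulation.
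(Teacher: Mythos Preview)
Your proof is correct and follows essentially the same route as the paper: starting from $\pi_g^{\bG^F} = R_w^{\bG}(\pi_t^{\bT^{wF}})$, applying the Mackey/inner-product formula for Deligne--Lusztig characters of tori, and then using \eqref{eq:conj-identity} together with the identification $V(w)/X \cong \Irr(\bT^{wF})$ from \cref{lem:char-grp-iso} to regroup the resulting sum. The paper's own proof is extremely terse (it simply cites the inner product formula and this identification), so your version is a faithful expansion of it; one small slip is that the solution set $\{z' : z'(Fx)(z')^{-1}=Fw\}$ is a \emph{left} coset of $\bC_W(Fx)$ and a \emph{right} coset of $\bC_W(Fw)$, not the other way round, but this does not affect the cardinality argument for the final bound.
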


\begin{proof}
As noted in the proof of \cref{prop:mackey-bound} we simply have to calculate $\langle R_x^{\bG}(\lambda), R_w^{\bG}(\pi_t^{\bT^{wF}})\rangle$. The statement now follows immediately from the inner product formula for Deligne--Lusztig characters and the identification $V(w)/X \cong \Irr(\bT^{wF})$.
\end{proof}

To make invoking Lusztig's classification results for the irreducible characters of $\bG^F$ simpler it will be beneficial to assume that $\rZ(\bG)$ is connected. As usual we invoke a regular embedding to achieve this. We note that for our purposes we do not need the significantly more difficult multiplicity freeness results obtained by Lusztig.

\begin{lem}\label{lem:ss-reg-embed}
Assume $\bG \to \tilde{\bG}$ is a regular embedding and let $\chi \in \Irr(\bG^F)$ be an irreducible constituent of $\Res_{\bG^F}^{\tilde{\bG}^F}(\tilde{\chi})$ for some $\tilde{\chi} \in \Irr(\tilde{\bG}^F)$. Then for any semisimple element $g \in \bG^F$ we have
\begin{equation*}
\abs{\chi(g)} \leqslant \abs{\tilde{\bG}^F/\tilde{\bG}_{\chi}^F}^{-1}\abs{\tilde{\chi}(g)} \leqslant \abs{\tilde{\chi}(g)},
\end{equation*}
where $\tilde{\bG}_{\chi}^F$ is the stabiliser of $\chi$ in $\tilde{\bG}^F$.
\end{lem}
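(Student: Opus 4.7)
The plan is to apply Clifford theory to the normal inclusion $\bG^F \lhd \tilde{\bG}^F$. Setting $I := \tilde{\bG}_{\chi}^F$ and $r := |\tilde{\bG}^F/I|$, Clifford's theorem produces an integer $e \geqslant 1$ and coset representatives $h_1 = 1, h_2, \dots, h_r$ of $I$ in $\tilde{\bG}^F$ with
\[
\Res_{\bG^F}^{\tilde{\bG}^F}(\tilde{\chi}) = e\sum_{i=1}^{r} \chi^{h_i},
\]
where $\chi^{h_i}(x) := \chi(h_i^{-1} x h_i)$. Evaluating at $g$ yields $\tilde{\chi}(g) = e\sum_{i=1}^r \chi(h_i^{-1} g h_i)$, and the first inequality in the statement reduces to showing $|\tilde{\chi}(g)| \geqslant r|\chi(g)|$; the second is immediate from $r \geqslant 1$.

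Thus the heart of the argument is the claim that for every $i$ the summand $\chi(h_i^{-1} g h_i)$ equals $\chi(g)$, which would give $|\tilde{\chi}(g)| = er|\chi(g)| \geqslant r|\chi(g)|$. This in turn reduces to showing that $h_i^{-1} g h_i$ is $\bG^F$-conjugate to $g$. Using $\tilde{\bG} = \bG \cdot \rZ(\tilde{\bG})$ (a property of any regular embedding), I would decompose $h_i = x_i z_i$ with $x_i \in \bG$ and $z_i \in \rZ(\tilde{\bG})$; centrality of $z_i$ yields $h_i^{-1} g h_i = x_i^{-1} g x_i$, and $F$-stability of $h_i$ forces the cocycle
\[
x_i^{-1} F(x_i) = z_i F(z_i)^{-1} \in \rZ(\tilde{\bG}) \cap \bG = \rZ(\bG) \subseteq \bC_{\bG}(g).
\]

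Since $\tilde{\bG}$ has connected centre, Steinberg's theorem gives $\bC_{\tilde{\bG}}(g)$ connected, with the product decomposition $\bC_{\tilde{\bG}}(g) = \bC_{\bG}(g)\cdot \rZ(\tilde{\bG})$. A Lang--Steinberg argument on this connected group, combined with the fact that the cocycle $x_i^{-1}F(x_i)$ is already written as a coboundary in $\rZ(\tilde{\bG}) \subseteq \bC_{\tilde{\bG}}(g)$, produces $c \in \bC_{\bG}(g)$ with $c^{-1} F(c) = x_i^{-1} F(x_i)$; then $y := c x_i$ lies in $\bG^F$ and satisfies $y^{-1} g y = x_i^{-1} g x_i$, exhibiting the desired $\bG^F$-conjugacy. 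The principal obstacle is precisely this final step: when $\bC_{\bG}(g)$ is disconnected one must control the class of the cocycle in $H^1(F,\bC_{\bG}(g))$ rather than merely in $H^1(F,\bC_{\tilde{\bG}}(g)) = 0$. The key lever is the freedom to modify the decomposition $h_i = x_i z_i$ by elements of $\rZ(\bG) = \rZ(\tilde{\bG}) \cap \bG$, which together with the factorisation through the connected group $\rZ(\tilde{\bG})$ suffices to trivialise the obstruction and land the Lang--Steinberg solution back inside $\bC_{\bG}(g)$.
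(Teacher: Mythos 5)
Your proposal correctly begins with Clifford's theorem, and correctly reduces the problem to showing $\chi(g) = \chi(h_i^{-1}gh_i)$ for each coset representative $h_i \in \tilde{\bG}^F$. However, the route you take — proving that $h_i^{-1}gh_i$ is actually $\bG^F$-conjugate to $g$ — is a strictly stronger statement, and it is \emph{false in general}. The $\bG^F$-classes inside $(g^{\bG})^F$ are parametrised by $H^1(F,\bC_\bG(g)/\bC_\bG^\circ(g))$. For $\bG$ of type $B$ or $D$ (the cases $\SO_{2n+1}$, $\SO_{2n}^\pm$, which are not simply connected), the centraliser $\bC_\bG(g)$ of a semisimple element — even a regular one — can be disconnected, making this cohomology set nontrivial. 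The connectedness of $\bC_{\tilde\bG}(g)$ you invoke implies precisely that $\tilde\bG^F$ acts \emph{transitively} on these several $\bG^F$-classes, so in such cases there necessarily exist $h\in\tilde\bG^F$ with $h^{-1}gh\not\sim_{\bG^F}g$. Your Lang--Steinberg step fails accordingly: the cocycle $x_i^{-1}F(x_i)\in\rZ(\bG)$ need not be a Lang coboundary of $\bC_\bG(g)$, and modifying the decomposition $h_i=x_iz_i$ by $\rZ(\bG)$ only changes the cocycle by coboundaries of $\rZ(\bG)$ — it cannot trivialise a genuinely nontrivial class in $H^1(F,\bC_\bG(g)/\bC_\bG^\circ(g))$.

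What is actually true, and what the paper proves, is the weaker equality $\chi(g)={}^{h}\chi(g)$ for semisimple $g$, \emph{without} any claim that $g$ and $h^{-1}gh$ lie in the same $\bG^F$-class. The argument is character-theoretic: by compatibility of $\Ind/\Res$ with Deligne--Lusztig induction (Bonnaf\'e, [B, Prop.\ 10.10]) and Frobenius reciprocity, $\chi$ and ${}^{h}\chi$ have the same inner product with every $R_\bT^\bG(\theta)$, hence the same uniform projection. Since the class indicator $\pi_g^{\bG^F}$ is a uniform function for semisimple $g$ ([DL, Prop.\ 9.18]), the value $\chi(g)=\langle\chi,\pi_g^{\bG^F}\rangle$ depends only on the uniform projection, whence $\chi(g)={}^{h}\chi(g)$ even when $\chi\neq{}^{h}\chi$. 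The subtlety your approach elides is exactly this: characters fused by $\tilde\bG^F$ can agree at all semisimple elements while differing at non-semisimple ones, and the lemma is true for the former reason, not because the semisimple conjugacy is coarser than it actually is.
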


\begin{proof}
Let $\bT \leqslant \bG$ be an $F$-stable maximal torus and $\theta \in \Irr(\bT^F)$. We then have $\tilde{\bT} = \bT\cdot\rZ(\tilde{\bG})$ is an $F$-stable maximal torus of $\tilde{\bG}$. Let $\tilde{\theta} \in \Irr(\tilde{\bT}^F)$ be an irreducible character such that $\Res_{\bT^F}^{\tilde{\bT}^F}(\tilde{\theta}) = \theta$. By \cite[Prop.~10.10]{B} we have $\Res_{\bG^F}^{\tilde{\bG}^F}(R_{\tilde{\bT}}^{\tilde{\bG}}(\tilde{\theta})) = R_{\bT}^{\bG}(\theta)$. Therefore, by Frobenius reciprocity,
\begin{equation*}
\langle \chi, R_{\bT}^{\bG}(\theta)\rangle = \langle \Ind_{\bG^F}^{\tilde{\bG}^F}(\chi), R_{\tilde{\bT}}^{\tilde{\bG}}(\tilde{\theta})\rangle.
\end{equation*}
This shows that for any $c \in \tilde{\bG}^F$ we have ${}^c\chi \in \Irr(\bG^F)$ and $\chi$ have the same uniform projection. As $\pi_g^{\bG^F}$ is a uniform function this means that $\chi(g) = \langle \chi,\pi_g^{\bG^F}\rangle = \langle {}^c\chi,\pi_g^{\bG^F}\rangle = {}^c\chi(g)$. Now by Clifford's Theorem we have
\begin{equation*}
\Res_{\bG^F}^{\tilde{\bG}^F}(\tilde{\chi}) = e\sum_{c \in \tilde{\bG}^F/\tilde{\bG}_{\chi}^F} {}^c\chi
\end{equation*}
for some integer $e \geqslant 1$. Hence $\tilde{\chi}(g) = e\abs{\tilde{\bG}^F/\tilde{\bG}_{\chi}^F}\chi(g)$ giving the bound.
\end{proof}

Fix a pair $(\lambda,a) \in \mathcal{D}_W(X,F)$. For a class function $f \in \Class(Fa)$ on the coset of $W^{\circ}(\lambda)$ we define a corresponding class function
\begin{equation}\label{eq:almost-char-def}
\mathcal{R}_{\lambda,a}^{\bG}(f) = \frac{1}{\abs{W^{\circ}(\lambda)}}\sum_{x \in a} f(Fx)R_x^{\bG}(\lambda).
\end{equation}
of $\bG^F$. Clearly this is contained in the subspace of all $\mathbb{C}$-class functions $\Class(\bG^F,\lambda,a)$ spanned by the Lusztig series $\mathcal{E}(\bG^F,\lambda,a)$. In fact, $\cR_{\lambda,a}^{\bG}$ gives an isomorphism $\Class(Fa) \to \Class_0(\bG^F,\lambda,a)$ onto the subspace spanned by $\{R_x^{\bG}(\lambda) \mid x \in a\}$; see the arguments in \cite[\S11.6]{DM}.

Suppose we choose a representative $a = wW^{\circ}(\lambda)$ of the coset. We may then form the semidirect product $W^{\circ}(\lambda) \sd Fw$ by the group $\langle Fw\rangle \leqslant \Aut(W^{\circ}(\lambda))$ as above. The coset of $W^{\circ}(\lambda)$ in $W^{\circ}(\lambda) \sd Fw$ containing $Fw$ can be identified, $W^{\circ}(\lambda)$-equivariantly, with the same coset in $W\sd F$.

We denote by $\Irr(Fw.W^{\circ}(\lambda))$ the set of restrictions $\Res_{Fa}^{W^{\circ}(\lambda)\sd Fw}(\tilde{\phi})$, where $\tilde{\phi} \in \Irr(W^{\circ}(\lambda)\sd Fw)$ restricts irreducibly to $W^{\circ}(\lambda)$. These functions on the coset $Fa$ depend on our choice of representative $w$, so we include a period to indicate this choice. There is, however, a natural choice $w_a \in a$ which is the element of minimal length (determined by our choice of Borel subgroup $\bB$).

\begin{lem}\label{lem:almost-char-bound}
Assume $g \in \bG^F$ is a regular semisimple element of type $wF$. For any irreducible character $\phi \in \Irr(Fx.W^{\circ}(\lambda))$, with $x \in a$, we have
\begin{equation*}
\abs{\mathcal{R}_{\lambda,a}^{\bG}(\phi)(g)} \leqslant \abs{\bC_W(Fw)}
\end{equation*}
\end{lem}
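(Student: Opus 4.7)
The plan is to combine the pointwise bound $\abs{R_{x'}^{\bG}(\lambda)(g)} \leqslant \abs{\bC_W(Fw)}$ from \cref{prop:val-DL-char} with a Cauchy--Schwarz inequality, once the appropriate $L^2$-normalization of $\phi$ on the coset $Fa$ is established. Concretely, the key auxiliary fact I would establish is the identity
\begin{equation*}
\sum_{x' \in a}\abs{\phi(Fx')}^2 = \abs{W^{\circ}(\lambda)}.
\end{equation*}
With this in hand, the rest of the argument is essentially a one-line Cauchy--Schwarz.

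To prove the norm identity, let $\tilde\phi \in \Irr(G)$, with $G = W^{\circ}(\lambda)\sd Fw$ and $N = W^{\circ}(\lambda)$, be the character of which $\phi$ is the restriction to the coset $Fa$, so that $\tilde\phi|_N$ is irreducible. Since $G/N \cong \langle Fw\rangle$ is cyclic of some order $d$, the $d$ linear characters $\chi_0,\ldots,\chi_{d-1}$ of $G/N$ pull back to $G$, and by Gallagher's theorem the twists $\tilde\phi\chi_0,\ldots,\tilde\phi\chi_{d-1}$ provide $d$ pairwise distinct irreducible extensions of $\tilde\phi|_N$ to $G$. Writing out the orthogonality relations $\langle \tilde\phi\chi_i,\tilde\phi\chi_j\rangle_G = \delta_{ij}$ in terms of the coset sums $T_k := \sum_{y \in Fw^kN}\abs{\tilde\phi(y)}^2$ yields a discrete Fourier transform linear system in $T_0,\ldots,T_{d-1}$ whose unique solution is $T_k = \abs{N}$ for every $k$, giving the displayed identity when applied to the coset $Fa$.

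For the main bound, I apply the triangle inequality and \cref{prop:val-DL-char} to the defining formula for $\cR_{\lambda,a}^{\bG}(\phi)$ to obtain
\begin{equation*}
\abs{W^{\circ}(\lambda)}\cdot\abs{\cR_{\lambda,a}^{\bG}(\phi)(g)} \leqslant \sum_{x' \in a}\abs{\phi(Fx')}\cdot\abs{R_{x'}^{\bG}(\lambda)(g)} \leqslant \abs{\bC_W(Fw)}\sum_{x' \in a}\abs{\phi(Fx')},
\end{equation*}
and then Cauchy--Schwarz gives $\sum_{x' \in a}\abs{\phi(Fx')} \leqslant \sqrt{\abs{a}}\cdot\sqrt{\sum_{x'\in a}\abs{\phi(Fx')}^2} = \sqrt{\abs{W^{\circ}(\lambda)}}\cdot\sqrt{\abs{W^{\circ}(\lambda)}} = \abs{W^{\circ}(\lambda)}$. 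Dividing through by $\abs{W^{\circ}(\lambda)}$ yields the claim. The only step requiring genuine work is the norm identity, which is a short character-theoretic computation leveraging Gallagher's theorem and column orthogonality for the cyclic quotient $G/N$; I do not anticipate further obstacles.
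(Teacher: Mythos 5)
Your proof is correct and follows essentially the same route as the paper: apply the triangle inequality together with the pointwise bound $\abs{R_{x'}^{\bG}(\lambda)(g)}\leqslant\abs{\bC_W(Fw)}$ from \cref{prop:val-DL-char}, then control the remaining sum of $\abs{\phi}$ over the coset $Fa$. The only difference is in how that last sum is handled: the paper cites Isaacs' Lemma~8.14(c), whereas you reprove the underlying coset norm identity $\sum_{y\in Fa}\abs{\tilde\phi(y)}^2=\abs{W^\circ(\lambda)}$ from scratch using Gallagher's theorem and orthogonality of the cyclic twists $\tilde\phi\chi_i$, and then pass to the $L^1$ bound via Cauchy--Schwarz. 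Your Fourier-inversion computation of the $T_k$'s is correct, and $\abs{a}=\abs{W^\circ(\lambda)}$ makes the Cauchy--Schwarz step close cleanly. The paper's phrasing suggests the $L^1$ average is exactly $1$; in fact that equality does not hold for a general extension (e.g.\ the standard character of $S_5$ on the nontrivial $A_5$-coset), so the inequality you derive is the correct and sufficient version, and your explicit Cauchy--Schwarz step makes the argument airtight.
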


\begin{proof}
By \cref{prop:val-DL-char} we have
\begin{equation*}
\abs{\mathcal{R}_{\lambda,a}^{\bG}(\phi)(g)} \leqslant \abs{\bC_W(Fw)} \left( \frac{1}{\abs{W^{\circ}(\lambda)}} \sum_{y \in W^{\circ}(\lambda)} \abs{\phi(Fxy)}\right),
\end{equation*}
but by \cite[Lem.~8.14(c)]{I}, the sum on the right hand side is equal to $1$.
\end{proof}

\begin{lem}\label{lem:value-almost-char}
Assume $g \in \bG^F$ is a regular semisimple element of type $wF$. Then for any class function $f \in \Class(Fa)$ we have
\begin{equation*}
\mathcal{R}_{\lambda,a}^{\bG}(f)(g) = \sum_{\substack{z \in \bC_W(Fw)\backslash W/W^{\circ}(\lambda) \\ Fx = z^{-1}Fwz \in Fa}} \frac{f(Fx) \cdot R_x^{\bG}(\lambda)(g)}{\abs{\bC_{W^{\circ}(\lambda)}(Fx)}}.
\end{equation*}
\end{lem}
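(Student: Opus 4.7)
The plan is to unwind the definition of $\cR_{\lambda,a}^{\bG}(f)$ and reorganise the resulting sum over $a$ into a sum over double cosets in $\bC_W(Fw) \backslash W / W^{\circ}(\lambda)$. The key simplification comes from \cref{prop:val-DL-char}: $R_x^{\bG}(\lambda)(g) = 0$ unless $Fx$ is $W$-conjugate (inside $W\sd F$) to $Fw$, so in
\[
\cR_{\lambda,a}^{\bG}(f)(g) = \frac{1}{\abs{W^{\circ}(\lambda)}}\sum_{x \in a} f(Fx) R_x^{\bG}(\lambda)(g)
\]
only those $x \in a$ with $Fx$ conjugate to $Fw$ contribute.

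First, I would re-index by the conjugating element. Let $\cZ = \{z \in W \mid z^{-1}(Fw)z \in Fa\}$; for each $z \in \cZ$ there is a unique $x(z) \in a$ with $Fx(z) = z^{-1}(Fw)z$, and each $x \in a$ with $Fx \sim Fw$ is hit by exactly $\abs{\bC_W(Fw)}$ elements of $\cZ$ (the preimages form a left coset of $\bC_W(Fw)$). This gives
\[
\sum_{x \in a} f(Fx) R_x^{\bG}(\lambda)(g) = \frac{1}{\abs{\bC_W(Fw)}} \sum_{z \in \cZ} f(Fx(z)) R_{x(z)}^{\bG}(\lambda)(g).
\]

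Next I would partition $\cZ$ into $\bC_W(Fw)\times W^{\circ}(\lambda)$-orbits under $(c,u)\cdot z = czu$. For $z' = czu$ one computes $(z')^{-1}(Fw)z' = u^{-1}(Fx(z))u$, so left-multiplication by $\bC_W(Fw)$ leaves $Fx(z)$ fixed and right-multiplication by $u \in W^{\circ}(\lambda)$ conjugates it by $u$ inside $Fa$. The summand is therefore constant on double cosets: $f$ is a class function on $Fa$ by hypothesis, while the invariance of $R_x^{\bG}(\lambda)(g)$ under $W^{\circ}(\lambda)$-conjugation of $Fx$ follows from \eqref{eq:conj-identity} together with the fact that ${}^u\lambda - \lambda \in \mathbb{Z}\Phi\subseteq X$ for $u\in W^{\circ}(\lambda)$, so $({}^u\lambda)_{F}$ and $\lambda_{F}$ give rise to the same Deligne--Lusztig character at the finite group level. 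A standard orbit count shows each double coset has size $\abs{\bC_W(Fw)}\abs{W^{\circ}(\lambda)}/\abs{\bC_{W^{\circ}(\lambda)}(Fx)}$, where $Fx=z^{-1}(Fw)z$ for a representative $z$; substituting and cancelling the two factors of $\abs{\bC_W(Fw)}\abs{W^{\circ}(\lambda)}$ against the prefactor $1/(\abs{\bC_W(Fw)}\abs{W^{\circ}(\lambda)})$ yields the stated formula.

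The only genuinely nontrivial step is verifying that $R_x^{\bG}(\lambda)(g)$ depends only on the $W^{\circ}(\lambda)$-orbit of $Fx$ in $Fa$ (rather than merely on its geometric orbit under all of $W$); this is where the distinction between $W(\lambda)$ and $W^{\circ}(\lambda)$ becomes essential and uses the compatibility of the $W$-action on $V$ with the identification $V(w)/X \cong \Irr(\bT^{wF})$ from \cref{lem:char-grp-iso}. One also has to check en route that $W^{\circ}(\lambda)$-conjugation preserves the coset $Fa$, which follows because $wF$ normalises $W^{\circ}(\lambda)$ (equivalently $W^{\circ}(\lambda) = W^{\circ}({}^{w}F^{*}\lambda)$) for any $w \in \rZ_W(\lambda,F)$; everything else is a routine orbit-counting argument.
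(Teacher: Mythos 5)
Your proof is correct and follows essentially the same route as the paper's (more terse) argument: use \cref{prop:val-DL-char} to restrict the sum defining $\cR_{\lambda,a}^{\bG}(f)(g)$ to those $x$ with $Fx$ conjugate to $Fw$, rewrite via the bijection $\bC_W(Fw)\backslash W \to \Cl_W(Fw)$, and then group into double cosets with the orbit-size factor $\abs{W^{\circ}(\lambda)}/\abs{\bC_{W^{\circ}(\lambda)}(Fx)}$. You also correctly identify and justify the two implicit steps the paper leaves unstated, namely that the summand is constant on right $W^{\circ}(\lambda)$-orbits (using the class-function hypothesis on $f$ and the $W_{\aff}$-equivariance of $(\lambda,x)\mapsto R_x^{\bG}(\lambda)$) and that the coset $Fa$ is stable under $W^{\circ}(\lambda)$-conjugation.
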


\begin{proof}
By \cref{prop:val-DL-char} we may restrict the sum over $x \in a$, found in the definition of $\mathcal{R}_{\lambda,a}^{\bG}(f)$, to those elements satisfying $Fx \in \Cl_W(Fw)$. Alternatively, via the bijection $\bC_W(Fw)\backslash W \to \Cl_W(Fw)$, given by $\bC_W(Fw)z \mapsto z^{-1}Fwz$, we can sum over all cosets $\bC_W(Fw)z \in \bC_W(Fw)\backslash W$ such that $z^{-1}Fwz \in Fa$. Grouping together elements in the same $W^{\circ}(\lambda)$-orbit and bringing $\abs{W^{\circ}(\lambda)}$ into the sum gives the statement.
\end{proof}

Observe that, if $W^{\circ}(\lambda) = W$ and $w \in a$ then $\mathcal{R}_{\lambda,a}^{\bG}(f)(g) = f(Fw)\lambda_{wF}(t)$, see \cite[Prop.~3.3]{LM}. Together with \cref{thm:char-bound-hyperoctahedral} and \cref{length-k}, this implies

\begin{cor}\label{unip-bound}
Suppose $\bG$ is a quasisimple group of classical type $\sA$ to $\sD$. If $g \in \bG^F$ is a regular semisimple element of cycle length $k$ and $\chi \in \Irr(\bG^F)$ is a unipotent character, then $\abs{\chi(g)} \leq 2^{2k} \cdot k!$.
\end{cor}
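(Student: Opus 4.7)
The plan is to combine the observation immediately preceding the corollary with the Weyl-group character bounds proved in \cref{thm:char-bound-hyperoctahedral} and \cref{length-k}. For a unipotent character one has $\lambda = 0$, so that $W^\circ(\lambda) = W$ and $\lambda_{wF}(t) = 1$; the formula of the observation therefore specialises to $\mathcal{R}_{0,W}^{\bG}(f)(g) = f(Fw)$ for any $f \in \Class(WF)$. The task reduces to identifying the class function $f_\chi \in \Class(WF)$ whose image under $\mathcal{R}_{0,W}^{\bG}$ is the uniform projection of $\chi$, and to bounding $|f_\chi(Fw)|$.

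First I would reduce to the case that $\rZ(\bG)$ is connected. Applying \cref{lem:ss-reg-embed} to a regular embedding $\bG \hookrightarrow \tilde{\bG}$, and using the fact that every unipotent character of $\bG^F$ occurs as a constituent of $\Res_{\bG^F}^{\tilde{\bG}^F}(\tilde{\chi})$ for some unipotent $\tilde{\chi}\in\Irr(\tilde{\bG}^F)$, this reduction is immediate. Then, since the characteristic function $\pi_g^{\bG^F}$ of the class of the semisimple element $g$ is uniform, $\chi(g)$ equals the value at $g$ of the uniform projection of $\chi$, which by the previous paragraph equals $f_\chi(Fw)$.

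The remaining step is to identify $f_\chi$ in each classical type and invoke the appropriate bound from \cref{sec:hyperoctahedral-groups}. In type $\sA$ (split or twisted) one has $W \cong \Sym_n$ and $f_\chi$ is an ordinary irreducible character $\chi_\mu$ of $\Sym_n$ for a partition $\mu \vdash n$; the bound $|\chi_\mu(Fw)| \leq 2^{k-1}\cdot k!$ follows from the classical MN-rule, as in \cite[Thm.~7.2]{LaSh2}. In types $\sB$ and $\sC$ one has $W \cong W_\cI$ and $f_\chi = \phi_{\osymb{X}}$ for an ordered symbol of rank $n$ and odd defect, so \cref{length-k} yields $|\phi_{\osymb{X}}(Fw)| \leq 2^{k-1}\cdot k!$. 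In type $\sD$ (split or twisted) one has $W \cong W_\cI^0$, and $f_\chi$ is a constituent of $\Res_{W_\cI^0}^{W_\cI}(\phi_{\osymb{X}})$ for a symbol $\osymb{X}$ of even defect; the second part of \cref{thm:char-bound-hyperoctahedral} then gives $|f_\chi(Fw)| \leq (2^k + 1)\cdot 2^{k-1}\cdot k! \leq 2^{2k}\cdot k!$. In every case $|\chi(g)| \leq 2^{2k}\cdot k!$, as claimed.

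The most delicate point will be the identification of $f_\chi$, which relies on Lusztig's classification of the unipotent characters of finite classical groups in terms of symbols, together with the explicit description of their uniform projections via almost characters. One must match the Lusztig datum with the combinatorial $\phi_{\osymb{X}}$ of \cref{sec:hyperoctahedral-groups} so as to apply the MN-rule of \cref{prop:phi-MN-rule}. For the twisted groups $\tw{2}\sA$ and $\tw{2}\sD$ some additional care is needed to check that $F$ acts on $W$ compatibly with this combinatorics (by an inner automorphism in the first case, by the standard outer swap in the second), so that the MN-rule bounds transfer to the twisted coset $WF$ without change.
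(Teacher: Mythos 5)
Your proposal is correct and follows essentially the same route the paper has in mind: the observation before the corollary reduces $\chi(g)$ to $f_\chi(Fw)$ (with $\lambda = 0$, so $W^\circ(0)=W$ and $\lambda_{wF}(t)=1$), and then one invokes the Weyl-group bounds of Section~\ref{sec:hyperoctahedral-groups}. The paper deliberately leaves the identification of $f_\chi$ implicit, citing \cref{thm:char-bound-hyperoctahedral} and \cref{length-k} together, so your fleshing-out of the case division is exactly what is expected.

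One small imprecision in the type $\sD$ step is worth flagging. You attribute the bound for $f_\chi$ there uniformly to the second part of \cref{thm:char-bound-hyperoctahedral}, which is a statement about irreducible characters of $W_\cI^0$. For a \emph{non-degenerate} symbol $\osymb{X}$ of even defect, $f_\chi = \Res_{W_\cI^0}^{W_\cI}(\phi_{\osymb{X}})$ is not an irreducible character of $W_\cI^0$ in general; the correct tool is \cref{length-k}, which gives $\abs{\phi_{\osymb{X}}(w)}\leq 2^{k-1}\cdot k!$ and hence the same bound for the restriction evaluated at $w\in W_\cI^0$. It is only in the \emph{degenerate} case $X^0=X^1$ (where $\phi_{\osymb{X}}=\pm\rho_{\osymb{X}}$ is irreducible, restricting as $\chi_+ + \chi_-$ and $f_\chi$ equals one of $\pm\chi_\pm$) that the second part of \cref{thm:char-bound-hyperoctahedral}, i.e.\ the Stembridge-based argument, is genuinely needed, and this is precisely what forces the weaker constant $2^{2k}\cdot k!$ in the statement. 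Since both bounds are $\leq 2^{2k}\cdot k!$, your conclusion is unaffected, but the attribution should distinguish the two subcases.
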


Now suppose $[\bG,\bG]$ is quasisimple of type $\sA_{n-1}$. In this case we have $W$ is isomorphic to $\Sym_n$ and $F$ induces an inner automorphism on $W$, which is either trivial or conjugation by the longest element. Hence, the coset $WF$ can be identified with $W$ so it makes sense to speak of the cycle type of an element of $WF$.

\begin{cor}\label{glu-bound2}
Assume all the quasi-simple components of $\bG$ are of type $\sA$. If $g \in \bG^F$ is a regular semisimple element of type $wF$ then
\begin{equation*}
\abs{\chi(g)} \leqslant \abs{\bC_W(Fw)}
\end{equation*}
for any $\chi \in \Irr(\bG^F)$. Moreover, suppose $[\bG,\bG]$ is quasisimple of type $\sA_{n-1}$ with $n \geqslant 2$. If $wF$ has cycle length $k \geqslant 1$ then $\abs{\chi(g)} \leqslant k! \cdot n^k$ for any $\chi \in \Irr(\bG^F)$.
\end{cor}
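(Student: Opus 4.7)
The plan is to show that every irreducible character of $\bG^F$ is, up to a sign, a single almost character $\mathcal{R}_{\lambda,a}^{\bG}(\phi)$ — a feature special to type~$\sA$ — and then to invoke \cref{lem:almost-char-bound}. The Moreover part then reduces to a standard centraliser bound in $\Sym_n$.

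First I would reduce to the case $\rZ(\bG)$ connected. Choose a regular embedding $\iota\colon \bG \hookrightarrow \tilde{\bG}$ in which every component of $\tilde{\bG}$ is of $\GL$- or $\GU$-type (possible since the original components are of type~$\sA$). Given $\chi \in \Irr(\bG^F)$, fix $\tilde{\chi} \in \Irr(\tilde{\bG}^F)$ lying above it; \cref{lem:ss-reg-embed} gives $\abs{\chi(g)} \leqslant \abs{\tilde{\chi}(g)}$, and $\iota$ identifies the Weyl groups so no factor is lost. Then $\tilde{\chi}$ lies in some series $\mathcal{E}(\tilde{\bG}^F,\tilde{\lambda},\tilde{a})$, and $W(\tilde{\lambda}) = W^{\circ}(\tilde{\lambda})$ by \cref{lem:res-series}. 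Because $\tilde{\bG}$ is of type~$\sA$ with connected centre, Lusztig's classification yields a sign-bijection $\phi \mapsto \pm\mathcal{R}_{\tilde{\lambda},\tilde{a}}^{\tilde{\bG}}(\phi)$ between $\Irr(Fw_{\tilde{a}}.W^{\circ}(\tilde{\lambda}))$ and $\mathcal{E}(\tilde{\bG}^F,\tilde{\lambda},\tilde{a})$. Applying \cref{lem:almost-char-bound} to the $\phi$ corresponding to $\tilde{\chi}$ gives $\abs{\tilde{\chi}(g)} \leqslant \abs{\bC_W(Fw)}$, establishing the first inequality.

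For the Moreover part, $W \cong \Sym_n$ and $Fw$ is conjugate in $\Sym_n$ to an element with $k$ cycles, of lengths $d_1,\dots,d_k$ and multiplicities $m_i$ (so $\sum_i m_i = k$ and $\sum_i i m_i = n$). Then
\[
\abs{\bC_W(Fw)} \;=\; \prod_i i^{m_i}\, m_i! \;=\; \Big(\prod_{j=1}^k d_j\Big)\prod_i m_i!,
\]
with $\prod_j d_j \leqslant n^k$ (each $d_j \leqslant n$) and $\prod_i m_i! \leqslant k!$ (since these appear as denominators of a multinomial coefficient on $k$ symbols, so $\prod_i m_i!$ divides $k!$). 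Combining yields $\abs{\chi(g)} \leqslant k! \cdot n^k$.

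The main obstacle is neither step of the bound itself but rather cleanly citing the ``every character is an almost character up to sign'' statement in type~$\sA$. For split $\GL_n$ this is classical — it follows from linear independence of Deligne--Lusztig characters and the uniformity of all characters in type~$\sA$ — but the mixed $\GL/\GU$ setting in our generality needs an explicit Jordan decomposition, or a direct invocation of Lusztig's classification, rather than just the character-theoretic reductions of the previous sections.
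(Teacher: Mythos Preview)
Your proposal is correct and follows essentially the same route as the paper: reduce to connected centre via \cref{lem:ss-reg-embed}, use that in type $\sA$ every irreducible character is (up to sign) a single almost character $\mathcal{R}_{\lambda,a}^{\bG}(\phi)$, and then invoke \cref{lem:almost-char-bound}; the centraliser bound in $\Sym_n$ is likewise handled by the same factorisation $\abs{\bC_{\Sym_n}(w)} = \prod_i i^{m_i}\,m_i!$ together with $\prod_i m_i! \mid k!$ and $\prod_j d_j \leqslant n^k$. Your closing caveat about needing to cite Lusztig's classification explicitly is exactly the one step the paper also takes for granted.
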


\begin{proof}
By \cref{lem:ss-reg-embed}, we can assume that $\rZ(\bG)$ is connected. In that case every irreducible character is, up to sign, of the form $\mathcal{R}_{\lambda,a}^{\bG}(\phi)$ with $\phi \in \Irr(Fw.W(\lambda))$ so this is just \cref{lem:almost-char-bound}.

For the final statement we need only show that $\abs{\bC_{\Sym_n}(w)} \leqslant k!\cdot n^k$. If all cycles have the same length, say $m \geqslant 1$, then $\bC_{\Sym_n}(w) \cong C_m \wr \Sym_k$ so $\abs{\bC_{\Sym_n}(w)} = k! \cdot m^k$. Now an arbitrary $w$ may be written as a pairwise commuting product $w = w_1\cdots w_r$ such that for each $1 \leqslant i \leqslant r$ we have $w_i$ is a product of $k_i \geqslant 1$ disjoint cycles of length $m_i \geqslant 1$ and the lengths $m_1,\dots,m_r$ are pairwise distinct. We then have 
\begin{equation*}
\bC_{\Sym_n}(w) \cong \bC_{\Sym_{k_1m_1}}(w_1) \times \cdots \times \bC_{\Sym_{k_rm_r}}(w_r),
\end{equation*}
and by the previous calculation 
\begin{equation*}
\abs{\bC_{\Sym_n}(w)} = (k_1!\cdot m_1^{k_1})\cdots (k_r!\cdot m_r^{k_r}) \leqslant k!\cdot n^{k_1+\cdots +k_r} = k! \cdot n^k.\qedhere
\end{equation*}
\end{proof}

For the next statement we wish to define an integer $r(W,F) \geqslant 0$ as follows. Let $\mathbb{S} \subseteq W$ be the set of Coxeter generators determined by our choice of Borel $\bB$. Write $W = W_1\cdots W_m$ as a direct product of its irreducible components, all of which are assumed to be of type $\sA$ through $\sD$. We then have a corresponding decomposition $\mathbb{S} = \mathbb{S}_1\sqcup \cdots \sqcup \mathbb{S}_m$. The Frobenius $F$ permutes the $W_i$. Suppose first that it does so transitively. Then we define 
\begin{equation*}
r(W,F) = \begin{cases}
0 &\text{if $W_1$ is of type $\sA_n$ with $n \geqslant 0$},\\
\abs{\mathbb{S}_1} &\text{otherwise}.
\end{cases}
\end{equation*}
Here we consider the trivial group as being of type $\sA_0$.

Now grouping together the $W_i$ we can write $W = W^{(1)}\cdots W^{(n)}$ where each $W^{(i)}$ is an $F$-stable subgroup such that $F$ permutes transitively its irreducible components. Hence, we are in the previous situation and we define $r(W,F) = r(W^{(1)},F)+\cdots+r(W^{(n)},F)$.

\begin{thm}\label{thm:main-bound-classical}
Assume $g \in \bG^F$ is a regular semisimple element of type $wF$ and every quasisimple component of $\bG$ is of classical type $\sA$ to $\sD$. If $F$ is a Frobenius endomorphism then for any irreducible character $\chi \in \mathcal{E}(\bG^F,\lambda,a)$ we have
\begin{equation*}
\abs{\chi(g)} \leqslant 2^r\cdot \abs{\bC_W(Fw)}
\end{equation*}
where $w_a \in a$ is the unique element of minimal length and $r = r(W^{\circ}(\lambda),Fw_a)$ is defined as above.
\end{thm}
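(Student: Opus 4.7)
The plan is to reduce to bounding the value of the uniform projection of $\chi$ at $g$, then to expand this projection in the basis of almost characters $\mathcal{R}_{\lambda,a}^{\bG}(\phi)$ and bound the transition coefficients using Lusztig's classification.

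Using \cref{lem:ss-reg-embed} I would first reduce to the case of connected centre, noting that $W$, $W^{\circ}(\lambda)$ and $r(W^{\circ}(\lambda), Fw_a)$ are unchanged by a regular embedding. Regular semisimplicity of $g$ means $\pi_g^{\bG^F}$ is uniform (as in the proof of \cref{prop:mackey-bound}), so $\chi(g)$ depends only on the uniform projection $\chi^{\mathrm{unif}} \in \Class_0(\bG^F, \lambda, a)$. Expanding in the almost character basis
\[
\chi^{\mathrm{unif}} = \sum_\phi c_{\chi,\phi}\,\mathcal{R}_{\lambda,a}^{\bG}(\phi),
\]
and using \cref{lem:almost-char-bound} to bound each $|\mathcal{R}_{\lambda,a}^{\bG}(\phi)(g)| \le |\bC_W(Fw)|$, the problem reduces to showing $\sum_\phi |c_{\chi,\phi}| \le 2^r$.

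For the coefficient bound, Jordan decomposition identifies $\mathcal{E}(\bG^F,\lambda,a)$ with unipotent characters of a classical reductive group whose Weyl group is $W^{\circ}(\lambda)$, and the transition matrix $(c_{\chi,\phi})$ accordingly factors as a tensor product over the $F$-orbits of simple components in the decomposition $W^{\circ}(\lambda) = W^{(1)}\cdots W^{(n)}$. Each type-$\sA$ orbit contributes trivially, since the irreducible characters of a symmetric group are parameterised directly by partitions and no Fourier transform intervenes; this gives a factor of $1 = 2^0$. Each non-$\sA$ orbit contributes Lusztig's Fourier transform on a Lusztig family, whose entries agree up to sign with the coefficients $\pm 1/s(X)$ appearing in the definition of $\phi_{\osymb{X}}$ in \cref{sec:hyperoctahedral-groups}. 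A direct count of the terms in $\Sim(X)$ of appropriate defect, weighed against $s(X) = 2^{(|X^{\ominus}| - \Def(X_{\spec}))/2}$, bounds the row sum of this tensor factor by $2^{|\mathbb{S}_i|}$, and taking the product over the $F$-orbits yields $2^r$.

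The hard part is this last step: correctly identifying the transition matrix as a tensor product of Fourier transforms of precisely the form set up in \cref{sec:combinatorics,sec:asai-result}, and bounding each factor's row sum by $2^{|\mathbb{S}_i|}$. This demands invoking Lusztig's explicit classification of characters of classical groups, handling carefully the splitting $\symb{X}_{\pm}$ of degenerate symbols in type $\sD$ and the action of $F$ when it permutes simple components nontrivially in the twisted types. The combinatorial apparatus built up in \cref{sec:combinatorics,sec:asai-result,sec:hyperoctahedral-groups}, including the commuting diagrams for $\cR_e, \cQ_e$ and $\cH_{d,i}^j$, is precisely what is needed to carry out this estimate uniformly.
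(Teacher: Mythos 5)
Your proposal follows the same high-level strategy as the paper — reduce to connected centre via \cref{lem:ss-reg-embed}, pass to the uniform projection of $\chi$ using regular semisimplicity of $g$, expand in the almost-character basis, bound each $\abs{\mathcal{R}_{\lambda,a}^{\bG}(\phi)(g)}$ by $\abs{\bC_W(Fw)}$ via \cref{lem:almost-char-bound}, and then control the $\ell^1$-norm of the transition coefficients by $2^r$. Where you diverge is in how you bound those coefficients. You propose to compute the transition matrix explicitly as a tensor product of symbol-combinatorial Fourier transforms (the $\pm 1/s(X)$ coefficients of $\phi_{\osymb{X}}$ from \cref{sec:hyperoctahedral-groups}) and to estimate each tensor factor's row sum by counting terms in the similarity class. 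The paper instead invokes Lusztig's abstract parametrization directly: for each $\gamma$-stable family $\mathcal{F}\subseteq\Irr(W(\lambda))$ it uses \cite[Thm.~4.23, 4.26.1]{Lus} to write $\chi(g) = \pm\sum_{\phi\in\mathcal{F}^\gamma}\{\bar x_\chi, x_{\tilde\phi}\}\mathcal{R}_{\lambda,a}^{\bG}(\tilde\phi)(g)$, then applies the two abstract facts that $\abs{\{\bar x, x\}} = \abs{\mathcal{G_F}}^{-1}$ (since $\mathcal{G_F}$ is elementary abelian $2$ in classical type) and $\abs{\mathcal{F}^\gamma}\leqslant\abs{\mathcal{G_F}}^2$ (from \cite[4.21.6]{Lus}), giving $\sum_\phi\abs{c_{\chi,\phi}}\leqslant\abs{\mathcal{G_F}}\leqslant 2^r$. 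This avoids entirely the case analysis (degenerate symbols, $\symb{X}_{\pm}$ splitting, twisted $F$-action) that you correctly identify as the hard part of your route; in particular the paper does not need the machinery of \cref{sec:combinatorics,sec:asai-result} for this theorem at all — that machinery is deployed later, for the sharper quadratic-unipotent bound in \cref{quadratic} and \cref{bcd-bound}, which your explicit approach more closely resembles. Your plan would work, but you are paying for precision you do not need; the paper's abstract route is shorter, cleaner, and sidesteps exactly the delicate bookkeeping you flag as unfinished.
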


\begin{proof}
Again, by \cref{lem:ss-reg-embed} we can assume $\rZ(\bG)$ is connected. By \cref{lem:res-series} this implies that $W(\lambda) = W^{\circ}(\lambda)$ and $a = \rZ_W(\lambda,F)$ so $\mathcal{E}(\bG^F,\lambda,a) = \mathcal{E}(\bG^F,\lambda)$. Recall that in \cite[Chp.~4]{Lus} Lusztig has defined a partition of $\Irr(W(\lambda))$ into families.

Denote by $w_a \in a = \rZ_W(\lambda,F)$ the unique element of minimal length. The automorphism $\gamma := Fw_a$ of $W(\lambda)$ permutes the families. Suppose $\mathcal{F} \subseteq \Irr(W(\lambda))$ is a $\gamma$-stable family. For each $\gamma$-fixed character $\phi \in \mathcal{F}^{\gamma}$ we fix an extension $\tilde{\phi} \in \Irr(W(\lambda) \sd \gamma)$ that is realisable over $\mathbb{Q}$.

Suppose $\mathcal{F} \subseteq \Irr(W(\lambda))$ is $\gamma$-stable and let 
$$\mathcal{E}(\bG^F,\lambda,\mathcal{F}) = \{\chi \in \Irr(\bG^F) \mid \langle \chi, \mathcal{R}_{\lambda,a}^{\bG}(\tilde{\phi})\rangle \neq 0 \mbox{ for some }\phi \in \mathcal{F}^{\gamma}\}.$$ 
These sets partition $\mathcal{E}(\bG^F,\lambda)$; see \cite[Thm.~6.17]{Lus}. Associated to $\mathcal{F}$ we have a corresponding finite group $\mathcal{G}_{\mathcal{F}}$.

As all factors are of classical type, we have $\mathcal{G}_{\mathcal{F}}$ is a (possibly trivial) elementary abelian $2$-group. We also have two sets $\overline{\mathcal{M}}(\mathcal{G_F},\gamma)$ and $\mathcal{M}(\mathcal{G_F},\gamma)$ and a pairing 
$$\{-,-\} : \overline{\mathcal{M}}(\mathcal{G_F},\gamma) \times \mathcal{M}(\mathcal{G_F},\gamma) \to \mathbb{C}.$$ 
From the formula for this pairing, we see that
\begin{equation*}
\abs{\{\bar{x},x\}} = \abs{\mathcal{G_F}}^{-1}
\end{equation*}
for any $\bar{x} \in \overline{\mathcal{M}}(\mathcal{G_F},\gamma)$ and $x \in \mathcal{M}(\mathcal{G_F},\gamma)$

By \cite[Thm.~4.23]{Lus}, we have a bijection $\mathcal{E}(\bG^F,\lambda,\mathcal{F}) \to \overline{\mathcal{M}}(\mathcal{G_F},\gamma)$, which we denote by $\chi \mapsto x_{\chi}$, and an injection $\mathcal{F}^{\gamma} \to \mathcal{M}(\mathcal{G_F},\gamma)$, denoted by $\phi \mapsto x_{\tilde{\phi}}$. This latter map depends on our choice of extension. Now, by \cite[4.26.1]{Lus}, if $\chi \in \mathcal{E}(\bG^F,\lambda,\mathcal{F})$ then
\begin{equation*}
\chi(g) = \pm \sum_{\phi \in \mathcal{F}^{\gamma}} \{\bar{x}_{\chi}, x_{\tilde{\phi}}\}\mathcal{R}_{\lambda,a}^{\bG}(\tilde{\phi})(g).
\end{equation*}
The group of roots of unity acts on $\mathcal{M}(\mathcal{G_F},\gamma)$, and the number of orbits is the same as $\abs{\overline{\mathcal{M}}(\mathcal{G_F},\gamma)} = \abs{\mathcal{G_F}}$. It follows from \cite[4.21.6]{Lus} that $\abs{\mathcal{F}^{\gamma}} \leqslant \abs{\mathcal{G_F}}^2$. We now use \cref{lem:almost-char-bound}.
\end{proof}

We assume $F$ is a Frobenius endomorphism. If $[\bG,\bG]$ is quasisimple of type $\sB_n$ or $\sC_n$ then $W \cong W_{\cI}$ is a hyperoctahedral group, and $F$ induces the identity on $W$. If $[\bG,\bG]$ is quasisimple of type $\sD_n$ then $W \cong W_{\mathcal{I}}^0$ and either $F$, $F^2$, or $F^3$ induces the identity on $W$. When $F^2$ induces the identity on $W$, we have an embedding $W\sd F \to W_{\cI}$. Thus, it makes sense to speak of the \emph{cycles} of an element of the coset $WF$. The following is now just a simple application of \cref{lem:cents-pairwise-distinct}.

\begin{cor}\label{bcd-bound-distinct}
Assume $[\bG,\bG]$ is quasisimple of type $\sB_n$ ($n\geqslant 2$), $\sC_n$ ($n\geqslant 2$), or $\sD_n$ ($n\geqslant 4$), and $F$ is a Frobenius endomorphism with $F^2$ inducing the identity on $W$. If $wF \in WF$ has cycle length $k \geqslant 1$ and pairwise distinct cycles, then for any regular semisimple element $g \in \bG^F$ of type $wF$ we have
\begin{equation*}
\abs{\chi(g)} \leqslant 2^{n+k}\cdot n^k
\end{equation*}
for all $\chi \in \Irr(\bG^F)$.
\end{cor}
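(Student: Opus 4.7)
The plan is to combine \cref{thm:main-bound-classical} with \cref{lem:cents-pairwise-distinct}(ii). Since every quasisimple component of $\bG$ is of classical type, \cref{thm:main-bound-classical} gives, for any $\chi \in \Irr(\bG^F)$ lying in some Lusztig series $\mathcal{E}(\bG^F,\lambda,a)$, the bound
$$
\abs{\chi(g)} \leqslant 2^r \cdot \abs{\bC_W(Fw)},
\qquad
r = r(W^{\circ}(\lambda), Fw_a).
$$
It therefore suffices to bound the two factors by $2^n$ and $2^k n^k$ respectively.

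For $r$, I would argue directly from the definition preceding \cref{thm:main-bound-classical}: for any $F$-stable Coxeter system $W'$ with classical factors, $r(W',F)$ is at most the rank of $W'$, because within each $F$-orbit of irreducible components one contributes either $0$ or the rank of a single component, and the sum over all orbits is bounded above by the total rank of $W'$. Since $W^{\circ}(\lambda)$ is a reflection subgroup of $W$ and $W$ has rank $n$, this forces $r \leqslant n$, so $2^r \leqslant 2^n$.

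For $\abs{\bC_W(Fw)}$, I would use the embedding $W \sd F \hookrightarrow W_{\cI}$ discussed just before the statement (trivially available in types $\sB_n$ and $\sC_n$ since $F$ acts trivially on $W = W_{\cI}$, and provided by the hypothesis $F^2|_W = \mathrm{id}$ in type $\sD_n$). Under this embedding $wF$ becomes an element of $W_{\cI}$ of cycle length $k$ with pairwise distinct cycles, and \cref{lem:cents-pairwise-distinct}(ii) then yields $\abs{\bC_{W_{\cI}}(wF)} \leqslant 2^k n^k$. Since $\bC_W(Fw)$ sits inside $\bC_{W_{\cI}}(wF)$, the same bound passes down. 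Multiplying the two estimates gives $\abs{\chi(g)} \leqslant 2^n \cdot 2^k n^k = 2^{n+k} n^k$, as desired.

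The one point that needs to be checked with care is that the notion of ``cycle'' used in the hypothesis for elements of the coset $WF$ coincides with the cycle structure induced by the embedding into $W_{\cI}$, so that \cref{lem:cents-pairwise-distinct} applies verbatim; this is essentially the content of the paragraph introducing the embedding immediately before the corollary, and so presents no real obstacle.
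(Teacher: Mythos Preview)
Your proposal is correct and follows exactly the route the paper intends: it applies \cref{thm:main-bound-classical} and then bounds the two factors using the definition of $r(-,-)$ (giving $r\leqslant n$) and \cref{lem:cents-pairwise-distinct}(ii) (giving $\abs{\bC_W(Fw)}\leqslant 2^k n^k$) via the embedding $W\sd F\hookrightarrow W_{\cI}$. The paper's own proof is the one-line remark ``a simple application of \cref{lem:cents-pairwise-distinct}'', and your write-up simply unpacks this.
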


\section{Quadratic unipotent characters}\label{quadratic-unipotent}
Recall that a bound for the values of unipotent characters at regular semisimple elements was obtained in \cref{unip-bound}.
In this section, we establish a bound for the more general class of quadratic unipotent characters. Consider a connected reductive group $\bG$ whose center $\rZ(\bG)$ is connected, of dimension $1$, and whose derived subgroup $\bG_{\der}=[\bG,\bG] \leqslant \bG$ is a symplectic or special orthogonal group. We specify $\bG$ by its root datum $(X,\Phi,\wc{X},\wc{\Phi})$. Firstly we have $X = \bigoplus_{i=0}^n\mathbb{Z}e_i$ and $\wc{X} = \bigoplus_{i=0}^n\mathbb{Z}\wc{e}_i$ with perfect pairing $\langle -,-\rangle : X\times\wc{X} \to \mathbb{Z}$ given by $\langle e_i,\wc{e}_j\rangle = \delta_{ij}$ (the Kronecker delta). We assume $n \geqslant 2$.

A set of simple roots $\alpha_1,\dots,\alpha_n$ and corresponding coroots $\wc{\alpha}_1,\dots,\wc{\alpha}_n$ are as follows. We have $(\alpha_1,\wc\alpha_1)$ is one of the pairs $(-e_1,-2\wc{e}_1)$, $(e_0-2e_1,-\wc{e}_1)$, or $(e_0-e_1-e_2,-\wc{e}_1-\wc{e}_2)$. Then for $2 \leqslant i \leqslant n$ we let $\alpha_i = e_{i-1}-e_i$ and $\wc\alpha_i = \wc{e}_{i-1}-\wc{e}_i$. The choices correspond to whether $\bG$ is of type $\sB_n$, $\sC_n$, or $\sD_n$, respectively.

One easily calculates that $(\langle \alpha_i,\wc{\alpha}_j\rangle)$ is a Cartan matrix and $X/\mathbb{Z}\Phi \cong \mathbb{Z}$ is generated by $e_0+\mathbb{Z}\Phi$. Let $X_{\der} = X/(\mathbb{Z}e_0)$ and $\wc{X}_{\der} = \bigoplus_{i=1}^n\wc{e}_i$. Denote by $\overline{\phantom{\alpha}} : X \to X_{\der}$ the natural quotient map. We then have $(X_{\der},\overline{\Phi},\wc{X}_{\der},\wc{\Phi})$ is the root datum of $\bG_{\der}$.

Fix a prime power $q = p^a$. We describe $F$ by defining $F^*$ as an endomorphism of $X$. For any $0 \leqslant i \leqslant n$ with $i \neq 1$ we have $F^*e_i = qe_i$. We then have $F^*e_1$ is either $qe_1$ or $q(e_0-e_1)$ with this latter case occurring only when $\bG^F$ is of type ${}^2\sD_n(q)$.

If $V_{\der} = \mathbb{Z}_{(p)} \otimes_{\mathbb{Z}} X_{\der}$ then we have $V_{\der} \cong \bigoplus_{i=1}^n \mathbb{Z}_{(p)}\overline{e}_i$ and $V \cong \bigoplus_{i=1}^{n+1} \mathbb{Z}_{(p)}e_i$. The natural quotient map $X \to X_{\der}$ extends to a surjective $\mathbb{Z}_{(p)}$-module homomorphism $\overline{\phantom{\lambda}} : V \to V_{\der}$. Given $\lambda \in V$ we have $W(\lambda) = W^{\circ}(\lambda) = W^{\circ}(\overline{\lambda})$ and if $\chi \in \mathcal{E}(\bG^F,\lambda,a)$ then all irreducible constituents of $\chi$ are contained in $\mathcal{E}(\bG_{\der}^F,\overline{\lambda},a)$, see \cref{lem:res-series}.

Consider the totally ordered set $\mathcal{I} = \{-\overline{e}_n \prec \cdots \prec -\overline{e}_1\prec \overline{e}_1 \prec \cdots \prec \overline{e}_n\}$. Letting $F$ act on $X$ via $q^{-1}F^*$ we get an action of $W\sd F$ on $X_{\der}$ that gives an injective homomorphism $W\sd F \to \Sym_{\mathcal{I}}$. We implicitly identify $W\sd F$, hence also $W$, with its image which is contained in $W_{\mathcal{I}}$. Via this identification we can speak of the {\it signed cycle type} of any element of $W\sd F$.

\begin{thm}\label{quadratic}
Let $(\lambda,a) \in \mathcal{D}_W(X,F)$ be such that $2\overline{\lambda} \in X_{\der}$ and let $g \in \bG^F$ be a regular semisimple element of type $wF$. If $wF$ has cycle length $k \geqslant 1$ and pairwise distinct cycles then
\begin{equation*}
\abs{\chi(g)} \leqslant 2^{3k+4}\cdot k!
\end{equation*}
for any irreducible character $\chi \in \cE(\bG^F,\lambda,a)$.
\end{thm}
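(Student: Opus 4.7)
The plan is to follow the outline of the proof of \cref{thm:main-bound-classical}, decomposing $\chi(g)$ via Lusztig's formula (4.26.1) as a linear combination of almost-character values $\cR_{\lambda,a}^{\bG}(\tilde\phi)(g)$, but to replace the coarse estimate of \cref{lem:almost-char-bound} with the sharp $n$-independent bound provided by \cref{length-k}. First I would reduce to $\rZ(\bG)$ connected via \cref{lem:ss-reg-embed} so that \cref{lem:res-series} gives $W(\lambda)=W^\circ(\lambda)$ and $a=\rZ_W(\lambda,F)$. The hypothesis $2\bar\lambda\in X_{\der}$ says precisely that the dual semisimple element has order dividing $2$, and consequently $W^\circ(\lambda)$ decomposes as a direct product $W_{\cI_1}\times W_{\cI_2}^{0}$ of hyperoctahedral groups (or, in a type~$\sD$ factor, the index-$2$ subgroup) associated to a $\sigma$-stable splitting $\cI=\cI_1\sqcup\cI_2$ coming from the $\pm 1$-eigenspace decomposition of the dual element.

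Next, for a $\gamma$-stable family $\mathcal{F}\subseteq\Irr(W^\circ(\lambda))$ meeting $\chi$, with $\gamma=Fw_a$, I would apply
\begin{equation*}
\chi(g)=\pm\sum_{\phi\in\mathcal{F}^\gamma}\{\bar{x}_\chi,x_{\tilde\phi}\}\,\cR_{\lambda,a}^{\bG}(\tilde\phi)(g).
\end{equation*}
In classical type the family group $\mathcal{G}_{\mathcal{F}}$ is an elementary abelian $2$-group, $|\{\bar{x}_\chi,x_{\tilde\phi}\}|=|\mathcal{G}_{\mathcal{F}}|^{-1}$, and $|\mathcal{F}^\gamma|\leq|\mathcal{G}_{\mathcal{F}}|^2$, so it suffices to bound each almost-character value. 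I would expand $\cR_{\lambda,a}^{\bG}(\tilde\phi)(g)$ via \cref{lem:value-almost-char} and \cref{prop:val-DL-char}; because $2\lambda\in X$, every linear character $({}^{z'}\lambda)_{wF}$ takes values in $\{\pm 1\}$ on the order-dividing-$2$ element $t\in\bT^{wF}$ attached to $g$, so the expansion reduces to a signed combination of values $\tilde\phi(Fx)$. The key step is then to identify the inner combination $\sum_{\phi\in\mathcal{F}^\gamma}\{\bar{x}_\chi,x_{\tilde\phi}\}\tilde\phi$ with a product $\phi_{\osymb{X_1}}\cdot\phi_{\osymb{X_2}}$ of the class functions of \cref{sec:hyperoctahedral-groups} for symbols $\osymb{X_i}$ encoding the Jordan datum of $\chi$; this is where \cref{thm:asai-fourier-commutes} and \cref{prop:phi-MN-rule} enter, matching Lusztig's family-theoretic Fourier transform with the combinatorial Fourier transform $\tilde{\cR}$.

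To conclude, since $wF$ has $k$ pairwise distinct cycles its cycles distribute across the two factors of $W^\circ(\lambda)$ as $k=k_1+k_2$, and \cref{length-k} applied factorwise yields $|\phi_{\osymb{X_i}}(w_i)|\leq 2^{k_i-1}\cdot k_i!$. Multiplying, using $k_1!\,k_2!\leq k!$, and absorbing the fixed powers of $2$ coming from $|\mathcal{G}_{\mathcal{F}}|$, the double-coset count in \cref{lem:value-almost-char}, and the index-$2$ restriction/extension issues in type~$\sD$, comfortably produces $|\chi(g)|\leq 2^{3k+4}\cdot k!$. The main obstacle I anticipate is precisely the identification step in the previous paragraph: matching Lusztig's Fourier transform on $\mathcal{F}^\gamma$ with the combinatorial $\tilde{\cR}$, tracking signs through the restriction to $W_{\cI}^e$ in type~$\sD$, and organising the two factors of $W^\circ(\lambda)$ into a compatible pair of symbol-indexed class functions in the style of Jordan decomposition. \cref{thm:asai-fourier-commutes} is the ingredient designed for this match, but the sign bookkeeping and the handling of degenerate symbols will require real care.
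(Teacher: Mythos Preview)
Your plan is essentially the paper's: identify the uniform projection of $\chi$ as $\cR_{\lambda,a}^{\bG}(f_\chi)$ with $f_\chi=\phi_{\symb{X}}\boxtimes\phi_{\symb{Y}}$, then combine \cref{length-k} factorwise with the double-coset expansion of \cref{lem:value-almost-char}. The paper does this directly, writing down Lusztig's isomorphism $\mathscr{U}_m\otimes\mathscr{U}_{n-m}\to\Class(\bG^F,\lambda,a)$ type by type ($\sB$, $\sC$, $\sD$) and reading off $f_\chi$, rather than first passing through the family formula (4.26.1) as you suggest; but the identification you flag as the ``key step'' is exactly the same one.

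Two points in your write-up need correction. First, $t\in\bT^{wF}$ is a \emph{general} regular semisimple element and does not have order dividing $2$; it is the dual semisimple element (encoded by $\lambda$) that squares to $1$. All that is needed, and all the paper uses, is $|({}^z\lambda)_{wF}(t)|=1$. Second, once you identify $\sum_{\phi}\{\bar x_\chi,x_{\tilde\phi}\}\tilde\phi$ with $f_\chi$ there is no residual $|\mathcal{G}_{\mathcal{F}}|$ factor; the intermediate remark ``so it suffices to bound each almost-character value'' describes the crude route of \cref{thm:main-bound-classical} (which costs $|\mathcal{G}_{\mathcal{F}}|\le 2^n$) and should be discarded. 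The powers of $2$ that actually survive are: the number of double-coset terms in \cref{lem:value-almost-char} (at most $2^k$ ways to split $k$ distinct cycles between the two factors, times $[H:W(\lambda)]\le 4$ in types $\sC$, $\sD$), the centralizer index $|\bC_W(Fx)|/|\bC_{W(\lambda)}(Fx)|\le 4$ via \cref{lem:cents-pairwise-distinct}(iii), and the factorwise bounds from \cref{length-k} (or \cref{thm:char-bound-hyperoctahedral} for degenerate symbols). Note also that the shape of $W(\lambda)$ varies with the type: it is $W_{\cI_0}\times W_{\cI_1}$, $W_{\cI_0}^0\times W_{\cI_1}$, or $W_{\cI_0}^0\times W_{\cI_1}^0$ in types $\sB$, $\sC$, $\sD$ respectively, not uniformly $W_{\cI_1}\times W_{\cI_2}^0$.
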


\begin{proof}
Recall that we have an isomorphism $\cR_{\lambda,a}^{\bG} : \Class(Fa) \to \Class_0(\bG^F,\lambda,a)$. Thus, if $\pr : \Class(\bG^F) \to \Class(\bG^F)$ is the projection onto the subspace of uniform functions then there exists a unique class function $f_{\chi} \in \Class(Fa)$ such that $\cR_{\lambda,a}^{\bG}(f_{\chi}) = \pr(\chi)$.

Now $\pr(\chi)$ and $\chi$ have the same value at $g$, so it suffices to bound the value of $\cR_{\lambda,a}^{\bG}(f_{\chi})$ at $g$. By \cref{prop:val-DL-char,lem:value-almost-char}, we have
\begin{equation*}
\abs{\mathcal{R}_{\lambda,a}^{\bG}(f_{\chi})(g)} \leqslant \sum_{\substack{z \in \bC_W(Fw)\backslash W/W(\lambda) \\ Fx = z^{-1}Fwz \in Fa}}  \frac{\abs{\bC_W(Fx)}}{\abs{\bC_{W(\lambda)}(Fx)}} \cdot \abs{f_{\chi}(Fx)}.
\end{equation*}
We can assume $\mathcal{R}_{\lambda,a}^{\bG}(f_{\chi})(g) \neq 0$ and hence assume that $Fx = z^{-1}Fwz \in Fa$ is a conjugate of $Fw$.

We now bound: $\abs{\bC_W(Fx)}/\abs{\bC_{W(\lambda)}(Fx)}$, the number of terms in the sum, and finally $\abs{f_{\chi}(Fx)}$. Let us note that the number of terms in the sum is precisely the number of $W(\lambda)$-orbits on $Fa$ that meet the centraliser $\bC_W(Fw)$.

We will take this case by case. First let us note that as $wF$ has pairwise distinct cycles, so does its conjugate $Fx = zF(wF)F^{-1}z^{-1}$. Now, by replacing $\lambda$ with an element in the same $W_{\aff}$-orbit, we can assume that $\overline{\lambda} = \frac{1}{2}(\overline{e}_1+\cdots+\overline{e}_m)$ for some $0 \leqslant m \leqslant n$ where $\overline{\lambda} = 0$ when $m = 0$.

We set $\mathcal{I}_1 = \{\pm\overline{e}_i \mid 1 \leqslant i \leqslant m\}$ and $\mathcal{I}_0 = \mathcal{I}\smallsetminus\mathcal{I}_1$ and let $H = W_{\mathcal{I}_0}W_{\mathcal{I}_1} \leqslant W_{\mathcal{I}}$. For convenience, we let $\pi_i = (\overline{e}_i,-\overline{e}_i) \in W_{\mathcal{I}}$ for any $1 \leqslant i \leqslant n$.

\underline{Type $\sB_n$}. We have $W(\lambda) = Fa = H$. Lusztig has shown that there is an isomorphism 
$$\mathscr{U}_m^{\odd}\otimes_{\mathbb{C}} \mathscr{U}_{n-m}^{\odd} \to \Class(\bG^F,\lambda,a)$$
such that the natural basis $\{\symb{X}\otimes\symb{Y} \mid \symb{X} \in \cS_m^{\odd}$ and $\symb{Y} \in \cS_{n-m}^{\odd}\}$ maps onto the series $\cE(\bG^F,\lambda)$.

The images of the Fourier transforms $\cR_0(\symb{X})\otimes \cR_0(\symb{Y})$ are Lusztig's almost characters. By \cite[4.23]{Lus}, this bijection may be chosen such that if $\osymb{X} \in \tilde{\cS}_m^1$ and $\osymb{Y} \in \tilde{\cS}_{n-m}^1$ have defect $1$ then
\begin{equation*}
\cR_0(\symb{X})\otimes \cR_0(\symb{Y}) \mapsto \cR_{\lambda,a}^{\bG}(\rho_{\osymb{X}} \boxtimes \rho_{\osymb{Y}}).
\end{equation*}
As $\symb{X}\otimes \symb{Y} = \cR_0\cQ_0(\symb{X})\otimes \cR_0\cQ_0(\symb{Y})$, we see that if $\chi$ is the image of $\symb{X}\otimes \symb{Y}$ then $f_{\chi} = \phi_{\symb{X}} \boxtimes \phi_{\symb{Y}}$.

By (iii) of \cref{lem:cents-pairwise-distinct}, we have $\bC_W(Fx) = \bC_H(Fx)$. Write $Fx = x_0x_1$ with $x_i \in W_{\mathcal{I}_i}$. If $k_i\geqslant 1$ is the cycle length of $x_i$ then $k = k_0+k_1$. Using \cref{thm:char-bound-hyperoctahedral}, we thus get the following bound on the character value
\begin{equation*}
\abs{f_{\chi}(Fx)} = \abs{\phi_{\symb{X}}(x_0)} \cdot \abs{\phi_{\symb{Y}}(x_1)} \leqslant (2^{k_0-1}\cdot k_0!) \cdot (2^{k_1-1}\cdot k_1!) \leqslant 2^{k-2} \cdot k!.
\end{equation*}

If $\alpha = (\alpha_1,\dots,\alpha_{k_0})$ and $\beta = (\beta_1,\dots,\beta_{k_1})$ are the signed cycle types of $x_0$ and $x_1$ respectively then, up to reordering the entries, $\alpha\cup\beta = (\alpha_1,\dots,\alpha_{k_0},\beta_1,\dots,\beta_{k_1})$ is the signed cycle type of $Fx$. Thus, there can certainly be at most $\sum_{c=0}^k \binom{k}{c} = 2^k$ terms in the above sum. Putting things together we get the bound $2^{2k-2}\cdot k!$ in this case.

\underline{Type $\sC_n$}. We have $W(\lambda) = W_{\mathcal{I}_0}^0W_{\mathcal{I}_1}$ and $Fa = W_{\mathcal{I}_0}^eW_{\mathcal{I}_1}$ for some $e \in \{0,1\}$. In this case, we have an isomorphism 
$$\mathscr{U}_m^{\ev,e}\otimes_{\CC} \mathscr{U}_{n-m}^{\odd} \to \Class(\bG^F,\lambda,a).$$
The natural basis $\{\symb{X}\otimes\symb{Y} \mid \symb{X} \in \cS_m^{\ev,e}$ and $\symb{Y} \in \cS_{n-m}^{\odd}\}$ maps onto the series $\cE(\bG^F,\lambda)$.

By \cite[4.23]{Lus}, this bijection may be chosen such that for any $\osymb{X} \in \tilde{\cS}_m^0$ and $\osymb{Y} \in \tilde{\cS}_{n-m}^1$, we have
\begin{equation*}
\cR_e(\symb{X})\otimes \cR_0(\symb{Y}) \mapsto \cR_{\lambda,a}^{\bG}(\Res_{W_{\cI_0}^e}^{W_{\cI_0}}(\rho_{\osymb{X}}) \boxtimes \rho_{\osymb{Y}}).
\end{equation*}
As $\symb{X}\otimes \symb{Y} = \cQ_e\cR_e(\symb{X})\otimes \cQ_0\cR_0(\symb{Y})$ we see that if $\chi$ is the image of $\symb{X}\otimes \symb{Y}$ then $f_{\chi} = \phi_{\symb{X}} \boxtimes \phi_{\symb{Y}}$.

By (iii) of \cref{lem:cents-pairwise-distinct}, we have $\bC_H(Fx) = \bC_W(Fx)$ and $\abs{\bC_H(Fx)}/\abs{\bC_{W(\lambda)}(Fx)} \leqslant \abs{H/W(\lambda)} \leqslant 2$.
Appealing to \cref{thm:char-bound-hyperoctahedral}, when $X$ is degenerate, we find, as above, that $\abs{f_{\chi}(Fx)} \leqslant 2^{2k-1}\cdot k!$. As we have $\abs{\bC_{W}(Fw)\backslash W/W(\lambda)} \leqslant 2\abs{\bC_{W}(Fw)\backslash W/H}$, there are at most $2^{k+1}$ terms in the above sum. Putting things together gives the bound $2^{3k+1}\cdot k!$ in this case.

\underline{Type $\sD_n$}. We have $W(\lambda) = W_{\mathcal{I}_0}^0W_{\mathcal{I}_1}^0$. If $\pi_0 = (\bar{e}_1,-\bar{e}_1)$ and $\pi_1 = (\bar{e}_{m+1},-\bar{e}_{m+1})$ then the coset $Fa \subseteq H$ is either: $W(\lambda)$, $\pi_0W(\lambda) = W_{\cI_0}^1W_{\cI_1}^0$, $\pi_1W(\lambda) = W_{\cI_0}^0W_{\cI_1}^1$, or $\pi_0\pi_1 W(\lambda)$. These cases are similar to the above. We have $\bC_H(Fx) = \bC_W(Fx)$ and $\abs{\bC_H(Fx)}/\abs{\bC_{W(\lambda)}(Fx)} \leqslant \abs{H/W(\lambda)} \leqslant 4$. Arguing similarly, we get that the above sum has at most $2^{k+2}$ terms, and $\abs{f_{\chi}(Fx)} \leqslant 2^{2k}\cdot k!$. Putting things together gives the bound $2^{3k+4}\cdot k!$.

We end with a comment about the final coset $\pi_0\pi_1 W(\lambda)$. In this case we have an isomorphism $\mathscr{U}_m^{\ev,1}\otimes_{\CC} \mathscr{U}_{n-m}^{\ev,1} \to \Class(\bG^F,\lambda,a)$. Lusztig's Theorem in this case says that this isomorphism can be chosen such that for any $\osymb{X} \in \tilde{\cS}_m^0$ and $\osymb{Y} \in \tilde{\cS}_{n-m}^0$ we have
\begin{equation*}
\cR_1(\symb{X})\otimes \cR_1(\symb{Y}) \mapsto \cR_{\lambda,a}^{\bG}(\Res_{\pi_1\pi_2 W(\lambda)}^{W_{\cI_0}W_{\cI_1}}(\rho_{\osymb{X}} \boxtimes \rho_{\osymb{Y}}));
\end{equation*}
see the discussion in \cite[\S 4.21]{Lus}. One readily checks that if $\chi$ is the image of $\symb{X} \otimes \symb{Y}$ then $f_{\chi} = \Res_{\pi_1\pi_2 W(\lambda)}^{W_{\cI_0}W_{\cI_1}}(\phi_{\symb{X}} \boxtimes \phi_{\symb{Y}})$.
\end{proof}

\begin{cor}\label{bcd-bound}
Assume $\bG$ is a symplectic or special orthogonal group and $\chi \in \Irr(\bG^F)$ is a quadratic unipotent character. Furthermore, let $wF \in WF$ have cycle length $k \geqslant 1$ and pairwise distinct cycles. Then $\abs{\chi(g)} \leqslant 2^{3k+4}\cdot k!$ for any regular semisimple element $g \in \bG^F$ of type $wF$.
\end{cor}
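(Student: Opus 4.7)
The plan is to reduce the corollary to \cref{quadratic} via a regular embedding. Since \cref{quadratic} requires the ambient group to have connected center of dimension $1$ and a derived subgroup that is symplectic or special orthogonal, whereas the corollary deals with $\bG$ already semisimple, the first step is to pass to a connected reductive group $\tilde{\bG}$ with $[\tilde{\bG}, \tilde{\bG}] = \bG$ and $\rZ(\tilde{\bG})$ connected of dimension $1$. Concretely one can take $\tilde{\bG}$ to be the conformal symplectic or conformal orthogonal group, giving a regular embedding $\iota : \bG \hookrightarrow \tilde{\bG}$; the Weyl group and the type $wF$ of $g$ are preserved under this embedding, and so the cycle length and pairwise distinct cycles hypotheses transfer automatically to $g$ viewed as an element of $\tilde{\bG}^F$.

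Next, I would combine \cref{lem:ss-reg-embed} with \cref{lem:res-series} to choose an irreducible character $\tilde{\chi} \in \Irr(\tilde{\bG}^F)$ whose restriction to $\bG^F$ contains $\chi$ and which lies in a Lusztig series $\cE(\tilde{\bG}^F, \tilde{\lambda}, a)$ for a lift $\tilde{\lambda}$ of $\lambda$ under $\iota^*$. \cref{lem:ss-reg-embed} then guarantees the inequality $\abs{\chi(g)} \leq \abs{\tilde{\chi}(g)}$, so applying \cref{quadratic} to $\tilde{\bG}$, the pair $(\tilde{\lambda}, a)$, the character $\tilde{\chi}$, and the element $g$ directly yields $\abs{\tilde{\chi}(g)} \leq 2^{3k+4} \cdot k!$, hence the same bound for $\abs{\chi(g)}$.

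The main obstacle, though I expect it to be a matter of careful bookkeeping rather than a genuine mathematical difficulty, is to verify that the quadratic unipotent hypothesis on $\chi$ translates to the condition $2\overline{\tilde{\lambda}} \in X_{\der}$ required by \cref{quadratic}. This should follow from the fact that the quadratic condition depends only on the image of $\tilde{\lambda}$ in $V_{\der}$, which via the compatibility of $\iota^*$ with passage to the derived quotient is identified with the parameter of the original Lusztig series of $\chi$. Since the lift $\tilde{\lambda}$ may be chosen so that its image in $V_{\der}$ coincides with $\lambda$, the hypothesis of \cref{quadratic} is then immediate, and no further work beyond invoking the machinery of \S 4 and \S 8 should be required.
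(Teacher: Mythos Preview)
Your proposal is correct and follows essentially the same approach as the paper: the paper's proof simply recalls that quadratic unipotent means $\chi \in \cE(\bG^F,\lambda,a)$ with $2\lambda \in X$, and then invokes \cref{quadratic} together with \cref{lem:res-series,lem:ss-reg-embed}. The bookkeeping you flag as a potential obstacle---that the quadratic unipotent condition on $\chi$ translates to $2\overline{\tilde\lambda}\in X_{\der}$ for the lift---is handled by the paper as a matter of definition, since the map $\iota^*$ of \cref{lem:res-series} is exactly the quotient $\overline{\phantom{\lambda}}:V\to V_{\der}$ of \S\ref{quadratic-unipotent}.
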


\begin{proof}
Recall that being quadratic unipotent means that $\chi$ lies in a series $\mathcal{E}(\bG^F,\lambda,a)$ with $2\lambda \in X$. The statement is thus an immediate consequence of \cref{quadratic} and \cref{lem:res-series,lem:ss-reg-embed}.
\end{proof}

\section{Character degrees}

In this section, we prove the following theorem.

\begin{thm}
\label{Degrees}
There exists an absolute constant $C>0$ such that for every finite quasisimple group $G$ of Lie type of rank $r$ and every positive integer $D$, the number of 
irreducible characters of $G$ of degree $\le D$ is at most $D^{C/r}$.
\end{thm}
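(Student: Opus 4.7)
My plan is to convert the counting problem into a zeta-function bound via Markov's inequality. Setting $\zeta^G(s) := \sum_{\chi \in \Irr(G)} \chi(1)^{-s}$, one has $|\{\chi \in \Irr(G) : \chi(1) \leq D\}| \leq D^s \cdot \zeta^G(s)$, so the theorem reduces to showing $\zeta^G(C/r) \leq C'$ for some absolute constants $C, C'$. Via a regular embedding $\bG \hookrightarrow \tilde{\bG}$, I may reduce to the case where $\rZ(\bG)$ is connected, absorbing bounded index factors of the form $[\tilde{\bG}^F : \bG^F \rZ(\tilde{\bG})^F]$ into the constants.

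Lusztig's Jordan decomposition then partitions $\Irr(\bG^F) = \bigsqcup_{[s]} \mathcal{E}(\bG^F, s)$, indexed by $F^*$-stable semisimple conjugacy classes in $\bG^{*F^*}$, with a bijection onto the set of unipotent characters of $C^{F^*} := C_{\bG^*}(s)^{F^*}$ and the degree formula $\chi(1) = [\bG^{*F^*} : C^{F^*}]_{p'} \cdot \psi_\chi(1)$. This factorizes
\begin{equation*}
\zeta^G(s) \;=\; \sum_{[s]} [\bG^{*F^*} : C^{F^*}]_{p'}^{-s} \cdot \zeta^{\mathrm{uni}}_{C^{F^*}}(s),
\end{equation*}
where $\zeta^{\mathrm{uni}}_{C^{F^*}}(s) := \sum_{\psi} \psi(1)^{-s}$ ranges over unipotent characters of $C^{F^*}$.

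The proof then requires two estimates at $s = C/r$. First, each $\zeta^{\mathrm{uni}}_{C^{F^*}}(s)$ should be bounded by an absolute constant: the number of unipotent characters of any reductive group of rank $\leq r$ is sub-exponential in $r$ (bounded by counts of symbols or partitions), while nontrivial unipotent characters have degrees that are polynomials in $q$ of positive degree, and so the sum converges uniformly once $s$ exceeds a suitable multiple of $1/r$. Second, the semisimple sum $\sum_{[s]} [\bG^{*F^*} : C^{F^*}]_{p'}^{-s}$ must be bounded by stratifying $[s]$ by its centralizer type: each $C_{\bG^*}(s)$ is a pseudo-Levi subgroup $L$ of maximal rank, and its root system $\Psi \subseteq \Phi^*$ is classified by $W^*$-orbits of subsets of the extended Dynkin diagram. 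For each type of codimension $c := |\Phi^*| - |\Psi|$, the number of $F^*$-stable classes is bounded by $|Z(L)^{F^*}|$, which is controlled since $Z(L)^\circ$ has dimension zero, while the index satisfies $[\bG^{*F^*}:L^{F^*}]_{p'} \geq q^{c}/O(1)$.

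The main obstacle is controlling the semisimple sum precisely. Because $s = C/r$ is small, the factor $q^{-cs}$ decays only weakly, so the count of pseudo-Levi types of each codimension $c$ must grow sufficiently slowly relative to $q^{cs}$. The natural strategy is induction on $r$: once the theorem is established at ranks $< r$, any proper centralizer of semisimple rank $r' < r$ satisfies a better unipotent zeta bound at the larger exponent $C/r' > C/r$, which can be folded back into the inductive estimate. Full-rank centralizers (those with $\Psi$ of rank $r$) must be treated separately, but they correspond to a finite list of subsystems governed by the extended Dynkin diagram, and contribute a bounded factor. Balancing constants in this induction, together with combinatorial control on sub-root systems of bounded codimension, is where the technical work lies; the small-rank base case reduces to the trivial bound $|\Irr(G)| \leq C_0(r)$, after adjusting $C$.
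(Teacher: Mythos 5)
Your reduction via Markov's inequality to bounding the Witten-type zeta function $\zeta^G(C/r) = \sum_{\chi} \chi(1)^{-C/r}$ is a legitimate repackaging of the counting problem, and your Jordan-decomposition factorization correctly isolates the two required estimates (a unipotent zeta bound for centralizers, a weighted sum over semisimple classes). This matches the paper's structural skeleton. However, both estimates are left as sketches that do not go through as stated, and the missing content is exactly where the paper's work lies.

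For the unipotent estimate, ``sub-exponential total count plus positive polynomial degree'' is not enough. Writing $n\asymp r$, every nontrivial unipotent character has degree at least $q^{cn}$ for an absolute $c$, so each contributes at most $q^{-cC}$ to $\zeta^{\mathrm{uni}}(C/r)$; but there are on the order of $p(r)\to\infty$ such terms, and $1 + p(r)q^{-cC}$ is unbounded in $r$. What is needed is a refined level-by-level count: the number of unipotent characters of degree $<q^{rx}$ must be bounded exponentially in $x$ alone, uniformly in $r$ and $q$ — this is precisely \cref{Unipotent Degrees}, whose proof occupies most of the section and requires the explicit degree formulas and a careful analysis of which symbols give small degree. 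That proposition does not follow from counting all symbols.

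For the semisimple sum, the stratification by centralizer type contains errors and, more seriously, the proposed induction cannot close. The claim that $Z(L)^\circ$ has dimension zero is false in general (for regular semisimple $s$, $\bC_{\bG^*}(s)$ is a maximal torus $T$ and $Z(T)^\circ = T$ has dimension $r$), and the count of classes with a given centralizer type is not simply $|Z(L)^{F^*}|$. As for the induction: you want to bound $\zeta^{\mathrm{uni}}_{\bC_{G^*}(s)}$ at the \emph{small} exponent $s=C/r$, but the inductive hypothesis at rank $r'<r$ gives control only at the \emph{larger} exponent $C/r'$; since $\zeta$ is decreasing in $s$, this gives information in the wrong direction and the argument does not recurse. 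The paper resolves both issues by invoking \cite[Lemma~3.2]{LiSh}, which provides both (a) the bound $q^{\delta A}$ on the number of classes with index $\le q^{\delta r}$ and (b) the crucial structural fact that such centralizers contain a ``large factor'' of rank $\ge r-B\delta$. Part (b) is what lets \cref{Unipotent Degrees} be applied at exponent $C/r$ to the dominant factor, while the remaining factors have total rank $O(\delta)$ and are controlled by \cite{FG}. Without this large-factor input, the argument does not close.
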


Taking $C$ large enough, we can ignore any finite number of quasisimple groups $G$, and thus we may assume that
$G = \bG^F$ for a simple, simply connected algebraic group $\bG$ of rank $r$ and a Frobenius endomorphism
$F: \bG \to \bG$.  The Landazuri-Seitz bound \cite{LaSe} implies that the minimal non-trivial character of $G$ has degree at least $\abs{G}^{\epsilon}$,
where $\epsilon$ depends only on the rank of $G$.  Therefore, we are justified in assuming that $r$ is as large as we wish, so, in particular, $G$ is of classical type
and $F$ is an endomorphism of Steinberg type.

Our proof closely follows the character degree estimates of Liebeck and Shalev \cite{LiSh}.   Liebeck and Shalev prove a more precise result \cite[Theorem~1.1]{LiSh} than Theorem~\ref{Degrees}
when $q$ is sufficiently large in terms of $r$ and a weaker result \cite[Theorem~1.2]{LiSh} for general $q$.  

What is needed to obtain good bounds in high rank for small $q$ is an estimate for the number
of unipotent characters of $G$ and certain related groups of bounded degree.  This follows in principle from the degree formulas for
unipotent characters of classical groups in \cite{Lus}.  We begin with these computations.

\begin{prop}
\label{Unipotent Degrees}
There exists an absolute constant $C'$ such that for every finite quasisimple group $G$ of classical Lie type of rank $r$ and every positive integer $D$, the number of unipotent characters of $G$ of degree $\le D$ is at most $D^{C'/r}$.
\end{prop}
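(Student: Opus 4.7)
The plan is to combine Lusztig's parametrization of unipotent characters of classical groups with an explicit combinatorial estimate on the number of parameters whose generic-degree polynomial has small $q$-degree. After absorbing finitely many small-rank exceptions into $C'$, I assume $G = \bG^F$ with $\bG$ simple simply connected of classical type and $r$ large. Unipotent characters of $G$ are parametrized by partitions of $n=r+1$ in type $\sA$ and by symbols of rank $r$ with prescribed defect in types $\sB$, $\sC$, $\sD$; for each parameter $\Lambda$, $\chi_\Lambda(1)$ is the value at $q$ of a polynomial $D_\Lambda(T) \in \ZZ[T]$ with positive leading coefficient, and I set $A(\Lambda) := \deg D_\Lambda$. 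Inspection of Lusztig's formula gives a bound of the shape $\chi_\Lambda(1) \geq c\cdot q^{A(\Lambda)}$ for an absolute $c > 0$, so it suffices to bound the number of $\Lambda$ with $A(\Lambda) \leq m$, where $m := \log_q(D/c)$.

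The combinatorial heart of the proof will be the estimate
\[
N(m,r) := \#\{\Lambda : A(\Lambda) \leq m\} \;\leq\; \exp\bigl(C_1\sqrt{m/r + 1}\bigr)
\]
for some absolute $C_1$. In type $\sA$ this follows from the identity $A(\lambda) = \binom{n}{2} - \sum_i \binom{\lambda_i}{2}$ together with the convexity inequality $\sum_{i\geq 2}\binom{\lambda_i}{2} \leq \binom{j}{2}$ for $j = n - \lambda_1$: the hypothesis $A(\lambda)\leq m$ forces $j(n-j) \leq 2m$, hence $j = O(m/r)$, after which the remaining parts form an unrestricted partition of $j$, bounded in number by $p(j) \leq e^{\pi\sqrt{2j/3}}= \exp(O(\sqrt{m/r}))$. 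For types $\sB$, $\sC$, $\sD$ I would argue analogously using Lusztig's generic-degree formula for symbols: small $A(\Lambda)$ forces the entries of $\Lambda$ to cluster near the minimal configuration compatible with the defect, and counting such configurations yields the same exponential bound.

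Granting the combinatorial claim, comparing exponents reduces the theorem to verifying that
\[
C_1\sqrt{m/r + 1} \;\leq\; (C'/r)\log D \;=\; (C'/r)\bigl(m\log q + O(1)\bigr).
\]
For $m \geq r$ the left side is $O(\sqrt{m/r})$ while the right is $\asymp (C'\log q)\cdot m/r$, so the inequality holds whenever $C'$ exceeds a suitable absolute constant (depending only on $C_1$ and $\log 2$). For $m < r$ one uses the fact, again read off from Lusztig's formula, that every nontrivial unipotent character of a classical group of rank $r$ satisfies $A(\Lambda) \geq r - O(1)$; thus, after enlarging $C'$ if necessary, only the trivial character contributes in this regime, and the bound $D^{C'/r}\geq 1$ suffices.

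The principal obstacle is the combinatorics of symbols in types $\sB$, $\sC$, $\sD$. Unlike the partition case, the generic-degree polynomial is more intricate, and one must verify both (i) the threshold bound $A(\Lambda) \geq r - O(1)$ on nontrivial symbols, and (ii) the structural statement that symbols of small $A$-value cluster in a controlled way near a unique minimal configuration. Both follow from Lusztig's classification, but producing uniform bounds in $r$, in the defect, and in $q$ simultaneously will require a careful analysis of symbol shapes across all defect classes, analogous to but finer than the estimates of Liebeck--Shalev \cite{LiSh}.
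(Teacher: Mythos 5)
Your proposal follows essentially the same architecture as the paper: parametrize unipotent characters by partitions (type $\sA$) or symbols (types $\sB$, $\sC$, $\sD$), use a degree lower bound to constrain the combinatorial parameter to a "near-minimal" configuration, count such configurations via the sub-exponentiality of the partition function, and dispose of the small-$D$ regime using Landazuri--Seitz. Your type $\sA$ computation is correct in substance (the identity $\binom{n}{2}-\binom{n-j}{2}-\binom{j}{2}=j(n-j)$ gives $j(n-j)\le m$ rather than $\le 2m$, but this does not affect the argument), and your normalization $N(m,r)\le\exp\bigl(C_1\sqrt{m/r+1}\bigr)$ matches what the paper's bound gives after unwinding logarithms.

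However, as you acknowledge yourself, your treatment of types $\sB$, $\sC$, $\sD$ is not a proof but a statement of the two lemmas you would need: (i) a threshold $A(\Lambda)\ge r-O(1)$ for nontrivial symbols, and (ii) a structural "clustering" claim for small-$A$ symbols. This is precisely where the bulk of the actual work lies, and it is considerably more delicate than the type $\sA$ case. The paper handles it by merging and sorting the two rows of the symbol into a sequence $\nu_1\le\cdots\le\nu_n$, proving the exact identity $r=\sum_k(\nu_k-\lfloor(k-1)/2\rfloor)$, establishing the key inequality $\max_i\bigl(\nu_i-\lfloor(i-1)/2\rfloor\bigr)\ge r-\log d_X/\log q^r-O(1)$ via two nontrivial explicit identities involving $\sum_i\lfloor(i-1)/2\rfloor^2$ and $\sum_k\binom{n-2k}{2}$, then using the rank formula $r'=\rho((X')^0)+\rho((X')^1)+\lfloor\Def(X')^2/4\rfloor$ to deduce both a bound on the two residual ranks and (crucially, and absent from your sketch) a bound $|X^0|+|X^1|<3x+22.5$ on the total number of entries in the symbol. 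Only with this last bound can one control the ambiguity coming from leading zeros and reduce the count to (roughly) $p(i)^2$ — pairs of partitions, one per row — rather than a single $p(i)$. None of this is routine; a symbol has two rows and a defect parameter, and the degree polynomial mixes them in a way that does not reduce to the type $\sA$ convexity trick. So while the skeleton of your argument is sound and matches the paper's, the proposal as written has a genuine unfilled gap in the $\sB$/$\sC$/$\sD$ case, which is the part that actually requires proof.
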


\begin{proof}
For every prime power $q$, we have by \cite[Lemma 4.1(i), (iii)]{LMT} that
\begin{equation}\label{d10}
  \prod_{n=1}^\infty (1-q^{-1}) > \frac 14\ge q^{-2}
\end{equation}  
and 
\begin{equation}\label{d11}
  \prod_{n=1}^\infty (1+q^{-1}) < 2.4 < q^2.
\end{equation}  

\smallskip
(i) If $G$ is of type $A_r$ or $^2A_r$, then the unipotent characters of $G$ are indexed by sets $A$
of positive integers such that $\rho(A)=r+1$ in the notation of \S2.  Denoting the elements of $A$ by $\lambda_1<\cdots<\lambda_m$, we have
$$r+1 = \sum_{i=1}^{m} (\lambda_i+1-i).$$
The terms $\mu_i:=\lambda_i+1-i$ in this sum give a partition of $r+1$, so in particular, $m\le r+1$.

Here, $G=\SL_n^{\eps}(q)$ with $n = r+1$.
The degree $d_A$ of the character with given set $A$  is the absolute value of 
\begin{equation}
\label{A-dim}
\frac{\prod_{1\le j < i \le m}((\eps q)^{\lambda_i}-(\eps q)^{\lambda_j})\prod_{i=1}^r ((\eps q)^i-1)}
{\prod_{i=1}^{m}\prod_{j=1}^{\lambda_i} ((\eps q)^j-1)\prod_{k=2}^{m-1} q^{\binom k2}}.
\end{equation}
For any fixed $j$, we have from \eqref{d10} that
$$\prod_{i=j+1}^{m} \abs{(\eps q)^{\lambda_i}-(\eps q)^{\lambda_j}} > q^{-2}\cdot q^{\sum_{i=j+1}^{m} \lambda_i},$$
so
$$\prod_{j=1}^{m-1} \prod_{i=j+1}^{m} \abs{(\eps q)^{\lambda_i}-(\eps q)^{\lambda_j}} > q^{2-2m}\cdot q^{\sum_{1\le j < i\le m}\lambda_i}.$$
Treating the other factors of \eqref{A-dim} in the same way (and using \eqref{d11} for the products in the denominator), we obtain
$$d_A > q^{-4m}\cdot q^{\sum_{1\le j < i\le m}\lambda_i + \binom{r+1}2 -\sum_{i=1}^{m} \binom{\lambda_i+1}2 - \sum_{k=2}^{m-1} \binom k2}.$$
As is well-known (see e.g. the proof of \cite[Lemma 5.3]{GLT}, the exponent of the second factor on the right-hand side is 
$$\frac 12 (n^2-\sum^m_{i=1}\mu_i^2) \geq \frac 12(n^2-\mu_m\sum^m_{i=1}\mu_i) = n(n-\mu_m)/2,$$
and so
$$d_A \geq q^{n(n-\mu_m)/2-4m} \geq q^{n(n-\mu_m)/2-4n}$$

Therefore, 
$$n - \mu_m \le  \Bigl(2\frac{\log d_A}{\log q^n}+8\Bigr).$$
As $\mu_m=\lambda_{m} + 1-m$ is the largest part of the partition of $n$ associated to $A$, the number of possibilities for $A$ such that $d_A \le D$ is at most
$$\sum_{i=1}^{\lfloor 2\frac{\log D}{\log q^n}+8\rfloor} p(i),$$
where $p$ denotes the partition function.  As $p(i)$  is sub-exponential in $i$, when $D \geq q^{n/3}$ this number is 
$e^{O(\log D/\log q^n)} = D^{O(1)/n}$,
yielding a uniform upper bound of the form $D^{C'/r}$  for the number of unipotent characters of degree $\leq D$.
However, by \cite{LaSe}, for $D \le q^{n/3}$, there are no non-trivial irreducible characters of degree
$\leq D$, and in particular no such unipotent characters.

\smallskip
(ii) The proof for the remaining classical groups follows the same pattern.
Let $[X]$ denote an equivalence class of ordered symbols, and let $X=(X^0,X^1)$ be the representative such that $0\not\in X^\cap = X^0\cap X^1$.
Let the elements of $X^0$ and $X^1$ respectively form
the increasing sequences of non-negative integers $\lambda^0_1<\cdots<\lambda^0_{\abs{X^0}}$ and $\lambda^1_1<\cdots<\lambda^1_{\abs{X^1}}$, so assuming $X^0$ and $X^1$ are both non-empty, we have
$\lambda^0_1+\lambda^1_1>0$.  We note that $X^0$ and $X^1$ determine (possibly improper) partitions $\{\lambda^0_i+1-i\}_i$ and $\{\lambda^1_j+1-j\}_i$ of the ranks $\rho(X^0)$ and $\rho(X^1)$ respectively. More precisely, if $\lambda^j_1 > 0$ then  $\{\lambda^j_i+1-i\}_i$ is a partition of $\rho(X^j)$, but
if $\lambda^j_1=0$ then $\{\lambda^j_i+1-i\}_i$ is a sequence of initial zeroes concatenated with a partition of $\rho(X^j)$.
Let $n := \abs{X^0}+\abs{X^1}$, and let $\nu_1\le \nu_2\le \cdots\le\nu_n$ be the sequence obtained by first merging $X^0$ and $X^1$ and then sorting, without eliminating repetitions.
Thus 
\begin{equation}
\label{odd conditions}
\nu_1 < \nu_3 < \nu_5 <\cdots,
\end{equation}
and 
\begin{equation}
\label{even conditions}
0 < \nu_2 < \nu_4 < \cdots.
\end{equation}

By \eqref{eq:rank-definition}, the rank $r$ of the symbol $X$ is given by
\begin{equation}\label{d12}
r = \sum_i \lambda^0_i + \sum_j \lambda^1_j - \Bigl\lfloor\frac{(\abs{X^0}+\abs{X^1}-1)^2}4\Bigr\rfloor = \sum_k \nu_k - \Bigl\lfloor\frac{(n-1)^2}4\Bigr\rfloor
= \sum_{k=1}^n (\nu_k - \lfloor(k-1)/2\rfloor).
\end{equation}
We have $\nu_k \ge \lfloor(k-1)/2\rfloor$ for all $k$, with strict equality when $k$ is even, so $r \ge \lfloor n/2\rfloor$. 
Thus,
\begin{equation}\label{d12b}
  \begin{aligned}r & = \rho(X^0)+\binom{\abs{X^0}}2 + \rho(X^1)+\binom{\abs{X^1}}2 - \Bigl\lfloor\frac{(\abs{X^0}+\abs{X^1}-1)^2}4\Bigr\rfloor\\
  & = \rho(X^0)+\rho(X^1) + \left\{ \begin{array}{lr} \bigl(\Def(X)^2-1\bigr)/4, & 2 \nmid n\\ \Def(X)^2/4, & 2|n \end{array} \right.\\
  &  \geq \rho(X^0)+\rho(X^1).\end{aligned}
\end{equation}  

For every prime power $q$ and every  symbol $X$, there is at least one associated unipotent character of at least one classical group $G$ of rank $r$ over the field $\FF_q$.  If $\Def(X)$ is odd, we obtain characters of $G=\Sp_{2r}(\FF_q)$ and of $G=\Spin_{2r+1}(\FF_q)$ in this way.  If it is divisible by $4$, there is a character of $G=\Spin^+_{2r}(\FF_q)$; otherwise, there is a character of $G=\Spin^-_{2r}(\FF_q)$.   If $X^0=X^1$, then there are two unipotent characters for $\Spin^+_{2r}(\FF_q)$ associated to $X$; otherwise, there is only one  for each possible $G$.  Moreover, all unipotent characters for groups of type $B$, $C$, and $D$ arise in this way for a unique equivalence class of unordered symbols.

The degree $d_X$ of the unipotent character of $G$ associated to the symbol $X$ is (at least)
$$\frac{\abs{G}_{p'}
\prod_{1\le j < i\le n^0} (q^{\lambda^0_i}-q^{\lambda^0_j})
\prod_{1\le j < i\le n^1} (q^{\lambda^1_i}-q^{\lambda^1_j})
\prod_{i=1}^{n^0}\prod_{j=1}^{n^1} (q^{\lambda^0_i}+q^{\lambda^1_j})}
{2^{\lfloor n/2\rfloor}
\prod_{i=1}^{n^0} \prod_{j=1}^{\lambda^0_i} (q^{2j}-1)
\prod_{i=1}^{n^1} \prod_{j=1}^{\lambda^1_j} (q^{2j}-1)
\prod_{k=1}^{\lfloor (n^0+n^1-2)/2\rfloor}q^{\binom{n^0+n^1-2k}2}
}$$
with $n^j := \abs{X^j}$.
In terms of the sequence $\nu_i$, this takes the form
$$d_X\geq \frac{\abs{G}_{p'}
\prod_{1\le j < i\le n} (q^{\nu_i}\pm q^{\nu_j})}
{2^{\lfloor n/2\rfloor}
\prod_{i=1}^n \prod_{j=1}^{\nu_i} (q^{2j}-1)
\prod_{k=1}^{\lfloor (n-2)/2\rfloor}q^{\binom{n-2k}2}
}.$$
Note that
$$\abs{G}_{p'} = \begin{cases}
\prod_{i=1}^r (q^{2i}-1)&\text{if $\Def(X)\equiv 1 \pmod2$},\\
(q^r-1)\prod_{i=1}^{r-1} (q^{2i}-1)&\text{if $\Def(X)\equiv 0 \pmod4$},\\
(q^r+1)\prod_{i=1}^{r-1} (q^{2i}-1)&\text{if $\Def(X)\equiv 2 \pmod4$},\
\end{cases}$$
so $\abs{G}_{p'} \ge q^{-2}\cdot q^{r^2+r}$ if $n$ is odd and $\abs{G}_{p'}\ge q^{-2} q^{r^2}$ if $n$ is even, again by \eqref{d10}.

Reasoning as in case $A$, for cases $B$ and $C$ (i.e., $n=2m+1$ is odd) and using the fact that $n\le 2r+1$, we have
\begin{align*}
d_X&\ge q^{-2+2-2n-n/2}\cdot q^{r^2+r+ \sum_{1\le j<i\le n} \nu_i - \sum_{i=1}^n \nu_i(\nu_i+1) - \sum_{k=1}^{\frac{n-1}2} \binom{n-2k}2} \\
&\geq q^{-15r/2}\cdot q^{r^2+r+ \sum_{1\le j<i\le n} \nu_i - \sum_{i=1}^n \nu_i(\nu_i+1) - \sum_{k=1}^{\frac{n-1}2} \binom{n-2k}2}.
\end{align*}
We define
$$Y:=\sum^n_{i=1}\Bigl( (i-1)-2\lfloor \frac{i-1}{2} \rfloor \Bigr)\nu_i = \nu_2 + \nu_4 + \ldots + \nu_{2m} \geq \sum^m_{i=1}i = \frac{m(m+1)}2,$$ 
By \eqref{d12}, the exponent of the second term on the right-hand side is
\begin{align*}
&r^2+\sum_{i=1}^n \nu_i-m^2+
\sum_{i=1}^n (i-1)\nu_i - \sum_{i=1}^n \nu_i(\nu_i+1) -\sum_{k=1}^{m} \binom{n-2k}2 \\
& = r^2-m^2+
\sum_{i=1}^n (i-1)\nu_i - \sum_{i=1}^n \nu_i^2 -\sum_{k=1}^{m} \binom{n-2k}2 \\
&=r^2-m^2 +Y - \sum_{i=1}^n \Bigl(\nu_i-\bigl{\lfloor} \frac{i-1}2 \bigr{\rfloor} \Bigr)^2 + \sum_{i=1}^n \Bigl(\bigl{\lfloor} \frac{i-1}2 \bigr{\rfloor}\Bigr)^2 -\sum_{k=1}^{m} \binom{n-2k}2 \\
&\geq  r^2 -  \sum_{i=1}^n \Bigl(\nu_i-\bigl{\lfloor} \frac{i-1}2 \bigr{\rfloor} \Bigr)^2 + m\\
&\geq  r^2 -  \sum_{i=1}^n \Bigl(\nu_i-\bigl{\lfloor} \frac{i-1}2 \bigr{\rfloor} \Bigr) \cdot \max_i \Bigl(\nu_i-\bigl{\lfloor} \frac{i-1}2 \bigr{\rfloor} \Bigr)\\
& = r^2-r\cdot \max_i \Bigl(\nu_i-\bigl{\lfloor} \frac{i-1}2 \bigr{\rfloor} \Bigr).
\end{align*}
Here we have used the identity
$$\sum^{2m+1}_{i=1}\Bigl(\bigl{\lfloor} \frac{i-1}2 \bigr{\rfloor}\Bigr)^2 -\sum_{k=1}^{m} \binom{2m+1-2k}2 = 2\sum^{m-1}_{i=1}i^2+m^2 - \sum^{m-1}_{i=1}i(2i+1)= 
    \frac{m(m+1)}{2}.$$
Thus,
\begin{equation}\label{d13a}
  \max_i \Bigl(\nu_i - \bigl{\lfloor} \frac{i-1}2 \bigr{\rfloor}\Bigr) \ge r - \frac{\log d_X}{\log q^r} - 7.5.
\end{equation}  

Since $\abs{\Irr(G)} \leq q^{C_1r}$ for some absolute constant $C_1$ by \cite{FG}, by enlarging $C'$ (which then covers all small ranks), we
may assume $\log d_X/\log q^r \leq r/2-8.5$, and so 
\begin{equation}\label{d13}
   \max_i \Bigl(\nu_i - \bigl{\lfloor} \frac{i-1}2 \bigr{\rfloor}\Bigr) \ge \frac{r}{2}+1.
\end{equation}
By \eqref{odd conditions} and \eqref{even conditions} and the integrality of the $\nu_i$, we have
$$\nu_{i+2} - \bigl{\lfloor} \frac{(i+2)-1}2 \bigr{\rfloor} \geq \nu_i - \bigl{\lfloor} \frac{i-1}2 \bigr{\rfloor},$$ 
and so 
$$\max_i \Bigl(\nu_i- \lfloor \frac{i-1}2\rfloor \Bigl) = \max\Bigl(\nu_{n-1}-\lfloor \frac{n-2}{2} \rfloor,\nu_n-\lfloor \frac{n-1}{2} \rfloor\Bigr).$$
Now, if $\max_i\bigl(\nu_i- \bigl{\lfloor} \frac{i-1}2\bigr{\rfloor}\bigr)$ is attained at $i=n-1$, then 
$$\nu_{n-1} - \bigl\lfloor \frac{n-2}2 \bigr\rfloor- 1 \geq r/2+1$$ and 
$$\nu_{n} - \bigl\lfloor \frac{n-1}2 \bigr\rfloor \geq \nu_{n-1} -\bigl\lfloor \frac{n-2}2 \bigr\rfloor- 1 \geq r/2$$ by \eqref{d13}, and this violates \eqref{d12}.
Hence, $\nu_i-\lfloor \frac{i-1}2\rfloor$ achieves its maximum at only $i=n$, and $\nu_n>\nu_{n-1}$, again by \eqref{d13};
also, 
$$r \geq \nu_n-\bigl\lfloor \frac{n-1}2\big\rfloor \geq r-\frac{\log d_X}{\log q^r} - 7.5.$$

Applying the Landazuri-Seitz bound as before, we may assume that $d_X \geq q^r$. 
The rank $r'$ of the symbol, obtained from $X$ by deleting the largest single term $\nu_n$, call it $X'$, 
is bounded above by $\log d_X/\log q^r+7.5$, by \eqref{d12} and \eqref{d13a}. Applying \eqref{d12b} to
$X' = ((X')^0,(X')^1)$, we see that 
\begin{equation}\label{d14}
  \rho((X')^0)+\rho((X')^1) \leq \log d_X/\log q^r+7.5 \leq x+7.5
\end{equation}  
if $d_X\leq q^{rx}$ with $x \geq 0$. In fact, we can show that such symbols $X$ satisfy
\begin{equation}\label{d15}
  \abs{X^0}+\abs{X^1} < 3x+22.5.
\end{equation}  
Indeed, without loss we may assume that $\mu_1 \geq 1$, so the sequence $\{\mu_j+1-j\}$ of 
$\abs{T'}$ integers is a proper partition of $\rho((X')^1)$, and so $\abs{(X')^1} \leq \rho((X')^1) \leq x+7.5$ by \eqref{d14}. Applying \eqref{d12b} to
$X'$ we have $(\abs{(X')^0}-\abs{(X')^1})^2 \leq 4r'+1$, and so
$$\abs{(X')^0}-\abs{(X')^1} \leq \sqrt{4r'+1} \leq \sqrt{4(x+7.5)+1} < x+6.5.$$
Hence $\abs{(X')^0} < 2x+14$, and $\abs{X^0}+\abs{X^1} = \abs{(X')^0}+\abs{(X')^1}+1 < 3x + 22.5$, as stated. 
 
By \eqref{d15}, even when $\lambda^j_1=0$, the number of zero entries in the sequence $\{\lambda^j_i+1-i\}$
is at most $\abs{(X')^j} < 3x+22.5$. Now, counting the number of (possibly improper)
partitions $\{\lambda^j_i+1-i\}$ of $\rho((X')^j)$ and using \eqref{d14}, we see that
the number of possibilities for the symbol $X$ 
with $d_X < q^{rx}$ is bounded above by 
$$(3x+22.5)\sum^{\lfloor x +7.5\rfloor}_{i=1}p(i)^2,$$
an exponential in $x$ for $x\ge 1$,
proving the proposition for types $B_r$ and $C_r$. 

\smallskip
(iii) For types $D_r$ and $\tw2 D_r$, that is, when $n=2m$ is even, we have $n\le 2r$, and
\begin{align*}
d_X &\ge q^{-2+2-2n-n/2}\cdot q^{r^2+ \sum_{1\le j<i\le n} \nu_i - \sum_{i=1}^n \nu_i(\nu_i+1) - \sum_{k=1}^{\frac{n-2}2} 
\binom{(n-2k}{2}} \\
&\geq q^{-5r}\cdot q^{r^2+\sum_{1\le j<i\le n} \nu_i - \sum_{i=1}^n \nu_i(\nu_i+1) - \sum_{k=1}^{\frac{n-2}2} \binom{n-2k}{2}}.
\end{align*}
The exponent of the second term on the right hand side is therefore
\begin{align*}
&r^2+
\sum_{i=1}^n (i-1)\nu_i - \sum_{i=1}^n \nu_i(\nu_i+1) -\sum_{k=1}^{m-1} \binom{n-2k}2 \\
&=r^2+
\sum_{i=1}^n (i-2)\nu_i - \sum_{i=1}^n \nu_i^2 -\sum_{k=1}^{m-1} \binom{n-2k}2 \\
&=r^2-Y - \sum_{i=1}^n \Bigl(\nu_i-\bigl{\lfloor} \frac{i-1}2 \bigr{\rfloor} \Bigr)^2 + \sum_{i=1}^n \Bigl(\bigl{\lfloor} \frac{i-1}2 \bigr{\rfloor}\Bigr)^2 -\sum_{k=1}^{m-1} \binom{n-2k}2 \\
&\geq  r^2 -  \sum_{i=1}^n \Bigl(\nu_i-\bigl{\lfloor} \frac{i-1}2 \bigr{\rfloor} \Bigr)^2 - r\\
&\geq  r^2 -  \sum_{i=1}^n \Bigl(\nu_i-\bigl{\lfloor} \frac{i-1}2 \bigr{\rfloor} \Bigr) \cdot \max_i \Bigl(\nu_i-\bigl{\lfloor} \frac{i-1}2 \bigr{\rfloor} \Bigr)-r\\
& = r^2-r\cdot \max_i \Bigl(\nu_i-\bigl{\lfloor} \frac{i-1}2 \bigr{\rfloor} \Bigr)-r.
\end{align*}
Here we have used the inequality 
$$\begin{array}{lll}Y&:=\sum^n_{i=1}\Bigl(2\Bigl{\lfloor} \frac{i-1}{2} \Bigr{\rfloor}-(i-2) \Bigr)\nu_i
  & = \nu_1 + \nu_3 + \ldots + \nu_{2m-1}\\
 & = \sum^n_{i=1,~2 \nmid i}\Bigl(\nu_i - \bigl{\lfloor} \frac{i-1}2 \bigr{\rfloor} \Bigr)+ \binom{m}{2} & \leq r + \binom{m}{2}\end{array}$$ 
and the identity
$$\sum^{2m}_{i=1}\Bigl(\bigl{\lfloor} \frac{i-1}2 \bigr{\rfloor}\Bigr)^2 -\sum_{k=1}^{m-1} \binom{2m-2k}2 = 2\sum^{m-1}_{i=1}i^2- \sum^{m-1}_{i=1}i(2i-1)= 
    \binom{m}{2}.$$
The argument finishes as before.
\end{proof}

\begin{proof}[Proof of Theorem~\ref{Degrees}]
Let $G^* = (\bG^*)^{F^*}$ denote the dual group of $G$.  We partition the irreducible characters of $G$ into rational Lusztig series $\cE(s)$, indexed
by conjugacy classes of semisimple conjugacy classes $(s)$ of $G^*$.  There is a bijection between the elements of $\cE(s)$ and unipotent characters of $\bC_{G^*}(s)$; this correspondence multiplies degrees by $\abs{G^*}_{p'}/\abs{\bC_{G^*}(s)}_{p'}$.

We restate \cite[Lemma~3.2]{LiSh} in a form more convenient for our purposes.  Since the ratio $n/r$ between the dimension $n$ of the natural module and the rank of $G$ is bounded
between $1$ and $3$, and since the constants $d$ and $d'$ in \cite{LiSh} are absolute, if $\delta$ is greater than some absolute constant
$\delta_0$, then the number of semisimple conjugacy classes $(s)$ with $\abs{G^*}_{p'}/\abs{\bC_{G^*}(s)}_{p'} \le q^{\delta r}$ is less than $q^{\delta A}$ for some absolute constant $A$.  Moreover, 
$\bC_{G^*}(s)$ contains a factor, the \emph{large factor}, which is classical of rank $r' \ge r-B\delta$ for some absolute constant $B$, and this large factor is 
$A_{r'}(q)$ or $\tw2 A_{r'}(q)$ when $\bG$ is of type $A$.

For $D <q^{r/3}$, there is only one irreducible character, by \cite{LaSe}. Hence we may assume $D \geq q^{r/3}$. 
Enlarging $C$ if necessary, we may assume that $D = q^{\delta r}$ with $\delta \geq \max(\delta_0,1/2)$.  By \cite{FG}, $\abs{\Irr(G)} \leq q^{C_1r}$
for some absolute constant $C_1$, so the result follows if $\delta C \geq C_1r$. Again enlarging $C$ if necessary, 
we may assume without loss of generality that $\delta < r/2B$.

If $\chi$ is an irreducible character of degree $\le D$, then it belongs to the Lusztig series $\cE(s)$ for some $s$ with $\abs{G}_{p'}/\abs{\bC_{G^*}(s)}_{p'}\le D$.
The number of such semisimple classes $s$ is bounded above by $q^{\delta A}$.  
Following the proof of \cite[Lemma 3.4]{LiSh}, note that 
for each $s$,
$\bC_{G^*}(s)$ contains a subgroup $\bC_{G^*}(s)^\circ$ which is the group of $F^*$-fixed points of the connected reductive 
algebraic group $\bC_{\bG^*}(s)^\circ$.
If $\bG$ is of type $B$, $C$, or $D$, the quotient group $\bC_{G^*}(s)/\bC_{G^*}(s)^\circ$ has order $\le 4$.
Suppose $\bG$ is of type $A$ and $\bC_{G^*}(s)^\circ$ is a proper subgroup of $\bC_{G^*}(s)$. Lifting $s$ to an element 
$\hat{s}$ of $\GL_n^{\eps}(q)$, we see that every eigenvalue of $\hat{s}$ has multiplicity $\le n/2$, but this 
contradicts the existence of the large factor of $\bC_{G^*}(s)$ which is of type $A_{r'}(q)$ or $\tw2 A_{r'}(q)$ with $r' > r/2$.
So we have $\bC_{G^*}(s) = \bC_{G^*}(s)^\circ$ for type $A$.
Thus, there are at most $4$ unipotent characters of $\bC_{G^*}(s)$ of degree $\le D$ for each unipotent character of $\bC_{G^*}(s)^\circ$ of degree $\le D$.
Taking $F^*$-fixed points of the derived group of $\bC_{\bG^*}(s)^\circ$ we obtain a subgroup whose unipotent characters correspond to those of $\bC_{G^*}(s)$,
and this subgroup is a product of classical groups whose ranks sum up to at most $r$ and at least one of which, the large factor,  
has rank $r'$ at least $r-B\delta \ge r/2$.
The total number of unipotent characters of the product of all the factors other than the large factor can therefore be bounded above by $q^{C_1B\delta}$ by \cite{FG}.
The number of unipotent characters of the large factor of degree $\le D=q^{\delta r}$ is bounded above by 
$D^{C'/r'} \leq q^{2\delta C'}$, by Proposition~\ref{Unipotent Degrees}.
Hence, the number of unipotent characters of degree $\le D$ of $\bC_{G^*}(s)^\circ$ is bounded by 
$q^{(BC_1+2C')\delta}$, and the number for $\bC_{G^*}(s)$ is likewise
bounded by an exponential in $\delta$. Thus the number of characters of degree at most $D$ is bounded by 
$q^{C_2\delta}$ for some absolute constant $C_2$, and the theorem follows by taking $C \geq C_2$.
\end{proof}

For later use, we prove the following related statement:

\begin{prop}\label{j-red}
Let $n \geq 4$ and $j \geq 1$ be integers and let $q$ be any prime power. Let 
$G = \bG^F$ be one of the groups $\Sp_{2n}(q)$, $\SO_{2n+1}(q)$, or $\SO^\eps_{2n}(q)$, and suppose that $\chi \in \Irr(G)$ has 
degree 
$$\chi(1) \leq \min\bigl(q^{nj},q^{(n^2-n)/2-4}\bigr).$$ 
Then there is an $F$-stable Levi subgroup $\bL= \bL_1 \times \bL_2$ of $\bG$, possibly equal to
$\bG$, such that the following statements hold:
\begin{enumerate}
\item[\rm(i)] $\bL_1$ is $F$-stable, of the same type as $\bG$, and of rank $n-m \geq n-2j$.
\item[\rm(ii)] $\bL_2$ is $F$-stable, of type $\GL_m$ with $m \leq 2j$.
\item[\rm(iii)] $\chi = \pm R^\bG_\bL(\varphi_1 \boxtimes \varphi_2)$, where $\varphi_1 \in \Irr(\bL_1^F)$ is a unipotent or quadratic unipotent character,
and $\varphi_2 \in \Irr(\bL_2^F)$. Moreover, if $j \leq n/2$, then $\varphi_1(1) \leq q^{(n-m)j}$.
\end{enumerate} 
\end{prop}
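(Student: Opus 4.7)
The plan is to apply Lusztig's Jordan decomposition to $\chi$, use the eigenvalue structure of the associated semisimple parameter to extract a Levi of the required shape, and then convert the degree hypotheses into the numerical bounds $m \leqslant 2j$ and $\varphi_1(1) \leqslant q^{(n-m)j}$.

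First I would write $\chi \in \cE(\bG^F,s)$ for some semisimple class of $s$ in the dual group $(\bG^*)^{F^*}$, and decompose the natural module $V$ of $\bG^*$ as $V = V_+ \oplus V_-$, where $V_+$ is the sum of the $(\pm 1)$-eigenspaces of $s$ and $V_-$ collects the remaining eigenspaces. Since the non-$(\pm 1)$ eigenvalues of a classical semisimple element come in inverse pairs $\{\alpha,\alpha^{-1}\}$, the complement $V_-$ is non-degenerate of even dimension $2m$ and admits an $F^*$-stable polarization into dual maximal isotropic subspaces. This produces an $F^*$-stable Levi $\bL^* = \Cl(V_+)\times\GL(V_-')$ of $\bG^*$ that contains $\bC_{\bG^*}(s)$; dualizing furnishes an $F$-stable Levi $\bL = \bL_1 \times \bL_2$ of $\bG$ with $\bL_1$ of the same classical type as $\bG$ of rank $n-m$ and $\bL_2 \cong \GL_m$.

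Next I would invoke the Bonnaf\'e--Michel compatibility of Jordan decomposition with Lusztig induction (after passing to a regular embedding via \cref{lem:ss-reg-embed} to handle disconnected centralizers in the orthogonal cases) to obtain $\chi = \pm R_{\bL}^{\bG}(\varphi_1 \boxtimes \varphi_2)$ with $\varphi_i \in \cE(\bL_i^F, s_i)$, where $s = s_1 s_2 \in \bL_1^* \times \bL_2^*$. By construction $s_1^2 = 1$, so $\varphi_1$ is quadratic unipotent (unipotent when $s_1 = 1$), establishing the structural part of (iii). The numerical parts would then come from the Lusztig-induction degree identity
\[
\chi(1) = \pm [\bG^F:\bL^F]_{p'}\,\varphi_1(1)\,\varphi_2(1)
\]
combined with an explicit evaluation of $[\bG^F:\bL^F]_{p'}$ as a ratio of cyclotomic factors (for example $\prod_{i=n-m+1}^{n}(q^{2i}-1)/\prod_{i=1}^{m}(q^i-1)$ in type $\sC$), which produces a lower estimate of the form $[\bG^F:\bL^F]_{p'} \geqslant q^{m(4n-3m)/2 - O(m)}$.

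From this estimate and the two hypotheses on $\chi(1)$, a short case split yields $m \leqslant 2j$: the bound $\chi(1) \leqslant q^{(n^2-n)/2-4}$ kills the middle and large-$m$ regimes in which the exponent $m(4n-3m)/2$ stays near its maximum value $\sim 2n^2/3$, leaving roughly $m \leqslant n/3$; on this surviving range the bound $\chi(1) \leqslant q^{nj}$ forces $m \leqslant 2j$ via the improved estimate $m(4n-3m)/2 \geqslant 3mn/2$. With $m \leqslant 2j$ and $j \leqslant n/2$ in hand, the bound $\varphi_1(1) \leqslant q^{(n-m)j}$ reduces to $[\bG^F:\bL^F]_{p'} \geqslant q^{mj}$, equivalently to the elementary inequality $2n - 3m/2 \geqslant j$, which holds because $2n - 3m/2 \geqslant 2n - 3j \geqslant n/2 \geqslant j$. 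The hard part will be producing the uniform lower bound $[\bG^F:\bL^F]_{p'} \geqslant q^{m(4n-3m)/2 - O(m)}$ across types $\sB$, $\sC$, and $\sD$, including twisted forms such as $\tw{2}\sD_n$ where $\bC_{\bG^*}(s)^{\circ}$ may carry $\GU$-factors in place of $\GL$-factors, with absolute control on the constants and verification that the two degree hypotheses together eliminate every $m > 2j$.
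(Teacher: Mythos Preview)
Your proposal is correct and follows essentially the same strategy as the paper: Lusztig series parameter $s$, eigenspace decomposition of $s$ on the natural module of $\bG^*$ to produce the Levi $\bL^* = \bL_1^* \times \bL_2^*$, the Lusztig-induction bijection $\cE(\bL^F,(s)) \to \cE(G,(s))$, the degree identity \eqref{j10}, and then an explicit lower bound on $[\bG^F:\bL^F]_{p'}$ feeding a numerical case split.

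Two minor differences are worth noting. First, the regular embedding you propose is unnecessary: the paper simply cites \cite[Thm.~11.4.3]{DM}, which applies as soon as $\bC_{\bG^*}(s) \leqslant \bL^*$, with no hypothesis on $\rZ(\bG)$ or on connectedness of the centralizer. Second, your case split is organized a little less efficiently than the paper's. You first use the hypothesis $\chi(1) \leqslant q^{(n^2-n)/2-4}$ to force $m \lesssim n/3$, and only then invoke $\chi(1) \leqslant q^{nj}$ on that restricted range via $4n-3m \geqslant 3n$. The paper instead disposes of $m=n$ in one line from the first hypothesis, and then for $2j+1 \leqslant m \leqslant n-1$ uses only the weaker (but universally valid on this range) inequality $4n-3m-1 \geqslant n+2$ together with $\chi(1)\leqslant q^{nj}$ to reach a contradiction directly; this avoids the intermediate $m \leqslant n/3$ step. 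For the final bound $\varphi_1(1) \leqslant q^{(n-m)j}$, your heuristic chain $2n-3m/2 \geqslant 2n-3j \geqslant n/2 \geqslant j$ is the right shape but, as you anticipate, the $O(1)$ losses make it tight at the boundary; the paper handles this by a short explicit split into $m \in \{1,2,3\}$ and $m \geqslant 4$.
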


\begin{proof}
View $G = \Sp(V)$ or $\SO(V)$, where $V = \FF_q^{2n}$, $\FF_q^{2n+1}$, or $\FF_q^{2n}$. Then we can identify the dual group
$G^*= (\bG^*)^{F^*}$ with $\SO(V^*) \cong \SO_{2n+1}(q)$, $\Sp(V^*) \cong \Sp_{2n}(q)$, or $\SO(V^*) \cong \SO(V) \cong \SO_{2n}(q)$, 
respectively. Let $\cE(G,(s))$ be the rational Lusztig series that contains $\chi$, where $s \in G^*$ is semisimple. If 
$s^2=\mathrm{id}_{V^*}$, then we are done by choosing $\bL=\bG$. Otherwise we can decompose 
$V^* = V^*_1 \oplus V^*_2$, where $s^2-\mathrm{id}_{V^*}$ is zero on $V_1^*$, and invertible on its orthogonal complement
$V_2^* \neq 0$. Since $s$ is semisimple, we see that $\bC_{\bG^*}(s)$ is contained in a proper $F^*$-stable Levi subgroup
$\bL^* = \bL^*_1 \times \bL_2^*$ of $\bG^*$, where $\bL_2^* \cong \GL_m$, with $m := \dim(V^*_2)/2 \leq n$, 
and $\bL^*_1$, of the same type 
as of $\bG^*$, are both $F^*$-stable. Let $\bL = \bL_1 \times \bL_2$ denote the Levi subgroup of $\bG$ dual to $\bL^*$, where 
$\bL_1$ is $F$-stable and of the same type as of $\bG$, and $\bL_2 \cong \GL_m$. 
By \cite[Thm. 11.4.3]{DM}, $\varepsilon_\bG\varepsilon_\bL R^\bG_\bL$  yields a bijection between 
$\cE(\bL^F,(s))$ and $\cE(G,(s))$, which implies (iii); in particular, 
\begin{equation}\label{j10}
  \chi(1)=\frac{\abs{G}_{p'}}{\abs{\bL^F}_{p'}}\varphi_1(1)\varphi_2(1),
\end{equation}   
if $p$ denotes the unique prime divisor of $q$. 

Using \eqref{d10} and \eqref{d11}, one readily checks that 
$$\frac{\abs{G}_{p'}}{\abs{\bL^F}_{p'}} > \left\{ \begin{array}{ll}q^{(4nm-3m^2-m-8)/2}, & \bG = \SO_{2n},\\
    q^{(4nm-3m^2+m-8)/2}, & \bG = \Sp_{2n},~\SO_{2n+1}.\end{array}\right.$$ 
In particular, if $m=n$ then $\chi(1) > q^{(n^2-n)/2-4}$ by \eqref{j10}, a contradiction. 
Assume now that $m \leq n-1$, but $m \geq 2j+1$. Then $4n-3m-1 \geq n+2$, whence
$$4nm-3m^2-m-8=m(4n-3m-1)-8 \geq (2j+1)(n+2)-8 = 2nj+n+2(2j+1)-8 \geq 2nj+1,$$
and so $\chi(1) > q^{nj}$, again a contradiction. Thus $m \leq 2j$. 

To show $\varphi_1(1) \leq q^{(n-m)j}$, it suffices by \eqref{j10} to check that $\abs{G}_{p'}/\abs{\bL^F}_{p'} \geq q^{mj}$. This is obvious if
$m=0$. If $1 \leq m \leq 3$, then $j \leq n/2 \leq 2n-6$ implies that $4nm-3m^2-m-8 \leq 2mj$. If $m \geq 4$, then 
$$4nm-3m^2-m-8=m(4n-3m-1)-8 \geq m(n+2)-8 \geq mn \geq 2mj,$$
and so we are done.
\end{proof}

\section{Applications to asymptotic variants of Thompson's conjecture}
\subsection{Type $A$}
Recall $\SL^\eps_n(q)$ denotes $\SL_n(q)$ when $\eps=+$, and $\SU_n(\FF_{q^2})$ when $\eps=-$, and 
similarly for $\GL^\eps_n(q)$. 

\begin{thm}
\label{almost-thompson-SLU}
For all $k \in \ZZ_{\geq 1}$, there exists an explicit constant $B=B(k) >0$ such that the following statement holds for all $n \in \ZZ_{\geq 1}$ and all prime powers 
$q$. Suppose $G = \SL^\eps_n(q)$ for some $\eps=\pm$ and $g \in G$ 
is a regular semisimple element whose characteristic polynomial on the natural module of $G$ is a 
product of $k$ pairwise distinct irreducible polynomials.
Then $g^{G}\cdot g^{G}$ contains every element $x \in G$ of support $\ge B$.  
\end{thm}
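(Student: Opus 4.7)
The plan is to apply the Frobenius formula \eqref{frob}: since the trivial character contributes $1$, it suffices to show
\[
\bigg|\sum_{1_G \neq \chi \in \Irr(G)} \frac{\chi(g)^2 \chi(x^{-1})}{\chi(1)}\bigg| < 1
\]
for all $x \in G$ with $\supp(x) \geq B(k)$. For the values $\chi(g)$, \cref{glu-bound2} gives the uniform bound $\abs{\chi(g)} \leq k!\cdot n^k$, since the hypothesis that the characteristic polynomial of $g$ factors into $k$ pairwise distinct irreducibles forces the associated element $wF$ of $W = \Sym_n$ (or its twisted coset in the unitary case) to have cycle length exactly $k$.

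For the values $\chi(x^{-1})$ I would invoke a character bound of Larsen--Shalev--Tiep type, as developed in \cite{LaSh2} and its successors: for every $\eps > 0$ there is a threshold $B_0(\eps)$, independent of $n$ and $q$, such that $\abs{\chi(y)} \leq \chi(1)^\eps$ for every nontrivial $\chi \in \Irr(G)$ whenever $\supp(y) \geq B_0(\eps)$. Substituting, and combining with \cref{Degrees} (the number of $\chi \in \Irr(G)$ with $\chi(1) \leq D$ is at most $D^{C/r}$, where $r = n-1$) together with the Landazuri--Seitz lower bound $\chi(1) \geq q^{cr}$ for nontrivial $\chi$, a standard Abel summation gives
\[
\sum_{1_G \neq \chi \in \Irr(G)} \chi(1)^{\eps - 1} = O(q^{-c'r})
\]
for a suitable absolute constant $c' > 0$, provided $\eps$ is chosen small enough (e.g.\ $\eps = 1/2$) and $r$ is large enough that $\eps < 1 - C/r$.

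Putting the pieces together bounds the modulus of the sum above by $(k!)^2\, n^{2k} \cdot O(q^{-c'r})$. Since $\supp(x) \leq n$, the assumption $\supp(x) \geq B$ forces $n \geq B$, so choosing $B = B(k)$ sufficiently large (in particular larger than $B_0(1/2)$, and large enough that $c' r \log q$ dominates $2k \log n + 2 \log k!$) guarantees that the exponential decay $q^{-c'r}$ swamps the polynomial prefactor $n^{2k}$, producing the strict inequality required. The main obstacle is establishing the bound $\abs{\chi(x^{-1})} \leq \chi(1)^\eps$ with the required uniformity in $\eps$, $n$, $q$, and $\chi$, particularly for $\chi$ in non-unipotent Lusztig series; for type $\sA$ this is manageable because Jordan decomposition of characters identifies every $\chi \in \Irr(\SL_n^\eps(q))$ with a unipotent character of a pure type-$\sA$ centralizer, so the necessary bounds for non-unipotent characters follow from \cref{unip-bound} combined with Deligne--Lusztig induction from Levi subgroups, and the delicate point is tracking the dependence of the constants to produce an effective $B(k)$.
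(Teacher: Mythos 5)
The proposal hinges on a character bound that does not exist. You posit that for each $\eps>0$ there is a threshold $B_0(\eps)$, \emph{independent of $n$ and $q$}, with $\abs{\chi(y)}\le\chi(1)^{\eps}$ for every nontrivial $\chi\in\Irr(G)$ once $\supp(y)\ge B_0(\eps)$. What \cite[Theorem~5.5]{LT} actually supplies is $\abs{\chi(y)}\le\chi(1)^{1-\sigma\,\supp(y)/n}$ with $\sigma$ an absolute constant, and for fixed support the exponent tends to $1$ as $n\to\infty$. This is sharp: the paper's own \cref{low a} gives, for a unipotent $\chi$ of fixed level $j$ and $y$ of fixed support $m$, that $\abs{\chi(y)}/\chi(1)\sim q^{-mj}$ while $\chi(1)\sim q^{jn}$, so $\abs{\chi(y)}\sim\chi(1)^{1-m/n}$, and any fixed $\eps<1$ is eventually violated. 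With the true exponent, your Abel summation yields $\sum_{\chi\ne 1}\chi(1)^{-\sigma B/n}\ll q^{\,cC-c\sigma B\,r/n}$, and since $r/n\to 1$ this is \emph{bounded below by a positive constant} as $n\to\infty$ for fixed $B$; it is not $O(q^{-c'r})$, so it cannot absorb the polynomial prefactor $n^{2k}$ coming from $\abs{\chi(g)}\le k!\cdot n^k$. The conclusion $<1$ fails for all large $n$.

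The paper resolves precisely this tension by working in $\tilde G=\GL^\eps_n(q)$, stratifying $\Irr(\tilde G)$ by \emph{level}, and proving an estimate on $\abs{\chi(g)}$ that is independent of $n$. For $\chi$ of true level $j<n/2$, writing $\chi=\pm R^{\tilde G}_L(\varphi\boxtimes\psi)$ with $L=\GL^\eps_{n-m}(q)\times\GL^\eps_m(q)$ and using $\chi(g)=\pm\mathrm{Ind}^{\tilde G}_L(\St_L\cdot(\varphi\boxtimes\psi))(g)$ together with a $2^k$-count of conjugates of $L$ containing $g$ (the key point, driven by the $k$ pairwise distinct irreducible factors), one gets $\abs{\chi(g)}\le 2^{2k-1}k!\,j^{k-1}$. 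Coupled with $\chi(1)>q^{nj/3}$, hence $\abs{\chi(x)}/\chi(1)<q^{-\sigma Bj/3}$, and the count of at most $9q^j$ characters of true level $j$, the level-$j$ contributions form a geometric series in $j$ that is $o(1)$ once $B$ is large; the high-degree tail $\chi(1)\ge q^{\epsilon n^2}$ is dispatched separately using $\abs{T}<q^{2n}$. Your sketch has neither the level stratification nor the $n$-independent bound on $\abs{\chi(g)}$, and the ``delicate constant tracking'' you flag at the end is in fact an impossibility rather than a technicality.
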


\begin{proof}
(a) Embed $G$ in $\tilde G:=\GL^\eps_n(q)$. Since the support of an element of $\tilde G$ is at most $n$, 
by enlarging $B$, we are free to make $n \geq k$ as large as we wish. 

Note that the element $g$ is regular semisimple, and $T:=\bC_{\tilde G}(g)$ is a maximal torus, so of order at most $(q+1)^n$. Moreover, the image of $T$ under the determinant map is 
the same as that of $\tilde G$. Hence the conjugacy class of $g$ in $G$ is the same as its class in $\tilde G$.  Let $x\in G$.  To show that $x\in g^{G}\cdot g^{G}$, it suffices to prove that
$$\sum_{\chi \in \Irr(\tilde G)} \frac{\chi(g)^2\bar\chi(x)}{\chi(1)} \neq 0.$$
As $\det(g)=\det(x)=1$, for every character $\chi$ of degree $1$ we have $\chi(g) = \chi(g)^2\bar\chi(x) = 1$. 
Therefore, it suffices to prove that
\begin{equation}
\label{abs bound}
\sum_{\{\chi \in \Irr(\tilde G) \mid \chi(1)>1\}} \frac{\abs{\chi(g)}^2\abs{\chi(x)}}{\chi(1)} < q-\eps.
\end{equation}

\smallskip
(b) For any fixed $\epsilon > 0$, choosing $B$ sufficiently large, the contribution of characters $\chi \in \Irr(\tilde G)$ 
satisfying $\chi(1)\ge q^{\epsilon n^2}$ to \eqref{abs bound} is $o(1)$.  Indeed, consider any such character $\chi$ and any irreducible constituent $\psi$ of $\chi|_G$. 
Since $\tilde G/G \cong C_{q-\eps}$, by Clifford's theorem we have $\chi|_G = \psi_1 + \ldots +\psi_t$, where
$\psi_1=\psi, \ldots,\psi_t$ are distinct $\tilde G$-conjugates of $\psi$, and $t|(q-\eps)$. By \cite[Theorem 5.5]{LT}, 
$$\abs{\psi_i(x)} \leq \psi_i(1)^{1-\sigma B/n} = (\chi(1)/t)^{1-\sigma B/n}$$
for some absolute constant $\sigma > 0$, and so $\abs{\chi(x)} \leq t(\chi(1)/t)^{1-\sigma B/n}$. As $\chi(1) \geq (q+1)^2 \geq t^2$, we obtain 
$$\abs{\chi(x)/\chi(1)} \leq \chi(1)^{-\sigma B/2n} \leq q^{-\eps\sigma Bn/2}.$$
Since $\abs{T} \leq (q+1)^n < q^{2n}$, it follows that the contribution of all these characters to \eqref{abs bound} is at most
$$q^{-\eps\sigma Bn/2}\sum_{\chi}\abs{\chi(g)}^2 \leq q^{-\eps\sigma Bn/2}\abs{T} <  q^{2n(1-\eps\sigma B/4)}$$
which is $o(1)$ when $B$ is large enough.

\smallskip
(c) Now let $j$ denote the {\it level} of $\chi \in \Irr(\tilde G)$, as defined in \cite{GLT}. Assuming $\chi(1) > 1$, we have $j > 0$. 
If $j \geq n/2$, then $\chi(1) \ge q^{n^2/4-2}$ by \cite[Theorem 1.2(ii)]{GLT}, and, as shown in (b), the contribution of all such 
characters to the left hand side of \eqref{abs bound} is $o(1)$. Hence it remains to consider the characters $\chi$ with 
$$j < n/2;$$
any such character is irreducible over $G$, see \cite[Corollary 8.6]{GLT}. Up to a linear factor, we may assume that 
$\chi$ has {\it true level} $j$. By \cite[Theorem 3.9]{GLT}, any such character $\chi$ is of the form
$$\chi = \pm R^{\tilde G}_L(\varphi \boxtimes \psi),$$
where $L = \GL^\eps_{n-m}(q) \times \GL^\eps_m(q)$ is a (possibly non-proper) Levi subgroup of $\tilde G$ with $0 \leq m < n$, 
$\varphi=\varphi^{\lambda}$ is the unipotent character of $\GL^\eps_{n-m}(q)$ labeled by a partition $\lambda \vdash (n-m)$ with 
largest part $\lambda_1=n-j$, so, in particular, 
\begin{equation}\label{bd20}
  m \leq j,
\end{equation} 
and $\psi \in \Irr(\GL^\eps_m(q))$ when $m > 0$. Moreover, the total number of characters of $\tilde G$ of true level $j$ is 
$\abs{\Irr(\GL^\eps_j(q))}$, which is  shown in \cite[Propositions 3.5, 3.9]{FG} to be at most $9q^j$. 

\smallskip 
Since $\chi$ has level $j$, 
$\chi(1) \geq q^{j(n-j)-1} > q^{nj/3}$ by \cite[Theorem 1.2(i)]{GLT}.
For these characters $\chi$, $\supp(x) \ge B$ implies by \cite[Theorem 5.5]{LT} that
\begin{equation}\label{bd21}
  \abs{\chi(x)}/\chi(1) < q^{-\sigma Bj/3}.
\end{equation}  

\smallskip
As $g$ is regular semisimple, the Steinberg character
${\mathsf{St}}_{G}$ of $G$ takes value $\pm 1$ at $x$. Applying \cite[Cor. 10.2.10]{DM} we have 
\begin{equation}\label{bd2}
  \chi(g) = \pm ({\mathsf{St}}_{G}\cdot\chi)(g) = \pm \mathrm{Ind}^G_L({\mathsf{St}}_L\cdot \varphi)(g).
\end{equation}
Note that if $V=\FF_q^n$ denotes the natural module of $\tilde G$ (endowed
with a Hermitian form when $\eps=-$), then the $L$-module $V$ is a direct (orthogonal when $\eps=-$) sum of two 
non-isomorphic irreducible modules $V_1:=\FF_q^{n-m}$ and $V_2:=\FF_q^m$, with $m \leq j < n/2$, see \eqref{bd20}. 
In particular, if $y \in \bN_G(L)$, then $y$ preserves each of $V_1$ and $V_2$, and thus 
$\bN_G(L)=L$.

Now we count the number $N$ of elements 
$y \in G$ such that $y^{-1}gy \in L$, i.e. $g \in yLy^{-1}$.  Then $g$ acts on each of the subspaces
$yV_1$ and $yV_2$. On the other hand, the decomposition $p_V(g) = \prod^k_{i=1}f_i(X)$ leads to a decomposition
$V = \oplus^k_{i=1}U_i$, where $p_{U_i}(g)=f_i(X)$, and each $U_i$ is a minimal $\langle g \rangle$-invariant, non-degenerate if $\eps=-$,
subspace. Moreover, the $\langle g \rangle$-modules $U_i$ are pairwise non-isomorphic. Hence $(yV_1,yV_2)$ is uniquely
determined by choosing a subset of $\{U_1, \ldots,U_k\}$ (so that $yV_1$ is the sum over this subset and $yV_2$ is the sum over the 
complement). Thus the total number of possibilities for $yLy^{-1} = \bN_{G}(yV_1,yV_2)$ is at most $2^k$. 
On the other hand, $yLy^{-1} = y'Ly'^{-1}$ if and only if $y^{-1}y' \in \bN_G(L)=L$. Hence
\begin{equation}\label{bd22}
  N \leq 2^k\abs{L}.
\end{equation}    

Suppose $y^{-1}gy = \diag(g_1,g_2) \in L$, with $g_1 \in L_1:=\GL^\eps_{n-m}(q)$ and $g_2 \in L_2:=\GL^\eps_m(q)$. Let $k_i$ denote
the number of irreducible factors of the characteristic polynomial of $g_i$ on the natural module for $L_i$.
Then $k_1 + k_2 \leq k$. Since $\varphi$ is unipotent and $g_1$ is regular semisimple,
we have 
$$\abs{\varphi(g_1)} \leq 2^{k_1-1} \cdot k_1!$$
by \cref{unip-bound}. On the other hand, when $m > 0$, \eqref{bd20} and Corollary \ref{glu-bound2} show that
$$\abs{\psi(g_2)} \leq k_2! \cdot j^{k_2}.$$
As $y^{-1}gy$ is regular semisimple in $L$, $\abs{\St_L(y^{-1}gy)}=1$. Hence
$$\bigl{|}\bigl(\St_L \cdot (\varphi \boxtimes \psi)\bigr)(y^{-1}gy)\bigr{|} \leq  2^{k-1} \cdot k! \cdot j^{k-1}.$$ 
It now follows from \eqref{bd2} and \eqref{bd22} that
$$\abs{\chi(g)} = \biggl{|}\frac{1}{\abs{L}}\sum_{y \in G:y^{-1}gy \in L}\pm (\St_L\cdot\varphi)(y^{-1}gy)\biggr{|}\leq 2^{2k-1} \cdot k! \cdot j^{k-1}.$$

With \eqref{bd21}, this shows that the total contribution of characters of a fixed true level $1 \leq j < n/2$ 
to \eqref{abs bound} is at most
$$9q^j \cdot q^{-\sigma Bj/3} \cdot (2^{2k-1} \cdot k! \cdot j^{k-1})^2 < A(k)/q_1^j,$$
where $A(k) = 9 \cdot (2^{2k-1} \cdot k!)^2$ and $q_1:= q^{\sigma B/3-k}$ (here we use the estimate $q^j \geq 2^j \geq j^2$). 
Note that 
\begin{equation}\label{bd23}
  \sum^\infty_{j=1}1/q_1^j = 1/(q_1-1)
\end{equation}  
is $o(1)$ when $B$ is large enough.
Hence, the total contribution of characters $\chi$, of level at least $1$ and less than $n/2$, to \eqref{abs bound}, is less than 
$(q-\eps)A(k)/(q_1-1)=o(q-\eps)$, and the theorem follows.
\end{proof}

\subsection{Types $BCD$}
Recall the involution $f \mapsto f^\checkmark$ on the set $\cF_q^*$ of monic irreducible polynomial $f \in \FF_q[t]$ with $f(0) \neq 0$:
if $\deg(f)=m$ then $f^\checkmark(t)=t^mf(1/t)/f(0)$ (equivalently, 
$\lambda \in \overline{\FF_q}$ is a root of $f^\checkmark$ if and only if
$1/\lambda$ is a root of $f$). In the following theorem, the condition that the regular semisimple element $g \in G$ has pairwise distinct cycles implies that the characteristic polynomial of $g$ on the natural $\FF_qG$-module is of the form 
$$(t-1)^a(t+1)^b\prod^c_{i=1}f_if_i^\checkmark\prod^d_{j=1}h_j,$$ 
where $a,b,c,d \geq 0$, $a,b \leq 2$, $f_i^\checkmark \neq f_i \in \cF_q^*$, $h_j=h_j^\checkmark \in \cF_q^*$,
$\deg(f_i)=m_i$, $\deg(h_j) = n_j$,  
$m_1 > \ldots >m_c$, and $n_1 > \ldots > n_d$ (and $c+d \leq k \leq c+d+2$ for the cycle length $k$). 
Note that if the irreducible factors of the characteristic polynomial of $g$ have
pairwise distinct degrees, then $g$ has pairwise distinct cycles. 

\begin{thm}
\label{almost-thompson-BCD}
For all $k \in \ZZ_{\geq 1}$, there exists $B>0$ such that the following statement holds for all $n \in \ZZ_{\geq 1}$ and all prime powers 
$q$. If $G = \Sp_{2n}(q)$, $\SO_{2n+1}(q)$, or $\SO^\pm_{2n}(q)$, and $g \in G$ is a regular semisimple element with cycle length 
$k$ and pairwise distinct cycles,
then $g^{G}\cdot g^{G}$ contains every element $x \in [G,G]$ of support $\ge B$.  
\end{thm}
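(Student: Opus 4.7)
The plan is to adapt the strategy of \cref{almost-thompson-SLU}: apply the Frobenius formula \eqref{frob}, split the character sum by degree, handle high-degree characters by general support-based character bounds, and handle low-degree ones by invoking \cref{j-red} together with the quadratic-unipotent character bounds of \cref{bcd-bound}. More precisely, it suffices to prove
\begin{equation*}
S := \sum_{\chi \in \Irr(G),\ \chi(1) > 1} \frac{\abs{\chi(g)}^2 \abs{\chi(x)}}{\chi(1)} < [G:[G,G]],
\end{equation*}
since every linear character $\chi$ satisfies $\chi(g)^2 \bar{\chi}(x) = \chi(g^2 x^{-1}) = 1$: indeed $x \in [G,G]$ by hypothesis, and $g^2 \in [G,G]$ because the spinor norm (respectively Dickson invariant in characteristic $2$) of a square is trivial.

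I would fix a small $\epsilon > 0$ and first handle the \emph{large} characters, $\chi(1) \geq q^{\epsilon n}$, via the character bound of \cite[Thm.~5.5]{LT}, which yields $\abs{\chi(x)}/\chi(1) \leq \chi(1)^{-\sigma B/n}$ for some absolute $\sigma > 0$ whenever $\supp(x) \geq B$. Since $\sum_\chi \abs{\chi(g)}^2 = \abs{\bC_G(g)} \leq (2(q+1))^n$, the total large-character contribution to $S$ is bounded by $(2(q+1))^n \cdot q^{-\sigma\epsilon B}$, which is $o(1)$ for $B$ large.

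For the \emph{small} characters with $\chi(1) \leq q^{nj}$, $1 \leq j < \epsilon n$, \cref{j-red} provides a factorisation $\chi = \pm R^{\bG}_{\bL}(\varphi_1 \boxtimes \varphi_2)$, where $\bL = \bL_1 \times \bL_2$ with $\bL_1$ classical of rank $n-m \geq n-2j$, $\bL_2 \cong \GL_m$ of rank $m \leq 2j$, $\varphi_1$ quadratic unipotent, and $\varphi_1(1) \leq q^{(n-m)j}$. Since $g$ is regular semisimple, $\St_G(g) = \pm 1$, so
\begin{equation*}
\chi(g) = \pm (\St_G\cdot\chi)(g) = \pm \frac{1}{\abs{\bL^F}}\sum_{y \in G,\ y^{-1}gy \in \bL^F} \bigl(\St_{\bL^F}\cdot (\varphi_1 \boxtimes \varphi_2)\bigr)(y^{-1}gy).
\end{equation*}
The pairwise-distinct-cycles hypothesis forces the isotypic decomposition $V = U_1 \perp \cdots \perp U_k$ of the natural module into $\langle g\rangle$-invariant summands to be unique, and a Levi conjugate $y\bL y^{-1}$ containing $g$ is determined by choosing which $U_i$ assemble into the $\GL_m$-part, giving at most $2^k$ configurations and hence at most $2^k\abs{\bL^F}$ relevant $y$. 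Writing $y^{-1}gy = (g_1, g_2)$ with cycle lengths $k_1 + k_2 \leq k$, \cref{bcd-bound} gives $\abs{\varphi_1(g_1)} \leq 2^{3k_1+4}k_1!$, \cref{glu-bound2} gives $\abs{\varphi_2(g_2)} \leq k_2!(2j)^{k_2}$, and combining with $\abs{\St_{\bL^F}(y^{-1}gy)}=1$ one obtains $\abs{\chi(g)} \leq 2^{3k+4}\cdot k!\cdot (2j)^k$.

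Finally, \cref{Degrees} bounds the number of small characters of degree $\leq q^{nj}$ by $q^{Cj}$, and \cite[Thm.~5.5]{LT} gives $\abs{\chi(x)}/\chi(1) \leq q^{-\sigma Bj/n}$. The contribution to $S$ of characters with $\chi(1) \in (q^{n(j-1)}, q^{nj}]$ is therefore at most $A(k)/q_1^j$ for some constant $A(k)$ and $q_1 = q^{\sigma B/n - C - 3k}$ (after absorbing the polynomial factor $(2j)^{2k}$ into exponential slack using $j < \epsilon n$); summing over $j \geq 1$ yields $o(1)$ and establishes $S < [G:[G,G]]$. The principal obstacle I expect is the geometric counting in the third paragraph: one must rigorously verify that the pairwise-distinct-cycles hypothesis limits the number of Levi conjugates of shape $\bL_1 \times \GL_m$ containing $g$ to $O(2^k)$, accounting for the constraint that the $\GL_m$-part must be hyperbolic (and that self-paired factors cannot be split), and that the induced cycle lengths $k_1, k_2$ on the two factors satisfy the hypotheses required by \cref{bcd-bound} and \cref{glu-bound2}.
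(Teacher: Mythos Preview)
Your overall strategy is exactly that of the paper, but the exponents you write down do not work, and as stated the argument fails in two places.

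First, the threshold for ``large'' characters must be $\chi(1)\ge q^{\epsilon n^2}$, not $q^{\epsilon n}$. With your threshold, \cite[Thm.~5.5]{LT} gives only $\abs{\chi(x)}/\chi(1)\le \chi(1)^{-\sigma B/n}\le q^{-\sigma\epsilon B}$, so the large-character contribution is bounded by $(2(q+1))^n q^{-\sigma\epsilon B}\sim q^{2n-\sigma\epsilon B}$, which is \emph{not} $o(1)$ as $n\to\infty$ for any fixed $B$. With the correct threshold $q^{\epsilon n^2}$ one gets $q^{-\sigma\epsilon Bn}$ instead of $q^{-\sigma\epsilon B}$, and then $q^{2n-\sigma\epsilon Bn}=o(1)$ once $B>2/(\sigma\epsilon)$.

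Second, and for the same reason, your small-character estimate $\abs{\chi(x)}/\chi(1)\le q^{-\sigma Bj/n}$ has a spurious $1/n$: if $q^{n(j-1)}\le\chi(1)$ (and in fact $\chi(1)\ge q^{nj/2}$ after invoking the Landazuri--Seitz bound for $j=1$), then $\chi(1)^{-\sigma B/n}\le q^{-\sigma Bj/2}$, not $q^{-\sigma Bj/n}$. Consequently your $q_1=q^{\sigma B/n-C-3k}$ should be $q_1=q^{\sigma B/2-C-O(k)}$. As you have written it, $q_1\to 1$ as $n\to\infty$ and the geometric series $\sum_j q_1^{-j}$ does not converge uniformly, so the small-character contribution is not controlled. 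Once both exponents are repaired, your outline coincides with the paper's proof; the geometric counting you flag as the ``principal obstacle'' is carried out exactly as you describe (the paper also observes $\bN_G(L)=L$, so the $2^k$ bound on Levi conjugates translates directly to the bound \eqref{bd32} on $N$).
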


\begin{proof}
(a) Enlarging $B$, we are free to make $n \geq \max(k,5)$ as large as we wish. Write $G = \bG^F$ for a corresponding simple algebraic 
group of type $\Sp$ or $\SO$. Then $\bC_{\bG}(g)$ is a maximal torus, so, using the well-known structure of centralizers of semisimple
elements in the finite group $G$, we see that $T:=\bC_G(g)$ has order at most $2(q+1)^n$. 

Let $x\in [G,G]$. Since the linear characters of $G$ take value $1$ at $x$, to show that $x\in g^{G}\cdot g^{G}$, it suffices to prove that
\begin{equation}
\label{abs bound2}
\sum_{\{\chi \in \Irr(G) \mid \chi(1)>1\}} \frac{\abs{\chi(g)}^2\abs{\chi(x)}}{\chi(1)} < 1 \leq \abs{G/[G,G]}.
\end{equation}

\smallskip
(b) For any fixed $\epsilon > 0$, choosing $B$ sufficiently large, the contribution of characters $\chi \in \Irr(G)$ 
satisfying $\chi(1)\ge q^{\epsilon n^2}$ to \eqref{abs bound2} is $o(1)$.  Indeed, for any such character $\chi$, by \cite[Theorem 5.5]{LT} we have 
$$\abs{\chi(x)/\chi(1)} \leq \chi(1)^{-\sigma B/n} \leq q^{-\eps\sigma Bn}$$
for some absolute constant $\sigma > 0$.
Since $\abs{T} \leq 2(q+1)^n < q^{2n}$, it follows that the contribution of all these characters to \eqref{abs bound2} is at most
$$q^{-\eps\sigma Bn}\sum_{\chi}\abs{\chi(g)}^2 \leq q^{-\eps\sigma Bn}\abs{T} <  q^{2n(1-\eps\sigma B/2)}$$
which is $o(1)$ when $B$ is large enough.

\smallskip
(c) Now we consider any $\chi \in \Irr(G)$ with $1 < \chi(1) < q^{(n^2-4n)/4}$. By the Landazuri-Seitz bound 
\cite{LaSe}, we have $\chi(1) > q^{n/2}$. Let $j \in \ZZ_{\geq 1}$ be the unique integer such that  $q^{n(j-1)} \leq \chi(1) < q^{nj}$, and note
that 
$$q^{nj/2} \leq \chi(1) < \min\bigl(q^{nj},q^{(n^2-n)/2-4}\bigr),~~j < n/4.$$
By Proposition \ref{j-red}, there is an $F$-stable Levi subgroup $\bL= \bL_1 \times \bL_2$ of $\bG$, possibly equal to
$\bG$, such that the following statements hold:
\begin{enumerate}
\item[\rm $(\alpha)$] $\bL_1$ is $F$-stable, of the same type as of $\bG$, and of rank $n-m \geq n-2j$.
\item[\rm $(\beta)$] $\bL_2$ is $F$-stable, of type $\GL_m$ with $m \leq 2j$.
\item[\rm $(\gamma)$] $\chi = \pm R^\bG_\bL(\varphi_1 \boxtimes \varphi_2)$, where $\varphi_1 \in \Irr(\bL_1^F)$ is a unipotent or quadratic unipotent character, and $\varphi_2 \in \Irr(\bL_2^F)$.
\end{enumerate}
Moreover, by Theorem \ref{Degrees}, there is an absolute constant $C$ such that the total number $N_j$ of characters of $G$ of degree 
$\leq q^{nj}$ is at most 
\begin{equation}\label{bd30}
  N_j \leq q^{Cj}.
\end{equation}
Since $\chi(1) \geq q^{nj/2}$, $\supp(x) \ge B$ implies by \cite[Theorem 5.5]{LT} that
\begin{equation}\label{bd31}
  \abs{\chi(x)}/\chi(1) < q^{-\sigma Bj/2}.
\end{equation}  

\smallskip
Next we bound $\abs{\chi(g)}$, again using \eqref{bd2}. Let $V=\FF_q^d$ denote the natural module of $G$ endowed
with a symplectic or quadratic form, $d = 2n$ or $2n+1$, and let $L:=\bL^F$. Then the $L$-module $V$ is an orthogonal sum of two 
non-degenerate $L$-invariant subspaces $V_1:=\FF_q^{d-2m}$ and $V_2:=\FF_q^{2m}$, with 
$$2m \leq 4j < n \leq d-2m.$$ 
Furthermore,
$V_1$ is the natural, irreducible module of dimension $d-2m$ for $L_1:=\bL_1^F$, with $\bL_1$ of the same type as of $\bG$, and 
$L_1$ acts trivially on $V_2$. Next, $L_2:=\bL_2^F \cong \GL_m(q)$ or $\GU_m(q)$, with $V_1$ a minimal $L_2$-invariant 
non-degenerate subspace, and $L_2$ acts trivially on $V_1$.  
In particular, if $y \in \bN_G(L)$, then $y$ preserves each of $V_1$ and $V_2$, and thus 
$\bN_G(L)=L$.

Now we count the number $N$ of elements $y \in G$ such that $y^{-1}gy \in L$, i.e. $g \in yLy^{-1}$.  Then $g$ acts on each of the subspaces
$yV_1$ and $yV_2$. On the other hand, since $g$ has cycle length $k$ with pairwise distinct cycles, $V$ admits an orthogonal 
decomposition $V = \oplus^k_{i=1}U_i$, where each $U_i$ is a minimal $\langle g \rangle$-invariant non-degenerate 
subspace. Moreover, the $\langle g \rangle$-modules $U_i$ are pairwise non-isomorphic. Hence $(yV_1,yV_2)$ is uniquely
determined by choosing a subset of $\{U_1, \ldots,U_k\}$ (so that $yV_1$ is the sum over this subset and $yV_2$ is the sum over the 
complement). Thus the total number of possibilities for $yLy^{-1} = \bN_{G}(yV_1,yV_2)$ is at most $2^k$. 
On the other hand, $yLy^{-1} = y'Ly'^{-1}$ if and only if $y^{-1}y' \in \bN_G(L)=L$. Hence
\begin{equation}\label{bd32}
  N \leq 2^k\abs{L}.
\end{equation}    

Suppose $y^{-1}gy = \diag(g_1,g_2) \in L$, with $g_1 \in L_1$ and $g_2 \in L_2 =\GL^\pm_m(q)$. Let $k_i$ denote the cycle length of
$g_i$. Then $k_1 + k_2 \leq k$. Since $\varphi_1$ is quadratic unipotent and $g_1$ is regular semisimple,
we have 
$$\abs{\varphi_1(g_1)} \leq 2^{3k_1+4} \cdot k_1!$$
by Corollary \ref{bcd-bound}. On the other hand, when $m>0$, the statement $(\beta)$ and Corollary \ref{glu-bound2} show that
$$\abs{\varphi_2(g_2)} \leq k_2! \cdot (2j)^{k_2}.$$
As $y^{-1}gy$ is regular semisimple in $L$, $\abs{\St_L(y^{-1}gy)}=1$. Hence
$$\bigl{|}\bigl(\St_L \cdot (\varphi_1 \boxtimes \varphi_2)\bigr)(y^{-1}gy)\bigr{|} \leq  2^{3k+4} \cdot k! \cdot j^{k-1}.$$ 
It now follows from \eqref{bd2} and \eqref{bd32} that
$$\abs{\chi(g)} = \biggl{|}\frac{1}{\abs{L}}\sum_{y \in G:y^{-1}gy \in L}\pm (\St_L\cdot\varphi)(y^{-1}gy)\biggr{|}\leq 2^{4k+4} \cdot k! \cdot j^{k-1}.$$

With \eqref{bd31}, this shows that the total contribution of characters of degree satisfying 
$q^{n(j-1)} \leq \chi(1) < q^{nj}$ 
to \eqref{abs bound2} is at most
$$q^{Cj} \cdot q^{-\sigma Bj/2} \cdot (2^{4k+4} \cdot k! \cdot j^{k-1})^2 < A(k)/q_1^j,$$
where $A(k) = (2^{4k+4} \cdot k!)^2$ and $q_1:= q^{\sigma B/2-C-k+1}$ (here we again use $q^j \geq j^2$). Recalling 
\eqref{bd23},
we conclude that the total contribution to \eqref{abs bound2} of characters $\chi$, of degree at least $2$ and less than $q^{(n^2-4n)/4}$,   is less than 
$A(k)/(q_1-1)=o(1)$, and hence the theorem follows.
\end{proof}

\subsection{Another result for $\SL$}
For any positive interger $k$, let $\ua$ denote a fixed increasing sequence $a_1<\cdots<a_k$ of positive integers.  By an \emph{$\ua$-flag} in an $\FF_q$-vector space $V$, we mean a flag
$$V_1\subset\cdots\subset V_k\subset V$$
of $\FF_q$-subspaces such that $\dim V_i = a_i$ for $1\le i\le k$.  The number of $\ua$-flags is
$$F_{\ua}(\dim V) := \frac{\prod_{j=1}^{a_{k+1}}(q^j-1)}{\prod_{i=0}^k \prod_{j=1}^{a_{i+1}-a_i} (q^j-1)},$$
where we define $a_0:=0$ and $a_{k+1} := \dim V$.
As 
$$\prod_{j=1}^\infty (1-q^{-j}) > \frac 14$$
by \cite[Lemma 4.1]{LMT}, we have
\begin{equation}
\label{F-sub-a}
\frac{q^{d_{\ua}(N)}}4 <F_{\ua}(N) < 4^k q^{d_{\ua}(N)},
\end{equation}
where
$$d_{\ua}(N):= \sum_{0\le i < j\le k} (a_{i+1}-a_i)(a_{j+1}-a_j) = a_k(N-a_k) +  \sum_{0\le i < j\le k-1} (a_{i+1}-a_i)(a_{j+1}-a_j).$$
In particular, as $N$ goes to infinity,
\begin{equation}
\label{d-sub-a}
d_{\ua} = a_k N + O(1),
\end{equation}
where the implicit constant depends on $\ua$.
Moreover,
\begin{equation}
\label{ratios}
\frac{F_{\ua}(N+1)}{F_{\ua}(N)} = \frac{q^{N+1}-1}{q^{N+1-a_k}-1} = q^{a_k}+q^{-N+O(1)},
\end{equation}
so
\begin{equation}
\label{ratio-limit}
\lim_{N\to \infty} \frac{F_{\ua}(N+1)}{F_{\ua}(N)}  = q^{a_k}.
\end{equation}

\begin{lem}
\label{stable flags}
Let $k$ and $m$ be positive integers, and let $\ua$ be an increasing sequence of $k$ positive integers.  If $N$ is sufficiently large in terms of $m$ and $\ua$, then for all $g\in \GL_N(q)$
with $\supp(g) = m$,  the number of $g$-stable $\ua$-flags in $\FF_q^N$ can be written
$$q^{-a_k m}(1+\epsilon)F_{\ua}(N),$$
with $\abs{\epsilon} < q^{-N/2}$.
\end{lem}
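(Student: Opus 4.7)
The plan is to isolate the dominant contribution from $g$-stable $\underline{a}$-flags lying entirely in the largest eigenspace of $g$, and to show that the remaining flags contribute only a negligible error. Let $\lambda \in \overline{\FF_q}$ be an eigenvalue of $g$ whose eigenspace $E \subseteq V = \FF_q^N$ has dimension $N-m$, let $\pi : V \to V/E$ denote the projection, and write $\bar g$ for the induced action on the $m$-dimensional space $V/E$. Since $g$ acts as the scalar $\lambda$ on $E$, every $\underline{a}$-flag contained in $E$ is automatically $g$-stable, and the number of such flags is exactly $F_{\underline{a}}(N-m)$. A telescoping application of \eqref{ratios} iterated $m$ times gives
\[
\frac{F_{\underline{a}}(N-m)}{F_{\underline{a}}(N)} = \prod_{i=1}^{m}\frac{F_{\underline{a}}(N-m+i-1)}{F_{\underline{a}}(N-m+i)} = q^{-a_k m}\bigl(1 + O(q^{-(N-m)})\bigr),
\]
which produces the main term.

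The error term requires controlling $g$-stable flags $(V_1\subset\cdots\subset V_k)$ with $V_k \not\subseteq E$, organized by $j := \dim\pi(V_k)\in\{1,\ldots,m\}$. For each $\bar g$-stable $j$-dimensional subspace $U \subseteq V/E$ (of which there are at most $\binom{m}{j}_q$, hence $O_m(1)$ choices), set $\tilde U = \pi^{-1}(U)$. Each such $V_k$ with $\pi(V_k)=U$ is determined by the intersection $F := V_k \cap E$ (an $(a_k-j)$-dimensional subspace of $E$) together with a $g$-equivariant splitting of the short exact sequence of $g$-modules
\[
0 \to E/F \to \tilde U/F \to U \to 0,
\]
where $g$ acts on $E/F$ as the scalar $\lambda$. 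Splittings form a torsor under $\Hom_g(U,E/F)$ when nonempty, and the obstruction to existence lies in $\mathrm{Ext}^1_g(U,E/F)$; a direct calculation using the Jordan structure of $\bar g|_U$ at $\lambda$ shows that both groups have the same $\FF_q$-dimension $r(N-m-a_k+j)$, where $r$ is the number of Jordan blocks of $\bar g|_U$ at $\lambda$. The two scales should cancel: the number of splittings per admissible $F$ is $q^{r(N-m-a_k+j)}$, and as $F$ varies over the Grassmannian of $(a_k-j)$-dimensional subspaces of $E$, the obstruction class $[\tilde U/F]\in\mathrm{Ext}^1_g(U,E/F)$ distributes evenly enough that only a $q^{-r(N-m-a_k+j)}$-fraction of $F$'s admit splittings, yielding at most $C_m\binom{N-m}{a_k-j}_q$ pairs $(F, V_k)$ for some constant $C_m$ depending only on $m$.

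Multiplying by the at most $F_{\underline{a}'}(a_k)$ choices of sub-flag $V_1\subset\cdots\subset V_{k-1}$ (for $\underline{a}' = (a_1,\ldots,a_{k-1})$), summing over the $O_m(1)$ choices of $U$, and summing over $j\ge 1$ using
\[
\binom{N-m}{a_k-j}_q\bigg/\binom{N-m}{a_k}_q = q^{-j(N-m-2a_k+j)+O(1)},
\]
the total error is $O_m\!\left(q^{-(N-O_m(1))}\right) F_{\underline{a}}(N-m)$. For $N$ sufficiently large in terms of $m$ and $\underline{a}$, this is at most $q^{-N/2} F_{\underline{a}}(N-m)$, which combined with the main term yields the claim.

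The main obstacle is establishing the cancellation uniformly for all $g$ with $\supp(g)=m$, rather than only for simple Jordan types. When $g$ has several Jordan blocks of size $\ge 2$ for the eigenvalue $\lambda$, the number $r$ can grow as large as $j$, so that both the number of splittings per admissible $F$ and the codimension of the admissible locus scale as $q^{r(N-m-a_k+j)}$; one must then carefully analyze how the obstruction map $F \mapsto [\tilde U/F]\in\mathrm{Ext}^1_g(U,E/F)$ varies as $F$ moves through the Grassmannian, to confirm that the two effects compensate exactly and deliver a bound independent of the Jordan type of $g$.
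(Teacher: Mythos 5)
Your decomposition is genuinely different from the paper's. The paper splits $V_k$ by whether it lies in the generalized eigenspace $W_\lambda$ and then in the eigenspace $K_\lambda$, and uses only the bounds $\dim W^\lambda, \dim I_\lambda \le m$ together with crude dimension counting; no homological machinery appears. You instead organize by the image $U = \pi(V_k) \subseteq V/E$ and invoke the torsor/obstruction description of $g$-equivariant lifts. This is conceptually cleaner, but it creates a delicate cancellation that the paper's route sidesteps entirely.

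That cancellation is the genuine gap, and you flag it yourself. The assertion that the obstruction $[\tilde U/F]\in\mathrm{Ext}^1_g(U,E/F)$ vanishes for only a $q^{-r(N-m-a_k+j)}$-fraction of the subspaces $F$ — exactly offsetting the Hom-torsor of size $q^{r(N-m-a_k+j)}$ — is stated as a hope (``should cancel'', ``distributes evenly enough''), not proved, and without it your error estimate does not close. The cancellation is in fact true, but for a specific reason you never identify: the class $\xi$ of $0\to E\to\pi^{-1}(U)\to U\to 0$ in $\mathrm{Ext}^1_g(U,E)\cong E\otimes\mathrm{Ext}^1_g(U,\FF_q(\lambda))$, viewed as a map $\Hom_g(\FF_q(\lambda),U)\to E$ via the perfect Yoneda pairing, is \emph{injective}. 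Injectivity is forced because $E$ is the whole $\lambda$-eigenspace of the $g$-stable subspace $\pi^{-1}(U)$, not merely contained in it, so the connecting map $\delta$ in the long exact sequence for $\Hom_g(\FF_q(\lambda),-)$ applied to that short exact sequence has trivial kernel. Only then does $[\tilde U/F]=0$ reduce to the codimension-$r(N-m-a_k+j)$ Schubert condition $F\supseteq \mathrm{Im}(\xi)$, and your count goes through. Two smaller loose ends: you never argue that $\lambda\in\FF_q$ (the paper deduces this from $N>3m$, which forces $\lambda$ to be the unique eigenvalue of largest multiplicity, hence Frobenius-fixed), and the quantities you label $O_m(1)$ and $C_m$ actually carry $q$-dependence of order $q^{O(m^2)}$; the final inequality still absorbs this for $N$ large in terms of $m$ and $\ua$, but it should be stated rather than hidden in ``constants.''
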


\begin{proof}
We may assume $N > 3m$, so the eigenvalue $\lambda$ of multiplicity $N-m$ is unique and therefore lies in $\FF_q$.  Let $W_\lambda \subset \FF_q^N$ denote the generalized $\lambda$-eigenspace of $g$ and $W^\lambda$ the direct sum of the
generalized eigenspaces of $g$ for all eigenvalues other than $\lambda$.  Thus $\dim W^\lambda < N/3$.

If $V_1\subset\cdots \subset V_k$ is a $g$-stable $\ua$-flag, $V_k$ is determined by the decomposition 
$$V_k = (V_k\cap W_\lambda)\oplus (V_k\cap W^\lambda).$$
If $\dim V_k\cap W_\lambda < a_k$, then by applying \eqref{F-sub-a} to sequences of length $1$, we see that
 the number of possibilities for $V_k$ is less than 
$$\sum_{i=0}^{a_k-1} F_i(\dim W_\lambda)F_{a_k-i}(\dim W^\lambda) < 4^2\sum_{i=0}^{a_k-1} q^{i(\dim W_\lambda-i)}q^{(a_k-i)(\dim W^\lambda-a_k+i)} = q^{(a_k-1)N-O(1)},$$
so the total number of possibilities for the whole flag is less than $q^{(a_k-1)N-O(1)}$.

If $\dim V_k\cap W_\lambda = a_k$, then $V_k\subset W_\lambda$.
Let $I_\lambda$ and $K_\lambda$ denote the image of $\lambda - g$ on $W_\lambda$ and the kernel of $\lambda-g$ respectively.
Because $V_k$ is $g$-stable, either $V_k\subset K_\lambda$ or $V_k\cap I_\lambda\neq \{0\}$.  In the latter case, $V_k$ is spanned
by a non-zero vector in $I_\lambda$ and a subspace of $W_\lambda$ of dimension $a_k-1$.  As $\dim I_\lambda \le k < N/3$, the number of possibilities for
the whole flag is less than $q^{(a_k-1)N-O(1)}$.

Finally, we consider the number of possibilities when $V_k\subset K_\lambda$.  As $g$ acts on $K_\lambda$ as scalar multiplication, all $\ua$-flags with
$V_k\subset K_\lambda$ are $g$-stable.  The total number is
$$F_{\ua}(\dim K_\lambda) = F_{\ua}(N-m) = q^{a_k m}(1-q^{-N+O(1)})F_{\ua}(N),$$
by \eqref{ratios}.
The lemma follows.
\end{proof}

The unipotent characters of $\GL_N(q)$ are indexed by partitions $\lambda\vdash N$, and we say $\chi=\chi_\lambda$ has \emph{level $N-\lambda_1$}, where the parts of $\lambda$ are arranged from largest to smallest, see \cite[\S3]{GLT}. Also recall that,
for $\lambda,\mu \vdash N$, the Kostka number $K_{\lambda\mu}$ is the number of semistandard Young tableaux of shape $\lambda$ and weight $\mu$.

\begin{lem}
\label{stable}
Assuming $\mu_1\ge N/2$, $K_{\lambda\mu}$ depends only the partitions $(\lambda_2,\lambda_3,\ldots)$ and $(\mu_2,\mu_3,\ldots)$, obtained by removing the largest parts $\lambda_1$ and $\mu_1$ from $\lambda$ and $\mu$, and not on 
the value of $N$.
\end{lem}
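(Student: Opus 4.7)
The plan is to exhibit an explicit combinatorial bijection that peels off the largest parts $\lambda_1$ and $\mu_1$, thereby expressing $K_{\lambda\mu}$ in terms of data involving only $\hat\lambda := (\lambda_2,\lambda_3,\ldots)$ and $\hat\mu := (\mu_2,\mu_3,\ldots)$.

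First I would dispose of the trivial case: $K_{\lambda\mu} = 0$ unless $\lambda_1 \geq \mu_1$, because in any SSYT the $\mu_1$ entries equal to $1$ occupy distinct columns (by column-strictness), all in the top row (for the same reason), forcing $\lambda_1 \geq \mu_1$. So we may assume $\lambda_1 \geq \mu_1 \geq N/2$, from which
$$\lambda_2 \;\leq\; |\hat\lambda| \;=\; N - \lambda_1 \;\leq\; N - \mu_1 \;\leq\; N/2 \;\leq\; \mu_1.$$

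Next I would analyze the structure of an arbitrary SSYT $T$ of shape $\lambda$ and weight $\mu$. By the observations above together with weak increase along rows, the $\mu_1$ ones of $T$ fill precisely the leftmost $\mu_1$ boxes of the first row. The remaining cells of $T$ split into two blocks sitting in disjoint sets of columns: a single row of length $\lambda_1 - \mu_1 = |\hat\mu| - |\hat\lambda|$ in row $1$ and columns $\mu_1+1,\ldots,\lambda_1$, and the sub-diagram of shape $\hat\lambda$ in rows $\geq 2$ and columns $1,\ldots,\lambda_2 \leq \mu_1$. Since no column is shared between the two blocks, the semistandard conditions decouple, and $T$ is determined by an independent semistandard filling of each block with entries from $\{2,3,\ldots\}$ and combined weight $\hat\mu$.

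Shifting letters down by one, this yields the identity
$$K_{\lambda\mu} \;=\; \sum_{\alpha} K_{\hat\lambda,\,\hat\mu - \alpha},$$
where $\alpha = (\alpha_1,\alpha_2,\ldots)$ ranges over compositions with $0 \leq \alpha_i \leq \hat\mu_i$ and $|\alpha| = |\hat\mu| - |\hat\lambda|$ (encoding the weight of the row-$1$ block, which, being a single weakly increasing row, is determined by its weight alone). The right-hand side is a function of $\hat\lambda$ and $\hat\mu$ alone, proving the claim. The only step requiring care is the column-disjointness of the two blocks, which follows immediately from the displayed chain of inequalities; the rest is routine bookkeeping, so I do not anticipate a genuine obstacle.
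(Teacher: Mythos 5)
Your proof is correct and follows essentially the same route as the paper's: in both, the $\mu_1$ ones are forced into the leftmost cells of the first row, and the hypothesis $\mu_1\ge N/2$ guarantees $\lambda_2\le\mu_1$, so the row-1 tail and the shape $\hat\lambda$ occupy disjoint columns and can be filled independently. You are somewhat more explicit than the paper in spelling out the inequality chain $\lambda_2\le N-\lambda_1\le N-\mu_1\le\mu_1$ and in packaging the decoupling as the identity $K_{\lambda\mu}=\sum_\alpha K_{\hat\lambda,\hat\mu-\alpha}$, but the underlying argument is the same.
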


\begin{proof}
In any semistandard Young tableaux of shape $\lambda$ and weight $\mu$,
the first $\mu_1$ entries of the first row must have filled with value $1$, and the remaining boxes in the first row are all to the right of every box in the remaining rows.
Therefore, such a tableau is determined by choosing from the $\mu_2$ values $2$, the $\mu_3$ values $3$, and so on, an arbitrary weakly increasing sequence for the $\lambda_1-\mu_1$ remaining boxes in the first row,
and from the values that remain, a semistandard Young tableau of shape $(\lambda_2,\lambda_3,\ldots)$.  The number of such choices depends only on $(\lambda_2,\lambda_3,\ldots)$ and
$(\mu_2,\mu_3,\ldots)$, but not on $N$.
\end{proof}

\begin{prop}
\label{low a}
Let $m$ and $n$ be fixed positive integers.  If $N$ is a positive integer sufficiently large in terms of $m$ and $n$, $\chi$ is a unipotent character of $\GL_N(q)$ of level $n$, and $g\in \GL_N(q)$ has support $m$, then
\begin{equation}
\label{good-estimate}
\Bigm|\frac{q^{mn}\chi(g)}{\chi(1)}-1\Bigm| < q^{-N/3}.
\end{equation}
\end{prop}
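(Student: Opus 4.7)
The plan is to expand $\chi_\lambda$ as an integer combination of Harish-Chandra induced permutation characters on partial flag varieties, evaluate each summand via a dual form of Lemma \ref{stable flags}, and extract the leading asymptotic from the level-$n$ contributions.

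First, I would invoke the classical decomposition
\[
\Ind_{P_\mu}^G \mathbf{1} = \sum_{\nu \trianglerighteq \mu} K_{\nu\mu}\,\chi_\nu,
\]
valid for every partition $\mu \vdash N$, where $G = \GL_N(q)$, $P_\mu$ is the standard parabolic of type $\mu$, and $K_{\nu\mu}$ is the Kostka number. The transition matrix is triangular with unit diagonal in dominance order, so inverting over $\ZZ$ yields
\[
\chi_\lambda = \sum_{\mu \trianglerighteq \lambda} a_{\lambda\mu}\,\Ind_{P_\mu}^G \mathbf{1},\qquad a_{\lambda\lambda}=1,\ a_{\lambda\mu}\in\ZZ.
\]
For $\lambda$ of level $n$, the dominance condition $\mu \trianglerighteq \lambda$ forces $\mu_1 \geq N - n$, so $\mu = (\mu_1,\tilde\mu)$ with $\tilde\mu \vdash n'(\mu) := N - \mu_1 \in \{0,1,\ldots,n\}$. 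By Lemma \ref{stable}, the Kostka numbers at levels $\leq n$ depend only on the tails once $N \geq 2n$; inverting the finite level-$\leq n$ block shows that the same holds for each $a_{\lambda\mu}$, so the sum has boundedly many terms with coefficients bounded in $n$ alone.

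Second, $\Ind_{P_\mu}^G \mathbf{1}(g)$ equals the number of $g$-stable $\mu$-flags in $\FF_q^N$. Replacing each subspace in such a flag by its annihilator in $V^* \cong \FF_q^N$ gives a bijection with flags of shape $\ub$ in $V^*$ whose top entry is $N - \mu_1 = n'(\mu) \leq n$, bounded independently of $N$; these dual flags are stable under the contragredient action of $g$, which has the same support $m$. Lemma \ref{stable flags} then applies and yields
\[
\Ind_{P_\mu}^G \mathbf{1}(g) = q^{-n'(\mu)m}(1+\epsilon_\mu)[G : P_\mu],\qquad |\epsilon_\mu| < q^{-N/2}.
\]
Substituting produces
\[
q^{nm}\chi_\lambda(g) = \sum_\mu a_{\lambda\mu}[G : P_\mu]\,q^{(n - n'(\mu))m}(1+\epsilon_\mu),\qquad \chi_\lambda(1) = \sum_\mu a_{\lambda\mu}[G : P_\mu].
\]

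Finally, $[G : P_\mu]$ is a polynomial in $q$ of leading degree $n'(\mu)(N - n'(\mu)) + O_n(1)$, so level-$(n' < n)$ terms are smaller in degree than level-$n$ terms by at least $(n - n')(N - n - n') \geq N - 2n$. For $N$ sufficiently large in $m,n$ this gap exceeds $nm + N/2$, so the contribution of level-$(n' < n)$ terms to $q^{nm}\chi_\lambda(g)/\chi_\lambda(1)$ is $O(q^{-N/2})$. Combined with the $O(q^{-N/2})$ error coming from the $\epsilon_\mu$ in the level-$n$ terms, this gives $|q^{nm}\chi_\lambda(g)/\chi_\lambda(1) - 1| = O(q^{-N/2}) < q^{-N/3}$ for $N$ sufficiently large. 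The main obstacle is controlling the lower-level terms: the $q^{nm}$ prefactor partially cancels the polynomial-degree gap, so $N$ must be chosen large enough in terms of both $m$ and $n$ that the gap still dominates, and the uniform-in-$N$ boundedness of the inverse coefficients $a_{\lambda\mu}$ provided by Lemma \ref{stable} is essential for keeping all implicit constants independent of $N$.
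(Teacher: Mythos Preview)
Your proposal is correct and follows essentially the same approach as the paper: express $\chi_\lambda$ via inverse Kostka numbers as an integer combination of flag permutation characters $\phi_\mu$ with $\mu \succeq \lambda$ (hence of level at most $n$), bound the inverse coefficients uniformly in $N$ using Lemma~\ref{stable}, apply Lemma~\ref{stable flags} to each $\phi_\mu$, and separate the dominant level-$n$ contribution from the lower-level error. The one cosmetic difference is that you dualize flags to obtain bounded top dimension before invoking Lemma~\ref{stable flags}, whereas the paper achieves the same thing directly by defining $\ua_\mu$ with the parts of $\mu$ in increasing order (so that $a_k = N-\mu_1 \leq n$); the two devices are equivalent.
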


\begin{proof}
As $K_{\lambda\mu}$ is the number of semistandard Young tableaux of shape $\lambda$ and weight $\mu$,
we have $K_{\lambda \lambda}=1$, and if $K_{\lambda \mu}\neq 0$, then $\lambda$ dominates $\mu$: $\lambda \succeq \mu$.   
In particular, this implies that $\mu_1\le \lambda_1$.

For each $\mu \vdash N$, we define the increasing sequence $\ua_\mu$ of positive integers such that the sequence $a_1 = a_1-a_0,\ldots,a_{k+1}-a_k = N-a_k$ gives the parts of $\mu$ in increasing order.

Let $\phi_\mu$ denote the permutation character of $\GL_N(q)$ acting on the set of $\ua_\mu$-flags in $\FF_q^N$.
Then by  \cite[Lemma~2.4]{AT},
$$\phi_\mu = \sum_{\lambda \succeq \mu} K_{\lambda\mu} \chi_\lambda.$$
If $N\ge 2n$ and $\mu_1\ge N-n$, then by Lemma~\ref{stable}, $K_{\lambda\mu}$ depends only on $(\lambda_2,\lambda_3,\ldots)$
and $(\mu_2,\mu_3,\ldots)$.

As every Kostka matrix $K$ (for partitions of $N$) is unitriangular, we can invert and write $\chi=\chi_\lambda$ as a linear combination of permutation characters associated to $\phi_\mu$, where $\mu\succeq \lambda$.
We can therefore express each unipotent character of level $n$, including $\chi_\lambda$, as a linear combination of permutation characters $\chi_{\ub,N}$ associated to flags $\ub$ with maximal dimension $\le n$,
with coefficients which are entries in the inverse Kostka matrix $K^{-1}$.  

Note that, for any fixed $n$, the set of partitions $\lambda \vdash N$ with $\lambda_1 \geq N-n$ depends only on $n$, but not on $N$, $m$, or $q$. The unitriangularity of $K$ implies that 
the submatrix of $K^{-1}$, truncated to only partitions of $N$ with the first part $\geq N-n$, is the inverse of the submatrix of $K$, 
truncated to the same set of partitions. Applying Lemma \ref{stable}, we see that all entries of this truncated submatrix of $K^{-1}$ 
are bounded by some constant $O(1)$ that depends only on $n$:
\begin{equation}\label{inv1}
  \abs{(K^{-1})_{\lambda\mu}} \leq O(1)
\end{equation}  
whenever $\lambda,\mu \geq N-n$. Moreover,
\begin{equation}\label{inv2}
  \sum_{\mu \succeq \lambda}(K^{-1})_{\lambda\mu}\phi_\mu(1) = \chi_\lambda(1).
\end{equation}

Define $\epsilon_\mu$ so that 
\begin{equation}
\label{epsilon-sub-mu}
\phi_\mu(g)= q^{(\mu_1-N)m}(1+\epsilon_\mu)\phi_\mu(1).
\end{equation}
By Lemma \ref{stable flags}, for fixed $m$ and $\mu$, $\abs{\epsilon_\mu} < q^{-N/2}$ if  $N$ is sufficiently large and $g$ is of support $m$.
When $\mu_1 = \lambda_1$, we have $q^{(\mu_1-N)m} = q^{-mn}$. On the other hand, if 
$\mu_1 > \lambda_1$, then by \cite[Theorem 1.2(i)]{GLT} we have
$$\phi_\mu(1) \leq q^{(n-1)N}.$$
Now, by \eqref{inv1}, \eqref{inv2}, and \eqref{epsilon-sub-mu},
\begin{equation}
\label{estimate-at-g}
\begin{split}
\chi_\lambda(g) &= \sum_{\mu\succeq \lambda}(K^{-1})_{\lambda\mu} \phi_\mu(g) \\
                           & = \sum_{\mu\succeq \lambda} (K^{-1})_{\lambda\mu} (1+\epsilon_\mu)q^{-mn}\phi_\mu(1) 
                                            + \sum_{\substack{\mu\succeq\lambda\\ \mu_1>\lambda_1}} (K^{-1})_{\lambda\mu} (1+\epsilon_\mu)(q^{-m(N-\mu_1)}-q^{-mn})\phi_\mu(1)\\
                          &=  q^{-mn}\sum_{\mu\succeq \lambda} (K^{-1})_{\lambda\mu} (1+\epsilon_\mu)\phi_\mu(1)  + q^{(n-1)N+O(1)} \\
                          &=  q^{-mn}\sum_{\mu\succeq \lambda} (K^{-1})_{\lambda\mu}\phi_\mu(1) + q^{-mn}\sum_{\mu\succeq \lambda} \epsilon_\mu\abs{(K^{-1})_{\lambda\mu}}\phi_\mu(1) + q^{(n-1)N+O(1)},\\
&=  (1+\epsilon)q^{-mn}\chi_\lambda(1) + q^{(n-1)N+O(1)},                          
\end{split}
\end{equation}
where $\abs{\epsilon} \le \max_\mu \abs{\epsilon_\mu} < q^{-N/2}$. On the other hand,
$\chi_\lambda(1) > q^{nN-O(1)}$ by \cite[Theorem 1.2(i)]{GLT}. So
for $N$ sufficiently large, \eqref{estimate-at-g} implies \eqref{good-estimate}.
\end{proof}

\begin{prop}
\label{cancel}
Let $p \geq 3$ be a prime, and let $\cX$ denote the set of cuspidal characters of $\GL_p(q)$, i.e. characters of the form $I_p^k[1]$ in the notation of \cite{Green}.
Let $T < \GL_p(q)$ denote the centralizer of a semisimple element with characteristic polynomial irreducible over $\FF_q$ and $T^1 := T\cap \SL_p(q)$.  Let $z$ be a central element of $\SL_p(q)$.  If $t$ is a generator of $T^1$, then 
$$\sum_{\chi\in \cX}\chi(t)^2\chi(z) = p(1-q)\chi(1).$$
\end{prop}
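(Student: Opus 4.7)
The plan is to use Deligne--Lusztig theory to parametrise the cuspidal characters of $\GL_p(q)$ via regular characters of the Coxeter torus $T \cong \FF_{q^p}^\times$. The rational Weyl group $\rN_G(T)/T \cong \ZZ/p\ZZ$ acts on $T$ as Frobenius $x \mapsto x^q$, and since the $\FF_q$-split ranks of $\GL_p$ and $T$ are $p$ and $1$ respectively, for $p$ odd we have $\varepsilon_\bG\varepsilon_\bT = 1$; hence each $\chi_\theta := R_T^{\GL_p}(\theta)$ with $\theta \in \Irr(T)$ \emph{regular} (i.e., not factoring through the norm $N \colon \FF_{q^p}^\times \to \FF_q^\times$) is a genuine irreducible character of degree $D := \prod_{i=1}^{p-1}(q^i-1)$, and two regular characters give the same cuspidal iff they lie in the same $W$-orbit. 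Applying \cref{prop:val-DL-char} to the regular semisimple element $t$ of type $w$ yields $\chi_\theta(t) = \sum_{i=0}^{p-1}\theta(t^{q^i})$. For $z = \zeta I$ central in $\SL_p(q)$, hence with $\zeta \in \FF_q^\times$ and $\zeta^p = 1$, the standard formula $R_T^G(\theta)(zg) = \theta(\zeta) R_T^G(\theta)(g)$ gives $\chi_\theta(z) = \theta(\zeta)D$ after viewing $\zeta \in \FF_q^\times \subseteq \FF_{q^p}^\times \cong T$.

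Next I would exploit that the summand is $W$-invariant (since $\zeta$ is $W$-fixed and the square $(\sum_i \theta(t^{q^i}))^2$ is $W$-invariant by reindexing) to sum over all regular $\theta$ and divide by $p$:
\begin{equation*}
\sum_{\chi \in \cX}\chi(t)^2\chi(z) = \frac{D}{p}\sum_{i,j=0}^{p-1}\sum_{\theta \text{ regular}}\theta(\zeta t^{q^i+q^j}).
\end{equation*}
The inner sum is computed by orthogonality: the full sum over $\Irr(T)$ is $(q^p-1)[u=1]$, while the non-regular contribution factors through the norm and equals $(q-1)[N(u)=1]$. For $u = \zeta t^{q^i+q^j}$ the identities $\zeta^p = 1$ and $t^n = 1$ (with $n = (q^p-1)/(q-1)$) show $N(u) = 1$ unconditionally, so the non-regular contribution is always $q-1$. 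A short bookkeeping then reduces the theorem to the arithmetic claim
\begin{equation*}
I := \#\{(i,j)\in\{0,\ldots,p-1\}^2 : \zeta t^{q^i+q^j}=1\} = 0.
\end{equation*}

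The main obstacle is verifying $I = 0$. Writing $t = \gamma^m$ for a generator $\gamma$ of $T^1$ of order $n$ with $\gcd(m,n)=1$, and $\zeta^{-1} = \gamma^c$, the equation $\zeta t^{q^i+q^j} = 1$ translates into the congruence $q^i + q^j \equiv m^{-1}c \pmod n$. Since $\zeta^{-1}$ has order dividing $p$, any solution forces $n/p \mid q^i + q^j$, which in particular requires $p \mid n$ (equivalently $p \mid q-1$) whenever $\zeta \neq 1$. The stronger divisibility $n \mid q^i + q^j$ is ruled out by a base-$q$ digit count: $n = 1 + q + \cdots + q^{p-1}$ has $p \geq 3$ nonzero base-$q$ digits, whereas $q^i + q^j$ has at most two. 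For the weaker divisibility $n/p \mid q^i + q^j$ (relevant only when $p \mid q-1$, so $q \geq p+1$), the case $i = j$ reduces via $\gcd(n/p, q) = 1$ to $n/p \mid 2$, contradicting $n/p \geq q^{p-1}/p > 2$; and the case $i \neq j$, writing $q^i + q^j = q^i(1+q^k)$ with $1 \leq k \leq p-1$, reduces to $n \mid p(1 + q^k)$, which fails by a direct size comparison for $k \leq p-2$ and, for $k = p-1$, by a short residue computation $\bmod\, n$ reducing it to $n \mid p(1 + q + \cdots + q^{p-3})$, again failing by size.
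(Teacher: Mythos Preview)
Your argument is correct and follows essentially the same route as the paper: parametrise the cuspidals by Frobenius-orbits of regular $\theta \in \Irr(T)$, expand $\chi_\theta(t)^2\chi_\theta(z)/\chi_\theta(1)$ as $\sum_{a,b}\theta(\zeta\, t^{q^a+q^b})$, sum over all regular $\theta$ (dividing by $p$), and reduce via orthogonality to an arithmetic statement about the exponents $q^a+q^b$ modulo $n=\abs{T^1}$.

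The only substantive difference is in that arithmetic step. The paper proves the stronger fact
\[
\gcd\bigl((q^a+q^b)(q-1),\,q^p-1\bigr)=q-1,\qquad\text{equivalently}\quad \gcd(q^a+q^b,n)=1,
\]
by analysing each prime $\ell$ dividing $q^p-1$; this immediately forces $I=0$, since $t^{q^a+q^b}$ then generates $T^1$ while $z^{-1}$, having order dividing $p<n$, does not. You instead establish $I=0$ directly, ruling out $n\mid q^a+q^b$ by a base-$q$ digit count and, when $\zeta\neq 1$ (so $p\mid q-1$ and $q\ge p+1$), ruling out $(n/p)\mid q^a+q^b$ by size estimates together with the reduction $q^{p-1}\equiv -(1+q+\cdots+q^{p-2})\pmod n$. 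Both arguments are valid; yours is marginally more economical in that it proves exactly what the orthogonality computation needs, while the paper's coprimality lemma is a cleaner standalone statement.
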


\begin{proof}
First we claim that for all non-negative integers $a,b,c$, if 
$$c\equiv q^a+q^b\pmod{\abs{T^1}}$$ 
then 
\begin{equation}\label{cusp1}
  \gcd\bigl( c(q-1),\abs{T} \bigr)=q-1.
\end{equation}  
It is clear that $q-1$ divides both factors. 
If a prime $\ell$ divides 
$$\gcd(c(q-1),\abs{T}) = \gcd((q^a+q^b)(q-1),q^p-1),$$
then it divides $\abs{T}$, so it cannot divide $q$.  It must also divide either $q^a+q^b$ or $q-1$ or both.   If it divides $q-1$ but not $q^a+q^b$, then the highest power of
$\ell$ dividing $(q^a+q^b)(q-1)$ is the same as the highest power dividing $q-1$ and therefore the same as the highest power dividing $\gcd(c(q-1),\abs{T})$.  If it divides both, then
$0\equiv q^a+q^b \equiv 2\pmod \ell$, so $\ell=2$.  However, $\abs{T^1}$ is odd (as $p$ is odd), so the highest power of $2$ dividing $\abs{T}$ is the highest power dividing $q-1$ and therefore the highest
dividing $\gcd(c(q-1),\abs{T})$.  If $\ell$ divides only $q^a+q^b$, then we may replace $a$ and $b$  by their remainders under division by $p$ (since $\ell|(q^p-1)$), and assume $\ell$ divides $q^a+q^b$ for $0\le a\le b<p$ and therefore divides $1+q^{b-a}$ with $0\le b-a<p$.  We have already seen that the highest power of $2$ dividing $q-1$ is the same as the highest
power dividing $\gcd(c(q-1),\abs{T})$, so we may assume $b-a>0$.  Thus, the order of $q$ (mod $\ell$) divides $2(b-a)$ as well as $p$, so $q\equiv 1\pmod \ell$, contrary to assumption.

Let $t_0$ be a generator of the cyclic group $T$ with $t_0^{q-1} = t$, and let $c$ be as above. If $\phi$ is a character of $T$ and $\phi(t)^c= 1$, then \eqref{cusp1} implies $\phi^{q-1}(t_0)=1$ and so $\phi^{q-1}=1_T$.
Therefore, if $\phi^{q-1}\neq 1_T$, we have
$$\sum_{i=0}^{\abs{T}-1} \phi(t^{ic}) = \sum_{i=0}^{\abs{T}-1} \phi(t_0^{ic(q-1)}) = \sum_{i=0}^{\abs{T}-1} (\phi(t^c))^i= 0.$$
If $I$ denotes the set of $i\in \{0,\ldots,\abs{T}-1\}$ such that $i$ is not divisible by $\abs{T^1}$, then
\begin{equation}\label{cusp2}
  \sum_{i\in I} \phi^i(t^c) =\sum_{i\in I} \phi(t^{ic}) = \sum_{i=0}^{\abs{T}-1} \phi(t^{ic}) - \sum_{j=0}^{q-2} 1 = 1-q.
\end{equation}  

Now consider the action of $\ZZ/p\ZZ$ on the character group of $T$ which is generated by the map $\psi\mapsto \psi^q$.
All orbits are of length $p$ except for the singletons $\{\psi\}$ for which $\psi^{q-1}=1_T$. 
By \cite[p.~431]{Green}, the restriction of any character $\chi\in \cX$ to $T$ is of the form 
$$\chi(t) = \phi(t) + \phi(t^q) + \cdots + \phi(t^{q^{p-1}})$$
for some length $p$ orbit $\{\phi,\phi^q,\ldots,\phi^{q^{p-1}}\}$, and moreover different $\chi \in \cX$ correspond to different orbits of
length $p$.
If $g = zu$ with $u$ unipotent, then $\chi(g) = \phi(z)\chi(u)$.  Note that this is well defined because $z\neq 1$ implies $q\equiv 1\pmod p$
and $z^p=1$, which implies 
$$\phi^q(z) = \phi(z^{q-1})\phi(z) = \phi(z).$$
In particular, we have 
$$\chi(z) = \phi(z)\chi(1) = \phi(t_0^k)\chi(1),$$
where $k$ is some integer divisible by $\abs{T^1}$.

Therefore $\chi(t)^2\chi(z)/\chi(1)$ is a sum of $p^2$ terms of the form $\phi(t^{q^a+q^b+k})$.
Denoting by $\phi_0$ a generator of the character group of $T$, we see that the set of 
$\phi \in \Irr(T)$ with $\phi^{q-1} \neq 1_T$ is precisely $\{ \phi_0^i \mid i \in I \}$. Now using \eqref{cusp2} we have
\begin{align*}
\sum_{\chi\in \cX} \frac{\chi(t)^2 \chi(z)}{\chi(1)}
&= \frac 1p \sum_{\{\phi\mid \phi^{q-1}\neq 1_T\}} \sum_{a=0}^{p-1}\sum_{b=0}^{p-1} \phi(t^{q^a+q^b+k})\\
&= \frac 1p  \sum_{a=0}^{p-1}\sum_{b=0}^{p-1} \sum_{\{\phi\mid \phi^{q-1}\neq 1_T\}} \phi^{q^a+q^b+k}(t)\\
&= \frac 1p  \sum_{a=0}^{p-1}\sum_{b=0}^{p-1} \sum_{i\in I} \phi_0^i(t^{q^a+q^b+k}) \\
&= \frac 1p  \sum_{a=0}^{p-1}\sum_{b=0}^{p-1} (1-q) = p(1-q).
\end{align*}
\end{proof}

\begin{thm}\label{sl-coxeter}
For all but finitely many ordered pairs $(p,q)$ where $p\ge 3$ is prime and $q$ is a prime power, the following statement holds. If 
$t$ is a generator of the norm-$1$ subgroup $T_1 \cong C_{(q^p-1)/(q-1)}$ of $\FF_{q^p}^\times$
then every non-central element of $\SL_p(q)$ is a product of two conjugates of $t$.
\end{thm}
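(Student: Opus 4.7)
My plan is to apply the Frobenius formula \eqref{frob} at $g=t$ and verify that, for every non-central $x \in G := \SL_p(q)$, the sum
\[
S(x) = \sum_{\chi\in\Irr(G)} \frac{\chi(t)^2\,\chi(x^{-1})}{\chi(1)}
\]
is nonzero. Since $t$ generates a Coxeter-type maximal torus $T^1 \leq G$, Deligne--Lusztig theory forces $\chi(t)=0$ unless $\chi \in \mathcal{E}(G,s)$ for some semisimple $s$ in the dual group $G^* = \mathrm{PGL}_p(q)$ whose centralizer contains a Coxeter torus of $G^*$. The primality of $p$ implies that such $s$ is either trivial (giving the unipotent characters) or a regular element of ``Coxeter type'' in $G^*$, so that $\bC_{G^*}(s)$ is itself a Coxeter torus and the series $\mathcal{E}(G,s)$ contains a single character. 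After Clifford-theoretic passage between $\SL_p(q)$ and $\GL_p(q)$, these latter characters form the family $\mathcal{X}$ of Proposition \ref{cancel}.

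For the unipotent contribution, I would invoke the Murnaghan--Nakayama rule (\cref{prop:MN-rule-irr-char}) at the Coxeter element of $\Sym_p$ to conclude that $\chi_\lambda(t)=\pm 1$ for hook partitions $\lambda\vdash p$ and $\chi_\lambda(t)=0$ otherwise. Hence exactly $p$ unipotent characters contribute, and the $q$-hook formula gives $\chi_{(p-j,1^j)}(1)$ polynomially large in $q$ for $j\geq 1$. Combined with the Liebeck--Shalev-type bound $|\chi(x)|\leq \chi(1)^{1-c}$ for non-central $x$, the unipotent piece becomes
\[
U(x) = 1 + \sum_{j=1}^{p-1} \frac{\chi_{(p-j,1^j)}(x^{-1})}{\chi_{(p-j,1^j)}(1)} = 1 + O(q^{-c(p-1)}).
\]

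For the cuspidal part $C(x) = \sum_{\chi\in\mathcal{X}}\chi(t)^2\chi(x^{-1})/\chi(1)$, the starting point is Proposition \ref{cancel}, which handles $x$ central. For general non-central $x$ I would decompose $x = su$ in Jordan form. By Deligne--Lusztig, $\chi(su) = 0$ for $\chi\in\mathcal{X}$ unless $s$ is conjugate to a power $t^k$ of $t$; in that case Green's character formula writes $\chi(su) = \sum_\phi \phi(t^k) Q_\phi(u)$, and the orbit sums $\sum_\phi \phi(t)^{2}\phi(t^k)$ telescope exactly as in the proof of Proposition \ref{cancel}. Since $\chi(1)$ for $\chi\in\mathcal{X}$ grows like $q^{\binom{p}{2}}$ while the Green function values $Q_\phi(u)$ are polynomially bounded in $q$ of much smaller degree, this yields $|C(x)| = O(q^{-1})$. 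Adding the pieces, $S(x) = 1 + O(q^{-1}) \neq 0$ once $q$ exceeds an explicit constant depending on $p$, and since the Landazuri--Seitz bound already forces $q$ and $p$ to be linked in the residual cases, this leaves only finitely many pairs $(p,q)$ to inspect by hand.

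The main obstacle will be the mixed case where $s=t^k$ is a nontrivial power of $t$ and $u$ is a nontrivial unipotent element commuting with $s$: here both $U(x)$ and $C(x)$ can have appreciable size simultaneously, and one must verify by explicit Green-function calculation that the orbit-sum collapse from the analogue of Proposition \ref{cancel} does not conspire with the hook unipotent values to produce an exact cancellation in $S(x)$. This delicate bookkeeping is precisely where the finitely many exceptional pairs $(p,q)$ alluded to in the statement of the theorem will emerge.
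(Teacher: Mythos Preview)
Your overall architecture matches the paper's: split the Frobenius sum into the unipotent and cuspidal pieces, use Murnaghan--Nakayama to see that only hook partitions contribute among the unipotent characters, and handle the cuspidal family $\mathcal{X}$ via the orbit-sum identity of Proposition~\ref{cancel}. However, there is a genuine gap in your treatment of the unipotent contribution.

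The bound you invoke, $|\chi(x)| \leq \chi(1)^{1-c}$ for a fixed absolute constant $c>0$ and every non-central $x$, is false. The exponential bounds available (e.g.\ \cite[Theorem~5.5]{LT}) give only $|\chi(x)|/\chi(1) \leq \chi(1)^{-\sigma\,\supp(x)/p}$ with $\sigma>0$ absolute; for a transvection ($\supp(x)=1$) the exponent is $\sigma/p \to 0$. Concretely, for $\chi_{(p-1,1)}$ and $x$ a transvection one has $\chi(x)/\chi(1)\sim q^{-1}\approx \chi(1)^{-1/(p-1)}$, so no uniform $c$ exists. With the correct bound your error term is not $O(q^{-c(p-1)})$ but at best $\sum_{j\ge 1} q^{-\sigma j/3}$, bounded uniformly in $p$ yet possibly exceeding $1$ for small $q$. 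That failure occurs for every prime $p$, hence for infinitely many pairs $(p,q)$, and your closing appeal to Landazuri--Seitz does not repair this: that bound gives no link between $p$ and $q$.

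The paper closes this gap in two steps. First, the $k=1$ case of Theorem~\ref{almost-thompson-SLU} reduces to targets $x$ of bounded support. Second, and this is the missing idea, Proposition~\ref{low a} shows that for $p$ large relative to $\supp(x)$ and the level $j$, the ratio $\chi_{(p-j,1^j)}(x)/\chi_{(p-j,1^j)}(1)$ is \emph{positive} and asymptotic to $q^{-j\,\supp(x)}$. Thus the low-level hook terms add a positive amount $\gamma$ rather than merely a bounded one, and only the tail $j\ge A$ needs the crude exponential estimate; choosing $A$ absolute makes that tail below $1/3$. Bounded $p$ with large $q$ is handled separately via Gluck's bound. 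Finally, your ``mixed case'' with $s=t^k$ non-central and $u\neq 1$ never occurs: such $s$ is regular, so $\bC_G(s)=T$ forces $u=1$, and those $x$ have support $p-1$ and are already covered by Theorem~\ref{almost-thompson-SLU}.
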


\begin{proof}
Fixing an $\FF_q$-basis of $\FF_{q^p}$ we can identity $\FF_{q^p}^\times$ with the centralizer $T$ in $G:=\GL_p(q)$ of any generator of $\FF_{q^p}$.
As $p$ is prime, every non-central element of $T$ generates $\FF_{q^p}$ as $\FF_q$-algebra, so no such element
is  contained in a proper parabolic subgroup of $G$.  Therefore, every Harish-Chandra induced character of $G$ vanishes on every element of $T \smallsetminus \bZ(G)$; in particular at our element $t$.
By \cite[(12)]{Green}, a primary (i.e. not Harish-Chandra induced) character of $G$ can be non-zero at $t$ if and only if it is of the form $I_1^k[p]$ or of the form $I_p^k[1]$.
In the first case, it belongs to the set $\cX$ of Proposition~\ref{cancel}.  In the second case, it is the product of a unipotent character and a linear character. Since $\bC_G(t) = T$, the conjugate classes of $t$ in $G$ and in $\SL_p(q)$ are the same.

By Theorem~\ref{almost-thompson-SLU}, we may assume that our target element $g$ has bounded support.
By the Frobenius formula, $(q-1)^{-1}$ times the number of representations of $g$ as a product of two conjugates of $t$ in $G$ is
$$(q-1)^{-1}\frac{\abs{t^G}^2\abs{g^G}}{\abs{G}}\sum_{\chi \in \Irr(G)} \frac{\chi(t)^2\bar\chi(g)}{\chi(1)}.$$
We divide this sum into a sum over $\chi$ which are unipotent characters times linear characters and a sum over $\chi\in \cX$.

For the first, we note that $t$ and $g$ are both in $\SL_p(\FF_q)$, and all linear characters of $G$ are trivial on this subgroup. So we can simply sum over unipotent characters and omit the  factor $(q-1)^{-1}$.
The contribution of the trivial character to the sum
$$\sum_{\chi \in \Irr(G)} \frac{\chi(t)^2\bar\chi(g)}{\chi(1)}$$
is $1$.  For the other unipotent characters, by \cite[Theorem~12]{Green}, $\chi_\lambda(t)$ is given by the value at a $p$-cycle of the character of the symmetric group $\sS_p$ associated to the partition $\lambda\vdash p$,
and by the Murnaghan-Nakayama rule, this value is $\pm 1$ if $\lambda$ is of the form $1^n (p-n)^1$ and $0$ otherwise.  By the main theorem of \cite{LT}, since $g \notin \bZ(G)$, there exists an absolute constant $\epsilon > 0$ such that
\begin{equation}\label{cusp3}
  \abs{\chi(g)} \leq \chi(1)^{1-\epsilon/p}
\end{equation}
for all $\chi \in \Irr(G)$.
By the dimension formula for primary characters of $G$ \cite[Lemma~7.4]{Green}, 
$$\chi_\lambda(1) \ge q^{n(p-n/2-1/2)}\ge q^{np/3}.$$
Therefore,
$$\frac{\abs{\chi_{1^n (p-n)^1}(g)}}{\chi_{1^n (p-n)^1}(1)} \le 2^{-n\epsilon/3}.$$
Choosing $A$ to be a sufficiently large absolute constant, we have
$\sum_{n\ge A} 2^{-n\epsilon/3} < \frac 13$, which guarantees 
$$\Bigm|\sum_{\chi=\chi_{1^n(p-n)^1},\,n \geq A} \frac{\chi(t)^2\bar\chi(g)}{\chi(1)}\Bigm| \leq \frac13.$$
On the other hand, if $p$ is large enough compared to $A$, then
by Proposition~\ref{low a},  for $1 \leq n < A$ we have $\chi_{1^n(p-n)^1}(g) > 0$, and 
$$\sum_{1\le n<A} \frac{\chi_{1^n (p-n)^1}(g)}{\chi_{1^n (p-n)^1}(1)}$$
is the sum of a positive term $\gamma$ and an error term less than $1/3$ in absolute value.
Recalling $\chi(t)^2=1$ on all these $\chi=\chi_{1^n (p-n)^1}$, it follows that the total 
contribution to the sum from nontrivial unipotent characters is $\gamma$ plus an error term less that $2/3$ in absolute value. 
Hence the total contribution to the sum from all unipotent characters is at least $\gamma + 1/3$. If $p$ is bounded but 
$q$ is large enough, then by Gluck's bound \cite{Gl}, there is some absolute constant $C > 0$ such that 
$$\Bigm|\sum_{1\le n<A} \frac{\chi_{1^n (p-n)^1}(g)}{\chi_{1^n (p-n)^1}(1)}\Bigm| <  \frac{CA}{\sqrt{q}} \leq \frac 13,$$
and this ensures that the total contribution to the sum from all unipotent characters is at least $1/3$.

As $p$ is prime, for $\chi \in \cX$, $\chi(g) = 0$ unless $g=zu$, where $z$ is scalar and $u$ is unipotent, 
or $g$ is conjugate to an element of $T$.  In the former case, $\chi(g) = c_u \frac{\chi(z)}{\chi(1)}$, where $c_u\in\ZZ$ depends only on $u$
but not on the particular $\chi \in \cX$.
By Proposition~\ref{cancel}, 
$$\frac 1{q-1} \Bigm|\sum_{\chi\in \cX} \frac{\chi(t)^2\bar\chi(g)}{\chi(1)} \Bigm| = \frac{p\abs{c_u}}{\chi(1)}.$$
As $g$ is not scalar, $\abs{c_u} = \abs{\chi(g)} \le \chi(1)^{1-\epsilon/p}$ by \eqref{cusp3}.  Since $\chi(1) = \prod_{i=1}^p (q^i-1) > q^{p(p-1)/2}/4$,
we get a uniform bound $\abs{c_u}/\chi(1) \leq q^{-\beta p}$ for some $\beta>0$.  Thus, the contribution of the cuspidal characters is less
than $1/6$ when either $p$ or $q$ is sufficiently large, implying the theorem.
\end{proof}

\end{document}